\theoremstyle{plain}
\newtheorem{thm}{Theorem}[section]
\newtheorem{lem}[thm]{Lemma}
\newtheorem{prop}[thm]{Proposition}
\newtheorem*{cor}{Corollary}
\theoremstyle{definition}
\newtheorem{defn}{Definition}[section]
\newtheorem{exmp}{Example}[section]
\theoremstyle{remark}
\newtheorem*{rem}{Remark}
\newtheorem*{rems}{Remarks}
\title{Quantum Boson Algebra and Poisson Geometry of the Flag Variety}
\author{Li, Yu}
\date{\vspace{-5ex}}
\numberwithin{equation}{subsection}
\begin{document}
\maketitle

\begin{abstract}
In his work on crystal bases \cite{Kas}, Kashiwara introduced a certain degeneration of the quantized universal enveloping algebra of a semi-simple Lie algebra $\mathfrak g$, which he called a quantum boson algebra.  In this paper, we construct Kashiwara operators associated to all positive roots and use them to define a variant of Kashiwara's quantum boson algebra.  We show that a quasi-classical limit of the positive half of our variant is a Poisson algebra of the form $(P \simeq \mathbb C[\mathfrak n^{\ast}], \, \{~~,~~\}_P)$, where $\mathfrak n$ is the positive part of $\mathfrak g$ and $\{~~,~~\}_P$ is a Poisson bracket that has the same rank as, but is different from, the Kirillov-Kostant bracket $\{~~,~~\}_{KK}$ on $\mathfrak n^{\ast}$.  Furthermore, we prove that, in the special case of type $A$, any linear combination $a_1 \{~~,~~\}_P + a_2 \{~~,~~\}_{KK}$, $a_1, a_2 \in \mathbb C$, is again a Poisson bracket.  In the general case, we establish an isomorphism of $P$ and the Poisson algebra of regular functions on the open Bruhat cell in the flag variety.  In type $A$, we also construct a Casimir function on the open Bruhat cell, together with its quantization, which may be thought of as an analog of the linear function on $\mathfrak n^{\ast}$ defined by a root vector for the highest root.
\end{abstract}

\tableofcontents

\section{Introduction}

Let $\mathfrak g$ be a complex semi-simple Lie algebra and $\mathfrak g = \mathfrak n \oplus \mathfrak h \oplus \mathfrak n^-$ a triangular decomposition of $\mathfrak g$.  Using this data, one can define the quantized universal enveloping algebra $U$ (\cite{Dri}, \cite[4.3]{Jan}) deforming the usual universal enveloping algebra $\mathcal U$ of $\mathfrak g$.  In \cite{Kas}, Kashiwara defines, for each simple root $\alpha$, an operator $e'_{\alpha}$ acting on the negative half $U^-$ of $U$.  These operators allow Kashiwara to define what he calls the quantum boson algebra, which plays an important role in his theory of crystal bases (see \cite{Kas}).

In this paper, we construct Kashiwara operators $r'_{\lambda}: U^- \rightarrow U^-$ for each $\lambda \in \Phi^+$, where $\Phi^+$ is the set of positive roots.  These operators generate the positive half $C_q^+$ of a version of the so called quantum boson algebra $C_q$.  It turns out that the quasi-classical limit $P$ of a certain integral form of $C_q^+$ is a commutative algebra.  In fact, we will prove that (see Proposition \ref{PBW-qc} below)

\begin{thm} \label{first-thm}
There is a $\mathbb C$-algebra isomorphism $P \simeq \mathbb C[\overline r'_{\lambda}: \lambda \in \Phi^+]$, where $\overline r$ stands for the specialization of $r \in C_q^+$ at $q = 1$.
\end{thm}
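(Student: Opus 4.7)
The plan is to prove a PBW-type structural theorem for $C_q^+$ and then pass to the quasi-classical limit $q \to 1$. Since the target $\mathbb{C}[\overline r'_{\lambda}: \lambda \in \Phi^+]$ is a polynomial algebra, the content of the isomorphism splits into two parts: surjectivity (the $\overline r'_{\lambda}$ generate $P$, which is essentially the definition of $C_q^+$) and injectivity, i.e.\ algebraic independence of the $\overline r'_{\lambda}$. Both are handled cleanest by establishing a PBW basis of $C_q^+$ over $\mathbb{C}(q)$ and then tracking it through the integral form.

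First, I would fix a convex ordering on $\Phi^+$ (say, the one coming from a reduced expression for the longest element $w_0$ of the Weyl group) and consider the family of ordered monomials $\prod_{\lambda \in \Phi^+} (r'_{\lambda})^{n_{\lambda}}$ in $C_q^+$. Using the definitions of the $r'_{\lambda}$ as operators on $U^-$, I would derive cross-relations of Levendorskii--Soibelman type among the generators: $r'_{\mu} r'_{\lambda} = q^{c_{\lambda,\mu}} r'_{\lambda} r'_{\mu} + (\text{ordered terms on roots strictly between }\lambda, \mu)$ for $\lambda < \mu$. These relations imply that ordered monomials span $C_q^+$ as a $\mathbb{C}(q)$-vector space.

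Second, I would establish linear independence of these ordered monomials by exploiting the natural action of $C_q^+$ on $U^-$. The idea is to apply a hypothetical nontrivial relation to carefully chosen PBW-type elements of $U^-$ (built from negative root vectors in the same convex order) and to use a triangularity property of the $r'_{\lambda}$ with respect to the weight grading on $U^-$ to extract a contradiction from the leading term. This is the step where the construction of the higher Kashiwara operators $r'_{\lambda}$ for non-simple $\lambda$ really enters: one must verify that each $r'_{\lambda}$ lowers weight by exactly $\lambda$ and acts as expected on the corresponding PBW generator of $U^-$.

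Third, I would choose the integral form of $C_q^+$ (over a localization of $\mathbb{C}[q, q^{-1}]$ at $q = 1$) so that it is the $\mathbb{C}[q,q^{-1}]_{(q-1)}$-span of the ordered monomials and the quotient at $q = 1$ is $P$. Setting $q = 1$ in the cross-relations of Step~1 collapses the quantum coefficients to $1$ and eliminates the correction terms (this is the precise sense in which $P$ is commutative), so $P$ is generated by the $\overline r'_{\lambda}$ subject to no relations beyond commutativity, giving the desired isomorphism. The principal obstacle is Step~2: spanning is a formal rewriting once the cross-relations are in hand, whereas algebraic independence requires the substantive input that each $r'_{\lambda}$ acts on $U^-$ with a controllable, non-vanishing leading term, and getting this right uniformly for all positive roots is the crux of the argument.
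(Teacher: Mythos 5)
Your proposal follows essentially the same route as the paper: the paper proves a PBW basis of ordered monomials for $C_q^+$ (spanning via a Levendorskii--Soibelman-type straightening law for the $r'_{\lambda}$, linear independence by evaluating on PBW monomials $F^{\vec e}$ of $U^-$ and using the weight grading together with the orthogonality $r'_{\vec d}(F^{\vec e}) = \delta_{\vec d,\vec e}$), then passes to an integral form and specializes at $q=1$. The only point worth flagging is that your Step 3 quietly assumes the correction terms in the straightening law have coefficients regular at and vanishing to order $\ge 1$ in $(1-q)$; this is exactly the nontrivial strengthened Levendorskii--Soibelman estimate that the paper proves in its appendix, so it deserves explicit justification rather than being absorbed into ``collapses the quantum coefficients.''
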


It follows from Theorem \ref{first-thm} that one can identify $P$ with $\mathbb C[\mathfrak n^{\ast}]$.  So, by the Hayashi construction, $\mathfrak n^{\ast}$ comes equipped with a Poisson structure $\pi$ via the identification $\text{Spec} P \simeq \mathfrak n^{\ast}$.  Recall that two Poisson structures $\pi_1, \pi_2$ on a variety $X$ are called compatible if their sum is again a Poisson structure, equivalently, $\{a_1 \pi_1 + a_2 \pi_2: a_1, a_2 \in \mathbb C\}$ is a pencil of Poisson structures.  Regarding the relation between $\pi$ and the Kirillov-Kostant Poisson structure $\pi_{KK}$ on $\mathfrak n^{\ast}$, we prove that (see Theorem \ref{KK} below)

\begin{thm}
Let $\mathfrak g$ be of type $A$.  Then the Poisson structures $\pi$ and $\pi_{KK}$ on $\mathfrak n^{\ast}$ are compatible.
\end{thm}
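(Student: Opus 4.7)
The plan is to reduce the compatibility of $\pi$ and $\pi_{KK}$ to a Jacobi-type identity on the polynomial generators of $P$ and to verify it.

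To begin, recall that two Poisson structures $\pi_1, \pi_2$ are compatible precisely when their Schouten--Nijenhuis bracket vanishes, equivalently when $\pi_1 + \pi_2$ again satisfies the Jacobi identity. Because $\pi$ and $\pi_{KK}$ individually satisfy Jacobi, the statement reduces to the vanishing of the mixed cyclic sum
\[ \sum_{\mathrm{cyc}} \bigl( \{\{f, g\}_P, h\}_{KK} + \{\{f, g\}_{KK}, h\}_P \bigr) = 0 \]
for all $f, g, h \in P$. By Theorem \ref{first-thm}, $P$ is freely generated as a commutative algebra by the elements $\overline r'_\lambda$, $\lambda \in \Phi^+$, and the Leibniz rule applied to both brackets reduces the verification to triples of generators $(\overline r'_\lambda, \overline r'_\mu, \overline r'_\nu)$.

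Next I would make the two brackets on generators fully explicit. Under the identification $\mathrm{Spec}\, P \simeq \mathfrak n^{\ast}$, the $\overline r'_\lambda$ become linear coordinates dual to a basis of root vectors, so $\{\overline r'_\lambda, \overline r'_\mu\}_{KK}$ is a scalar multiple of $\overline r'_{\lambda + \mu}$ when $\lambda + \mu \in \Phi^+$ and zero otherwise, determined by the structure constants of $\mathfrak n$. The bracket $\{\overline r'_\lambda, \overline r'_\mu\}_P$ should be read off from the commutation relations among the Kashiwara operators $r'_\lambda$ in $C_q^+$ by expanding to first order in $q - 1$; these relations are presumably established in the preceding sections. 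In type $A_n$, positive roots are indexed by pairs $1 \le i < j \le n+1$, so any triple of generators is supported on at most six indices, and one may classify triples by the interlacing pattern of their index pairs and verify the mixed cyclic identity in each configuration.

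The main obstacle I anticipate is that $\{~,~\}_P$ is almost certainly of mixed polynomial degree rather than linear, so the mixed Jacobi relation becomes a genuinely nontrivial polynomial identity in the $\overline r'_\lambda$, not merely a linear one as for $\pi_{KK}$ alone. The bookkeeping in the case analysis could therefore grow quickly. To avoid grinding, I would look for a conceptual shortcut: if $\pi$ can be exhibited as the quasi-classical limit of an $R$-matrix deformation whose classical $r$-matrix satisfies the classical Yang--Baxter equation, then compatibility with $\pi_{KK}$ typically follows from that equation, placing $(\pi, \pi_{KK})$ into a standard bi-Hamiltonian framework. Failing that, the case-by-case verification in type $A$, while tedious, is unobstructed and is presumably what the authors carry out, with the type $A$ restriction reflecting some combinatorial feature (such as the multiplicity-freeness of root sums) that fails for other Dynkin types.
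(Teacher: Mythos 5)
Your proposal matches the paper's proof of Theorem \ref{KK} in essence: the paper verifies that the combination $\{f,g\} - 2\{f,g\}_{KK}$ satisfies the Jacobi identity, which (since each bracket is separately Poisson) is equivalent to the vanishing of your mixed cyclic sum, and it likewise reduces to a case-by-case check on triples of generators using the explicit formulas of Proposition \ref{bra-An} and \eqref{Kir-bra} --- an analysis the paper also omits as tedious. The only organizational difference is that the paper runs an induction on $n$, exploiting the stability of both bracket formulas under the inclusion $A_{n-1}\subset A_n$ so that only triples in which at least one index equals $n$ need be checked directly, whereas you propose a direct classification of all interlacing patterns; your suggested Yang--Baxter shortcut is not pursued in the paper.
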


Let $G$ be a connected algebraic group whose Lie algebra is $\mathfrak g$.  The classical $r$-matrix gives $G$ the structure of a Poisson Lie group \cite{BGY, Dri, EL, EvLu, EvLu2, GY, LM, Mi}.  The flag variety $G/B$ inherits a Poisson structure $\pi_{st}$ making the natural quotient map $G \rightarrow G/B$ Poisson.  $\pi_{st}$ is usually referred to as the standard Poisson structure.  Let $w_0$ be the longest element of the Weyl group of $(\mathfrak g, \mathfrak h)$, so $Bw_0B/B$ is the open Bruhat cell in $G/B$.  It turns out (see Theorem \ref{main-conj} below) that

\begin{thm} \label{second-thm}
There is a Poisson isomorphism $(\mathfrak n^{\ast}, \pi) \simeq (Bw_0B/B, \pi_{st})$.
\end{thm}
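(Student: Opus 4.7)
The plan is to match the two Poisson algebras by lifting the comparison to the quantum level. The underlying varieties are readily identified: one has $Bw_0 B/B \simeq N$ via $n \mapsto n w_0 B$, and by composing with $\exp \colon \mathfrak{n} \to N$ together with a vector-space identification $\mathfrak{n}^* \simeq \mathfrak{n}$ (for instance via the Killing-form pairing with $\mathfrak{n}^-$), one obtains a biregular map $\phi \colon \mathfrak{n}^* \to Bw_0 B/B$. What must be shown is that $\phi^*$ intertwines $\pi_{st}$ and $\pi$.

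The natural strategy is to realize both Poisson structures as quasi-classical limits of a common quantum object. By construction, $P$ is the quasi-classical limit of an integral form of $C_q^+$, and by Theorem~\ref{first-thm} a set of generators is given by the classes $\overline{r}'_\lambda$ of the Kashiwara operators. On the other side, the standard Poisson structure on the open Bruhat cell is the quasi-classical limit of a quantum coordinate ring $\mathcal{O}_q(Bw_0 B/B)$ which, by work in the De Concini--Kac--Procesi line, can be identified with a twisted localization of $U_q^-$. The main step I would carry out is to construct a $\mathbb{C}(q)$-algebra isomorphism between an integral form of $C_q^+$ and $\mathcal{O}_q(Bw_0 B/B)$ sending each $r'_\lambda$ to a quantum root-subgroup coordinate. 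Grading both algebras by the positive root lattice and comparing Hilbert series reduces this to exhibiting a surjection compatible with PBW-type bases on both sides. Specializing at $q = 1$ then yields an isomorphism of the coordinate rings that carries $\pi$ to $\pi_{st}$, which is precisely $\phi^*$.

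The main obstacle lies in the quantum matching, since the operators $r'_\lambda$ at non-simple positive roots are defined through an iterative construction introduced earlier in the paper and do not manifestly satisfy the same commutation relations as the standard De Concini--Kac--Procesi root vectors, even though their classical limits ought to. If the direct identification at the quantum level proves too rigid, a back-up route is to argue classically on generators: compute $\{\overline{r}'_\alpha, \overline{r}'_\beta\}_P$ for simple roots $\alpha, \beta$ using Kashiwara's original formulas, compare with $\{\phi^* x_\alpha, \phi^* x_\beta\}_{st}$ extracted from a standard description of $\pi_{st}$ in root-subgroup coordinates, and then propagate to all of $\Phi^+$ by induction on height using the Leibniz rule together with the inductive definition of the $r'_\lambda$. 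In either approach the most delicate bookkeeping will be tracking normalizations so that the pairing used to identify $\mathfrak{n}$ with $\mathfrak{n}^*$ matches the one implicit in the Kashiwara construction; once that is pinned down, the Poisson compatibility should follow from the fact that both sides are polynomial algebras of the same graded dimension and are determined by their brackets on a generating set.
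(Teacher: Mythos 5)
Your overall strategy -- match the Kashiwara operators with De Concini--Procesi-type rescaled root vectors, use PBW bases to get an algebra isomorphism, and specialize at $q=1$ -- is the same as the paper's first half. Indeed, the paper shows (Proposition \ref{Poisson-isomorphism-for-main-theorem}) that $\overline r'_{\lambda_i} \mapsto \overline{-(q_{\lambda_i}-q_{\lambda_i}^{-1})E_{\lambda_i}}$ is an (anti-)Poisson isomorphism from $P$ onto the classical limit $A^+_{cl}$ of the De Concini--Procesi integral form $A^+$, precisely by comparing the Levendorskii--Soibelman relations for the rescaled $E$'s with Lemma \ref{com-rel} for the $r'_{\lambda_i}$'s. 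So the ``quantum matching'' you worry about is not the obstacle: the commutation relations do agree, up to the sign coming from the two conventions $\frac{1}{1-q}$ versus $\frac{1}{q-1}$ in the Hayashi construction, which you would need to track.

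The genuine gap is your second displayed step: the assertion that $(Bw_0B/B,\pi_{st})$ ``is the quasi-classical limit of a quantum coordinate ring $\mathcal O_q(Bw_0B/B)$ which \ldots can be identified with a twisted localization of $U_q^-$'' is exactly the statement that carries most of the content of the theorem, and you give no argument for it. What the paper actually does here is pass through the Poisson dual group: De Concini--Procesi identify the classical limit $Z_0$ of their integral form with $\mathbb C[G^{\ast}]$, one then takes $N^-$-invariants to land in $\mathbb C[B]\simeq\mathbb C[G^{\ast}]^{N^-}$ carrying the \emph{quotient} Poisson structure, proves separately (Lemma \ref{equivalence-of-Poisson-structures}) that this quotient structure is the negative of the standard structure on $B$ as a Poisson subgroup of $G$, and finally checks that $b\mapsto b\dot w_0 B/B$ is Poisson and computes the pullbacks of coordinates explicitly. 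None of this is routine, and without it your specialization at $q=1$ identifies $P$ with \emph{some} Poisson algebra but not visibly with $\mathbb C[Bw_0B/B]$. A secondary issue: the parametrization of the cell that actually comes out of this computation is by ordered products of root-subgroup exponentials $\exp(t_{\lambda_N}e_N)\cdots\exp(t_{\lambda_1}e_1)$, not by a single $\exp\colon\mathfrak n\to N$ composed with a Killing-form identification, so your claim that the resulting isomorphism ``is precisely $\phi^{\ast}$'' for the $\phi$ you fixed at the outset is unjustified (though harmless for the existence statement). Your back-up plan of checking brackets only on simple-root generators and propagating by induction on height would also need the additional argument that a bracket-preserving algebra map on a Poisson-generating set is Poisson, and in any case does not remove the need to know $\pi_{st}$ explicitly in the relevant coordinates.
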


In \cite{EL}, Elek and Lu have proved that $\mathbb C [Bw_0B/B]$ has a natural structure of a cluster algebra, c.f. \cite{GY}.  One advantage of our approach is that the structure of symmetric Poisson CGL extension \cite{GY} on $\mathbb C [\mathfrak n^{\ast}]$ is very transparent.

In the next two results, we assume that $\mathfrak g \simeq \mathfrak {sl}_{n+1}$ and let $\alpha_1, \cdots, \alpha_n$ be the simple roots of $\mathfrak g$.  For each $\lambda \in \Phi^+$, we will construct a vector field $\overline F_{\lambda}$ on $\mathfrak n^{\ast}$ using our version of the quantum boson algebra $C_q$.  These vector fields can be used to deform the Poisson structure $\pi$.  The following result is a combination of Theorems \ref{deformed-bivect} and \ref{comp-gen} of the main text.

\begin{thm} \label{third-thm}
For any $1 \le i \le j \le n$, the bivector $\mathcal L_{\overline F_{\alpha_i + \cdots + \alpha_j}} \pi$ is Poisson, where $\mathcal L$ stands for the Lie derivative.  Moreover, the Poisson structures $\pi$ and $\mathcal L_{\overline F_{\alpha_i + \cdots + \alpha_j}} \pi$ are compatible.
\end{thm}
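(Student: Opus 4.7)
The plan is to treat the two assertions separately, and to reduce the compatibility claim to the Poisson claim. For any vector field $F$ and any Poisson bivector $\pi$, the graded Jacobi identity for the Schouten--Nijenhuis bracket yields
\[
2\,[\mathcal L_F \pi,\,\pi]_{SN} \;=\; \mathcal L_F\,[\pi,\pi]_{SN} \;=\; 0,
\]
so that, expanding $[\,a_1\pi + a_2\mathcal L_F\pi,\; a_1\pi + a_2\mathcal L_F\pi\,]_{SN}$, one finds that the pencil of $\pi$ with $\mathcal L_F\pi$ is automatically Poisson as soon as $\mathcal L_F\pi$ itself is. Consequently the compatibility statement (Theorem \ref{comp-gen}) is a formal consequence of the assertion that $\mathcal L_{\overline F_\lambda}\pi$ is Poisson, where $\lambda := \alpha_i+\cdots+\alpha_j$.

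For that main assertion, I would exploit the quantum origin of $\overline F_\lambda$. By construction, $\overline F_\lambda$ is the $q\to 1$ specialization of an operator $F_\lambda$ on (an integral form of) $C_q^+$, so its action as a derivation of $P\simeq \mathbb C[\mathfrak n^{\ast}]$ on the generators $\overline r'_\mu$ can be read off from the commutators $[F_\lambda,r'_\mu]$ in $C_q^+$. The first step is to obtain an explicit formula for $\overline F_\lambda(\overline r'_\mu)$ as a polynomial in the $\overline r'_\sigma$. The second step is to use the identity
\[
(\mathcal L_{\overline F_\lambda}\pi)(dx,dy) \;=\; \overline F_\lambda\bigl(\{x,y\}_P\bigr) \;-\; \{\overline F_\lambda(x),\,y\}_P \;-\; \{x,\,\overline F_\lambda(y)\}_P
\]
to express $(\mathcal L_{\overline F_\lambda}\pi)(d\overline r'_\mu,d\overline r'_\nu)$ as a polynomial in the generators, thereby identifying $\mathcal L_{\overline F_\lambda}\pi$ explicitly as a bivector on $\mathfrak n^{\ast}$.

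It then remains to check that this bivector satisfies the Jacobi identity. By the Leibniz property of a bracket defined on generators, this reduces to a finite verification on triples $(\overline r'_\sigma, \overline r'_\tau, \overline r'_\rho)$. The main obstacle is precisely this combinatorial check. The hypothesis that $\lambda$ corresponds to a single contiguous interval $[i,j]$ in the type $A$ Dynkin diagram is crucial: the brackets involving $\overline r'_\lambda$ have a triangular shape with respect to the height filtration on $\Phi^+$, and I expect the Jacobi identity to follow from root-combinatorial identities on nested subintervals of $[i,j]$; a natural reduction is to restrict to the Levi subalgebra of type $A_{j-i+1}$ supported on $[i,j]$ and argue by induction on its rank. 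A more conceptual alternative would be to realize $\mathcal L_{\overline F_\lambda}\pi$ directly as the quasi-classical limit of a bivector defined internally inside $C_q^+$, so that Jacobi is inherited from associativity at the quantum level; this route would also clarify why the statement is specific to type $A$, but it requires first proving that $F_\lambda$ acts as a (twisted) derivation at the quantum level, which is likely the genuinely difficult input.
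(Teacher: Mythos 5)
Your reduction of the compatibility assertion to the first assertion is correct, and it is in fact cleaner than the paper's own route.  The graded Jacobi identity does give $2[\mathcal L_F\pi,\pi]=\mathcal L_F[\pi,\pi]=0$ for an \emph{arbitrary} vector field $F$ on an arbitrary Poisson variety, so once $\mathcal L_F\pi$ is known to be Poisson the whole pencil $a_1\pi+a_2\mathcal L_F\pi$ is Poisson.  The paper instead obtains compatibility (Theorem \ref{comp-gen}) from a separate computational lemma, $\mathcal L_{\overline F_\lambda}\mathcal L_{\overline F_\lambda}\pi=0$ (Theorem \ref{comp-reason}), combined with a flow argument: when $\mathcal L_{\hat v}\mathcal L_{\hat v}\pi=0$ the pushforward of $\pi$ along $\exp(tv)$ equals $\pi-t\mathcal L_{\hat v}\pi$ exactly, and expanding $[\pi-t\mathcal L_{\hat v}\pi,\pi-t\mathcal L_{\hat v}\pi]=0$ in powers of $t$ gives the conclusion.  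Your observation shows the $t^1$ term vanishes for free, for any vector field whatsoever; all of the type-$A$-specific content of the theorem is therefore concentrated in the first assertion.

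That first assertion is where your proposal stops at a plan.  The paper omits this computation as well (``computational in nature''), so you are not behind the paper on this point, but nothing in your outline actually establishes the Jacobi identity for $\mathcal L_{\overline F_\lambda}\pi$, and the proposed induction on the interval $[i,j]$ is unsubstantiated.  Note that the same Schouten calculus you already invoked gives a sharper target: applying the graded Jacobi identity to $[F,[[F,\pi],\pi]]$ and using $[[F,\pi],\pi]=0$ yields $[\mathcal L_F\pi,\mathcal L_F\pi]=-[\pi,\mathcal L_F\mathcal L_F\pi]$, so the Poisson property of $\mathcal L_F\pi$ follows from the single identity $\mathcal L_{\overline F_\lambda}\mathcal L_{\overline F_\lambda}\pi=0$ --- which is precisely the paper's key lemma (Theorem \ref{comp-reason}).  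With the explicit vector field $\overline F_{\alpha_i+\cdots+\alpha_j}=-\partial_{i,j}+\sum_{k=j+1}^n x_{j+1,k}\partial_{i,k}$ of Proposition \ref{vf-gen-a} and the quadratic brackets of Proposition \ref{bra-An}, verifying $\mathcal L_F\mathcal L_F\pi=0$ is a finite second-derivative check on pairs of generators, considerably lighter than verifying the Jacobi identity for $\mathcal L_F\pi$ on triples as you propose.  Until one of these computations is carried out, the core of the theorem remains unproved.
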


It follows from Theorem \ref{second-thm} and Theorem \ref{third-thm} that, in type $A$, the standard Poisson structure on the open Bruhat cell fits into a pencil of compatible Poisson structures.  It would be interesting to find an interpretation of the vector field $\overline F_{\alpha_i + \cdots + \alpha_j}$ in the context of the flag variety.

Theorem \ref{second-thm} also sheds some light on the Poisson center of the coordinate ring of $Bw_0B/B$.  To simplify notation, in type $A_n$, for $1 \le i \le j \le n$, we write $\overline r_{i,j}$ for $\overline r'_{\alpha_i + \cdots + \alpha_j}$ and $r_{i,j}$ for $r'_{\alpha_i + \cdots + \alpha_j}$.  For a partition
\begin{align}
\kappa = \{1 \le \kappa_1 < \kappa_2 < \cdots < \kappa_k = n\}
\end{align}
of $n$, we write $\overline r_{\kappa}$ for the monomial 
\begin{align}
\overline r_{1,\kappa_1} \overline r_{\kappa_1+1, \kappa_2} \cdots \overline r_{\kappa_{k-1}+1, \kappa_k}.
\end{align}
We prove the following result in Propositions \ref{Poi-cen} and \ref{quan-of-Casimir}.

\begin{thm}
(a) The element $\psi := (\sum \overline r_{\kappa})\overline r_{1,n} \in \mathbb C[\mathfrak n^{\ast}]$, where $\kappa$ runs over all partitions of $n$, is a Casimir function with respect to $\pi$.

(b) The element
\begin{align}
\Psi := \sum\limits_{\kappa \vdash n} q^{n - |\kappa|} r_{1,\kappa_1} r_{\kappa_1+1, \kappa_2} \cdots r_{\kappa_{k-1}+1, \kappa_k} r_{1,n}
\end{align}
is a central element of $C_q^+$.
\end{thm}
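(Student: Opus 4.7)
The natural plan is to prove part (b) by a direct calculation inside $C_q^+$, since part (a) will then follow automatically as the quasi-classical limit. I will describe the quantum computation and note how the classical statement is extracted at the end.

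First I would record the commutation relations among the generators $r_{i,j}$ of $C_q^+$; these should already be in hand from the construction of the Kashiwara operators $r'_\lambda$ earlier in the paper. I expect them to be of Levendorskii--Soibelman type: $r_{i,j}$ and $r_{k,l}$ $q$-commute (up to a scalar power of $q$) whenever $[i,j]$ and $[k,l]$ are disjoint and non-adjacent, while an overlap or shared endpoint produces a correction quadratic in the generators describing the splitting or merging of the intervals at the common endpoint.

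Next, to prove (b), I would expand $\Psi$ along the partition sum and compute $[\Psi, r_{a,b}]$ for an arbitrary generator using the Leibniz rule for commutators. Each factor $r_{\kappa_s+1,\kappa_{s+1}}$ whose index interval meets $[a,b]$ contributes a quadratic correction via Step 1, and the resulting terms must be matched in pairs: for each composition $\kappa$ and each break point $\kappa_s$ of $\kappa$ lying in $[a,b]$, I would pair $\kappa$ with the composition $\kappa'$ obtained by moving or deleting $\kappa_s$, which shifts $|\kappa|$ by $\pm 1$. The weight $q^{n-|\kappa|}$ is designed precisely so that this change in $|\kappa|$ produces the $q$-factor needed to cancel the Levendorskii--Soibelman coefficient appearing in the commutator. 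The trailing factor $r_{1,n}$ absorbs the remaining boundary contributions, namely those in which $a=1$ or $b=n$, where no partner composition exists inside the main sum; the Leibniz rule applied to this factor generates exactly the terms required for cancellation.

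Once (b) is established, (a) is immediate. At $q=1$ we have $\overline{\Psi} = \psi$, and the Poisson bracket on $P$ is defined by $\{\bar a, \bar b\} = \overline{(q-1)^{-1}[a,b]}$, so centrality of $\Psi$ in $C_q^+$ descends to Poisson-centrality of $\psi$ with respect to $\pi$. The main obstacle will be the combinatorial bookkeeping in the pairing argument: the number of compositions of $n$ is $2^{n-1}$ and the terms proliferate, so it is essential to classify compositions cleanly by the position of $[a,b]$ relative to the blocks of $\kappa$ (contained in a single block, straddling two adjacent blocks, or touching an endpoint of $[1,n]$). Once the right pairing is set up, the cancellation is forced by the exact form of the Levendorskii--Soibelman relation together with the prescribed exponent $q^{n-|\kappa|}$.
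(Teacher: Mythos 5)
Your route is genuinely different from the paper's. The paper proves part (a) directly at the classical level: it observes that $P$ is Poisson-generated by the simple-root variables $x_1,\dots,x_n$, writes $\psi = x_{1,n}\psi_1$ with $\psi_1 = \sum_{\kappa\vdash n} x_\kappa$, and verifies $\{x_i,\psi\}=0$ using the explicit brackets of Proposition \ref{bra-An} --- for $i=1$ and $i=n$ via a bijection $\kappa\mapsto\tilde\kappa$ between partitions with $\kappa_1=1$ and those with $\kappa_1>1$ (merging the first two parts), and for $1<i<n$ by induction on $n$ using the fact that the bracket depends only on the relative position of the index intervals. Part (b) is then stated separately (Proposition \ref{quan-of-Casimir}) and its proof is not written out in the paper. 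You instead propose to prove (b) first and let (a) fall out by specialization at $q=1$. The reduction (b) $\Rightarrow$ (a) is sound: if $\Psi$ is central in $C^+_{\mathcal A}$ then $[\Psi,b]=0$ exactly, hence $\{\overline\Psi,\overline b\}=\overline{(1-q)^{-1}[\Psi,b]}=0$, and $\overline\Psi=\psi$. This is the more ambitious order of proof, and it would buy a uniform argument for both statements; your heuristic that the weight $q^{n-|\kappa|}$ is calibrated to absorb the $q$-shift coming from merging two parts is consistent with the factor $2$ appearing in the classical identity $\sum_{\kappa\in T'}x_{\tilde\kappa}=\sum_{\kappa\in T}x_\kappa$ used in the paper.

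The gap is that the entire content of (b) --- and hence of your proof of (a) --- rests on cancellations you assert but do not verify, and the inputs needed to verify them are not available off the shelf. The paper never records the exact quantum commutation relations among the $r_{i,j}$ in type $A$: Proposition \ref{bra-An} gives only their classical limits, and the structure constants of Lemma \ref{com-rel} (the coefficients $c_{\vec d}\langle F^{\vec d},E^{\vec d}\rangle$ in type $A$) are explicitly omitted as ``not illuminating.'' Your pairing argument needs these exact coefficients, not just their images modulo $(1-q)$, because centrality of $\Psi$ is an identity in $C^+_{\mathcal A}$, not merely modulo $(1-q)^2$. You would also need to justify that checking $[\Psi,r_{a,b}]=0$ against the generators suffices (it does, since the $r_{a,b}$ generate $C_q^+$), and to confirm that the case analysis of how $[a,b]$ meets the blocks of $\kappa$ is exhaustive, including the interactions with the trailing factor $r_{1,n}$, which in the quantum setting does not commute past the other factors for free. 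As written, the proposal is a credible plan whose decisive step --- the explicit $q$-commutation relations and the term-by-term cancellation --- remains to be carried out; by contrast, the paper's proof of (a) is complete because it only ever uses the closed-form classical brackets of Proposition \ref{bra-An}.
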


This paper is organized as follows.  In section \ref{basic-def} we generalize the construction of Kashiwara in the case of simple roots and associate a certain Kashiwara operator $r'_{\lambda}$ to every $\lambda \in \Phi^+$.  For this, we make use of the coalgebra structure on $U$ and Lusztig's braid group action on $U$ \cite{Lus}.  Most of the preparatory material is taken from \cite{Jan}.  The quantum boson algebra $C_q$ is defined in the same section.

In section \ref{basic-properties-boson} we construct a PBW basis for $C_q^+$ and prove certain Levendorskii-Soibelman and Leibiz type properties for Kashiwara operators.  Some of these properties will be key technical tools for the study of quasi-classical limits.

In section \ref{cl-sect} we introduce the quasi-classical limit $C_{cl}$ and study the Poisson bracket on the positive half $C_{cl}^+ \simeq P$ of $C_{cl}$ that comes from the Hayashi construction.

{\bf Acknowledgements.} The author would like to first thank V. Ginzburg for introducing the problem of studying Kashiwara operators and generously sharing the unpublished notes \cite{GG}.  The author would also like to thank him for his endless patience in listening to the author and his incomparable vision for finding interesting directions to proceed.  He also spent a tremendous amount of time teaching the author how to write a paper, line by line.  Without these the present paper would not have existed.  The author is very grateful to S. Evens for useful discussions; many statements and proofs in sections \ref{gen-rk-sect} - \ref{compatible-Pois-bra-sect} were suggested by him.  The author would also like to thank J.-H. Lu for an invitation to visit the University of Hong Kong and for many helpful comments, including an approach to proving Theorem \ref{main-conj}.  The hospitality of the University of Hong Kong is gratefully acknowledged.

\section{Basic Definitions} \label{basic-def}

\subsection{Review of Quantized Universal Enveloping Algebras} \label{review}

Let $\mathfrak g$ be a complex semi-simple Lie algebra and $\mathfrak g = \mathfrak n \oplus \mathfrak h \oplus \mathfrak n^-$ a triangular decomposition of $\mathfrak g$.  Thus $\mathfrak b := \mathfrak n \oplus \mathfrak h$ and $\mathfrak b^- := \mathfrak n^- \oplus \mathfrak h$ is a pair of opposite Borel subalgebras of $\mathfrak g$.  Associated to this data, we have a root system $(\Phi = \Phi^+ \cup \Phi^-, \Pi)$, where $\Phi$ is the set of all roots, $\Phi^+$ (resp. $\Phi^-$) is the set of positive (resp. negative) roots and $\Pi \subseteq \Phi^+$ is the set of simple roots.

Consider the field $\mathbb C(q)$ of rational functions in the variable $q$.  For $n \in \mathbb N$, we write $[n] := \frac {q^n - q^{-n}} {q - q^{-1}} \in \mathbb C(q)$.  We fix a symmetric invariant nondegenerate bilinear form $(~~,~~)$ on $\mathfrak h^{\ast}$, the dual vector space of $\mathfrak h$, such that $(\lambda,\lambda) = 2$ for each short root $\lambda$.  If $\lambda$ is a positive root, then we write $[n]_{\lambda} := \frac {q_{\lambda}^n - q_{\lambda}^{-n}} {q_{\lambda} - q_{\lambda}^{-1}}$, where $q_{\lambda} := q^{(\lambda,\lambda)/2}$.  Define the quantum factorials as $[n]! := \prod_{i=1}^n [i]$ and $[n]_{\lambda}^! := \prod_{i=1}^n [i]_{\lambda}$.  By convention, we put $[0]! := 1$ and $[0]_{\lambda}^! := 1$.  The quantum binomial coefficients are defined as follows: ${m \brack n} := \frac {[m]!} {[n]! [m-n]!}$ and ${m \brack n}_{\lambda} := \frac {[m]_{\lambda}^!} {[n]_{\lambda}^! [m-n]_{\lambda}^!}$, for $m \in \mathbb N$ and $n \in \mathbb Z_{\ge 0}$ with $n \le m$.  The subring $\mathbb C[q^{\pm 1}]_{[3]!}$, the localization of $\mathbb C[q^{\pm 1}]$ at the element $[3]!$, of $\mathbb C(q)$ will be denoted by $\mathcal A$.  It is worth pointing out that the quantum integers $[n]$ and $[n]_{\lambda}$, quantum factorials and quantum binomial coefficients defined in this paragraph are all elements of $\mathcal A$.

\begin{rem}
Our choice of the `integral form' $\mathcal A$ of $\mathbb C(q)$ is not standard.  The standard choice for $\mathcal A$ is $\mathbb C[q^{\pm 1}]$.  We make our non-standard choice only because we need to take care of the situation where there are more than one root lengths in the root system $(\Phi, \Pi)$.  See section \ref{LS-sect} for more details.
\end{rem}

Recall that the quantized universal enveloping algebra $U = U(\mathfrak g)$ is the $\mathbb C(q)$-algebra generated by $E_{\alpha}$, $F_{\alpha}$, $K_{\alpha}$ and $K_{\alpha}^{-1}$ for all $\alpha \in \Pi$, subject to the relations
\begin{gather}
K_{\alpha}K_{\alpha}^{-1} = K_{\alpha}^{-1}K_{\alpha} = 1, K_{\alpha}K_{\beta} = K_{\beta}K_{\alpha} \\
K_{\alpha}E_{\beta}K_{\alpha}^{-1} = q^{(\alpha, \beta)} E_{\beta} \\ 
K_{\alpha}F_{\beta}K_{\alpha}^{-1} = q^{-(\alpha, \beta)} F_{\beta}\\
E_{\alpha}F_{\beta} - F_{\beta}E_{\alpha} = \delta_{\alpha,\beta} \frac {K_{\alpha} - K_{\alpha}^{-1}} {q_{\alpha} - q_{\alpha}^{-1}} \\
\sum_{i=0}^{1-c_{\alpha\beta}} (-1)^i {1-c_{\alpha\beta} \brack i}_{\alpha} E_{\alpha}^{1-c_{\alpha\beta}-i}E_{\beta}E_{\alpha}^i = 0, ~ 
\sum_{i=0}^{1-c_{\alpha\beta}} (-1)^i {1-c_{\alpha\beta} \brack i}_{\alpha} F_{\alpha}^{1-c_{\alpha\beta}-i}F_{\beta}F_{\alpha}^i = 0,
\end{gather}
where $\delta_{\alpha,\beta}$ is the Kronecker delta and $c_{\alpha\beta} := 2 \frac {(\alpha,\beta)}{(\alpha,\alpha)}$.

Let $U^+$ (resp. $U^-$) be the $\mathbb C(q)$-subalgebra of $U$ generated by $E_{\alpha}$ (resp. $F_{\alpha}$), for all $\alpha \in \Pi$.  We call $U^+$ (resp. $U^-$) the positive (resp. negative) half of $U$.  Let $U^0$ be the $\mathbb C(q)$-subalgebra of $U$ generated by $K_{\alpha}^{\pm}$, for all $\alpha \in \Pi$.  One has a triangular decomposition: $U \simeq U^+ \otimes U^0 \otimes U^-$ of vector spaces over $\mathbb C(q)$, c.f. \cite[Theorem~4.21]{Jan}.

\begin{defn}
Define $U^+_{\mathcal A}$ to be the $\mathcal A$-subalgebra of $U^+$ generated by $E_{\alpha}^{(n)}$ for all $\alpha \in \Pi$ and $n \in \mathbb Z_{\ge 0}$, where $E_{\alpha}^{(n)} := \frac {E_{\alpha}^n} {[n]_{\alpha}^!}$.

Similarly, define $U^-_{\mathcal A}$ to be the $\mathcal A$-subalgebra of $U^-$ generated by $F_{\alpha}^{(n)}$ for all $\alpha \in \Pi$ and $n \in \mathbb Z_{\ge 0}$, where $F_{\alpha}^{(n)} := \frac {F_{\alpha}^n} {[n]_{\alpha}^!}$.
\end{defn}

Write $\mathbb Z \Phi$ for the root lattice, and $(\mathbb Z \Phi)^+$ (resp. $(\mathbb Z \Phi)^-$) for those $\mathbb Z$-linear combinations of simple roots all of whose coefficients are non-negative (resp. non-positive) integers.  For $\mu \in (\mathbb Z \Phi)^+$, define `root spaces' in $U$:
\begin{align}
U^+_{\mu} := \{x \in U^+: K_{\alpha} x K_{\alpha}^{-1} = q^{(\alpha,\mu)} x ~~ \text {for all} ~~ \alpha \in \Pi \}, \\
U^-_{-\mu} := \{y \in U^-: K_{\alpha} y K_{\alpha}^{-1} = q^{-(\alpha,\mu)} y ~~ \text {for all} ~~ \alpha \in \Pi \}.
\end{align}
We call $U^+_{\mu}$ (resp. $U^-_{-\mu}$) the $\mu$- (resp. ($-\mu$)-) root space of $U$.  It is clear that there are $\mathbb C(q)$-vector space direct sum decompositions
\begin{align} \label{direct-sum-decomp}
U^+ = \bigoplus \limits_{\mu \in (\mathbb Z \Phi)^+} U^+_{\mu}
~~
\text{and}
~~
U^- = \bigoplus \limits_{\mu \in (\mathbb Z \Phi)^+} U^-_{-\mu}.
\end{align}

Using the direct sum decompositions above, we make $U^-$ a $(\mathbb Z \Phi)^+$-graded algebra by putting $U^-_{-\mu}$ in degree $\mu$.

\begin{rem}
This grading of $U^-$ is not standard.  The negative half $U^-$ is positively graded.
\end{rem}

In \cite{Kas}, for each $\alpha \in \Pi$, Kashiwara defines  $\mathbb C(q)$-linear maps $e_{\alpha}', e_{\alpha}'': U^- \rightarrow U^-$, which satisfy the equation
\begin{align} \label{Kas-original-def}
E_{\alpha} y - y E_{\alpha} = \frac {K_{\alpha} e_{\alpha}''(y) - K_{\alpha}^{-1} e'_{\alpha}(y)} {q_{\alpha} - q_{\alpha}^{-1}}
\end{align}
for all $y \in U^-$.  In the literature, the maps $e_{\alpha}', e_{\alpha}''$ are usually referred to as Kashiwara operators associated to $\alpha \in \Pi$.  In what follows, we wish to present another perspective on the Kashiwara operators.

Recall that $U$ has a Hopf algebra structure where the coproduct $\Delta: U \rightarrow U \otimes U$ is defined by
\begin{align}
\Delta(E_{\alpha}) = E_{\alpha} \otimes 1 + K_{\alpha} \otimes E_{\alpha}, ~~ \Delta(F_{\alpha}) = F_{\alpha} \otimes K_{\alpha}^{-1} + 1 \otimes F_{\alpha}, ~~ \Delta(K_{\alpha}) = K_{\alpha} \otimes K_{\alpha}
\end{align}
for all $\alpha \in \Pi$.  If $\nu = \sum_{\beta \in \Pi} c_{\beta} \beta \in \mathbb Z \Phi$, we write $K_{\nu} := \prod_{\beta \in \Pi} K_{\beta}^{c_{\beta}}$.  For $\mu \in (\mathbb Z \Phi)^+$ and $y \in U^-_{-\mu}$, it is known, c.f. \cite[4.13]{Jan}, that $\Delta(y) \in \bigoplus \limits_{0 \le \nu \le \mu} U^-_{-\nu} \otimes U^-_{-(\mu-\nu)} K_{\nu}^{-1}$, where $\le$ is the standard partial order on the root lattice, i.e. $\lambda_1 \le \lambda_2$ if and only if $\lambda_2 - \lambda_1 \in (\mathbb Z \Phi)^+$.  For $\alpha \in \Pi$, since $F_{\alpha}$ generates $U^-_{-\alpha}$ over $\mathbb C(q)$, for each $\mu \in (\mathbb Z \Phi)^+$, there is a unique $\mathbb C(q)$-linear map $r'_{\alpha}: U^-_{-\mu} \rightarrow U^-_{-(\mu - \alpha)}$ such that
\begin{align}
\Delta(y) = 1 \otimes y + \sum_{\alpha \in \Pi} F_{\alpha} \otimes r'_{\alpha}(y) K_{\alpha}^{-1} + \text{other terms},
\end{align}
where `other terms' stands for summands in $U^-_{-\nu} \otimes U^-_{-(\mu-\nu)} K_{\nu}^{-1}$ for $\nu \in (\mathbb Z \Phi)^+$, $\nu \neq 0$ and $\nu \not \in \Pi$.  Using the direct sum decomposition (\ref{direct-sum-decomp}), we extend the $r'_{\alpha}$'s by $\mathbb C(q)$-linearity to $\mathbb C(q)$-linear endomorphisms of $U^-$.

\begin{defn} \label{def-sim}
For each $\alpha \in \Pi$, the $\mathbb C(q)$-linear endomorphism $r'_{\alpha}$ of $U^-$ is called the Kashiwara operator associated to $\alpha$.
\end{defn}

An important property of the Kashiwara operators is

\begin{lem} \cite[Lemma~6.17]{Jan}  \label{two-def-agree}
For each $\alpha \in \Pi$, there is a unique $\mathbb C(q)$-linear endomorphism $r_{\alpha}$ of $U^-$ satisfying the equation
\begin{align}
E_{\alpha}y - yE_{\alpha} = \frac {K_{\alpha} r_{\alpha}(y) - r'_{\alpha}(y) K_{\alpha}^{-1}} {q_{\alpha} - q_{\alpha}^{-1}}
\end{align}
for all $y \in U^-$.
\end{lem}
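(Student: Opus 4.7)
The plan is to prove uniqueness first, then establish existence by induction on the $(\mathbb Z\Phi)^+$-grading, and finally check that the operator extracted in the existence proof coincides with the coproduct-defined $r'_\alpha$ of Definition \ref{def-sim}. \emph{Uniqueness} is immediate: if $r_\alpha$ and $\tilde r_\alpha$ both satisfy the stated equation, then $K_\alpha (r_\alpha(y) - \tilde r_\alpha(y)) = 0$ for every $y \in U^-$, and since $K_\alpha$ is invertible in $U$ we conclude $r_\alpha = \tilde r_\alpha$.

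For \emph{existence}, the triangular decomposition $U \simeq U^+ \otimes U^0 \otimes U^-$, together with the fact that $K_\alpha$ and $K_\alpha^{-1}$ are distinct elements of the monomial $\mathbb C(q)$-basis of $U^0$, implies that for every $\mu \in (\mathbb Z\Phi)^+$ the map
$$U^-_{-(\mu-\alpha)} \oplus U^-_{-(\mu-\alpha)} \longrightarrow U, \qquad (A,B) \mapsto K_\alpha A + B K_\alpha^{-1},$$
is injective. It therefore suffices to show that $E_\alpha y - y E_\alpha$ lies in its image for every $y \in U^-_{-\mu}$, and I would do this by induction on the height of $\mu$. The base $\mu = 0$ is trivial, and the case $y = F_\beta$ follows directly from $[E_\alpha, F_\beta] = \delta_{\alpha, \beta}(K_\alpha - K_\alpha^{-1})/(q_\alpha - q_\alpha^{-1})$. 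For the inductive step, writing a monomial of height $\ge 2$ as a product $y_1 y_2$ with $y_i \in U^-_{-\mu_i}$ of strictly smaller height, the Leibniz identity $[E_\alpha, y_1 y_2] = y_1 [E_\alpha, y_2] + [E_\alpha, y_1] y_2$ combined with the commutation $y_i K_\alpha^{\pm 1} = q^{\pm (\alpha, \mu_i)} K_\alpha^{\pm 1} y_i$ puts the commutator in the required form. Extracting the two components yields $\mathbb C(q)$-linear operators $r_\alpha, r''_\alpha \colon U^-_{-\mu} \to U^-_{-(\mu - \alpha)}$ with
$$E_\alpha y - y E_\alpha = \frac{K_\alpha r_\alpha(y) - r''_\alpha(y) K_\alpha^{-1}}{q_\alpha - q_\alpha^{-1}}.$$

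It remains to \emph{identify} $r''_\alpha$ with $r'_\alpha$ from Definition \ref{def-sim}; this is the heart of the matter, and I would again proceed by induction on the height of $\mu$. On generators, the computation above gives $r''_\alpha(F_\beta) = \delta_{\alpha, \beta}$, while the coproduct $\Delta(F_\beta) = F_\beta \otimes K_\beta^{-1} + 1 \otimes F_\beta$ gives $r'_\alpha(F_\beta) = \delta_{\alpha, \beta}$. For the inductive step, expanding $\Delta(y_1 y_2) = \Delta(y_1) \Delta(y_2)$ and reading off the coefficient of $F_\alpha \otimes (\cdot) K_\alpha^{-1}$ produces a twisted Leibniz rule
$$r'_\alpha(y_1 y_2) = y_1 r'_\alpha(y_2) + q^{(\alpha, \mu_2)} r'_\alpha(y_1) y_2$$
for $y_i \in U^-_{-\mu_i}$. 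A parallel computation starting from the commutator decomposition of the existence step produces the same formula for $r''_\alpha$. This, together with agreement on generators, forces $r''_\alpha = r'_\alpha$ on all of $U^-$.

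The subtlest point, which I expect to be the main obstacle, is verifying that both operators satisfy the \emph{same} twisted Leibniz rule with the same twist factor. On each side one must carefully track the $K_\alpha^{\pm 1}$ factors being commuted past weight-homogeneous elements of $U^-$, and check that the same $q^{(\alpha, \mu_2)}$ emerges in both computations; a sign or exponent error here would break the identification $r''_\alpha = r'_\alpha$.
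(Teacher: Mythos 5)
The paper does not prove this lemma at all: it is quoted verbatim from Jantzen's \emph{Lectures on Quantum Groups} (Lemma 6.17 there), so there is no in-paper argument to compare against. Your proof is correct and is essentially a reconstruction of Jantzen's, with a slightly different organization. Jantzen first defines both $r_\alpha$ and $r'_\alpha$ from the coproduct, establishes their twisted derivation properties, and then verifies the commutator identity by induction on the height of $\mu$; you instead extract a pair $(r_\alpha, r''_\alpha)$ directly from the commutator via the triangular decomposition and then identify $r''_\alpha$ with the coproduct-defined $r'_\alpha$ by matching twisted Leibniz rules and values on generators. The ingredients are the same, and the computation goes through: from $y_1 K_\alpha = q^{(\alpha,\mu_1)} K_\alpha y_1$ and $K_\alpha^{-1} y_2 = q^{(\alpha,\mu_2)} y_2 K_\alpha^{-1}$ one gets $r''_\alpha(y_1 y_2) = y_1 r''_\alpha(y_2) + q^{(\alpha,\mu_2)} r''_\alpha(y_1) y_2$, which is exactly the rule obtained for $r'_\alpha$ by expanding $\Delta(y_1)\Delta(y_2)$ and isolating the $F_\alpha \otimes (\cdot) K_\alpha^{-1}$ term (here one should note, as a small point you leave implicit, that since $\alpha$ is simple it cannot split as a sum of two nonzero elements of $(\mathbb Z\Phi)^+$, so only the two products $(1\otimes y_1)(F_\alpha\otimes\cdots)$ and $(F_\alpha\otimes\cdots)(1\otimes y_2)$ contribute). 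Your version has the minor advantage of making the uniqueness statement and the role of the triangular decomposition explicit.
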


In view of formula (\ref{Kas-original-def}) and Lemma \ref{two-def-agree}, on each root space of $U^-$, $r'_{\alpha}$ differs from Kashiwara's original $e'_{\alpha}$ only by a power of $q$.  Hence, at the level of quasi-classical limits ($q=1$), $r'_{\alpha}$ and $e'_{\alpha}$ coincide.  The interested readers should consult \cite{Jan} for details.

We are going to construct Kashiwara operators associated to all positive, not necessarily simple, roots.  This will be done via Lusztig's braid group action \cite{Lus} on $U$.  We will follow the presentation in \cite[8.14]{Jan}.

Let $W$ be the Weyl group for $(\mathfrak g, \mathfrak h)$, and $\mathcal B$ the corresponding braid group.  $W$ acts on the root lattice $\mathbb Z \Phi$.  For $w \in W$ and $\mu \in \mathbb Z \Phi$, we write $w(\mu)$ for the action of $w$ on $\mu$.  For a simple root $\alpha$, we write $T_{\alpha}$ for the corresponding generator of $\mathcal B$.  Lusztig defines in \cite{Lus} an action of $\mathcal B$ on $U$ by $\mathbb C(q)$-algebra automorphisms by the following formulas:
\begin{align}
& T_{\alpha}(K_{\mu}) := K_{s_{\alpha}(\mu)} ~~ & \forall \alpha \in \Pi ~ \text{and} ~ \mu \in \mathbb Z \Phi; \\
& T_{\alpha}(E_{\alpha}) := -F_{\alpha}K_{\alpha}, \nonumber \\
& T_{\alpha}(F_{\alpha}) := -K_{\alpha}^{-1}E_{\alpha} ~~ & \forall \alpha \in \Pi; \\
& T_{\alpha}(E_{\beta}) := \sum_{j=0}^i (-1)^j q_{\alpha}^{-j} E_{\alpha}^{(i-j)} E_{\beta} E_{\alpha}^{(j)}, \nonumber \\
& T_{\alpha}(F_{\beta}) := \sum_{j=0}^i (-1)^j q_{\alpha}^{j} F_{\alpha}^{(j)} F_{\beta} F_{\alpha}^{(i-j)} ~~ & \forall \alpha, \beta \in \Pi ~ \text{and} ~ \alpha \neq \beta,
\end{align}
where $i := -\frac {2(\beta,\alpha)} {(\alpha,\alpha)}$.

Write $s_{\alpha}$ for the simple reflection associated to the simple root $\alpha$.  Let $w_0 \in W$ be the longest element of $W$ and fix a reduced expression
\begin{align} \label{red-exp}
w_0 = s_{\alpha_{i_1}} s_{\alpha_{i_2}} \cdots s_{\alpha_{i_N}}
\end{align}
for $w_0$.  Here $N$ is the length of $w_0$, equivalently, the number of positive roots in $\Phi$.  It is known, c.f. \cite{DP, Hum, Jan} and references therein, that such a reduced expression gives rise to an enumeration

\begin{align} \label{enumeration-of-roots}
\lambda_1 := \alpha_{i_1}, \lambda_2 := s_{\alpha_{i_1}}(\alpha_{i_2}), \cdots, \lambda_N := s_{\alpha_{i_1}} s_{\alpha_{i_2}} \cdots s_{\alpha_{i_{N-1}}}(\alpha_{i_N})
\end{align}
of all positive roots.  One also obtains a total linear order on the set of all positive roots defined by $\lambda_i \preceq \lambda_j$ if and only if $i \le j$.

Now, for each $1 \le k \le N$, define $E_{\lambda_k} (= E_k) := T_{\alpha_{i_1}} \cdots T_{\alpha_{i_{k-1}}} (E_{\alpha_{i_k}})$ and call it the root vector for the positive root $\lambda_k$.  For a vector $\vec d \in (\mathbb Z_{\ge 0})^N$, $\vec d = (d_1, \cdots, d_N)$, we define
\begin{align}
E^{\vec d} := E_N^{d_N} \cdots E_1^{d_1}.
\end{align}
Similarly, define $F_{\lambda_k} (= F_k) := T_{\alpha_{i_1}} \cdots T_{\alpha_{i_{k-1}}} (F_{\alpha_{i_k}})$ and $F^{\vec d} := F_N^{d_N} \cdots F_1^{d_1}$.  The PBW theorem \cite[Theorem~8.24]{Jan} for $U$ states

\begin{thm} \label{PBW-quan}
(a) The elements $E^{\vec d} K_{\mu} F^{\vec e}$, where $\vec d, \vec e \in (\mathbb Z_{\ge 0})^N$, $\mu \in \mathbb Z \Phi$, form a $\mathbb C(q)$-basis for the quantized universal enveloping algebra $U$.

(b) The elements $E^{\vec d}$ (resp. $F^{\vec e}$), where $\vec d \in (\mathbb Z_{\ge 0})^N$ (resp. $\vec e \in (\mathbb Z_{\ge 0})^N$), form a $\mathbb C(q)$-basis for the positive (resp. negetive) half $U^+$ (resp. $U^-$) of the quantized universal enveloping algebra $U$.
\end{thm}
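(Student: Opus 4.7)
The plan is to deduce (a) from (b) using the triangular decomposition $U \simeq U^+ \otimes U^0 \otimes U^-$ as $\mathbb C(q)$-vector spaces cited just above. Since $\{K_\mu : \mu \in \mathbb Z \Phi\}$ is a $\mathbb C(q)$-basis for $U^0$, once (b) is proved the products $E^{\vec d} K_\mu F^{\vec e}$ are precisely the images of a tensor-product basis under the multiplication map, hence a basis of $U$. So the real content is in (b). Moreover, by the $\mathbb C(q)$-algebra symmetry $E_\alpha \leftrightarrow F_\alpha$, $K_\alpha \leftrightarrow K_\alpha^{-1}$ of the defining relations (which intertwines Lusztig's braid operators up to normalization and swaps $U^+$ with $U^-$), it is enough to handle $U^+$.

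For $U^+$, I would prove spanning and linear independence separately. The spanning statement is the hard half, and I would reduce it to a Levendorskii--Soibelman type straightening rule: for $1 \le i < j \le N$,
\[
E_{\lambda_j} E_{\lambda_i} - q^{-(\lambda_i,\lambda_j)} E_{\lambda_i} E_{\lambda_j} \in \mathrm{span}_{\mathbb C(q)}\bigl\{E_{\lambda_{i+1}}^{m_{i+1}} \cdots E_{\lambda_{j-1}}^{m_{j-1}} : m_k \in \mathbb Z_{\ge 0}\bigr\}.
\]
Given this, any product of root vectors can be rewritten as a $\mathbb C(q)$-linear combination of PBW monomials $E^{\vec d}$ by an induction on the number of out-of-order adjacent pairs and on the total degree, since each application of the relation replaces an inversion by monomials whose indices sit strictly between $i$ and $j$. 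The base case $j = i+1$ of the relation is a direct calculation inside the rank-two quantum subalgebra generated by $E_{\alpha_{i_k}}, E_{\alpha_{i_{k+1}}}$ for appropriate $k$, and follows from the quantum Serre relations together with the explicit braid formulas for $T_\alpha(E_\beta)$ recorded above. The inductive step is bootstrapped from the base case via the fact that each prefix of (\ref{red-exp}) gives a braid operator preserving a suitable subalgebra.

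For linear independence I would use the root-space grading $U^+ = \bigoplus_{\mu} U^+_\mu$: every $E^{\vec d}$ with $\sum d_k \lambda_k = \mu$ lies in $U^+_\mu$, so the count reduces to a count within each $\mu$-weight space. I would then pass to the integral form $U^+_{\mathcal A}$ and specialize at $q = 1$: the natural map $U^+_{\mathcal A}/(q-1) \twoheadrightarrow U(\mathfrak n)$ sends each $E^{\vec d}$ to a classical PBW monomial (up to a nonzero scalar), and classical PBW exhibits these as a basis of $U(\mathfrak n)_\mu$, whose dimension is the Kostant partition function value at $\mu$. The number of monomials $E^{\vec d}$ in $U^+_\mu$ is the same partition count, so spanning together with the classical lower bound on $\dim U^+_\mu$ forces linear independence.

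The main obstacle is proving the Levendorskii--Soibelman relation in the stated precision: the base case is an explicit rank-two manipulation using the quantum Serre relations, but it requires careful handling of the $q$-power coefficients, and the inductive step needs a sharp analysis of how $T_{\alpha_{i_1}} \cdots T_{\alpha_{i_{k-1}}}$ acts on the $\mathbb C(q)$-span of root vectors so that the error term is controlled to lie strictly between indices $i$ and $j$. Once the straightening law and the $q \to 1$ specialization map are in hand, the reduction of (a) to (b), the symmetry argument between $U^+$ and $U^-$, and the dimension count are all routine.
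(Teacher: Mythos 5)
The paper itself offers no proof of this theorem: it is imported verbatim from the literature (cited as \cite[Theorem~8.24]{Jan}), so there is nothing internal to compare against. Your architecture --- reduce (a) to (b) via the triangular decomposition, reduce $U^-$ to $U^+$ via the symmetry $E_{\alpha} \leftrightarrow F_{\alpha}$, prove spanning by a Levendorskii--Soibelman straightening law, and finish by a weight-space dimension count --- is the standard route. But the dimension count, which is where all the content of linear independence lives, has the specialization inequality running in the wrong direction. The weight space $(U^+_{\mathbb C[q^{\pm 1}]})_{\mu}$ is a finitely generated torsion-free, hence free, module over the PID $\mathbb C[q^{\pm 1}]$, so its rank equals both $\dim_{\mathbb C(q)} U^+_{\mu}$ and the dimension of its fibre at $q = 1$. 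That fibre is generated by elements satisfying the classical Serre relations, so the universal property gives a canonical surjection \emph{from} $U(\mathfrak n)_{\mu}$ \emph{onto} the fibre --- not, as you write, a surjection from the fibre onto $U(\mathfrak n)$. A map in your direction is not automatically well-defined precisely because the fibre might satisfy extra relations; whether it does is the question at issue. So the specialization argument yields only the upper bound $\dim U^+_{\mu} \le P(\mu)$ (the Kostant partition function), which you already have from spanning, and nothing forces independence. The needed lower bound $\dim U^+_{\mu} \ge P(\mu)$ must come from elsewhere: in Jantzen's treatment it comes from showing that the quantum Serre relations span the radical of the Drinfeld pairing on the free algebra (the pairing recalled in Proposition \ref{Dri-Kil}), which pins down $\dim U^{\pm}_{\mp\mu}$ exactly; alternatively one can produce enough representations to separate the monomials. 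Without such an input your argument proves spanning only.

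A second, smaller issue is a circularity risk in the straightening law. In De Concini--Procesi the relation for a non-adjacent pair $i < j$ is obtained by locating the commutator inside the subalgebra $U^+[w]$ attached to the segment of the reduced word between $i$ and $j$ and then expanding it in the \emph{already established} PBW basis of that subalgebra; so the straightening law and the PBW property must be proved together by induction on the length of the prefix, not one after the other. Your sketch gestures at this (``a braid operator preserving a suitable subalgebra'') but does not make the interleaved induction explicit, and as written it reads as if the straightening law is available before any basis statement is known. The adjacent case $j = i+1$ reducing to a rank-two computation with $E_{\alpha}$ and $T_{\alpha}(E_{\beta})$, the symmetry between $U^+$ and $U^-$, and the deduction of (a) from (b) via the triangular decomposition are all fine.
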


There is also a version \cite{Lus2} of the PBW theorem for $U^{\pm}_{\mathcal A}$:

\begin{thm} \label{PBW-integral-version}
The elements $E^{(\vec d)}$ (resp. $F^{(\vec d)}$), where 
\begin{align}
E^{(\vec d)} := E_N^{(d_N)} \cdots E_1^{(d_1)} = (\frac 1 {[d_N]^!_{\lambda_N}} E_N^{d_N}) \cdots (\frac 1 {[d_1]^!_{\lambda_1}} E_1^{d_1})
\end{align}
(resp. $F^{(\vec d)} := F_N^{(d_N)} \cdots F_1^{(d_1)} = (\frac 1 {[d_N]^!_{\lambda_N}} F_N^{d_N}) \cdots (\frac 1 {[d_1]^!_{\lambda_1}} F_1^{d_1})$)
and $\vec d \in (\mathbb Z_{\ge 0})^N$,
form an $\mathcal A$-basis for the free $\mathcal A$-module $U^+_{\mathcal A}$ (resp. $U^-_{\mathcal A}$).
\end{thm}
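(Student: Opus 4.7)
The plan is to reduce this to the classical Kostant-Lusztig integrality statement and then observe that base change to $\mathcal A$ is harmless. By Theorem \ref{PBW-quan}(b), the ordered monomials $F^{\vec d}$ form a $\mathbb C(q)$-basis of $U^-$, and since each $F^{(\vec d)}$ differs from $F^{\vec d}$ only by the nonzero scalar $\prod_k \bigl([d_k]^!_{\lambda_k}\bigr)^{-1} \in \mathbb C(q)$, the $F^{(\vec d)}$ are already $\mathbb C(q)$-linearly independent in $U^-$. Thus the only content is \emph{integrality}: each $F^{(\vec d)}$ lies in $U^-_{\mathcal A}$, and every element of $U^-_{\mathcal A}$ is an $\mathcal A$-linear combination of them. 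The statement for $E^{(\vec d)}$ is strictly parallel, and I would handle the $F$ case and invoke the antiautomorphism $E_\alpha \leftrightarrow F_\alpha$ at the end.

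\textbf{Step 1 (root vectors are integral).} First I would show inductively along the reduced expression (\ref{red-exp}) that each divided power $F_{\lambda_k}^{(n)}$ lies in $U^-_{\mathcal A}$. For $k=1$ this is by definition. For the inductive step one uses Lusztig's result that the braid operator $T_{\alpha_{i_1}}$ restricts to a (non-unital) automorphism sending the $\mathcal A$-form of $U^{\le 0}$ to the $\mathcal A$-form of $T_{\alpha_{i_1}}(U^{\le 0})$; in practice one expresses $F_{\lambda_k}^{(n)}$ via an explicit formula in the divided powers $F_\alpha^{(m)}$, $E_\alpha^{(m)}$, and $K_\alpha^{\pm 1}$ coming from the definition of $T_{\alpha_{i_1}} \cdots T_{\alpha_{i_{k-1}}}$ on $F_{\alpha_{i_k}}^{(n)}$. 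The resulting coefficients are quantum binomials, which lie in $\mathcal A$ by the remark at the end of section \ref{review}.

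\textbf{Step 2 (straightening).} I would then prove, again by induction on the triple (total degree, maximum index appearing, number of inversions), that any product $F_{\alpha_{j_1}}^{(n_1)} \cdots F_{\alpha_{j_s}}^{(n_s)}$ of simple-root divided powers can be rewritten as an $\mathcal A$-linear combination of the ordered monomials $F^{(\vec d)}$. The key technical input is the Levendorskii-Soibelman (``quantum Serre'') commutation formula
\begin{equation*}
F_{\lambda_j}^{(m)} F_{\lambda_i}^{(n)} = q^{-mn(\lambda_i,\lambda_j)} F_{\lambda_i}^{(n)} F_{\lambda_j}^{(m)} + \sum c_{\vec d}\, F_{\lambda_N}^{(d_N)} \cdots F_{\lambda_{i+1}}^{(d_{i+1})}, \qquad i < j,
\end{equation*}
where the sum ranges over multi-indices $\vec d$ strictly smaller in the induction order and $c_{\vec d} \in \mathcal A$. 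This is exactly the content of the Levendorskii-Soibelman type identities that will be established in section \ref{basic-properties-boson}, so I would cite that (forward) reference rather than redo the computation here. Combining Step 1 with this straightening, $U^-_{\mathcal A}$ is spanned over $\mathcal A$ by $\{F^{(\vec d)}\}$, and the $\mathbb C(q)$-linear independence from the opening remark then upgrades ``spanning set'' to ``basis of a free module.''

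\textbf{Main obstacle.} The only delicate point is that the Levendorskii-Soibelman coefficients $c_{\vec d}$ involve rational expressions in the $q_\alpha$ whose denominators, in the multiply-laced case, can pick up factors of $[3]!$ (this is exactly why the non-standard integral form $\mathcal A = \mathbb C[q^{\pm 1}]_{[3]!}$ was introduced, as flagged in the remark preceding section \ref{LS-sect}). So the core verification is really a case-by-case check that, once one localizes at $[3]!$, every structure constant one needs for both Step~1 and Step~2 is in $\mathcal A$; once that bookkeeping is done, the rest of the argument is the standard Lusztig proof.
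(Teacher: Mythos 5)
The paper does not prove this theorem at all: it is quoted directly from Lusztig \cite{Lus2}, and since Lusztig's divided-power PBW basis statement holds already over $\mathbb C[q^{\pm 1}]$, the version over the localization $\mathcal A = \mathbb C[q^{\pm 1}]_{[3]!}$ follows by flat base change (localization of a free module is free on the same generators). So there is no in-paper argument to compare yours against; what you have written is a sketch of the standard Lusztig/De Concini--Procesi proof, and in outline it is the right argument. Your opening reduction (linear independence is free from Theorem \ref{PBW-quan}(b), so only integrality and spanning matter) and Step 1 (root-vector divided powers lie in $U^-_{\mathcal A}$ because the braid operators preserve the integral form along a reduced word) are both correct and standard. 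One small correction to your framing of the ``main obstacle'': the $[3]!$ localization is not needed for this theorem --- Lusztig's basis works over $\mathbb C[q^{\pm 1}]$ without it; the localization enters this paper only for the Levendorskii--Soibelman coefficients $c_{\vec d}$ of Theorem \ref{LS} in the multiply-laced case.

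Two points in Step 2 need more care than your sketch provides. First, the forward citation to section \ref{basic-properties-boson} risks circularity: the integrality of the LS coefficients must be proven without using the integral PBW theorem you are trying to establish. (In this paper the appendix proof of Theorem \ref{LS-var} is an induction resting only on the rational PBW basis and rank-two computations, so the circle can be avoided, but you should say so explicitly.) Second, and more substantively, the straightening relation you invoke is for \emph{divided powers} $F_{\lambda_j}^{(m)} F_{\lambda_i}^{(n)}$, whereas Theorem \ref{LS} and the appendix only treat the commutator of two first powers $F_{\lambda_i} F_{\lambda_j}$. Deriving the divided-power version with coefficients still in $\mathcal A$ --- i.e.\ showing that after iterating the first-power relation $mn$ times and dividing by $[m]^!_{\lambda_j}[n]^!_{\lambda_i}$ the resulting structure constants remain integral --- is exactly the nontrivial bookkeeping in Lusztig's proof, and it is asserted rather than argued in your sketch. (Also, the monomials appearing in the error term should involve only $F_{\lambda_{i+1}}^{(d_{i+1})},\dots,F_{\lambda_{j-1}}^{(d_{j-1})}$, not indices up to $N$, consistent with the vanishing condition $c_{\vec d}=0$ for $d_k\neq 0$ with $k\in[1,i]\cup[j,N]$ in Theorem \ref{LS}.) Given that the theorem is available by citation plus base change, the cleanest fix is simply to take the paper's route; if you want a self-contained proof, the divided-power straightening lemma is the step you must actually supply.
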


\subsection{Construction of Kashiwara Operators Associcated to a (Non-simple) Positive Root} \label{Kas-op-sect}

Let $\mu \in (\mathbb Z \Phi)^+$ and $y \in U^-_{-\mu}$.  One knows that $\Delta(y) \in \bigoplus \limits_{0 \le \nu \le \mu} U^-_{-\nu} \otimes U^-_{-(\mu-\nu)} K_{\nu}^{-1}$.  Using that the $F^{\vec d}$'s with $\lambda_{\vec d} = \nu$, where $\lambda_{\vec d} := \sum_{k=1}^N d_k \lambda_k$, form a $\mathbb C(q)$-basis for $U^-_{-\nu}$, we conclude that there exist unique $\mathbb C(q)$-linear maps $r'_{\vec d}: U^-_{-\mu} \rightarrow U^-_{-(\mu - \lambda_{\vec d})}$ such that
\begin{align} \label{coproduct}
\Delta(y) = \sum_{\vec d} F^{\vec d} \otimes r'_{\vec d}(y) K_{\lambda_{\vec d}}^{-1}
\end{align}
for all $y \in U^-_{-\mu}$.  Using the direct sum decomposition (\ref{direct-sum-decomp}), we extend the maps $r'_{\vec d}$ by $\mathbb C(q)$-linearity to $\mathbb C(q)$-linear endomorphisms of $U^-$.

\begin{defn} \label{def}
(a) For each $\vec d \in (\mathbb Z_{\ge 0})^N$, the $\mathbb C(q)$-linear endomorphism $r'_{\vec d}$ of $U^-$ is called the Kashiwara operator associated to $\vec d$.

(b) If $\vec d = \vec e_k$ for some $1 \le k \le N$, i.e. $\vec d$ is the vector with a $1$ in the $k$th component and $0$'s elsewhere, we also write $r'_{\lambda_k} (=r'_k)$ for $r'_{\vec d}$ and refer to $r'_{\lambda_k} (=r'_k)$ as the Kashiwara operator associated to the (possibly non-simple) positive root $\lambda_k$.
\end{defn}

\begin{rem}
If $\lambda_k = \alpha$ for some $1 \le k \le N$ and $\alpha \in \Pi$, then $r'_{\lambda_k}$ coincides with $r'_{\alpha}$ from Definition \ref{def-sim}, because $F^{\vec e_k} = F_{\alpha}$.
\end{rem}


For each $\alpha \in \Pi$, left multiplication by $F_{\alpha}$ gives a $\mathbb C(q)$-linear endomorphism of $U^-$. The following definition is a slight modification of the one introduced by Kashiwara in \cite{Kas}.

\begin{defn} \label{def-boson-alg}
(a) The $\mathbb C(q)$-subalgebra of $\text{End}_{\mathbb C(q)} U^-$ generated by the operators of left multiplication by $F_{\alpha}$, for all $\alpha \in \Pi$, and the Kashiwara operators $r'_k$, for all $1 \le k \le N$, is called the quantum boson algebra.  This algebra will be denoted by $C_q = C_q(\mathfrak g)$.

(b) Define $C_q^+$ to be the $\mathbb C(q)$-subalgebra of $C_q$ generated by $r'_k$, for all $1 \le k \le N$.  Define $C_q^-$ to be the $\mathbb C(q)$-subalgebra of $C_q$ generated by the operators of left multiplication by $F_{\alpha}$, for all $\alpha \in \Pi$.  We will call $C_q^+$ (resp. $C_q^-$) the positive (resp. negative) half of $C_q$.
\end{defn}

We make $C_q$ a $\mathbb Z \Phi$-graded algebra in the following way.  For all $\alpha \in \Pi$, we put the operator of left multiplication by $F_{\alpha}$ in degree $-\alpha$; for all $1 \le k \le N$, we put $r'_k$ in degree $\lambda_k$.

The above definition of $C_q$ depends, a priori, on the choice of reduced expression (\ref{red-exp}) for $w_0$.  This is because the definition of the Kashiwara operators $r'_k$, $1 \le k \le N$, depends on the reduced expression.  We will show in Proposition \ref{Mat} below that $C_q^+$ is in fact independent of (\ref{red-exp}).

We now proceed to define an integral form of $C_q$.  Let $\mu \in (\mathbb Z \Phi)^+$ and $y \in U^-_{-\mu}$.  Recall that $\Delta(y) \in \bigoplus \limits_{0 \le \nu \le \mu} U^-_{-\nu} \otimes U^-_{-(\mu-\nu)} K_{\nu}^{-1}$ and that $\{F^{(\vec d)}: \lambda_{\vec d} = \nu\}$ is a $\mathbb C(q)$-basis for $U^-_{-\nu}$, by Theorem \ref{PBW-integral-version}.  Hence there exist unique $\mathbb C(q)$-linear maps $r'_{(\vec d)}: U^-_{-\mu} \rightarrow U^-_{-(\mu - \lambda_{\vec d})}$ so that
\begin{align} \label{int-Kas-op}
\Delta(y) = \sum_{\vec d} F^{(\vec d)} \otimes r'_{(\vec d)}(y) K_{\lambda_{\vec d}}^{-1}.
\end{align}

\begin{rem}
We emphasize that here we have used the $\mathbb C(q)$-basis $\{F^{(\vec d)}: \lambda_{\vec d} = \nu\}$ for $U^-_{-\nu}$, while at the beginning of this section we used the $\mathbb C(q)$-basis $\{F^{\vec d}: \lambda_{\vec d} = \nu\}$ for $U^-_{-\nu}$.  So, formula (\ref{int-Kas-op}) is not to be confused with formula (\ref{coproduct}).
\end{rem}

\begin{lem} \label{preserves-int-form}
The $\mathcal A$-module $U^-_{\mathcal A}$ is $r'_{(\vec d)}$-stable.  Thus, $r'_{(\vec d)}$ gives rise to an $\mathcal A$-linear endomorphism of $U^-_{\mathcal A}$.
\end{lem}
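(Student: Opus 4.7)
The plan is to deduce the lemma via a standard three-step pattern: (i) observe that the coproduct preserves the integral form of $U$; (ii) refine this, using the weight grading and the triangular decomposition, to see that $\Delta(y)$ lies in a sub-integral-form corresponding to the decomposition (\ref{direct-sum-decomp}); (iii) read off integrality of each $r'_{(\vec d)}(y)$ by expanding in the $\mathcal A$-PBW basis from Theorem~\ref{PBW-integral-version}.

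For the first step I would invoke the well-known fact that $\Delta: U \to U \otimes U$ restricts to an $\mathcal A$-algebra map $\Delta: U_{\mathcal A} \to U_{\mathcal A} \otimes_{\mathcal A} U_{\mathcal A}$. For Lusztig's standard integral form over $\mathbb C[q^{\pm 1}]$, this is proved (essentially by the explicit divided-power formulas for $\Delta(E^{(n)}_{\alpha})$ and $\Delta(F^{(n)}_{\alpha})$) in the material surrounding \cite[Chapter~11]{Jan}; since $\mathcal A = \mathbb C[q^{\pm 1}]_{[3]!}$ is a localization of $\mathbb C[q^{\pm 1}]$, the conclusion is inherited by flatness. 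In particular, if $y \in U^-_{\mathcal A}$ then $\Delta(y) \in U_{\mathcal A} \otimes_{\mathcal A} U_{\mathcal A}$.

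For the second step, fix $\mu \in (\mathbb Z\Phi)^+$ and $y \in U^-_{\mathcal A} \cap U^-_{-\mu}$. A priori we already have the ambient containment
\[
\Delta(y) \in \bigoplus_{0 \le \nu \le \mu} U^-_{-\nu} \otimes U^-_{-(\mu - \nu)} K_{\nu}^{-1}.
\]
Combining this with $\Delta(y) \in U_{\mathcal A} \otimes_{\mathcal A} U_{\mathcal A}$ and with the triangular decomposition of the integral form, $U_{\mathcal A} \simeq U^-_{\mathcal A} \otimes_{\mathcal A} U^0_{\mathcal A} \otimes_{\mathcal A} U^+_{\mathcal A}$ (again localized from the $\mathbb C[q^{\pm 1}]$-statement), together with the fact that this decomposition respects the $\mathbb Z\Phi$-weight grading and that each $K_{\nu}^{-1}$ is a unit in $U^0_{\mathcal A}$, one projects summand by summand to conclude
\[
\Delta(y) \in \bigoplus_{0 \le \nu \le \mu} \bigl((U^-_{\mathcal A})_{-\nu}\bigr) \otimes_{\mathcal A} \bigl((U^-_{\mathcal A})_{-(\mu-\nu)}\bigr) K_{\nu}^{-1}.
\]

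For the third step, by Theorem~\ref{PBW-integral-version} the set $\{F^{(\vec d)} : \lambda_{\vec d} = \nu\}$ is an $\mathcal A$-basis of $(U^-_{\mathcal A})_{-\nu}$. Expanding the left-hand tensor factor of $\Delta(y)$ in this basis and comparing with (\ref{int-Kas-op}) (the expansion being unique because these bases give $\mathcal A$-module direct-sum decompositions), one obtains $r'_{(\vec d)}(y)\, K_{\lambda_{\vec d}}^{-1} \in (U^-_{\mathcal A})_{-(\mu - \lambda_{\vec d})} K_{\lambda_{\vec d}}^{-1}$ for each $\vec d$, hence $r'_{(\vec d)}(y) \in U^-_{\mathcal A}$. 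Extending from each weight space $U^-_{\mathcal A} \cap U^-_{-\mu}$ to all of $U^-_{\mathcal A}$ is automatic by $\mathcal A$-linearity (which in turn is automatic from $\mathbb C(q)$-linearity). The only point requiring any care is the transfer of the $\Delta$-stability and triangular-decomposition statements from the standard Lusztig $\mathbb C[q^{\pm 1}]$-form to the localized form $\mathcal A$, but since $\mathcal A$ is a flat extension these transfers are essentially formal; I do not anticipate a genuine obstacle.
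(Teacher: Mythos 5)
Your argument is correct in substance, but it takes a genuinely different route from the paper's. You work ``globally'': you import the facts that the coproduct preserves Lusztig's integral form $U_{\mathcal A}$ of the \emph{whole} quantum group and that this form admits an integral triangular decomposition $U_{\mathcal A} \simeq U^-_{\mathcal A} \otimes_{\mathcal A} U^0_{\mathcal A} \otimes_{\mathcal A} U^+_{\mathcal A}$, and then extract the coefficients $r'_{(\vec d)}(y)$ by expanding in the free $\mathcal A$-basis $\{F^{(\vec d)}\}$ of the first tensor factor. The paper instead stays entirely inside $U^-_{\mathcal A}$: it uses multiplicativity of $\Delta$ together with the PBW basis of Theorem~\ref{PBW-integral-version} to show that the set of $y$ with all $r'_{(\vec d)}(y) \in U^-_{\mathcal A}$ is closed under products, and then verifies the claim on the algebra generators $F_{\alpha}^{(n)}$ via the explicit formula $\Delta(F_{\alpha}^{(n)}) = \sum_i F_{\alpha}^{(i)} \otimes q_{\alpha}^{i(n-i)} F_{\alpha}^{(n-i)} K_{i\alpha}^{-1}$. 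Your approach is shorter on the page but outsources real content: the paper never defines $U^0_{\mathcal A}$ or the full $U_{\mathcal A}$, and for your step (ii) to go through, $U^0_{\mathcal A}$ must be taken to contain the quantum binomial elements ${K_{\alpha}; c \brack n}$ (not merely the $K_{\lambda}^{\pm 1}$), since both $\Delta$-stability and the identity $U^- \cap U_{\mathcal A} = U^-_{\mathcal A}$ rest on the integral triangular decomposition for that larger $U^0_{\mathcal A}$; these are genuine theorems of Lusztig, transferred to $\mathcal A = \mathbb C[q^{\pm 1}]_{[3]!}$ by localization as you say. The paper's argument buys self-containedness and only ever touches the negative half; yours buys brevity and makes transparent \emph{why} the statement is true (the coproduct is globally integral), at the cost of heavier citations. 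Either is acceptable, but if you keep your version you should state precisely which integral form of $U^0$ you are using and cite the integral triangular decomposition explicitly rather than leaving it as ``essentially formal.''
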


\begin{proof}
Let $y, y' \in U^-_{\mathcal A}$.  Write
\begin{align}
\Delta(y) = \sum_{\vec d} F^{(\vec d)} \otimes r'_{(\vec d)}(y) K_{\lambda_{\vec d}}^{-1}
~~ \text{and} ~~
\Delta(y') = \sum_{\vec e} F^{(\vec e)} \otimes r'_{(\vec e)}(y') K_{\lambda_{\vec e}}^{-1}.
\end{align}
Since $\Delta$ is an algebra morphism, we have
\begin{align}
\Delta(yy') = \Delta(y) \Delta(y') = \sum_{\vec d, \vec e} F^{(\vec d)} F^{(\vec e)} \otimes q^? r'_{(\vec d)}(y) r'_{(\vec e)}(y') K_{\lambda_{\vec d + \vec e}}^{-1},
\end{align}
where $q^?$ denotes a power of $q$ that is not going to matter for the rest of the proof.  Note that, by Theorem \ref{PBW-integral-version}, $F^{(\vec d)}, F^{(\vec e)} \in U^-_{\mathcal A}$, so $F^{(\vec d)} F^{(\vec e)} \in U^-_{\mathcal A}$, hence, again by Theorem \ref{PBW-integral-version}, $F^{(\vec d)} F^{(\vec e)}$ can be expressed as an $\mathcal A$-linear combination of $F^{(\vec f)}$ for all $\vec f \in (\mathbb Z_{\ge 0})^N$.  It follows that $r'_{(\vec f)}(yy') \in U^-_{\mathcal A}$, provided that $r'_{(\vec d)}(y), r'_{(\vec e)}(y') \in U^-_{\mathcal A}$ for all $\vec d, \vec e \in (\mathbb Z_{\ge 0})^N$.  Recall that $U^-_{\mathcal A}$ is generated by $F_{\alpha}^{(n)}$, for all $\alpha \in \Pi$ and $n \in \mathbb Z_{\ge 0}$.  So to prove the lemma it suffices to prove that $r'_{(\vec d)}(F_{\alpha}^{(n)}) \in U^-_{\mathcal A}$ for all $\alpha \in \Pi$, $n \in \mathbb Z_{\ge 0}$, and $\vec d \in (\mathbb Z_{\ge 0})^N$.

By an easy induction argument, one shows that
\begin{align}
\Delta(F_{\alpha}^{(n)}) = \sum\limits_{i=0}^n F_{\alpha}^{(i)} \otimes q_{\alpha}^{i(n-i)} F_{\alpha}^{(n-i)} K_{i\alpha}^{-1}.
\end{align}
The statement follows.
\end{proof}

\begin{cor}
For $1 \le k \le N$, $r'_k$ maps $U^-_{\mathcal A}$ to itself.  Thus $r'_k$ gives rise to an $\mathcal A$-linear endomorphism of $U^-_{\mathcal A}$.
\end{cor}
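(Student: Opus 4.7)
The plan is to reduce the corollary directly to Lemma \ref{preserves-int-form} by observing that the operator $r'_k$ is a special case of the family $r'_{(\vec d)}$. Specifically, for $\vec d = \vec e_k$, the only nonzero entry is $d_k = 1$, and since $[1]^!_{\lambda_k} = 1$, one has
\begin{align}
F^{(\vec e_k)} = F_k^{(1)} = \frac{1}{[1]^!_{\lambda_k}} F_k = F_k = F^{\vec e_k}.
\end{align}
Thus the two PBW basis elements indexed by $\vec e_k$ coincide.

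Next I would compare the two expansions of $\Delta(y)$ for $y \in U^-_{-\mu}$, namely formula (\ref{coproduct}) in the basis $\{F^{\vec d}\}$ and formula (\ref{int-Kas-op}) in the basis $\{F^{(\vec d)}\}$. Matching the coefficients of $F_k \otimes (-) K_{\lambda_k}^{-1}$ on both sides, one sees that $r'_k(y) = r'_{\vec e_k}(y) = r'_{(\vec e_k)}(y)$, so $r'_k = r'_{(\vec e_k)}$ as $\mathbb C(q)$-linear endomorphisms of $U^-$.

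Finally, Lemma \ref{preserves-int-form} asserts that each $r'_{(\vec d)}$ stabilizes $U^-_{\mathcal A}$; applying this with $\vec d = \vec e_k$ yields $r'_k(U^-_{\mathcal A}) \subseteq U^-_{\mathcal A}$ and hence induces an $\mathcal A$-linear endomorphism of $U^-_{\mathcal A}$. There is no real obstacle here: the content is entirely in Lemma \ref{preserves-int-form}, and the corollary is merely the observation that divided powers and ordinary powers agree at exponent one.
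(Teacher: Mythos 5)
Your proposal is correct and is exactly the argument the paper intends: the corollary is stated without proof precisely because $F^{(\vec e_k)} = F^{\vec e_k} = F_k$ forces $r'_k = r'_{(\vec e_k)}$, after which Lemma \ref{preserves-int-form} applies verbatim. The coefficient-matching step is sound since the two bases $\{F^{\vec d}\}$ and $\{F^{(\vec d)}\}$ differ only by diagonal rescaling, so nothing further is needed.
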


In view of the corollary above, we define the integral form of $C_q$ as

\begin{defn} \label{def-int-boson-alg}
The $\mathcal A$-subalgebra of $\text{End}_{\mathcal A} (U^-_{\mathcal A})$ generated by the operators of left multiplication by $F_{\alpha}^{(n)}$, for $\alpha \in \Pi$, $n\in \mathbb Z_{\ge 0}$, and the Kashiwara operators $r'_k$, for $1 \le k \le N$, is called the integral form of the quantum boson algebra $C_q$.  This algebra will be denoted by $C_{\mathcal A}$.
\end{defn}

\section{Basic Properties of the Quantum Boson Algebra $C_q(\mathfrak g)$} \label{basic-properties-boson}

\subsection{PBW Property and Independence of Reduced Expression}

In this section we will construct a PBW type basis for the algebra $C_q^+$ and prove that $C_q^+$ is independent of (\ref{red-exp}).  To this end, we need to make use of the Drinfeld-Killing bilinear form \cite{Dri, Jan}, which we now recall.  Let $U^{\ge 0}$ (resp. $U^{\le 0}$) denote the $\mathbb C(q)$-subalgebra of $U$ generated by $E_{\alpha}$ (resp. $F_{\alpha}$), $K_{\alpha}$ and $K_{\alpha}^{-1}$, for all $\alpha \in \Pi$.

\begin{prop} \label{Dri-Kil} \cite{Dri} \cite[Proposition~6.12]{Jan}
There exists a unique $\mathbb C(q)$-bilinear pairing $<~~,~~>: U^{\le 0} \times U^{\ge 0} \rightarrow \mathbb C(q)$ such that, for all $x, x' \in U^+$, all $y, y' \in U^-$, all $\mu, \nu \in \mathbb Z \Phi$ and all $\alpha, \beta \in \Pi$, the following equations hold
\begin{gather}
<K_{\mu}, K_{\nu}> = q^{-(\mu,\nu)}, ~ <K_{\mu}, E_{\alpha}> = 0, ~ <F_{\alpha}, K_{\mu}> = 0, \\
<F_{\alpha}, E_{\beta}> = -\delta_{\alpha\beta} (q_{\alpha} - q_{\alpha}^{-1})^{-1}, \\
<y, xx'> = <\Delta(y), x' \otimes x>, ~ <yy', x> = <y \otimes y', \Delta(x)>.
\end{gather}
\end{prop}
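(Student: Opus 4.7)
The plan is to handle uniqueness by a direct inductive reduction to pairings of generators, and existence by first building a candidate pairing on an auxiliary pair of Hopf algebras lacking the Serre relations, then showing the Serre elements lie in its radical so that it descends to $U^{\le 0} \times U^{\ge 0}$.

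\emph{Uniqueness.} Since $U^{\ge 0}$ is generated as a $\mathbb C(q)$-algebra by the $E_\alpha$ and $K_\mu^{\pm 1}$, and $U^{\le 0}$ by the $F_\alpha$ and $K_\mu^{\pm 1}$, every element is a $\mathbb C(q)$-linear combination of monomials in these generators. Iterating property (5) peels factors off the second argument, and iterating (6) peels factors off the first, each step replacing one pairing by a combination of pairings of strictly lower total degree. The recursion terminates at pairings of generators, whose values are supplied by (1)--(4); the counit axiom combined with $\Delta(1) = 1 \otimes 1$ fixes the base case $\langle 1, \cdot\rangle = \epsilon(\cdot)$ and its mirror. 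Thus any pairing satisfying (1)--(6) is unique.

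\emph{Existence.} On the ``pre-quantum'' Hopf algebras $\widetilde U^{\ge 0}$ and $\widetilde U^{\le 0}$ obtained by dropping only the quantum Serre relations from the defining presentation, I would use (1)--(6) to define a candidate pairing recursively on the root-lattice grading. Well-definedness of this recursion rests on the coassociativity of $\Delta$: two orders of expansion of $\langle y, x_1 x_2 x_3\rangle$ correspond to applying $(\Delta \otimes \mathrm{id}) \circ \Delta$ versus $(\mathrm{id} \otimes \Delta) \circ \Delta$ to $y$, and these agree; an analogous check works on the other side. Then I would verify that the Serre elements
\begin{align*}
S^+_{\alpha,\beta} := \sum_{i=0}^{1-c_{\alpha\beta}} (-1)^i {1-c_{\alpha\beta} \brack i}_{\alpha} E_\alpha^{1-c_{\alpha\beta}-i} E_\beta E_\alpha^i,
\end{align*}
together with their $F$-analogs $S^-_{\alpha,\beta}$, lie in the radical of the candidate pairing. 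Granted this vanishing, the pairing descends to a well-defined $\mathbb C(q)$-bilinear map on $U^{\le 0} \times U^{\ge 0}$ and inherits (1)--(6) by construction.

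\emph{Main obstacle.} The crux is the radical step $\langle y, S^+_{\alpha,\beta}\rangle = 0$ and its $F$-side counterpart. By (5) it suffices to test $y$ of homogeneous weight, and then the computation reduces to the $q$-analog of the binomial theorem for $\Delta(E_\alpha^n) = \sum_k {n \brack k}_{\alpha} q_\alpha^{k(n-k)} E_\alpha^{n-k} K_\alpha^k \otimes E_\alpha^k$. Substituting into the alternating sum defining $S^+_{\alpha,\beta}$, all terms in $\langle y, S^+_{\alpha,\beta}\rangle$ assemble into $q$-binomial sums of the form $\sum_i (-1)^i {1-c_{\alpha\beta} \brack i}_{\alpha} q_\alpha^{?} = 0$, which vanish by the standard $q$-Vandermonde identity. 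This combinatorial collapse is where the specific shape of the Serre relations is essential, and I expect it to be the technically heaviest part of the proof; everything else is bookkeeping around the coproduct formulas of Section~\ref{review}.
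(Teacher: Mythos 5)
The paper does not actually prove this proposition: it is imported verbatim from Drinfeld and from Proposition~6.12 of \cite{Jan}, so there is no in-paper argument to compare against. Your outline --- uniqueness by inductively reducing any pairing value to pairings of generators via properties (5)--(6), and existence by defining the form on the auxiliary algebras without Serre relations (well-defined by coassociativity) and then checking that the Serre elements lie in the radical via $q$-binomial cancellations --- is precisely the standard argument given in the cited sources, and it is correct in outline.
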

Here, for $x, x' \in U^{\ge 0}$ and $y, y' \in U^{\le 0}$, we write $<x \otimes x', y \otimes y'> := <x,y><x',y'>$.

\begin{prop} \cite[6.13, Proposition~8.29, 8.30]{Jan} \label{Dri-Kil-val}
(a)  For $x \in U^+$, $y \in U^-$ and $\lambda, \mu \in \mathbb Z \Phi$, we have
\begin{align}
<y K_{\lambda}, x K_{\mu}> = q^{-(\lambda, \mu)} <y, x>.
\end{align}

(b)  For $\vec d, \vec e \in (\mathbb Z_{\ge 0})^N$, $<F^{\vec d}, E^{\vec e}>$ is nonzero only when $\vec d = \vec e$, in which case
\begin{align}<F^{\vec d}, E^{\vec e}> = \prod_{i=1}^N <F_{\lambda_i}^{d_i}, E_{\lambda_i}^{d_i}>.
\end{align}

(c)  Let $\lambda$ be a positive root and $d$ a non-negative integer, then
\begin{align}
<F_{\lambda}^{d}, E_{\lambda}^{d}> = (-1)^d q_{\lambda}^{d(d-1)/2} \frac {[d]_{\lambda}^!} {(q_{\lambda} - q_{\lambda}^{-1})^d}.
\end{align}
\end{prop}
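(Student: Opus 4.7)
The plan is to prove the three parts in order, each resting on the defining properties of the pairing in Proposition \ref{Dri-Kil} together with the Hopf algebra structure on $U$.

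For part (a), I would peel the group-like factors $K_\lambda$ and $K_\mu$ off one at a time. Writing $\langle yK_\lambda, xK_\mu\rangle = \langle y \otimes K_\lambda, \Delta(xK_\mu)\rangle$ and using $\Delta(xK_\mu) = \Delta(x)(K_\mu \otimes K_\mu)$, the key observation is that for $x \in U^+_\nu$ every summand of $\Delta(x)$ other than $x \otimes 1$ has its right tensor factor in $U^+_{>0}$, and $\langle K_\lambda, \cdot\rangle$ annihilates such elements (an immediate consequence of $\langle K_\lambda, E_\alpha\rangle = 0$, $\Delta(K_\lambda) = K_\lambda \otimes K_\lambda$, and the multiplicativity in Proposition \ref{Dri-Kil}). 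Only the term $x \otimes 1$ survives, producing $q^{-(\lambda,\mu)}\langle y, xK_\mu\rangle$, and a symmetric argument using $\Delta(y)$ and $\langle F_\alpha, K_\mu\rangle = 0$ removes the remaining $K_\mu$.

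For part (b), I would first note that the pairing is graded: $\langle U^-_{-\eta}, U^+_{\eta'}\rangle$ vanishes unless $\eta = \eta'$, which is a routine induction from Proposition \ref{Dri-Kil} and disposes of all $\vec d, \vec e$ with $\lambda_{\vec d} \neq \lambda_{\vec e}$. For the remaining orthogonality within a single weight space, and for the product formula, I would induct on the length $N$ of the reduced expression (\ref{red-exp}): isolate the factor $E_1^{e_1}$ on the right of $E^{\vec e}$ via $\langle F^{\vec d}, E^{\vec e}\rangle = \langle \Delta(F^{\vec d}), E_N^{e_N} \cdots E_2^{e_2} \otimes E_1^{e_1}\rangle$, expand $\Delta(F^{\vec d})$ in the PBW basis for $U^- \otimes U^-K_?$, and observe that, by grading and by $\langle F^{\vec d'}, E_1^{e_1}\rangle = 0$ whenever $\vec d'$ involves any $F_i$ with $i > 1$, only the summand whose right tensor factor is a power of $F_1$ can contribute. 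The inductive hypothesis applied to the truncated PBW monomials then yields both the vanishing when $\vec d \neq \vec e$ and the factored formula. The main obstacle is the book-keeping: one must verify that the twists incurred by part (a) combine with the Lusztig braid identities cleanly enough to produce exactly the diagonal product $\prod_i \langle F_{\lambda_i}^{d_i}, E_{\lambda_i}^{d_i}\rangle$ with no leftover cross-terms.

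For part (c), I would reduce to the rank-one situation. For any positive root $\lambda$, the triple $(E_\lambda, F_\lambda, K_\lambda^{\pm 1})$ generates a subalgebra isomorphic to $U_{q_\lambda}(\mathfrak{sl}_2)$ in a way compatible with the restriction of the coproduct modulo higher-weight terms, and inside this subalgebra I would induct on $d$ starting from the base case $\langle F_\lambda, E_\lambda\rangle = -(q_\lambda - q_\lambda^{-1})^{-1}$ (immediate for simple $\lambda$ from Proposition \ref{Dri-Kil}, and deduced for general $\lambda$ by observing that Lusztig's braid operators preserve this quantity). The inductive step uses
\[
\langle F_\lambda^{d+1}, E_\lambda^{d+1}\rangle = \langle \Delta(F_\lambda^{d+1}), E_\lambda^d \otimes E_\lambda\rangle
\]
together with the $q$-binomial expansion $\Delta(F_\lambda^{d+1}) = \sum_{i=0}^{d+1} q_\lambda^{i(d+1-i)} {d+1 \brack i}_\lambda F_\lambda^i \otimes F_\lambda^{d+1-i}K_\lambda^{-i}$; parts (a) and (b) then kill all but the term with $i = d$, which after simplification matches the claimed closed form.
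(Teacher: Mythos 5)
First, a point of comparison: the paper does not prove this proposition at all --- it is quoted directly from Jantzen \cite[6.13, Propositions 8.29, 8.30]{Jan} --- so your sketch is being measured against the standard proof in the cited source rather than against anything in the text. Your part (a) is correct and is essentially that standard argument: $<K_{\lambda}, \cdot>$ and $<\cdot, K_{\mu}>$ are characters (by $\Delta(K) = K \otimes K$ and the multiplicativity axioms) that kill the augmentation ideals of $U^+$ and $U^-$, so only the extreme terms of the two coproducts survive, producing exactly $q^{-(\lambda,\mu)}<y,x>$.

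The genuine gap is in part (b), and it propagates into (c). The grading reduction is fine, and peeling $E_1^{e_1}$ off $E^{\vec e}$ does force the contributing component of $\Delta(F^{\vec d})$ to have first tensor factor proportional to $F_{\alpha_{i_1}}^{e_1}$ (the space $U^-_{-e_1\alpha_{i_1}}$ is one-dimensional because $\alpha_{i_1}$ is simple). But to conclude that this component equals a constant times $F_1^{e_1} \otimes F_N^{d_N}\cdots F_2^{d_2}K^{-1}_{e_1\lambda_1}$, nonzero only when $d_1 = e_1$, you must exclude the possibility that some of the $e_1$ copies of $F_{\alpha_{i_1}}$ in the first slot are contributed by the coproducts of the non-simple root vectors $F_{\lambda_k}^{d_k}$, $k \ge 2$. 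That is precisely the statement that the skew-derivation attached to $\alpha_{i_1}$ (in the convention matching your PBW order) annihilates $F_{\lambda_k}$ for $k \ge 2$, equivalently that $F_{\lambda_k} \in U^- \cap T_{\alpha_{i_1}}(U^-)$ and that this intersection is the kernel of that derivation. This is Jantzen's Lemma 8.26 and Proposition 8.28; it is the crux of the whole proposition and where the braid group action genuinely enters --- calling it ``book-keeping'' leaves the induction unable to close. In part (c) there are two further unsupported steps for non-simple $\lambda$: the subalgebra generated by $E_{\lambda}, F_{\lambda}, K_{\lambda}^{\pm 1}$ is an algebra copy of $U_{q_{\lambda}}(\mathfrak{sl}_2)$ (since $T_w$ is an algebra automorphism) but is \emph{not} a Hopf subalgebra, so the $q$-binomial formula you quote for $\Delta(F_{\lambda}^{d+1})$ is false on the nose; you must argue both that the extra terms die against $E_{\lambda}^d \otimes E_{\lambda}$ by the orthogonality of (b) and that the coefficient of $F_{\lambda} \otimes F_{\lambda}^d K_{\lambda}^{-1}$ is still the $\mathfrak{sl}_2$ one, which is not automatic because intermediate-weight cross terms of $\Delta(F_{\lambda})\Delta(F_{\lambda}^d)$ can re-expand in the PBW basis with an $F_{\lambda}$ in the first slot. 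Finally, $T_w$ does not preserve the pair $(U^{\le 0}, U^{\ge 0})$, so ``Lusztig's braid operators preserve this quantity'' is not a valid justification of the base case $<F_{\lambda}, E_{\lambda}> = -(q_{\lambda} - q_{\lambda}^{-1})^{-1}$; one standard route is instead to apply $T_w$ to the rank-one commutation relation to get $E_{\lambda}F_{\lambda} - F_{\lambda}E_{\lambda} = (K_{\lambda} - K_{\lambda}^{-1})/(q_{\lambda} - q_{\lambda}^{-1})$ and compare with the general formula expressing such commutators through the pairing.
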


The following lemma follows readily from Proposition \ref{Dri-Kil} and coassociativity of $\Delta$.

\begin{lem} \label{mix}
For every $\vec d \in (\mathbb Z_{\ge 0})^N$, one has
\begin{align}
r'_{\vec d} = q_{\lambda_1}^{-\frac 12 d_1(d_1-1)} \cdots q_{\lambda_N}^{-\frac 12 d_N(d_N-1)}([d_1]_{\lambda_1}^! \cdots [d_N]_{\lambda_N}^!)^{-1} (r'_{N})^{d_N} \circ \cdots \circ (r'_{1})^{d_1}.
\end{align}
\end{lem}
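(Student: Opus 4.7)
The plan is to introduce a dual perspective: for each homogeneous $x \in U^+_\mu$, define a $\mathbb C(q)$-linear endomorphism $\partial_x$ of $U^-$ by the rule $\partial_x(y) K_\mu^{-1} = (<\cdot, x> \otimes \text{id})\Delta(y)$, where the pairing is applied to the first tensor factor of $\Delta(y)$. Substituting the expansion $\Delta(y) = \sum_{\vec e} F^{\vec e} \otimes r'_{\vec e}(y) K^{-1}_{\lambda_{\vec e}}$, one sees $\partial_x(y) = \sum_{\vec e:\, \lambda_{\vec e} = \mu} <F^{\vec e}, x>\, r'_{\vec e}(y)$. The strategy is to prove $\partial_{xx'} = \partial_x \circ \partial_{x'}$ and then unravel this identity at $x = E^{\vec d}$.

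For the multiplicativity, I would apply coassociativity $(\text{id}\otimes \Delta)\Delta = (\Delta \otimes \text{id})\Delta$ to $y$, pair the first two tensor slots with $x' \otimes x$ on both sides, and compare. Expanding the left side twice via the definition of the $r'_{\vec e}$'s together with $\Delta(K^{-1}_{\lambda_{\vec e}}) = K^{-1}_{\lambda_{\vec e}} \otimes K^{-1}_{\lambda_{\vec e}}$ produces
\begin{align*}
(\text{id}\otimes \Delta)\Delta(y) = \sum_{\vec e, \vec f} F^{\vec e} \otimes F^{\vec f} K^{-1}_{\lambda_{\vec e}} \otimes r'_{\vec f}(r'_{\vec e}(y)) K^{-1}_{\lambda_{\vec e} + \lambda_{\vec f}};
\end{align*}
Proposition \ref{Dri-Kil-val}(a) (with $\mu = 0$) absorbs the $K^{-1}_{\lambda_{\vec e}}$ inside the middle pairing, i.e., $<F^{\vec f} K^{-1}_{\lambda_{\vec e}}, x> = <F^{\vec f}, x>$, and the net result is $\partial_x(\partial_{x'}(y)) K^{-1}_{\mu+\mu'}$. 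Pairing the right side instead, the Hopf algebra identity $<F^{\vec g}, xx'> = <\Delta(F^{\vec g}), x' \otimes x>$ from Proposition \ref{Dri-Kil} immediately collapses the first two factors, giving $\partial_{xx'}(y) K^{-1}_{\mu+\mu'}$. Coassociativity equates the two, and cancelling the common $K^{-1}$ yields $\partial_{xx'} = \partial_x \circ \partial_{x'}$.

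Specializing to $x = E^{\vec d} = E_N^{d_N}\cdots E_1^{d_1}$, iterated multiplicativity gives $\partial_{E^{\vec d}} = \partial_{E_N}^{d_N} \circ \cdots \circ \partial_{E_1}^{d_1}$. On the other hand, Proposition \ref{Dri-Kil-val}(b) implies that only $F^{\vec d}$ pairs nontrivially with $E^{\vec d}$, so $\partial_{E^{\vec d}} = <F^{\vec d}, E^{\vec d}>\, r'_{\vec d} = \bigl(\prod_i <F_{\lambda_i}^{d_i}, E_{\lambda_i}^{d_i}>\bigr)\, r'_{\vec d}$; in particular $\partial_{E_k} = -(q_{\lambda_k} - q_{\lambda_k}^{-1})^{-1}\, r'_k$. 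Substituting the explicit pairing values from Proposition \ref{Dri-Kil-val}(c) into both expressions for $\partial_{E^{\vec d}}$, the signs $(-1)^{d_i}$ and the denominators $(q_{\lambda_i} - q_{\lambda_i}^{-1})^{d_i}$ cancel, and isolating $r'_{\vec d}$ recovers exactly the claimed formula. The only technical point requiring care is the multiplicativity step: the $K^{-1}$-factor in the middle tensor slot of $(\text{id}\otimes \Delta)\Delta(y)$ must be carried through the calculation and absorbed by Proposition \ref{Dri-Kil-val}(a) so that the clean identity $\partial_{xx'} = \partial_x \circ \partial_{x'}$ emerges.
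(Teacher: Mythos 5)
Your argument is correct and is essentially the route the paper intends: the paper gives no written proof but states that the lemma ``follows readily from Proposition \ref{Dri-Kil} and coassociativity of $\Delta$,'' and your dual operators $\partial_x$ together with the multiplicativity identity $\partial_{xx'} = \partial_x \circ \partial_{x'}$ are precisely the way those two ingredients combine, with the numerical factors then supplied by Proposition \ref{Dri-Kil-val}. The one technical point you flag --- absorbing the $K^{-1}_{\lambda_{\vec e}}$ in the middle tensor slot via Proposition \ref{Dri-Kil-val}(a) --- is handled correctly.
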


\begin{lem} \label{fund-comp}
Let $\vec d, \vec e \in (\mathbb Z_{\ge 0})^N$ be such that $\lambda_{\vec d} = \lambda_{\vec e}$.  Then $r'_{\vec d} (F^{\vec e}) = 
\begin{cases}
1 & \mbox{if } \vec d = \vec e \\
0 & \mbox{if } \vec d \neq \vec e.
\end{cases}$
\end{lem}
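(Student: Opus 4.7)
The plan is to exploit the Drinfeld--Killing pairing $<~~,~~>$ from Proposition \ref{Dri-Kil}, together with the orthogonality of the PBW bases under this pairing (Proposition \ref{Dri-Kil-val}(b)), to reduce the computation of $r'_{\vec d}(F^{\vec e})$ to a scalar identity.

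The first step is to observe that when $\lambda_{\vec d} = \lambda_{\vec e}$, the image $r'_{\vec d}(F^{\vec e})$ sits in $U^-_{-(\lambda_{\vec e} - \lambda_{\vec d})} = U^-_0 = \mathbb C(q) \cdot 1$ by the degree-shift built into the definition (\ref{coproduct}). Hence $r'_{\vec d}(F^{\vec e})$ is simply a scalar $c_{\vec d, \vec e} \in \mathbb C(q)$, and the lemma amounts to showing $c_{\vec d, \vec e} = \delta_{\vec d, \vec e}$.

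Next, I would apply the pairing identity $<F^{\vec e}, 1 \cdot E^{\vec d}> = <\Delta(F^{\vec e}), E^{\vec d} \otimes 1>$ from Proposition \ref{Dri-Kil}. The left-hand side is just $<F^{\vec e}, E^{\vec d}>$. On the right-hand side, I expand $\Delta(F^{\vec e}) = \sum_{\vec d'} F^{\vec d'} \otimes r'_{\vec d'}(F^{\vec e}) K_{\lambda_{\vec d'}}^{-1}$ and use multiplicativity of the pairing on tensor factors; by Proposition \ref{Dri-Kil-val}(b), the first tensor slot kills every term except $\vec d' = \vec d$. The remaining factor $<c_{\vec d, \vec e} K_{\lambda_{\vec d}}^{-1}, 1>$ equals $c_{\vec d, \vec e}$, since $<K_\mu, K_0> = 1$ and $1 = K_0$ in $U^{\ge 0}$. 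Combining,
\begin{equation*}
<F^{\vec e}, E^{\vec d}> = c_{\vec d, \vec e} \cdot <F^{\vec d}, E^{\vec d}>.
\end{equation*}
Invoking Proposition \ref{Dri-Kil-val}(b) on the left, this vanishes unless $\vec d = \vec e$, and when $\vec d = \vec e$ both sides share the nonzero factor $<F^{\vec d}, E^{\vec d}>$ (nonzero by Proposition \ref{Dri-Kil-val}(c)), forcing $c_{\vec d, \vec d} = 1$.

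The only delicate bookkeeping I anticipate is the tensor-swap in the identity $<y, xx'> = <\Delta(y), x' \otimes x>$: one must pair $\Delta(F^{\vec e})$ with $E^{\vec d} \otimes 1$ (and not $1 \otimes E^{\vec d}$) so that the orthogonality on the first slot isolates $\vec d' = \vec d$ while leaving the scalar $c_{\vec d, \vec e}$ evaluated by a trivial $K$-pairing in the second slot. Once this is kept straight, the proof is essentially a direct extraction of a PBW coefficient via the non-degenerate pairing.
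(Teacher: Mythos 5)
Your proposal is correct and follows essentially the same route as the paper: both compute $<F^{\vec e}, E^{\vec d}>$ via $<\Delta(F^{\vec e}), E^{\vec d} \otimes 1>$, isolate the $F^{\vec d}$-summand of the coproduct using the orthogonality of the PBW bases under the Drinfeld--Killing pairing, and conclude from the nonvanishing of $<F^{\vec d}, E^{\vec d}>$. Your handling of the tensor-slot convention and the trivial $K$-pairing in the second factor is the same bookkeeping the paper performs implicitly.
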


\begin{proof}
By Proposition \ref{Dri-Kil}, noticing that $r'_{\vec d} (F^{\vec e}) \in \mathbb C(q)$ whenever $\lambda_{\vec d} = \lambda_{\vec e}$, we have
\begin{align}
<F^{\vec e}, E^{\vec d}> & = <\Delta(F^{\vec e}), E^{\vec d} \otimes 1> \nonumber \\
& = <\cdots + F^{\vec d} \otimes r'_{\vec d} (F^{\vec e}) K_{\lambda_{\vec d}}^{-1} + \cdots, E^{\vec d} \otimes 1> \nonumber \\
& = <F^{\vec d}, E^{\vec d}> r'_{\vec d} (F^{\vec e}),
\end{align}
where the displayed summand in the second line of the equality above is the only one whose first tensor factor is a nonzero multiple of $F^{\vec d}$.  Now the conclusion follows easily.
\end{proof}

Lemma \ref{fund-comp} will be a key ingredient in the construction of PBW type bases for $C_q^+$.


Notice that part (b) of Theorem \ref{PBW-quan} already gives a PBW basis for the negative half $C_q^-$ of $C_q$.  For a PBW basis for the positive half $C_q^+$ of $C_q$, we have

\begin{prop} \label{PBW-boson}
The elements $(r'_{\lambda_N})^{d_N} \cdots (r'_{\lambda_1})^{d_1}$, for all $d_1, \cdots, d_N \in \mathbb Z_{\ge 0}$, form a $\mathbb C(q)$-basis for the algebra $C_q^+$.
\end{prop}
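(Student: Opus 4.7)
By Lemma \ref{mix}, each ordered monomial $(r'_N)^{d_N}\cdots(r'_1)^{d_1}$ differs from $r'_{\vec d}$ by a nonzero scalar; since $C_q^+$ is $\mathbb Z\Phi$-graded with $r'_{\vec d}$ living in degree $\lambda_{\vec d}$, linear independence reduces, for each $\mu \in (\mathbb Z\Phi)^+$, to the linear independence of $\{r'_{\vec d} : \lambda_{\vec d} = \mu\}$ in $\text{End}_{\mathbb C(q)}(U^-)$.  Given a hypothetical vanishing $\mathbb C(q)$-linear combination, evaluate it at each basis vector $F^{\vec e}$ of $U^-_{-\mu}$ (by Theorem \ref{PBW-quan}(b)); Lemma \ref{fund-comp} then reads off every coefficient as zero.

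\textbf{Spanning: construction of an isomorphism $\psi : U^+ \xrightarrow{\sim} C_q^+$.}  My plan is to transport the PBW basis of $U^+$ (Theorem \ref{PBW-quan}(b)) to $C_q^+$ through such an isomorphism.  For $x \in U^+$, define $\psi(x) \in \text{End}_{\mathbb C(q)}(U^-)$ as the unique operator characterized by
\[
\langle \psi(x)(y), z \rangle \;=\; \langle y, zx \rangle, \qquad \forall\, y \in U^-,\ z \in U^+.
\]
The non-degeneracy of the Drinfeld-Killing pairing between $U^-_{-\nu}$ and $U^+_\nu$ for each $\nu$ (Proposition \ref{Dri-Kil}) both makes $\psi(x)$ well-defined and forces $\psi$ to be injective.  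Applying the defining identity of $\psi$ twice, $\langle \psi(xx')(y), z\rangle = \langle y, (zx)x'\rangle = \langle \psi(x')(y), zx\rangle = \langle \psi(x)\psi(x')(y), z\rangle$, so $\psi$ is a $\mathbb C(q)$-algebra homomorphism.

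\textbf{Identification on generators and conclusion.}  Substituting the expansion $\Delta(y) = \sum_{\vec d} F^{\vec d}\otimes r'_{\vec d}(y) K^{-1}_{\lambda_{\vec d}}$ from (\ref{coproduct}) into $\langle y, z E_k\rangle = \langle \Delta(y), E_k \otimes z\rangle$, only the $\vec d = \vec e_k$ term survives by Proposition \ref{Dri-Kil-val}(b), and a trivial $K$-twist absorption via Proposition \ref{Dri-Kil-val}(a) yields $\psi(E_k) = \langle F_k, E_k\rangle r'_k$, the scalar being nonzero by Proposition \ref{Dri-Kil-val}(c).  Hence the image of $\psi$ is the subalgebra of $\text{End}(U^-)$ generated by $\{r'_\alpha : \alpha \in \Pi\}$; since each $r'_k$ (possibly for non-simple $\lambda_k$) is a nonzero multiple of $\psi(E_k)$, this image contains every $r'_k$ and hence coincides with $C_q^+$.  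Applying $\psi$ to the basis $\{E^{\vec d}\}$ of $U^+$, the homomorphism property delivers $\psi(E^{\vec d}) = \prod_k \langle F_k, E_k\rangle^{d_k} \cdot (r'_N)^{d_N}\cdots (r'_1)^{d_1}$ --- a nonzero scalar multiple of the claimed ordered monomial.  Bijectivity of $\psi$ finishes.

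\textbf{Main obstacle.}  The construction of $\psi$ is the technical heart: well-definedness and the algebra homomorphism property both rely on Proposition \ref{Dri-Kil}'s coproduct-pairing compatibility, and the identification $\psi(E_k) \propto r'_k$ essentially recasts Lemma \ref{fund-comp} through the $F$-$E$ PBW duality of Proposition \ref{Dri-Kil-val}(b).  A more hands-on alternative bypasses $\psi$ by proving Levendorskii-Soibelman-type commutation relations among the $r'_k$ directly and using them to straighten arbitrary words in the $r'_k$ into linear combinations of ordered monomials --- a route paralleling the analogous treatment of the $E_k$ in $U^+$ and ultimately resting on the same $U^+ \leftrightarrow U^-$ duality.
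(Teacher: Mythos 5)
Your proof is correct, but the spanning half takes a genuinely different route from the paper. The paper proves spanning by straightening: Lemma \ref{mix} reduces everything to the operators $r'_{\vec d}$, and Lemma \ref{com-rel} (the commutator formula extracted from coassociativity of $\Delta$) rewrites any out-of-order product of generators as a combination of ordered monomials. You instead build an injective algebra homomorphism $\psi: U^+ \rightarrow \text{End}_{\mathbb C(q)}(U^-)$, $\langle \psi(x)(y), z\rangle = \langle y, zx\rangle$, identify $\psi(E_{\lambda_k})$ as a nonzero multiple of $r'_k$, and transport the PBW basis of $U^+$ (Theorem \ref{PBW-quan}(b)) across $\psi$; this simultaneously gives spanning and an independent proof of linear independence (your separate independence argument via Lemma \ref{fund-comp} is exactly the paper's, so it is redundant given $\psi$, though harmless). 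Your route buys more: it exhibits $C_q^+ \simeq U^+$ as algebras, which makes the PBW property, the Levendorskii-Soibelman relations among the $r'_k$, and the independence of the reduced expression (Proposition \ref{Mat}) all immediate corollaries of the corresponding facts for $U^+$, rather than requiring separate coassociativity computations. What the paper's route buys is the explicit structure constants in $[r'_i, r'_j]$ (the coefficients $c_{\vec d}\langle F^{\vec d}, E^{\vec d}\rangle$ of Lemma \ref{com-rel}), which are indispensable later for the integral form, the quasi-classical limit, and the Poisson bracket formulas, so the paper cannot avoid that computation anyway. One small imprecision: the nondegeneracy of the pairing on each graded piece $U^-_{-\nu} \times U^+_{\nu}$, which underlies both the well-definedness and the injectivity of $\psi$, is not stated in Proposition \ref{Dri-Kil}; it follows from Proposition \ref{Dri-Kil-val}(b),(c), since the Gram matrix of the PBW bases is diagonal with nonzero diagonal entries.
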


\begin{proof}
The assertion that the elements $(r'_{\lambda_N})^{d_N} \cdots (r'_{\lambda_1})^{d_1}$, for all $d_1, \cdots, d_N \in \mathbb Z_{\ge 0}$, span $C_q^+$ over $\mathbb C(q)$ follows from Lemma \ref{mix} and Lemma \ref{com-rel} below.

For linear independence we recall that $U^-$ is a $\mathbb Z \Phi$-graded algebra: for each $\mu \in (\mathbb Z \Phi)^+$, the root space $U^-_{-\mu}$ has degree $\mu$.  The $\mathbb Z \Phi$-gradings on $C_q^+$ and $U^-$ are compatible in the following sense.  If $\lambda, \mu \in (\mathbb Z \Phi)^+$, $r \in C_q^+$ is homogeneous of degree $\lambda$, $y \in U^-$ is homogeneous of degree $\mu$, then, $r(y)$, the action of $r$ on $y$, is homogeneous of degree $\mu - \lambda$.

Now let $R = \sum a_{\vec d} (r'_{\lambda_N})^{d_N} \cdots (r'_{\lambda_1})^{d_1}$, where $a_{\vec d} \in \mathbb C(q)$ for each $\vec d \in (\mathbb Z_{\ge 0})^N$, be such that $R = 0$ in $C_q^+$.  Let $R = \sum_{\mu} R_{\mu}$, where $R_{\mu}$ is homogeneous of degree $\mu$, be a decomposition $R$ according to the grading by $\mathbb Z \Phi$.  For any homogeneous element $y \in U^-$, we have
\begin{align}
0 = R(y) = \sum_{\mu} R_{\mu}(y).
\end{align}
By compatibility of the gradings on $C_q^+$ and $U^-$ in the previous paragraph, the degree of $R_{\mu}(y)$ for different $\mu$'s are different.  It follows that $R_{\mu} (y) = 0$ for all $\mu \in \mathbb Z \Phi$.  Upon replacing $R$ with $R_{\mu}$, we may assume that $R$ is homogeneous of degree $\mu$.

Now we can write $R = \sum_{\lambda_{\vec d} = \mu} a_{\vec d} (r'_{\lambda_N})^{d_N} \cdots (r'_{\lambda_1})^{d_1}$.  For any $\vec e \in (\mathbb Z_{\ge 0})^N$ with $\lambda_{\vec e} = \mu$, consider the element $R(F^{\vec e})$ in $U^-$.  On the one hand, since $R$ is equal to zero in $C_q^+$, it is zero as an endomorphism of $U^-$.  Hence, $R(F^{\vec e})$ must also be $0$.  On the other hand, by Lemma \ref{mix} and Lemma \ref{fund-comp}, $R(F^{\vec e})$ is a nonzero multiple of $a_{\vec e}$.  Consequently, $a_{\vec e}$ must be $0$.  As $\vec e$ runs over all vectors in $(\mathbb Z_{\ge 0})^N$ with $\lambda_{\vec e} = \mu$, we conclude that all coefficients in the relation $R$ are $0$.
\end{proof}

We now prove that $C_q^+$ is independent of the reduced expression (\ref{red-exp}), as promised.

\begin{prop} \label{Mat}
$C_q^+$ does not depend on the choice (\ref{red-exp}) of reduced expression for $w_0$.
\end{prop}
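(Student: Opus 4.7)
My plan is to identify $C_q^+$ with an intrinsically defined subspace of $\mathrm{End}_{\mathbb{C}(q)}(U^-)$ whose description uses only the coproduct $\Delta$ and the Drinfeld-Killing pairing, neither of which depends on the choice (\ref{red-exp}). For each homogeneous $x \in U^+_\mu$ I would define an endomorphism $\partial_x$ of $U^-$ by requiring, for each $y \in U^-$,
$$\partial_x(y)\, K_\mu^{-1} \;=\; (\langle\, \cdot\, , x\rangle \otimes \mathrm{id})\bigl(\Delta(y)\bigr),$$
and then extending to all $x \in U^+$ by $\mathbb{C}(q)$-linearity. Substituting (\ref{coproduct}) and using that $\langle F^{\vec d}, x\rangle$ vanishes unless $\lambda_{\vec d}=\mu$, I obtain
$$\partial_x \;=\; \sum_{\vec d:\,\lambda_{\vec d}=\mu}\langle F^{\vec d}, x\rangle\,r'_{\vec d}.$$

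The next step is to verify that the $\mathbb{C}(q)$-linear span $\mathcal D$ of $\{\partial_x : x \in U^+\}$ inside $\mathrm{End}_{\mathbb{C}(q)}(U^-)$ coincides with the span of $\{r'_{\vec d} : \vec d \in (\mathbb{Z}_{\ge 0})^N\}$. The containment $\subseteq$ is immediate from the displayed formula. For $\supseteq$, Proposition \ref{Dri-Kil-val}(b),(c) shows that $\langle\,,\,\rangle$ restricts to a non-degenerate pairing $U^-_{-\mu}\times U^+_\mu \to \mathbb{C}(q)$, so the PBW basis $\{F^{\vec d}\}_{\lambda_{\vec d}=\mu}$ of $U^-_{-\mu}$ admits a dual basis $\{x_{\vec d}\}\subset U^+_\mu$; then $\partial_{x_{\vec d}} = r'_{\vec d}$, which places each $r'_{\vec d}$ in $\mathcal D$.

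The final step is to observe that $\mathcal D = C_q^+$ as subspaces of $\mathrm{End}_{\mathbb{C}(q)}(U^-)$. By Lemma \ref{mix} each $r'_{\vec d}$ differs from the PBW monomial $(r'_N)^{d_N}\cdots(r'_1)^{d_1}$ by a nonzero scalar in $\mathbb{C}(q)$, and Proposition \ref{PBW-boson} asserts that these monomials form a $\mathbb{C}(q)$-basis of $C_q^+$; hence $\{r'_{\vec d}\}$ is also a basis of $C_q^+$, and the previous step gives $\mathcal D = C_q^+$. Since $\mathcal D$ was built purely from $\Delta$, the pairing $\langle\,,\,\rangle$, and the intrinsic subalgebra $U^+ \subset U$---none of which depend on (\ref{red-exp})---the same must hold for $C_q^+$. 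The only mildly delicate point in the plan is verifying $\partial_{x_{\vec d}} = r'_{\vec d}$ on the nose rather than merely up to a scalar, but this is a direct consequence of the orthogonality assertion in Proposition \ref{Dri-Kil-val}(b) combined with the defining formula for $\partial_x$.
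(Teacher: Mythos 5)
Your argument is correct, but it takes a genuinely different route from the paper. The paper reduces to a single braid move and explicitly computes how the root vectors $F_{\lambda_k}$, and hence the operators $r'_{\lambda_k}$, transform under one application of a braid relation (working out the $A_2$ case in detail), concluding by mutual containment of the two generated subalgebras. You instead exhibit $C_q^+$ as an intrinsically defined subspace $\mathcal D = \{\partial_x : x \in U^+\}$ of $\text{End}_{\mathbb C(q)}U^-$, built only from $\Delta$ and the Drinfeld--Killing pairing; the identification $\mathcal D = \operatorname{span}\{r'_{\vec d}\} = C_q^+$ then follows from the non-degeneracy of the pairing on $U^-_{-\mu}\times U^+_{\mu}$ (Proposition \ref{Dri-Kil-val}(b),(c)), Lemma \ref{mix}, and the spanning half of Proposition \ref{PBW-boson} --- all of which are available at this point, so there is no circularity. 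Your approach is cleaner and more conceptual, and it makes transparent that $C_q^+$ is the image of $U^+$ under $x\mapsto \partial_x$. What the paper's hands-on computation buys in exchange is (i) explicit transition formulas between the generators attached to two reduced expressions, which are reused verbatim to prove independence of the subspaces $C_q^+[w]$ for arbitrary $w\in W$ (your intrinsic description only captures the full algebra, i.e.\ the case $w=w_0$, unless one further identifies $\mathcal D$ restricted to $U^+[w]$), and (ii) the integral statement Proposition \ref{Mat-int}, which does not follow immediately from your argument since the dual-basis elements $x_{\vec d}=\langle F^{\vec d},E^{\vec d}\rangle^{-1}E^{\vec d}$ involve denominators that are not units in $\mathcal A$.
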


\begin{proof}
Since every reduced expression for $w_0$ can be transformed to any other one by repeatedly applying the braid relations a finite number of times, it suffices to show that if we apply one single braid relation to the reduced expression (\ref{red-exp}), the algebra $C_q^+$ does not change.

We carefully work out the following simple case.  The proofs for all other cases are similar and will be omitted.

Suppose that in the reduced expression (\ref{red-exp}) there is a segment that reads
\begin{align} \label{segment}
w_0 = \cdots s_{\alpha} s_{\beta} s_{\alpha} \cdots,
\end{align}
where $\alpha, \beta \in \Pi$ and the rank two root system generated by $\alpha$ and $\beta$ is of type $A_2$.  Applying the braid relation $s_{\alpha} s_{\beta} s_{\alpha} = s_{\beta} s_{\alpha} s_{\beta}$ to (\ref{segment}), we get a new reduced expression for $w_0$:
\begin{align} \label{new-seg}
w_0 = \cdots s_{\beta} s_{\alpha} s_{\beta} \cdots.
\end{align}
Here, the only differences between the reduced expressions (\ref{segment}) and (\ref{new-seg}) are in the displayed portions.

Recall that (see formula (\ref{enumeration-of-roots})) (\ref{segment}) and (\ref{new-seg}) induce enumerations
\begin{align}
\lambda_1, \cdots, \lambda_N,
\end{align}
and
\begin{align}
\lambda'_1, \cdots, \lambda'_N
\end{align}
of positive roots, respectively.  Moreover, the reduced expressions (\ref{segment}) and (\ref{new-seg}) give rise to root vectors $F_{\lambda_1}, \cdots, F_{\lambda_N}$ and $F_{\lambda'_1}, \cdots, F_{\lambda'_N}$, respectively, as in section \ref{review}.

Let the integer $i$, $1 \le i \le N-2$, be the index of the first displayed $\alpha$ in (\ref{segment}), so the indices for $\beta$ and the second $\alpha$ in (\ref{segment}) are $i+1$ and $i+2$, respectively.  Thus, $\lambda_j = \lambda'_j$ and $F_{\lambda_j} = F_{\lambda'_j}$ whenever $j \neq i, i+1, i+2$.  For the indices $i, i+1, i+2$, an easy computation shows that
\begin{align}
F_{\lambda_i} = F_{\lambda'_{i+2}}, ~ F_{\lambda_{i+2}} = F_{\lambda'_i}, ~
F_{\lambda_{i+1}} = -q^{-1} F_{\lambda'_{i+1}} + (q^{-1} - q) F_{\lambda'_{i+2}}F_{\lambda'_i}.
\end{align}

For every integer $j$ satisfying $1 \le j \le N$, let $r'_{\lambda_j}$ be defined as in Definition \ref{def}.  Replacing the $\mathbb C(q)$-basis $\{F^{\vec d}: \vec d \in (\mathbb Z_{\ge 0})^N\}$ for $U^-$ that we have used in Definition \ref{def} with the $\mathbb C(q)$-basis $\{(F_{\lambda'_N})^{d_N} \cdots (F_{\lambda'_1})^{d_1}: \vec d = (d_1, \cdots, d_N) \in (\mathbb Z_{\ge 0})^N\}$, one defines Kashiwara operators $r'_{\lambda'_1}, \cdots, r'_{\lambda'_N}$ corresponding to the reduced expression (\ref{new-seg}).  More precisely, let $\mu \in (\mathbb Z \Phi)^+$.  For each $\vec d \in (\mathbb Z_{\ge 0})^N$, one has a unique $\mathbb C(q)$-linear map $s'_{\vec d}: U^-_{-\mu} \rightarrow U^-_{-(\mu - d_1 \lambda'_1 - \cdots - d_N \lambda'_N)}$ such that 
\begin{align}
\Delta(y) = \sum (F_{\lambda'_N})^{d_N} \cdots (F_{\lambda'_1})^{d_1} \otimes s'_{\vec d} (y) K^{-1}_{d_1 \lambda'_1 + \cdots + d_N \lambda'_N}
\end{align}
for all $y \in U^-_{-\mu}$.  Extending by $\mathbb C(q)$-linearity one obtains $\mathbb C(q)$-linear endomorphisms $s'_{\vec d}$ of $U^-$.  If $\vec d = \vec e_j$ for some $j$ satisfying $1 \le j \le N$, we also write $r'_{\lambda'_j}$ for $s'_{\vec e_j}$.  The analysis of root vectors above shows that $r'_{\lambda_j} = r'_{\lambda'_j}$ whenever $j \neq i, i+1, i+2$.  To compare $r'_{\lambda_i}, r'_{\lambda_{i+1}}, r'_{\lambda_{i+2}}$ with $r'_{\lambda'_i}, r'_{\lambda'_{i+1}}, r'_{\lambda'_{i+2}}$ we let $y \in U^-$ and compute
\begin{align}
\Delta(y) = & \sum F^{\vec d} \otimes r'_{\vec d} (y) K_{\lambda_{\vec d}}^{-1} \nonumber \\
= & \sum F_{\lambda'_N}^{d_N} \cdots F_{\lambda'_{i+3}}^{d_{i+3}} F_{\lambda'_i}^{d_{i+2}} (-q^{-1} F_{\lambda'_{i+1}} + (q^{-1} - q) F_{\lambda'_{i+2}}F_{\lambda'_i})^{d_{i+1}} F_{\lambda'_{i+2}}^{d_i} F_{\lambda'_{i-1}}^{d_{i-1}} \cdots F_{\lambda'_1}^{d_1} \otimes  r'_{\vec d} (y) K_{\lambda_{\vec d}}^{-1}.
\end{align}
From this we see that to compute $r'_{\lambda'_i}, r'_{\lambda'_{i+1}}, r'_{\lambda'_{i+2}}$, we only need to look at those summands in the very last expression whose first tensor factor is of the form $F_{\lambda'_i}$, $F_{\lambda'_i} F_{\lambda'_{i+2}}$, $-q^{-1} F_{\lambda'_{i+1}} + (q^{-1} - q) F_{\lambda'_{i+2}}F_{\lambda'_i}$ or $F_{\lambda'_{i+2}}$.  Call these four summands the relevant summands.

It is easy to see that the first relevant summand is
\begin{align}
F_{\lambda'_i} \otimes r'_{\lambda_{i+2}} (y) K_{\lambda_{i+2}}^{-1}.
\end{align}
It follows that $r'_{\lambda'_i} = r'_{\lambda_{i+2}}$.  Similarly, by looking at the last relevant summand, we conclude that $r'_{\lambda'_{i+2}} = r'_{\lambda_i}$.

Note that $F_{\lambda'_i} F_{\lambda'_{i+2}} = -q^{-1} F_{\lambda'_{i+1}} + q^{-1} F_{\lambda'_{i+2}} F_{\lambda'_i}$.  So the sum of the second and third relevant summands is equal to
\begin{multline}
(-q^{-1} F_{\lambda'_{i+1}} + q^{-1} F_{\lambda'_{i+2}} F_{\lambda'_i}) \otimes r'_{\vec e_i + \vec e_{i+2}} (y) K_{\lambda_i + \lambda_{i+2}}^{-1} \\
+ (-q^{-1} F_{\lambda'_{i+1}} + (q^{-1} - q) F_{\lambda'_{i+2}}F_{\lambda'_i}) \otimes r'_{\lambda_{i+1}} (y) K_{\lambda_{i+1}}^{-1} \\
= F_{\lambda'_{i+1}} \otimes (-q^{-1}) (r'_{\lambda_{i+2}} \circ r'_{\lambda_i} + r'_{\lambda_{i+1}}) (y) K_{\lambda'_{i+1}}^{-1} + \cdots,
\end{multline}
where we have used Lemma \ref{mix}, and $\cdots$ stands for summands whose first tensor factor is not a constant multiple of $F_{\lambda'_{i+1}}$.  From this computation it follows that $r'_{\lambda'_{i+1}} = -q^{-1} (r'_{\lambda_{i+2}} \circ r'_{\lambda_i} + r'_{\lambda_{i+1}})$.

Therefore, the $\mathbb C(q)$-subalgebra of $\text{End}_{\mathbb C(q)} U^-$ generated by $r'_{\lambda'_1}, \cdots, r'_{\lambda'_N}$ is a subalgebra of the $\mathbb C(q)$-subalgebra of $\text{End}_{\mathbb C(q)} U^-$ generated by $r'_{\lambda_1}, \cdots, r'_{\lambda_N}$.  By symmetry, the $\mathbb C(q)$-subalgebra of $\text{End}_{\mathbb C(q)} U^-$ generated by $r'_{\lambda_1}, \cdots, r'_{\lambda_N}$ is, in turn, a subalgebra of the $\mathbb C(q)$-subalgebra of $\text{End}_{\mathbb C(q)} U^-$ generated by $r'_{\lambda'_1}, \cdots, r'_{\lambda'_N}$.
\end{proof}

More generally, fix an arbitrary element $w \in W$.  It is known that every reduced expression for $w$ is a subword of some reduced expression for $w_0$, c.f. \cite{DP} \cite[Theorem~1.8]{Hum}.  Hence, we may assume that $w$ has a reduced expression of the form $w = s_{\alpha_{i_{j}}} s_{\alpha_{i_{j+1}}} \cdots s_{\alpha_{i_{k}}}$, for some $1 \le j \le k \le N$.  We define $C_q^+[w]$ to be the $\mathbb C(q)$-vector subspace of $C_q^+$ spanned by the monomials $(r'_{\lambda_{k}})^{d_{k}} \cdots (r'_{\lambda_{j}})^{d_{j}}$, where $d_{j}, \cdots, d_{k}$ range over all non-negative integers.  Similarly to the proof of Proposition \ref{Mat}, one shows that the vector space $C_q^+[w]$ is independent of the reduced expression $w = s_{\alpha_{i_{j}}} s_{\alpha_{i_{j+1}}} \cdots s_{\alpha_{i_{k}}}$, i.e. we have

\begin{cor}
The vector space $C_q^+[w]$ only depends on the element $w$ in the Weyl group $W$.
\end{cor}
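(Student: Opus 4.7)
The plan is to adapt the strategy of Proposition \ref{Mat} essentially verbatim. By Matsumoto's theorem any two reduced expressions for $w$ are related by a sequence of braid moves, so it suffices to show that $C_q^+[w]$ is unchanged under a single braid move. Given a reduced expression $w = s_{\alpha_{i_j}} s_{\alpha_{i_{j+1}}} \cdots s_{\alpha_{i_k}}$ realized as a subword of a reduced expression for $w_0$, a braid move applied to this subword produces another subword of another reduced expression for $w_0$, differing from the original only in positions that lie entirely within the interval $\{j, \ldots, k\}$.

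First I would invoke the explicit formulas already worked out in the proof of Proposition \ref{Mat} (together with their analogues for the rank-two systems $B_2$ and $G_2$, which are "similar" as the paper notes): the root vectors and Kashiwara operators indexed by positions outside the triple $\{i, i+1, i+2\}$ affected by the move are unchanged, while $r'_{\lambda'_i}, r'_{\lambda'_{i+1}}, r'_{\lambda'_{i+2}}$ are expressible as explicit polynomials in $r'_{\lambda_i}, r'_{\lambda_{i+1}}, r'_{\lambda_{i+2}}$, and vice versa. Consequently the two generating sets $\{r'_{\lambda_\ell}\}_{j \le \ell \le k}$ and $\{r'_{\lambda'_\ell}\}_{j \le \ell \le k}$ generate the same $\mathbb C(q)$-subalgebra of $C_q^+$, since each is contained in the subalgebra generated by the other.

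The crucial remaining step is to upgrade this equality of subalgebras to an equality of the PBW vector-space spans that actually define $C_q^+[w]$. For this I would establish, as an auxiliary lemma (or extract from the Levendorskii–Soibelman type relation Lemma \ref{com-rel} promised in Section \ref{basic-properties-boson}), that for any interval $[j,k] \subseteq [1,N]$ the $\mathbb C(q)$-span of the monomials $(r'_{\lambda_k})^{d_k} \cdots (r'_{\lambda_j})^{d_j}$ is closed under multiplication. The expected shape of these relations is
\begin{align}
r'_{\lambda_b} r'_{\lambda_a} - q^{c(a,b)} r'_{\lambda_a} r'_{\lambda_b} = \sum c_{\vec d}\, (r'_{\lambda_{b-1}})^{d_{b-1}} \cdots (r'_{\lambda_{a+1}})^{d_{a+1}}
\end{align}
for $j \le a < b \le k$, with all indices on the right hand side strictly between $a$ and $b$, so that repeated application reduces an arbitrary product of generators into the PBW span of the interval. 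Given this closure, $C_q^+[w]$ equals the subalgebra generated by $r'_{\lambda_j}, \ldots, r'_{\lambda_k}$, and the previous paragraph finishes the proof.

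The main obstacle, and the only content beyond what was already done for $w_0$ in Proposition \ref{Mat}, is precisely this closure-under-multiplication statement: the definition of $C_q^+[w]$ as a span of ordered PBW monomials (rather than as a subalgebra) is what prevents the argument of Proposition \ref{Mat} from applying directly. Once the Levendorskii–Soibelman relations are in hand this is routine, which is presumably why the author writes that the proof proceeds "similarly" to that of Proposition \ref{Mat}.
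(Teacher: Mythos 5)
Your proposal is correct and follows essentially the same route as the paper, which itself only says the claim is proved ``similarly to the proof of Proposition \ref{Mat}'': reduce to a single braid move, use the explicit transformation formulas for the affected Kashiwara operators, and then straighten products back into the ordered PBW span. The closure-under-multiplication point you isolate as the only new ingredient is exactly what the paper supplies (via Lemma \ref{com-rel} and Theorem \ref{LS}) in the corollary of the next section stating that $C_q^+[w]$ is a subalgebra, so your treatment just makes explicit a forward reference the paper leaves implicit.
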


\begin{rem}
We shall see in the next section that $C_q^+[w]$ is in fact a $\mathbb C(q)$-subalgebra of $\text{End}_{\mathbb C(q)} U^-$ (or of $C_q^+$).  In the literature of quantum groups, a $\mathbb C(q)$-vector subspace $U^+[w]$ of $U^+$ has attracted considerable interest, c.f. \cite{DP}.  It is known that $U^+[w]$ is in fact a $\mathbb C(q)$-subalgebra of $U^+$.  In this sense, our theory of quantum boson algebra parallels the classical and well-known theory of quantized universal enveloping algebra.
\end{rem}

\begin{defn}
(a) Let $C^+_{\mathcal A}$ be the $\mathcal A$-subalgebra of $\text{End}_{\mathcal A} U^-_{\mathcal A}$ generated by $r'_k$, for all $1 \le k \le N$.

(b) Let $C^-_{\mathcal A}$ be the $\mathcal A$-subalgebra of $\text{End}_{\mathcal A} U^-_{\mathcal A}$ generated by the operators of left multiplication by $F_{\alpha}^{(n)}$, for all $\alpha \in \Pi$ and $n \in \mathbb Z_{\ge 0}$.
\end{defn}

We will show (see Proposition \ref{PBW-boson-int} below) that various results in this section admit integral counterparts.  In particular, by inspecting the proof of Proposition \ref{Mat}, one obtains

\begin{prop} \label{Mat-int}
The $\mathcal A$-module $C^+_{\mathcal A}$ does not dependent on the choice (\ref{red-exp}) of reduced expression for $w_0$.
\end{prop}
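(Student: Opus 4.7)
The plan is to replay the argument of Proposition \ref{Mat} with $\mathbb C(q)$ replaced by $\mathcal A$, checking that every scalar appearing in the change-of-basis formulas between the old and new Kashiwara operators already lies in $\mathcal A$.  Once this integrality is verified, the two-sided containment proved in Proposition \ref{Mat} upgrades verbatim to an equality of $\mathcal A$-subalgebras of $\text{End}_{\mathcal A} U^-_{\mathcal A}$.

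As in the proof of Proposition \ref{Mat}, I first reduce to a single braid move: any two reduced expressions for $w_0$ are connected by a finite chain of such moves, so it suffices to treat one rank-two relation at a time.  For the $A_2$ case the proof of Proposition \ref{Mat} already writes down the relations
\[
r'_{\lambda'_i} = r'_{\lambda_{i+2}}, \qquad r'_{\lambda'_{i+2}} = r'_{\lambda_i}, \qquad r'_{\lambda'_{i+1}} = -q^{-1}\bigl(r'_{\lambda_{i+2}} \circ r'_{\lambda_i} + r'_{\lambda_{i+1}}\bigr),
\]
whose coefficients $\pm 1$ and $-q^{-1}$ all lie in $\mathcal A$.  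Hence in this case the $\mathcal A$-subalgebra generated by the primed operators is contained in the one generated by the unprimed operators, and the reverse inclusion follows by swapping the roles of the two reduced expressions.

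For the remaining rank-two braid moves ($A_1 \times A_1$, $B_2$, $G_2$), I would carry out the analogous computation: expand the Lusztig-transformed root vectors $F_{\lambda_j}$ as polynomials in the new $F_{\lambda'_k}$, substitute into the defining identity $\Delta(y) = \sum_{\vec d} F^{\vec d} \otimes r'_{\vec d}(y) K_{\lambda_{\vec d}}^{-1}$, and read off each new $r'_{\lambda'_j}$ from the summands whose first tensor factor is a constant multiple of $F_{\lambda'_j}$.  Every coefficient that arises is a power of $q^{\pm 1}$ times a quantum integer or quantum factorial of index at most three, all of which lie in $\mathcal A$ by construction — this is precisely what motivates the inversion of $[3]!$ in the definition of $\mathcal A$, per the remark in section \ref{review}.

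The main obstacle is bookkeeping rather than conceptual: one has to produce, in each of the $B_2$ and $G_2$ cases, explicit $\mathcal A$-polynomial formulas expressing each $r'_{\lambda'_j}$ in terms of compositions of the original $r'_{\lambda_k}$.  A subtle point is to avoid using Lemma \ref{mix} in a direction that would introduce denominators $[d_k]_{\lambda_k}^!$ of unbounded order; one should stay on the side of compositions $(r'_k)^{d_k}$ throughout, since these are precisely the generators furnished by Definition of $C^+_{\mathcal A}$.  Modulo this care, integrality of the change-of-basis coefficients gives inclusion in one direction, and symmetry of the braid relation completes the proof.
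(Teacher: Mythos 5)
Your proposal matches the paper's own treatment: the paper proves Proposition \ref{Mat-int} simply by noting that the coefficients appearing in the change-of-basis formulas in the proof of Proposition \ref{Mat} (namely $\pm 1$ and $-q^{-1}$ in the $A_2$ case, and their analogues in the other rank-two cases) already lie in $\mathcal A$, so the two-sided containment argument carries over to the integral form. Your additional caution about not introducing denominators $[d_k]_{\lambda_k}^!$ via Lemma \ref{mix} is a sensible refinement, but the overall route is the same.
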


As above, if $w = s_{\alpha_{i_{j}}} s_{\alpha_{i_{j+1}}} \cdots s_{\alpha_{i_{k}}}$, where $1 \le j \le k \le N$, is a subword of $w_0 = s_{\alpha_{i_1}} \cdots s_{\alpha_{i_N}}$, we define $C^+_{\mathcal A}[w]$ to be the $\mathcal A$-submodule of $C^+_{\mathcal A}$ spanned by the monomials $(r'_{\lambda_{k}})^{d_{k}} \cdots (r'_{\lambda_{j}})^{d_{j}}$, where $d_{j}, \cdots, d_{k}$ range over all non-negative integers.  We also have

\begin{cor}
The $\mathcal A$-module $C^+_{\mathcal A}[w]$ depends only on $w$.
\end{cor}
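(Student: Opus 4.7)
The plan is to follow the pattern of Proposition \ref{Mat-int} while localizing it to the positions $[j,k]$ in the reduced expression. By Matsumoto's theorem any two reduced expressions for $w$ are connected by a finite sequence of braid moves; extending each such expression to a reduced expression for $w_0$ using the \emph{same} outer letters $s_{\alpha_{i_1}} \cdots s_{\alpha_{i_{j-1}}}$ and $s_{\alpha_{i_{k+1}}} \cdots s_{\alpha_{i_N}}$, each braid move in the chain takes place at consecutive positions lying entirely inside $[j,k]$. It therefore suffices to verify that a single such braid move leaves $C^+_{\mathcal A}[w]$ unchanged.

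Fix a braid move, say at positions $i, i+1, i+2 \in [j,k]$ in the $A_2$ case (all other rank-two cases being parallel, as in Proposition \ref{Mat}), and write $r'_{\lambda'_l}$ for the Kashiwara operators built from the new reduced expression. The computation already carried out in the proof of Proposition \ref{Mat} yields $r'_{\lambda'_l} = r'_{\lambda_l}$ for $l \notin \{i, i+1, i+2\}$ together with
\[
r'_{\lambda'_i} = r'_{\lambda_{i+2}}, \quad r'_{\lambda'_{i+2}} = r'_{\lambda_i}, \quad r'_{\lambda'_{i+1}} = -q^{-1} \bigl( r'_{\lambda_{i+2}} \circ r'_{\lambda_i} + r'_{\lambda_{i+1}} \bigr),
\]
and symmetric formulas in the opposite direction; crucially, all coefficients that appear already lie in $\mathcal A$. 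Consequently the $\mathcal A$-subalgebra $\mathcal S \subseteq C^+_{\mathcal A}$ generated by $\{r'_{\lambda_l} : l \in [j,k]\}$ is preserved by the braid move.

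To conclude I would identify $C^+_{\mathcal A}[w]$ with $\mathcal S$. This step rests on an integral Levendorskii--Soibelman type relation for the Kashiwara operators (to be established in Section \ref{basic-properties-boson}): whenever $j \le a < b \le k$, the product $r'_{\lambda_a} \circ r'_{\lambda_b}$ can be rewritten as an $\mathcal A$-linear combination of ordered PBW monomials $(r'_{\lambda_k})^{d_k} \cdots (r'_{\lambda_j})^{d_j}$ supported on indices in $[a,b]$. Granted this, $\mathcal S$ coincides with the $\mathcal A$-span of those ordered monomials, i.e. with $C^+_{\mathcal A}[w]$, and the invariance of $\mathcal S$ transfers verbatim to $C^+_{\mathcal A}[w]$.

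The main obstacle is precisely this integral LS property: one must know that no denominators outside $\mathcal A$ are introduced when reordering Kashiwara operators. Without it one would instead have to substitute the transformation formulas above directly into a generic new ordered monomial $(r'_{\lambda'_k})^{d_k} \cdots (r'_{\lambda'_j})^{d_j}$ and expand by hand, verifying that the result is an $\mathcal A$-combination of old ordered monomials of the same shape. The non-standard choice $\mathcal A = \mathbb C[q^{\pm 1}]_{[3]!}$ is designed to absorb exactly the denominators appearing in such manipulations across all root lengths, which makes the integral LS relation plausible and reduces the proof of the corollary to a routine adaptation of Proposition \ref{Mat-int}.
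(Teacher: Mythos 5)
Your proposal is correct and follows essentially the same route the paper intends: the paper gives no separate argument for this corollary, deferring to the braid-move analysis in the proofs of Propositions \ref{Mat} and \ref{Mat-int}, which is exactly what you localize to the window $[j,k]$. Your explicit appeal to an integral Levendorskii--Soibelman straightening for the Kashiwara operators is also consistent with the paper, which supplies precisely that ingredient in Proposition \ref{ast} and uses it to show $C^+_{\mathcal A}[w]$ is an $\mathcal A$-subalgebra.
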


\subsection{Levendorskii-Soibelman Type Properties for Kashiwara Operators} \label{LS-sect}

In order to study quasi-classical limits, we need some information about the commutator of a pair of Kashiwara operators.  Recall the following result of Levendorskii and Soibelman.

\begin{thm} \cite{DP, LS} \label{LS}
For $1 \le i < j \le N$, one has
\begin{align}
& E_{\lambda_i} E_{\lambda_j} - q^{(\lambda_i, \lambda_j)} E_{\lambda_j} E_{\lambda_i} = \sum\limits_{\vec d \in (\mathbb Z_{\ge 0})^N} c_{\vec d} E^{\vec d} ~ \text{and} \\
& F_{\lambda_i} F_{\lambda_j} - q^{(\lambda_i, \lambda_j)} F_{\lambda_j} F_{\lambda_i} = \sum_{\vec d \in (\mathbb Z_{\ge 0})^N} \tilde c_{\vec d} F^{\vec d},
\end{align}
where $c_{\vec d}, \tilde c_{\vec d} \in \mathcal A$ and $c_{\vec d}, \tilde c_{\vec d} = 0$ whenever $d_k \neq 0$ for some $k \in [1,i] \cup [j,N]$.
\end{thm}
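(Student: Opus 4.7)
The plan is to establish the relation on the $E$-side; the $F$-side then follows either from the $\mathbb C(q)$-algebra anti-involution $\omega$ of $U$ interchanging $E_\alpha \leftrightarrow F_\alpha$ (which commutes with each $T_\alpha$), or by a word-for-word parallel argument. The main technique will be a reduction to a rank-two computation via Lusztig's braid group action, combined with induction on $j-i$.

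First I would apply the braid automorphism $\sigma := T_{\alpha_{i_{i-1}}}^{-1} \cdots T_{\alpha_{i_1}}^{-1}$, which is a $\mathbb C(q)$-algebra automorphism of $U$. By the recursive definition $E_{\lambda_k} = T_{\alpha_{i_1}} \cdots T_{\alpha_{i_{k-1}}}(E_{\alpha_{i_k}})$, the automorphism $\sigma$ sends $E_{\lambda_i}$ to the simple root vector $E_{\alpha_{i_i}}$ and sends $E_{\lambda_j}$ to the root vector associated to the reduced subword of length $j-i$ beginning at index $i$. Since $\sigma$ carries the PBW basis to the analogous PBW basis for this shifted reduced expression, it suffices to prove the identity for $i = 1$. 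I would then induct on $j - 1$: the base case $j = 2$ lives inside the rank-two sub-quantum-group $U_q(\mathfrak g')$ with $\mathfrak g'$ of type $A_1 \times A_1$, $A_2$, $B_2$, or $G_2$, and $[E_{\alpha_{i_1}}, E_{\lambda_2}]_q$ can be computed by direct expansion using the quantum Serre relations. The inductive step rewrites $E_{\lambda_j}$ in the form $T_{\alpha_{i_1}}(E_{\lambda_j''})$ for a root vector in a shorter word and applies the inductive hypothesis together with the rank-two base computation.

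The support condition — that $c_{\vec d} = 0$ unless $d_k = 0$ for all $k \in [1,i] \cup [j,N]$ — is enforced by the $\mathbb Z\Phi$-grading (which fixes the total weight of the commutator to be $\lambda_i + \lambda_j$, and hence pins $\lambda_{\vec d} = \lambda_i + \lambda_j$) together with an ordering argument on PBW monomials that rules out contributions outside the index interval $(i,j)$. The main obstacle, and precisely the reason the author replaces $\mathbb C[q^{\pm 1}]$ by $\mathcal A = \mathbb C[q^{\pm 1}]_{[3]!}$, is to maintain $\mathcal A$-integrality of the $c_{\vec d}$. In types $B_2$ and $G_2$ the rank-two expansion produces denominators of the form $[2]_\alpha$ and $[3]_\alpha$, which become invertible after inverting $[3]!$; one must verify carefully through the induction that no quantum integer $[n]$ with $n \ge 4$ ever enters a denominator. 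This bookkeeping is the essential content of the integrality refinement beyond the classical form of \cite{DP, LS} stated over $\mathbb C(q)$.
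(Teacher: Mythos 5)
First, note that the paper does not prove Theorem \ref{LS} itself --- it is quoted from \cite{DP, LS} --- but the appendix proof of the stronger Theorem \ref{LS-var} (the $F$-side of Theorem \ref{LS-str}) is the relevant comparison, and your overall skeleton matches it: reduce to $i=1$, induct on $j-i$, verify the base case by a rank-two computation, and track denominators to see why $\mathcal A = \mathbb C[q^{\pm 1}]_{[3]!}$ rather than $\mathbb C[q^{\pm 1}]$ is needed (you correctly identify that $[2]_\alpha$ and $[3]_\alpha$ arise in types $B_2$ and $G_2$ and that this is the point of the localization). The transfer between the $E$- and $F$-sides is also fine, though you want the algebra automorphism $\omega$ with $\omega(E_\alpha)=F_\alpha$, $\omega(K_\mu)=K_{-\mu}$ (which commutes with each $T_\alpha$ and sends $E^{\vec d}$ to $F^{\vec d}$ preserving the monomial order), not an anti-involution, which would reverse the PBW ordering.

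The genuine gap is in the inductive step. You propose to write $E_{\lambda_j} = T_{\alpha_{i_1}}(E''_{j-1})$ and peel the braid operator off the \emph{front} of the word. This does not reduce the problem: to invoke the inductive hypothesis for the shorter word $s_{\alpha_{i_2}}\cdots s_{\alpha_{i_N}}$ you would have to conjugate the whole $q$-commutator by $T_{\alpha_{i_1}}^{-1}$, and $T_{\alpha_{i_1}}^{-1}(E_{\lambda_1}) = T_{\alpha_{i_1}}^{-1}(E_{\alpha_{i_1}}) = -K_{\alpha_{i_1}}^{-1}F_{\alpha_{i_1}}$ leaves $U^+$, so the resulting identity is no longer an instance of the statement being proved. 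The working argument (in \cite{DP} and in the paper's appendix) peels from the \emph{back}: one writes the top root vector as a $q$-commutator of two root vectors of strictly smaller index, e.g. $F_r = T_u(F_{\alpha_{i_r}})\,F_{r-1} - q^{?}\,F_{r-1}\,T_u(F_{\alpha_{i_r}})$ where $u = s_{\alpha_{i_1}}\cdots s_{\alpha_{i_{r-2}}}$, notes that both factors occur as the $(r-1)$-st root vector of reduced expressions agreeing with the original in the first $r-2$ letters, applies the inductive hypothesis to each, and then runs a case analysis on the rank-two subsystem generated by $\alpha_{i_{r-1}},\alpha_{i_r}$ and on whether $u s_{\alpha_{i_r}}$ is reduced (the non-reduced cases require the exchange condition). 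Your proposal contains none of this, and it also takes the support condition ($c_{\vec d}=0$ unless the support of $\vec d$ lies strictly between $i$ and $j$) for granted via an unstated ``ordering argument,'' when that condition is precisely what the back-peeling induction establishes. As written, the proposal is a correct outline of the strategy but does not contain the step that makes the induction close.
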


\begin{rem}
This version of the `Levendorskii-Soibelman straightening law' is due to De Concini and Procesi \cite{DP}.  It is claimed in {\it loc. cit.} that the coefficients $c_{\vec d}, \tilde c_{\vec d}$ in the theorem above are in $\mathbb C[q^{\pm 1}]$.  However, we believe that this is not quite the case.  One needs the localization $\mathcal A = \mathbb C[q^{\pm 1}]_{[3]!}$ of $\mathbb C[q^{\pm 1}]$ for the statement to hold, as can be seen in the examples where $\mathfrak g$ is of type $B_2$ or $G_2$.  This is one of the reasons for our choice of $\mathcal A$ as an integral form for $\mathbb C(q)$.
\end{rem}

For our purpose of studying quasi-classical limits, we need the following stronger version of Theorem \ref{LS}.  The proof will be postponed to the appendix.  Our argument is inspired by the proof of Theorem \ref{LS} in \cite{DP}.

\begin{thm} \label{LS-str}
Retain the notations in Theorem \ref{LS}.  If $c_{\vec d} \neq 0$ (resp. $\tilde c_{\vec d} \neq 0$), then $c_{\vec d}$ (resp. $\tilde c_{\vec d}$) is divisible by $(1-q)^{(\sum_{i=1}^N d_i) - 1}$ in the unique factorization domain $\mathcal A$.
\end{thm}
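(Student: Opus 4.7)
The plan is to prove Theorem \ref{LS-str} by induction on $m := \sum_{k=1}^N d_k$, treating the $E$-statement and $F$-statement in parallel. For $m = 1$ the claim is vacuous, so the content begins at $m = 2$.

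For the base case $m = 2$, the natural move is to invoke the classical specialization. Since $U^+_{\mathcal A}$ is $\mathcal A$-free with $U^+_{\mathcal A} \otimes_{\mathcal A} \mathcal A/(1-q) \simeq \mathcal U(\mathfrak n)$, reducing the identity
$$E_{\lambda_i} E_{\lambda_j} - q^{(\lambda_i, \lambda_j)} E_{\lambda_j} E_{\lambda_i} \;=\; \sum_{\vec d} c_{\vec d}\, E^{\vec d}$$
modulo $(1-q)$ yields the classical commutator identity $[E_{\lambda_i}, E_{\lambda_j}] = \sum_{\vec d} \overline{c_{\vec d}}\, E^{\vec d}$ inside $\mathcal U(\mathfrak n)$. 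Since $[E_{\lambda_i}, E_{\lambda_j}] \in \mathfrak n$ is at most a single root vector, the classical PBW linear independence forces $\overline{c_{\vec d}} = 0$ for every $\vec d$ with $\sum d_k \ge 2$, so $(1-q) \mid c_{\vec d}$ in the UFD $\mathcal A$. This handles $m = 2$.

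For the inductive step $m \ge 3$, my plan is to mimic De Concini--Procesi's original inductive proof of Theorem \ref{LS} while tracking the $(1-q)$-adic valuation at every reduction. The key algebraic identity is
$$[X, Y]_q \;:=\; XY - q^{(\lambda_X, \lambda_Y)} YX \;=\; [X, Y] + \bigl(1 - q^{(\lambda_X, \lambda_Y)}\bigr) YX,$$
in which $\bigl(1 - q^{(\lambda_X, \lambda_Y)}\bigr)$ is visibly divisible by $(1-q)$. DP's inductive chain expresses the length-$m$ PBW monomials in $[E_{\lambda_i}, E_{\lambda_j}]_q$ as coming from nested $q$-commutators of elements already arising in smaller LS-type relations; at each nesting step, either the classical commutator $[X,Y]$ strictly drops PBW length (so it does not produce length-$m$ terms by itself, only via further combinations), or the deformation term $(1 - q^{(\lambda_X, \lambda_Y)}) YX$ is used, contributing an additional factor of $(1-q)$. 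Accumulating $m - 1$ such contributions along the chain yields $(1-q)^{m-1} \mid c_{\vec d}$.

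The main obstacle, and the reason the detailed argument is deferred to the appendix, is that $[\cdot,\cdot]_q$ is not a genuine Lie bracket and the PBW basis is not multiplicative: converting a PBW monomial of length $m$ into an iterated $q$-bracket requires introducing $\mathcal A$-valued rearrangement coefficients whose $(1-q)$-valuations must themselves be controlled. This is especially delicate in the rank-$2$ subsystems of types $A_2$, $B_2$, and $G_2$, where the denominator $[3]!$ from the localization $\mathcal A = \mathbb C[q^{\pm 1}]_{[3]!}$ genuinely enters. Making this combinatorial accounting rigorous, case by case along DP's inductive framework, is what the appendix will supply.
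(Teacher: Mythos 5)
Your base case is fine, and in fact the reduction-mod-$(1-q)$ argument (specializing to $\mathcal U(\mathfrak n)$ and invoking classical PBW independence to kill length-$\ge 2$ monomials in a Lie bracket) is a clean way to get the first power of $(1-q)$; the paper does not state it this way, but it is correct. The problem is that everything beyond the first power is exactly where the content of the theorem lies, and your inductive step is a program rather than a proof. The dichotomy you assert --- ``either the classical commutator drops PBW length, or the deformation term $(1-q^{(\lambda_X,\lambda_Y)})YX$ contributes a factor of $(1-q)$'' --- is not a step one can accumulate $m-1$ times without justification: when you substitute lower-order LS relations into products and re-straighten into the PBW basis, the re-straightening itself is governed by further LS relations, and the coefficient of a length-$m$ monomial becomes a sum of products of lower-order coefficients with $q$-power corrections and rearrangement coefficients. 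Verifying that every such contribution carries total $(1-q)$-valuation at least $m-1$ is precisely the part you defer, and you cannot defer it to ``the appendix'' when the proposal is the appendix. As written, there is a genuine gap. I would also flag that induction on $m=\sum d_k$ is not obviously well-founded as stated: there is no direct recursion producing the coefficient of a length-$m$ monomial from coefficients of length-$(m-1)$ monomials; the actual recursion runs over the pairs $(i,j)$.

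For comparison, the paper's proof (Theorem \ref{LS-var} in the appendix) inducts on $j-i$ and splits into six cases according to the rank-two subsystem generated by $\alpha_{i_{r-1}}$ and $\alpha_{i_r}$ (types $A_1\times A_1$, $A_2$, $B_2$) and according to whether $s_{\alpha_{i_1}}\cdots s_{\alpha_{i_{r-2}}}s_{\alpha_{i_r}}$ is reduced, using the exchange lemma to pass to a modified reduced expression when it is not. The $(1-q)$-accounting is then done on explicitly computed expressions: one writes $F_rF_1$ in terms of quantities like $S_{(1,r-1)}$ and $S'_{(1,r-1)}$ (which satisfy the induction hypothesis), and the extra factor of $(1-q)$ for each unit of added length comes from explicit scalars of the form $q^{(\lambda,\mu)}-1$, together with the induction hypothesis applied to the commutation of $S_{(1,r-1)}$-type expressions past a further root vector. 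The $B_2$ case requires a long explicit formula with five summands, each analyzed separately. This is the combinatorial accounting your proposal names but does not carry out; supplying it, case by case, is what would turn your sketch into a proof.
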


For the rest of this paper, we will only use the statement in Theorem \ref{LS-str} involving the $E$'s.  So, whenever two positive roots $\lambda_i$ and $\lambda_j$ ($1 \le i < j \le N$) are given, we write $c_{\vec d}$ for the coefficient for $E^{\vec d}$ in $E_{\lambda_i} E_{\lambda_j} - q^{(\lambda_i, \lambda_j)} E_{\lambda_j} E_{\lambda_i} = \sum\limits_{\vec d \in (\mathbb Z_{\ge 0})^N} c_{\vec d} E^{\vec d}$ as in Theorems \ref{LS} and \ref{LS-str}.  To simplify notation, for $1 \le i < j \le N$, we write $r'_{i,j}$ for $r'_{\vec e_i + \vec e_j}$.

The following result will play a crucial role in the study of Poisson geometry of the quasi-classical limit of $C^+_{\mathcal A}$.  The proof is based on the fact that the coproduct $\Delta$ on $U$ is coassociative \cite[Proposition~4.11]{Jan}. 

\begin{lem} \label{com-rel}
For $1 \le i < j \le N$, the following formula holds for the commutator $[r'_{i},r'_{j}]$ in $\text{End}_{\mathbb C(q)} U^-$ of the Kashiwara operators $r'_{i}$ and $r'_{j}$:
\begin{align} \label{com-rel-Kas}
[r'_{i}, r'_{j}] = (q^{(\lambda_i, \lambda_j)}-1) r'_{i,j} + <F_{\lambda_i}, E_{\lambda_i}>^{-1} <F_{\lambda_j}, E_{\lambda_j}>^{-1} \sum\limits_{\lambda_{\vec d} = \lambda_i + \lambda_j} c_{\vec d} <F^{\vec d}, E^{\vec d}> r'_{\vec d}.
\end{align}
\end{lem}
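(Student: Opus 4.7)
The plan is to use the Drinfeld--Killing pairing to convert the commutator on $U^-$ into an associativity identity for multiplication of $E$'s in $U^+$, and then apply the Levendorskii--Soibelman straightening law (Theorem \ref{LS}). The crucial preliminary is the \emph{adjointness formula}
\begin{align*}
\langle r'_{\vec d}(y), x\rangle = \langle F^{\vec d}, E^{\vec d}\rangle^{-1}\,\langle y,\, x E^{\vec d}\rangle, \qquad y \in U^-,\ x \in U^+,
\end{align*}
obtained by pairing the defining identity $\Delta(y) = \sum_{\vec e} F^{\vec e}\otimes r'_{\vec e}(y)K^{-1}_{\lambda_{\vec e}}$ against $E^{\vec d}\otimes x$, using the Hopf-pairing rule $\langle y, xx'\rangle = \langle\Delta(y), x'\otimes x\rangle$ of Proposition \ref{Dri-Kil}, the triviality $\langle zK_\lambda, x'\rangle = \langle z, x'\rangle$ for $x' \in U^+$ coming from Proposition \ref{Dri-Kil-val}(a) with $\mu = 0$, and the PBW-orthogonality $\langle F^{\vec e}, E^{\vec d}\rangle = \delta_{\vec d,\vec e}\langle F^{\vec d}, E^{\vec d}\rangle$ of Proposition \ref{Dri-Kil-val}(b).

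Iterating the adjointness formula---which is where coassociativity of $\Delta$ enters, dually via associativity of the product in $U^+$---immediately yields
\begin{align*}
\langle [r'_i, r'_j](y), x\rangle = \langle F_{\lambda_i}, E_{\lambda_i}\rangle^{-1}\langle F_{\lambda_j}, E_{\lambda_j}\rangle^{-1}\,\langle y,\, x(E_{\lambda_i}E_{\lambda_j} - E_{\lambda_j}E_{\lambda_i})\rangle.
\end{align*}
Next I would substitute Theorem \ref{LS} in the form $E_{\lambda_i}E_{\lambda_j} - E_{\lambda_j}E_{\lambda_i} = (q^{(\lambda_i,\lambda_j)} - 1)E_{\lambda_j}E_{\lambda_i} + \sum_{\vec d} c_{\vec d} E^{\vec d}$, noting that $E_{\lambda_j}E_{\lambda_i} = E^{\vec e_i + \vec e_j}$ since $i < j$ in the chosen PBW ordering. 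Converting each $\langle y, xE^{\vec d}\rangle$ back via the adjointness formula produces a scalar multiple of $\langle r'_{\vec d}(y), x\rangle$; the $r'_{i,j}$-term acquires precisely the coefficient $q^{(\lambda_i,\lambda_j)} - 1$ once the identity $\langle F^{\vec e_i + \vec e_j}, E^{\vec e_i + \vec e_j}\rangle = \langle F_{\lambda_i}, E_{\lambda_i}\rangle\langle F_{\lambda_j}, E_{\lambda_j}\rangle$ from Proposition \ref{Dri-Kil-val}(b) cancels the outer prefactor, while the remaining terms assemble into the sum on the right-hand side of (\ref{com-rel-Kas}).

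To pass from an identity of scalars to an identity of operators, I would invoke the non-degeneracy of the Drinfeld--Killing pairing on each matched weight pairing $U^-_{-\mu} \times U^+_\mu$, which is immediate from Proposition \ref{Dri-Kil-val}(b)--(c): the PBW pairing matrix is diagonal with nonzero entries. The main obstacle is purely bookkeeping---keeping track of the $K^{-1}_{\lambda_{\vec d}}$ weights and the normalizing scalars $\langle F^{\vec d}, E^{\vec d}\rangle$ through the iterated pairing identity, and verifying that $\vec d = \vec e_i + \vec e_j$ can be safely included in the sum (since $c_{\vec e_i + \vec e_j} = 0$ by Theorem \ref{LS}, as both $i$ and $j$ lie outside the forbidden range). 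Once the adjointness formula is set up cleanly, Theorem \ref{LS} does all the algebraic heavy lifting and the rest is formal.
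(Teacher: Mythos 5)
Your proposal is correct. The adjointness formula $\langle r'_{\vec d}(y), x\rangle = \langle F^{\vec d}, E^{\vec d}\rangle^{-1}\langle y, xE^{\vec d}\rangle$ follows exactly as you say from Propositions \ref{Dri-Kil} and \ref{Dri-Kil-val}; iterating it gives $\langle [r'_i,r'_j](y),x\rangle = \langle F_{\lambda_i},E_{\lambda_i}\rangle^{-1}\langle F_{\lambda_j},E_{\lambda_j}\rangle^{-1}\langle y, x(E_{\lambda_i}E_{\lambda_j}-E_{\lambda_j}E_{\lambda_i})\rangle$, and substituting Theorem \ref{LS} together with $E^{\vec e_i+\vec e_j} = E_{\lambda_j}E_{\lambda_i}$ and the factorization $\langle F^{\vec e_i+\vec e_j},E^{\vec e_i+\vec e_j}\rangle = \langle F_{\lambda_i},E_{\lambda_i}\rangle\langle F_{\lambda_j},E_{\lambda_j}\rangle$ yields (\ref{com-rel-Kas}) once you invoke non-degeneracy of the pairing on each pair of opposite weight spaces. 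The paper's proof uses the same inputs but organizes them differently: it works directly with the coassociativity identity $(1\otimes\Delta)\circ\Delta(y) = (\Delta\otimes 1)\circ\Delta(y)$ and isolates the tensor component of the form $F_{\lambda_j}\otimes F_{\lambda_i}K_{\lambda_j}^{-1}\otimes XK_{\lambda_i+\lambda_j}^{-1}$ on both sides, using the pairing only to compute the coefficients of $F_{\lambda_j}\otimes F_{\lambda_i}K_{\lambda_j}^{-1}$ in $\Delta(F_{\lambda_j}F_{\lambda_i})$ and $\Delta(F^{\vec d})$; it then needs Lemma \ref{mix} (in the form $r'_{\vec e_i+\vec e_j}=r'_j\circ r'_i$) to convert the resulting formula for $r'_i\circ r'_j$ into one for the commutator. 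Your dual formulation buys a cleaner extraction of the operator identity (non-degeneracy on weight spaces replaces the matching of tensor components) and computes the commutator directly, avoiding that final appeal to Lemma \ref{mix}; the cost is only the small preliminary of setting up the adjointness formula, and the two arguments are otherwise the same computation read on opposite sides of the pairing. One small correction to a parenthetical: $c_{\vec e_i+\vec e_j}=0$ because $\vec e_i+\vec e_j$ has nonzero components at $i\in[1,i]$ and $j\in[j,N]$, i.e.\ \emph{inside} the range that Theorem \ref{LS} declares forces the coefficient to vanish, not outside it; the conclusion you draw is nevertheless the right one.
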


\begin{proof}
Let $y \in U^-$.  Our strategy is to use coassociativity of $\Delta$, namely the equality $(1 \otimes \Delta) \circ \Delta (y) = (\Delta \otimes 1) \circ \Delta (y)$.  We isolate terms of the form
\begin{align}
F_{\lambda_j} \otimes F_{\lambda_i} K_{\lambda_j}^{-1} \otimes X K_{\lambda_i + \lambda_j}^{-1}
\end{align}
on the two sides, where $X$ is an element of $U^-$ that we would like to compute.

For the left-hand side we have $\Delta (y) = \cdots + F_{\lambda_j} \otimes r'_j (y) K_{\lambda_j}^{-1} + \cdots$, where the displayed summand is the only one whose first tensor factor is a constant multiple of $F_{\lambda_j}$.  So we have
\begin{align}
(1 \otimes \Delta) \circ \Delta (y) = & \cdots + F_{\lambda_j} \otimes \Delta (r'_j (y) K_{\lambda_j}^{-1}) + \cdots \nonumber \\
= & \cdots + F_{\lambda_j} \otimes F_{\lambda_i} K_{\lambda_j}^{-1} \otimes r'_i \circ r'_j (y) K_{\lambda_i + \lambda_j}^{-1} + \cdots.
\end{align}
It follows that the relevant term $X$ is equal to $r'_i \circ r'_j (y)$.

For the right-hand side we compute as follows.  First note that if
\begin{align}
\Delta (F_{\lambda_j} F_{\lambda_i}) = \cdots + F_{\lambda_j} \otimes a F_{\lambda_i} K_{\lambda_j}^{-1} + \cdots
\end{align}
for some $a \in \mathbb C(q)$, where the displayed summand is the only one which is a constant multiple of $F_{\lambda_j} \otimes F_{\lambda_i} K_{\lambda_j}^{-1}$, then we have
\begin{align}
a <F_{\lambda_i}, E_{\lambda_i}> <F_{\lambda_j}, E_{\lambda_j}> = & <\Delta (F_{\lambda_j} F_{\lambda_i}), E_{\lambda_j} \otimes E_{\lambda_i}> \nonumber \\
= & <F_{\lambda_j} F_{\lambda_i}, E_{\lambda_i} E_{\lambda_j}> \nonumber \\
= & <F_{\lambda_j} F_{\lambda_i}, q^{(\lambda_i, \lambda_j)} E_{\lambda_j} E_{\lambda_i} + \sum c_{\vec d} E^{\vec d}> \nonumber \\
= & q^{(\lambda_i, \lambda_j)} <F_{\lambda_i}, E_{\lambda_i}> <F_{\lambda_j}, E_{\lambda_j}>,
\end{align}
where the second equality follows from Proposition \ref{Dri-Kil} and the last equality follows from Proposition \ref{Dri-Kil-val}.  This implies that $a = q^{(\lambda_i, \lambda_j)}$.  Similarly, let $\vec d \in (\mathbb Z_{\ge 0})^N$ be such that $d_k = 0$ whenever $k \le i$ or $k \ge j$.  If
\begin{align}
\Delta (F^{\vec d}) = \cdots + F_{\lambda_j} \otimes b F_{\lambda_i} K_{\lambda_j}^{-1} + \cdots
\end{align}
for some $b \in \mathbb C(q)$, where the displayed summand is the only one which is a constant multiple of $F_{\lambda_j} \otimes F_{\lambda_i} K_{\lambda_j}^{-1}$, then we have
\begin{align}
b <F_{\lambda_i}, E_{\lambda_i}> <F_{\lambda_j}, E_{\lambda_j}> = & <\Delta (F^{\vec d}), E_{\lambda_j} \otimes E_{\lambda_i}> \nonumber \\
= & <F^{\vec d}, E_{\lambda_i} E_{\lambda_j}> \nonumber \\
= & <F_{\vec d}, q^{(\lambda_i, \lambda_j)} E_{\lambda_j} E_{\lambda_i} + \sum c_{\vec e} E^{\vec e}> \nonumber \\
= & c_{\vec d} <F^{\vec d}, E^{\vec d}>.
\end{align}
This implies that $b = <F_{\lambda_i}, E_{\lambda_i}>^{-1} <F_{\lambda_j}, E_{\lambda_j}>^{-1} c_{\vec d} <F^{\vec d}, E^{\vec d}>$.

Let $z \in U^-$.  By Proposition \ref{Dri-Kil-val}, a nonzero multiple of $F_{\lambda_j} \otimes F_{\lambda_i} K_{\lambda_j}^{-1}$ occurs as a summand in $\Delta (z)$ if and only if $<\Delta (z), E_{\lambda_j} \otimes E_{\lambda_i}>$ is nonzero.  From the equation
\begin{align}
<\Delta (z), E_{\lambda_j} \otimes E_{\lambda_i}> = <z, E_{\lambda_i} E_{\lambda_j}> = <z, q^{(\lambda_i, \lambda_j)} E_{\lambda_j} E_{\lambda_i} + \sum c_{\vec d} E^{\vec d}>,
\end{align}
we see that to determine the summand in $\Delta (z)$ which is a multiple of $F_{\lambda_j} \otimes F_{\lambda_i} K_{\lambda_j}^{-1}$, we only need to concentrate on the summands in $z$ which are constant multiples of $F_{\lambda_j} F_{\lambda_i}$ or $F^{\vec d}$ for some $\vec d \in (\mathbb Z_{\ge 0})^N$ with $d_k = 0$ whenever $k \le i$ or $k \ge j$.  Hence, we get
\begin{align}
& (\Delta \otimes 1) \circ \Delta (y) \nonumber \\
= & (\Delta \otimes 1) (\cdots + F_{\lambda_j} F_{\lambda_i} \otimes r'_{i,j} (y) K_{\lambda_i + \lambda_j}^{-1} + \sum F^{\vec d} \otimes r'_{\vec d} (y) K_{\lambda_{\vec d}}^{-1} + \cdots) \nonumber \\
= & \cdots + \Delta (F_{\lambda_j} F_{\lambda_i}) \otimes r'_{i,j} (y) K_{\lambda_i + \lambda_j}^{-1} + \sum \Delta (F^{\vec d}) \otimes r'_{\vec d} (y) K_{\lambda_{\vec d}}^{-1} + \cdots \nonumber \\
= & 
\cdots + F_{\lambda_j} \otimes F_{\lambda_i} K_{\lambda_j}^{-1} \otimes (q^{(\lambda_i, \lambda_j)} r'_{i,j} (y) \nonumber \\
& + \sum <F_{\lambda_i}, E_{\lambda_i}>^{-1} <F_{\lambda_j}, E_{\lambda_j}>^{-1} c_{\vec d} <F^{\vec d}, E^{\vec d}> r'_{\vec d} (y)) K_{\lambda_i + \lambda_j}^{-1} + \cdots,
\end{align}
where in the last step we have used our computation in the previous paragraph.  Therefore, the relevant term $X$ on the right hand side is given by:
\begin{align}
X = q^{(\lambda_i, \lambda_j)} r'_{i,j} (y) + \sum <F_{\lambda_i}, E_{\lambda_i}>^{-1} <F_{\lambda_j}, E_{\lambda_j}>^{-1} c_{\vec d} <F^{\vec d}, E^{\vec d}> r'_{\vec d} (y).
\end{align}

Comparing the two sides, we get
\begin{align}
r'_i \circ r'_j = q^{(\lambda_i, \lambda_j)} r'_{i,j} + \sum <F_{\lambda_i}, E_{\lambda_i}>^{-1} <F_{\lambda_j}, E_{\lambda_j}>^{-1} c_{\vec d} <F^{\vec d}, E^{\vec d}> r'_{\vec d}.
\end{align}
By Lemma \ref{mix}, we have $r'_{i,j} = r'_j \circ r'_i$.  So the last equality is equivalent to
\begin{align}
r'_i \circ r'_j - r'_j \circ r'_i = (q^{(\lambda_i, \lambda_j)} - 1) r'_{i,j} + \sum <F_{\lambda_i}, E_{\lambda_i}>^{-1} <F_{\lambda_j}, E_{\lambda_j}>^{-1} c_{\vec d} <F^{\vec d}, E^{\vec d}> r'_{\vec d},
\end{align}
proving the desired formula.
\end{proof}

\begin{cor}
For each $w \in W$, the $\mathbb C(q)$-vector subspace $C_q^+[w]$ of $C_q^+$ is a $\mathbb C(q)$-subalgebra of $C_q^+$.
\end{cor}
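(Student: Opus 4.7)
The plan is a straightening argument driven by Lemma \ref{com-rel}. After fixing, via the preceding corollary, a reduced expression $w = s_{\alpha_{i_j}} \cdots s_{\alpha_{i_k}}$ realized as a subword of (\ref{red-exp}), the subspace $C_q^+[w]$ is by definition the $\mathbb{C}(q)$-span of the PBW-ordered monomials $(r'_{\lambda_k})^{d_k} \cdots (r'_{\lambda_j})^{d_j}$, that is, the PBW-normal monomials with support in $[j,k]$. By bilinearity, closure under multiplication reduces to showing that every word $W = r'_{\lambda_{m_1}} \cdots r'_{\lambda_{m_p}}$ with letter indices $m_1, \ldots, m_p \in [j,k]$ in arbitrary order can be rewritten as a $\mathbb{C}(q)$-linear combination of such ordered monomials.

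I would do this by applying Lemma \ref{com-rel} as a rewriting rule whenever $W$ has an adjacent ascent $m_s < m_{s+1}$. The lemma, together with Lemma \ref{mix}, gives a relation of the form $r'_{\lambda_{m_s}} r'_{\lambda_{m_{s+1}}} = q^{(\lambda_{m_s}, \lambda_{m_{s+1}})} r'_{\lambda_{m_{s+1}}} r'_{\lambda_{m_s}} + \sum_{\vec d} \gamma_{\vec d} \, r'_{\vec d}$, where by Theorem \ref{LS} every $\vec d$ with $c_{\vec d} \neq 0$ has support strictly inside the open interval $(m_s, m_{s+1})$, and by Lemma \ref{mix} each $r'_{\vec d}$ is a scalar multiple of its corresponding PBW-ordered monomial. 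In particular, substituting the right-hand side into $W$ keeps every letter index inside $[j,k]$, so the rewriting never escapes $C_q^+[w]$.

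Termination I would verify by noetherian induction on the pair $(\vec e(W), I(W))$, ordered lexicographically, where $\vec e(W) \in \mathbb{Z}_{\ge 0}^N$ is the exponent vector of $W$ (compared in lex order starting at coordinate $N$ and descending) and $I(W)$ is the ascent count (used as tiebreaker). A single rewriting at an adjacent ascent $(m_s, m_{s+1})$ produces a swap term with $\vec e$ unchanged and $I$ strictly smaller by exactly one (the standard bubble-sort fact, since only the pair $(s,s+1)$ contributes a net change to the ascent count across the word), together with error terms whose exponent vector has coordinate $m_{s+1}$ decreased by one while every coordinate above $m_{s+1}$ is unchanged, hence strictly smaller in our lex order. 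The main obstacle in the whole argument is the simultaneous demand of staying inside $C_q^+[w]$ and strictly decreasing the well-order; both rest on the precise open-interval support of the error $\vec d$'s guaranteed by the Levendorskii-Soibelman law, and with that in hand the rest is routine bubble-sort bookkeeping combined with the PBW-ordered form of $r'_{\vec d}$ from Lemma \ref{mix}.
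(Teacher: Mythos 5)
Your proof is correct and follows essentially the same route as the paper, whose own proof is a one-line appeal to the definition of $C_q^+[w]$ together with Lemma \ref{mix}, Theorem \ref{LS} and Lemma \ref{com-rel}. You have simply made explicit the straightening and termination bookkeeping (the open-interval support of the error terms and the lexicographic induction) that the paper leaves to the reader.
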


\begin{proof}
This follows easily from the definition of $C_q^+[w]$, Lemma \ref{mix}, Theorem \ref{LS} and Lemma \ref{com-rel}.
\end{proof}

For $r, r' \in C_q^+$, write $[r,r']$ for the commutator $r \circ r' - r' \circ r$ of $r$ and $r'$ in $C_q^+$.  For $1 \le i < j \le N$, by Proposition \ref{PBW-boson}, we can write
\begin{align}
[r'_i,r'_j] = \sum h_{\vec d} (r'_N)^{d_N} \cdots (r'_1)^{d_1}, ~ h_{\vec d} \in \mathbb C(q).
\end{align}

\begin{prop} \label{ast}
Except for the case where $\vec d = \vec e_i + \vec e_j$, $h_{\vec d} = 0$ whenever $d_k \neq 0$ for some $k \in [1,i] \cup [j,N]$.  Moreover, all $h_{\vec d}$'s are elements of $\mathcal A$ and are divisible by $1 - q$.
\end{prop}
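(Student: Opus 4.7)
My plan is to combine the commutator identity of Lemma \ref{com-rel} with the scalar normalization from Lemma \ref{mix} to produce an explicit formula for $h_{\vec d}$, and then read off each of the three claims from this formula using Theorems \ref{LS} and \ref{LS-str}.

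Starting from Lemma \ref{com-rel}, I would substitute $r'_{\vec d} = \mu_{\vec d} (r'_N)^{d_N} \cdots (r'_1)^{d_1}$, with $\mu_{\vec d} := \prod_k q_{\lambda_k}^{-d_k(d_k-1)/2} ([d_k]_{\lambda_k}^!)^{-1}$, and then simplify using Proposition \ref{Dri-Kil-val}: the factorization $<F^{\vec d}, E^{\vec d}> = \prod_k (-1)^{d_k} q_{\lambda_k}^{d_k(d_k-1)/2} [d_k]_{\lambda_k}^! (q_{\lambda_k} - q_{\lambda_k}^{-1})^{-d_k}$ cancels the $q_{\lambda_k}$-powers and quantum factorials against $\mu_{\vec d}$, while $<F_{\lambda_k}, E_{\lambda_k}>^{-1} = -(q_{\lambda_k} - q_{\lambda_k}^{-1})$ for $k \in \{i,j\}$. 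The upshot is
\[
h_{\vec d} \;=\; (-1)^{\sum_k d_k} \, \frac{(q_{\lambda_i} - q_{\lambda_i}^{-1})(q_{\lambda_j} - q_{\lambda_j}^{-1})}{\prod_k (q_{\lambda_k} - q_{\lambda_k}^{-1})^{d_k}} \, c_{\vec d}
\]
for $\vec d$ ranging over vectors in the sum of Lemma \ref{com-rel}, together with the single separate contribution $h_{\vec e_i + \vec e_j} = q^{(\lambda_i, \lambda_j)} - 1$ coming from the leading $(q^{(\lambda_i, \lambda_j)}-1) r'_{i,j}$ term (the vector $\vec e_i + \vec e_j$ being excluded from the sum by its support).

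The first assertion is then immediate from Theorem \ref{LS}, which forces $c_{\vec d} = 0$ unless $d_k = 0$ for all $k \in [1,i] \cup [j,N]$.

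For the second assertion I would treat the two types of $\vec d$ separately. When $\vec d = \vec e_i + \vec e_j$, the coefficient $q^{(\lambda_i, \lambda_j)} - 1$ plainly lies in $\mathbb Z[q^{\pm 1}] \subset \mathcal A$ and is divisible by $1 - q$. For the remaining $\vec d$'s, I would establish $h_{\vec d} \in \mathcal A$ by appealing to the integral PBW basis for $C^+_{\mathcal A}$ (Proposition \ref{PBW-boson-int}): since $r'_i, r'_j \in C^+_{\mathcal A}$, the commutator lies there too, and its expansion in this basis has coefficients in $\mathcal A$. Divisibility by $1 - q$ then reduces to a single valuation count at the prime $(1-q)$: each $(q_{\lambda_k} - q_{\lambda_k}^{-1})$ has $(1-q)$-valuation exactly $1$, so the denominator in the formula contributes $\sum_k d_k$, the numerator prefactor contributes $2$, and Theorem \ref{LS-str} supplies at least $\sum_k d_k - 1$ from $c_{\vec d}$, giving a net valuation of at least $1$.

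The main obstacle is the stronger Levendorskii-Soibelman divisibility in Theorem \ref{LS-str}, whose proof is postponed to the appendix and requires the non-standard integral form $\mathcal A = \mathbb C[q^{\pm 1}]_{[3]!}$ in order to accommodate the doubly-laced and $G_2$ cases; granted that input together with the integral PBW basis, the proposition is a formal consequence of the explicit formula above.
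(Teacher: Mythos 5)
Your derivation of the explicit formula for $h_{\vec d}$ is exactly the paper's: the paper likewise rewrites the straightening law in the divided-power basis, feeds the pairing values of Proposition \ref{Dri-Kil-val} into the identity from the proof of Lemma \ref{com-rel}, and cancels the $q$-powers and quantum factorials against Lemma \ref{mix} to arrive at the same expression
\begin{align*}
h_{\vec d} \;=\; (-1)^{|\vec d|}\,(q_{\lambda_i}-q_{\lambda_i}^{-1})(q_{\lambda_j}-q_{\lambda_j}^{-1})\prod_{k=1}^N (q_{\lambda_k}-q_{\lambda_k}^{-1})^{-d_k}\, c_{\vec d},
\end{align*}
with the separate contribution $q^{(\lambda_i,\lambda_j)}-1$ at $\vec d = \vec e_i + \vec e_j$. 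The support claim via Theorem \ref{LS} and the valuation count at $(1-q)$ via Theorem \ref{LS-str} are also the paper's.

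The one genuine flaw is your appeal to Proposition \ref{PBW-boson-int} to establish $h_{\vec d} \in \mathcal A$: in the paper that proposition is \emph{deduced from} Proposition \ref{ast} (it is precisely Proposition \ref{ast} that shows the ordered monomials span $C^+_{\mathcal A}$ over $\mathcal A$; linear independence is then imported from Proposition \ref{PBW-boson}), so invoking it here is circular. At this point in the development you only know that the ordered monomials form a $\mathbb C(q)$-basis of $C_q^+$, which guarantees that the $h_{\vec d}$ exist in $\mathbb C(q)$ — not that an element of $C^+_{\mathcal A}$ expands with coefficients in $\mathcal A$. The repair is the route the paper actually takes: read integrality directly off the displayed closed formula, using that each $q_{\lambda_k}-q_{\lambda_k}^{-1}$ differs from $q-q^{-1}$ by the factor $[(\lambda_k,\lambda_k)/2]$, which is a unit of $\mathcal A = \mathbb C[q^{\pm 1}]_{[3]!}$ (this is exactly where the non-standard localization earns its keep), so that the divisibility of $c_{\vec d}$ supplied by Theorem \ref{LS-str} absorbs the denominator. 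With that substitution your argument coincides with the paper's proof.
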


\begin{proof}
We rewrite the Levendorskii-Soibelman straightening law using the $\mathbb C(q)$-basis $\{E^{(\vec d)}: \vec d \in (\mathbb Z_{\ge 0})^N \}$ of $U^+$ as follows:
\begin{align}
E_{\lambda_i} E_{\lambda_j} - q^{(\lambda_i, \lambda_j)} E_{\lambda_j} E_{\lambda_i} = \sum c_{(\vec d)} E^{(\vec d)}.
\end{align}
According to Theorem \ref{LS}, each $c_{(\vec d)}$ is in $\mathcal A$ and is, moreover, divisible by $[d_1]_{\lambda_i}^! \cdots [d_N]_{\lambda_N}^!$.  In fact, it is easy to see that $c_{(\vec d)} = [d_1]_{\lambda_i}^! \cdots [d_N]_{\lambda_N}^! c_{\vec d}$.  It follows from the proof of Lemma \ref{com-rel} that
\begin{align} \label{intermediate-formula}
[r'_{i}, r'_{j}] = (q^{(\lambda_i, \lambda_j)}-1) r'_j \circ r'_i + <F_{\lambda_i}, E_{\lambda_i}>^{-1} <F_{\lambda_j}, E_{\lambda_j}>^{-1} \sum\limits_{\lambda_{\vec d} = \lambda_i + \lambda_j} c_{(\vec d)} <F^{(\vec d)}, E^{(\vec d)}> r'_{(\vec d)}.
\end{align}
By Proposition \ref{Dri-Kil-val}, we know that
\begin{align}
& <F^{(\vec d)}, E^{(\vec d)}> = \nonumber \\
& (-1)^{d_1 + \cdots + d_N} q_{\lambda_1}^{d_1(d_1-1)/2} \cdots q_{\lambda_N}^{d_N(d_N-1)/2} (q_{\lambda_1} - q_{\lambda_1}^{-1})^{-d_1} \cdots (q_{\lambda_N} - q_{\lambda_N}^{-1})^{-d_N} ([d_1]_{\lambda_1}^! \cdots [d_N]_{\lambda_N}^!)^{-1}.
\end{align}
Hence, for the coefficient $<F_{\lambda_i}, E_{\lambda_i}>^{-1} <F_{\lambda_j}, E_{\lambda_j}>^{-1} c_{(\vec d)} <F^{(\vec d)}, E^{(\vec d)}>$ of $r'_{(\vec d)}$ in (\ref{intermediate-formula}), we have
\begin{align}
& <F_{\lambda_i}, E_{\lambda_i}>^{-1} <F_{\lambda_j}, E_{\lambda_j}>^{-1} c_{(\vec d)} <F^{(\vec d)}, E^{(\vec d)}> = \nonumber \\
& (-1)^{d_1 + \cdots + d_N} q_{\lambda_1}^{d_1(d_1-1)/2} \cdots q_{\lambda_N}^{d_N(d_N-1)/2} (q_{\lambda_i} - q_{\lambda_i}^{-1}) (q_{\lambda_j} - q_{\lambda_j}^{-1}) (q_{\lambda_1} - q_{\lambda_1}^{-1})^{-d_1} \cdots (q_{\lambda_N} - q_{\lambda_N}^{-1})^{-d_N} c_{\vec d}.
\end{align}
By Theorem \ref{LS-str}, $c_{\vec d}$ is an element of $\mathcal A$ divisible by $(1-q)^{d_1 + \cdots + d_N - 1}$.  It follows from our last formula that the coefficient of $r'_{(\vec d)}$ in (\ref{intermediate-formula}) is an element of $\mathcal A$; moreover this element is divisible by $1 - q$.  Lemma \ref{mix} clearly implies that
\begin{align}
r'_{(\vec d)} = q_{\lambda_1}^{-\frac 12 d_1(d_1-1)} \cdots q_{\lambda_N}^{-\frac 12 d_N(d_N-1)} (r'_{N})^{d_N} \circ \cdots \circ (r'_{1})^{d_1}.
\end{align}
The result follows by plugging this into formula (\ref{intermediate-formula}).
\end{proof}

It follows from Proposition \ref{ast} that operators of the form $(r'_N)^{d_N} \circ \cdots \circ (r'_1)^{d_1}$, $d_1, \cdots, d_N \in \mathbb Z_{\ge 0}$, span $C^+_{\mathcal A}$ as an $\mathcal A$-submodule of $C_q^+$.  Since these operators are linearly independent over $\mathbb C(q)$ by Proposition \ref{PBW-boson}, they are also linearly independent over $\mathcal A$.  Thus we have proved the following PBW theorem for $C^+_{\mathcal A}$.

\begin{prop} \label{PBW-boson-int}
The elements $(r'_N)^{d_N} \circ \cdots \circ (r'_1)^{d_1}$, for all $d_1, \cdots, d_N \in \mathbb Z_{\ge 0}$, form an $\mathcal A$-basis for the integral form $C^+_{\mathcal A}$ of $C_q^+$.
\end{prop}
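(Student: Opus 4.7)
The plan is to separate the proposition into two parts—spanning and linear independence—and handle each using the earlier results in the paper.

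Linear independence over $\mathcal A$ is automatic: the inclusion $\mathcal A \subset \mathbb C(q)$ means any nontrivial $\mathcal A$-linear relation among the monomials $(r'_N)^{d_N} \circ \cdots \circ (r'_1)^{d_1}$ would be a nontrivial $\mathbb C(q)$-linear relation, contradicting Proposition \ref{PBW-boson}.

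For spanning, let $M$ denote the $\mathcal A$-submodule of $C_q^+$ generated by the standard monomials. The goal is $C^+_{\mathcal A} = M$; the inclusion $M \subseteq C^+_{\mathcal A}$ is immediate, and for the reverse it suffices to show that $M$ is closed under left multiplication by each generator $r'_k$, since $C^+_{\mathcal A}$ is the $\mathcal A$-subalgebra generated by $r'_1, \ldots, r'_N$. Concretely, the task reduces to showing $r'_k \circ m \in M$ for every standard monomial $m$ and every index $k$.

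I plan to carry this out by a PBW-style straightening argument, repeatedly invoking Proposition \ref{ast}: whenever an out-of-order adjacent pair $r'_i \circ r'_j$ with $i < j$ appears in a word, one rewrites
\begin{align}
r'_i \circ r'_j = q^{(\lambda_i, \lambda_j)} r'_j \circ r'_i + \sum_{\vec d} h_{\vec d} \, (r'_N)^{d_N} \circ \cdots \circ (r'_1)^{d_1},
\end{align}
where each $h_{\vec d} \in \mathcal A$ and, crucially, the nonzero correction terms have support strictly inside the open interval $(i, j)$. The leading piece swaps two generators (reducing an inversion), while each correction is already in standard form and built from strictly intermediate generators.

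The main obstacle is termination, since a correction can contain more letters than the two-letter product it replaces and naive induction on word length fails. I would organize the induction on the pair $(\mu, j - i)$, where $\mu \in (\mathbb Z \Phi)^+$ is the preserved total degree and $j - i$ is the gap of the current swap. Because each root space $U^-_{-\mu}$ is finite-dimensional over $\mathbb C(q)$, Proposition \ref{PBW-boson} implies that only finitely many standard monomials of degree $\mu$ exist; and the corrections produced by an $(i,j)$-swap involve only strictly smaller gaps, which feeds the inner induction and forces termination. Combined with the closure property, this yields $C^+_{\mathcal A} = M$ and completes the proof.
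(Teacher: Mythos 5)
Your decomposition is exactly the paper's proof: linear independence over $\mathcal A$ is deduced from Proposition \ref{PBW-boson} via the inclusion $\mathcal A \subset \mathbb C(q)$, and spanning is asserted to "follow from Proposition \ref{ast}" with no further detail. So the route is the same; the only substantive thing you add is a termination scheme for the straightening, and that scheme as stated does not close. Inducting on the gap $j-i$ fails because a rewrite of an adjacent out-of-order pair can create new out-of-order adjacent pairs of strictly larger gap: in a word such as $r'_a \circ r'_i \circ r'_j$ with $a < i < j$, the leading term $r'_a \circ r'_j \circ r'_i$ contains the adjacent pair $(a,j)$ of gap $j-a > j-i$, and likewise the leftmost letter $r'_k$ (with $i<k<j$) of a correction monomial forms the pair $(a,k)$, whose gap $k-a$ may also exceed $j-i$. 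The statement is still true and the repair is standard: since the degree $\mu$ is preserved and every letter $r'_k$ has nonzero degree $\lambda_k$, the set of words of degree $\mu$ is finite; order these words lexicographically by their index sequences read left to right (larger index at the first differing position means larger word) and observe that each application of Proposition \ref{ast} replaces a word by an $\mathcal A$-linear combination of strictly larger words (the leading term puts $j>i$ at the swap position, and each correction puts some $c_1$ with $i<c_1<j$ there). A strictly increasing process on a finite set terminates, and it can only terminate at weakly decreasing, i.e.\ standard, monomials. With that substitution your argument is complete and coincides with the paper's.
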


Another consequence of Proposition \ref{ast} is

\begin{cor}
For each $w \in W$, the $\mathcal A$-submodule $C^+_{\mathcal A}[w]$ of $C^+_{\mathcal A}$ is an $\mathcal A$-subalgebra of $C^+_{\mathcal A}$.
\end{cor}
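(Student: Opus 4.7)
The plan is to prove that $C^+_{\mathcal A}[w]$ equals the $\mathcal A$-subalgebra $B$ of $C^+_{\mathcal A}$ generated by $r'_{\lambda_j}, \ldots, r'_{\lambda_k}$, where $w = s_{\alpha_{i_j}} \cdots s_{\alpha_{i_k}}$. The inclusion $C^+_{\mathcal A}[w] \subseteq B$ is immediate from the definition of $C^+_{\mathcal A}[w]$ as the $\mathcal A$-span of PBW monomials in these generators. For the reverse inclusion, I would show that every product $P = r'_{m_1} \cdots r'_{m_\ell}$ with all indices $m_s \in [j,k]$ expands, in the PBW basis of $C^+_{\mathcal A}$ given by Proposition \ref{PBW-boson-int}, as an $\mathcal A$-linear combination of monomials $(r'_N)^{d_N} \cdots (r'_1)^{d_1}$ with $d_i = 0$ for $i \notin [j,k]$.

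The main input is Proposition \ref{ast}, from which a short rearrangement (using Lemma \ref{com-rel} together with $r'_{i,j} = r'_j \circ r'_i$ from Lemma \ref{mix}) yields, for any $a < b$ in $[j,k]$,
\[
r'_a r'_b \;=\; q^{(\lambda_a,\lambda_b)}\, r'_b r'_a \;+\; \sum_{\vec d}\, h_{\vec d}\,(r'_N)^{d_N}\cdots (r'_1)^{d_1},
\]
where the sum runs over $\vec d \in (\mathbb Z_{\ge 0})^N$ satisfying $\lambda_{\vec d} = \lambda_a + \lambda_b$ and $d_i = 0$ whenever $i \notin (a,b)$, and where all coefficients $h_{\vec d}$ lie in $\mathcal A$. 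This is the crucial \emph{interval-preservation} property: the correction terms only involve indices strictly between $a$ and $b$, hence inside $[j,k]$.

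I would then induct on the pair $(w(P), D(P))$ in lexicographic order, where $w(P) = (w_1(P), \ldots, w_N(P))$ records the number of times each $r'_i$ occurs in $P$, compared from highest index downward (i.e., by $w_N$ first, then $w_{N-1}$, and so on), and $D(P)$ is the number of inversions of the sequence $(m_1, \ldots, m_\ell)$. Since the straightening preserves the total weight $\mu = \sum_s \lambda_{m_s}$ and only finitely many $w$-vectors correspond to a fixed weight, this is a well-ordering. In the inductive step with $D(P) > 0$, I pick an adjacent inversion $m_s < m_{s+1}$ and apply the formula above to the factor $r'_{m_s} r'_{m_{s+1}}$. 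The ``swap term'' $q^{(\lambda_{m_s},\lambda_{m_{s+1}})}\, r'_{m_1} \cdots r'_{m_{s-1}} r'_{m_{s+1}} r'_{m_s} r'_{m_{s+2}} \cdots r'_{m_\ell}$ has the same $w$-vector but disorder $D(P) - 1$, so it lies in $C^+_{\mathcal A}[w]$ by induction. For each correction term, substituting $(r'_N)^{d_N}\cdots (r'_1)^{d_1}$ in place of $r'_{m_s} r'_{m_{s+1}}$ leaves $w_i$ unchanged for $i > m_{s+1}$ (since $d_i = 0$ there) while strictly decreasing $w_{m_{s+1}}$ by $1$; hence the new $w$-vector drops strictly in the chosen lex order, and induction again applies.

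The main obstacle is that the length of the word can actually grow under the substitution step, because $[r'_a, r'_b]$ may contain PBW monomials of length strictly greater than $2$ (bounded only by the height of $\lambda_a + \lambda_b$). A naive induction on word length therefore fails. The resolution is precisely the top-down lexicographic order on the $w$-vectors: Proposition \ref{ast} guarantees that the correction terms only involve indices strictly less than $b = m_{s+1}$, which is exactly what is needed to strictly lower the top nontrivial coordinate of $w$ and thereby drive the induction through.
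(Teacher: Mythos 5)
Your proposal is correct and takes essentially the same route as the paper, whose proof of this corollary is the single sentence that it ``follows easily from Lemma \ref{com-rel}, Proposition \ref{ast} and the definition of $C^+_{\mathcal A}[w]$.'' Your straightening induction on the top-down lexicographic order of multiplicity vectors is a valid and careful expansion of exactly that argument, and you correctly isolate the two facts that make it work: the correction terms in Proposition \ref{ast} have coefficients in $\mathcal A$ and are supported on indices strictly between the two being commuted.
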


\begin{proof}
This follows easily from Lemma \ref{com-rel}, Proposition \ref{ast} and the definition of $C^+_{\mathcal A}[w]$.
\end{proof}

\begin{cor}
The $\mathbb C$-algebra $C^+_{\mathcal A}/(1-q) C^+_{\mathcal A}$ is commutative.
\end{cor}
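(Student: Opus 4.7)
The plan is to reduce commutativity of the quotient algebra to the vanishing of commutators of the generators modulo $(1-q)$, and then invoke Proposition \ref{ast} directly. Since $C^+_{\mathcal A}$ is generated as an $\mathcal A$-algebra by $r'_1, \dots, r'_N$, its quotient $C^+_{\mathcal A}/(1-q) C^+_{\mathcal A}$ is generated as a $\mathbb C$-algebra by the images $\overline{r'_1}, \dots, \overline{r'_N}$. Thus it suffices to verify that $[\overline{r'_i}, \overline{r'_j}] = 0$ in the quotient for all $1 \le i < j \le N$.

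To do so, I would expand $[r'_i, r'_j]$ in the PBW basis of Proposition \ref{PBW-boson-int}: by that result we may write, uniquely,
\begin{align*}
[r'_i, r'_j] = \sum_{\vec d \in (\mathbb Z_{\ge 0})^N} h_{\vec d}\, (r'_N)^{d_N} \circ \cdots \circ (r'_1)^{d_1},
\end{align*}
with coefficients $h_{\vec d} \in \mathbb C(q)$. Proposition \ref{ast} asserts precisely that each nonzero $h_{\vec d}$ lies in $\mathcal A$ and is divisible by $1-q$. Consequently $[r'_i, r'_j] \in (1-q)\, C^+_{\mathcal A}$, and passing to the quotient yields $[\overline{r'_i}, \overline{r'_j}] = 0$.

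Since a commutator of any two elements of $C^+_{\mathcal A}/(1-q) C^+_{\mathcal A}$ can be expressed as a $\mathbb C$-linear combination of commutators of monomials in the $\overline{r'_k}$, and these in turn reduce via the Leibniz rule to commutators of the generators, the pairwise commutativity of the $\overline{r'_k}$ implies commutativity of the entire quotient algebra. There is essentially no obstacle here: the hard technical work was done in Theorem \ref{LS-str} and its consequence Proposition \ref{ast}, which together ensure that every straightening coefficient relevant to the commutator $[r'_i, r'_j]$ is divisible by $1-q$. The present corollary is simply the conceptual shadow of that divisibility statement at the level of the quasi-classical limit.
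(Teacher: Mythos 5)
Your argument is correct and is essentially the paper's own proof: both reduce commutativity to the generators $\overline{r'_1},\dots,\overline{r'_N}$ and then apply Proposition \ref{ast} (together with the PBW basis of Proposition \ref{PBW-boson-int}) to conclude that $[r'_i,r'_j]\in(1-q)C^+_{\mathcal A}$. No further comment is needed.
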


\begin{proof}
Note that $C^+_{\mathcal A}/(1-q) C^+_{\mathcal A}$ is generated as a $\mathbb C$-algebra by $\overline r'_i$, for all $1 \le i \le N$, where $\overline r'_i$ stands for the image of $r'_i$ in $C^+_{\mathcal A}/(1-q) C^+_{\mathcal A}$.  By Proposition \ref{ast}, each coefficient in the commutator $[r'_{i},r'_{j}]$ is divisible by $1-q$.  It follows that in $C^+_{\mathcal A}/(1-q) C^+_{\mathcal A}$ we have $[\overline r'_i, \overline r'_j] = 0$.
\end{proof}

\subsection{Leibniz Rule for Kashiwara Operators} \label{Leib-sect}

The primary goal of this section is to study commutation relations between Kashiwara operators and the operators of left multiplication by a root vector in the negative half of the quantized universal enveloping algebra.  Material in this section will be used in section \ref{alg-asp-sect} to see that, for all $1 \le i \le N$, commutating with the root vector $F_i$ gives rise to a derivation on the quasi-classical limit of the integral form of the positive half of the quantum boson algebra.

We need some notations.  For $\vec d, \vec e \in (\mathbb Z_{\ge 0})^N$, the product $F^{(\vec d)} F^{(\vec e)}$ is an element of $U^-_{\mathcal A}$, hence it can be written as an $\mathcal A$-linear combination
\begin{align}
F^{(\vec d)} F^{(\vec e)} = \sum\limits_{\vec f} n_{\vec f}^{\vec d, \vec e} F^{(\vec f)}, ~ n_{\vec f}^{\vec d, \vec e} \in \mathcal A.
\end{align}
If $\vec f = \vec e_i$ for some $1 \le i \le N$, we put $n_{\lambda_i}^{\vec d, \vec e} := n_{\vec f}^{\vec d, \vec e}$, for simplicity.



Recall that $U^-$ is a $(\mathbb Z \Phi)^+$-graded algebra.  The $\mathcal A$-subalgebra $U^-_{\mathcal A}$ of $U^-$ inherits a $(\mathbb Z \Phi)^+$-grading whose degree $\mu$ component is $U^-_{\mathcal A} \cap U^-_{-\mu}$.

\begin{lem} \label{Leib} {\bf (Leibniz rule for Kashiwara operators)}
Let $\lambda$ be a positive root and $y, y' \in U^-_{\mathcal A}$ homogeneous elements.  Then, we have
\begin{align} \label{Leib-formula}
r'_{\lambda} (y y') = \sum q^{(\lambda_{\vec d}, \text{wt} (y') - \lambda_{\vec e})} n_{\lambda}^{\vec d, \vec e} r'_{(\vec d)} (y) r'_{(\vec e)} (y'),
\end{align}
where $\text{wt} (y') \in (\mathbb Z \Phi)^+$ stands for the degree of the homogeneous element $y' \in U^-_{\mathcal A}$.
\end{lem}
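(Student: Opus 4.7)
The plan is to derive the formula by exploiting the fact that $\Delta$ is an algebra homomorphism together with the defining equation (\ref{int-Kas-op}) for the operators $r'_{(\vec d)}$. Since both sides of the desired identity are $\mathbb C(q)$-bilinear in $(y,y')$, the stated formula for homogeneous $y,y'$ will suffice.

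First I would write out $\Delta(y)$ and $\Delta(y')$ using (\ref{int-Kas-op}), then multiply and use that $(a\otimes b)(c\otimes d)=ac\otimes bd$ in $U\otimes U$:
\begin{align*}
\Delta(yy')=\Delta(y)\Delta(y')=\sum_{\vec d,\vec e}F^{(\vec d)}F^{(\vec e)}\otimes r'_{(\vec d)}(y)\,K_{\lambda_{\vec d}}^{-1}\,r'_{(\vec e)}(y')\,K_{\lambda_{\vec e}}^{-1}.
\end{align*}
Next I would commute $K_{\lambda_{\vec d}}^{-1}$ past $r'_{(\vec e)}(y')$. Since $r'_{(\vec e)}(y')$ lies in $U^-_{-(\mathrm{wt}(y')-\lambda_{\vec e})}$, the relation $K_\alpha F_\beta K_\alpha^{-1}=q^{-(\alpha,\beta)}F_\beta$ extends to
\begin{align*}
K_{\lambda_{\vec d}}^{-1}\,r'_{(\vec e)}(y')=q^{(\lambda_{\vec d},\,\mathrm{wt}(y')-\lambda_{\vec e})}\,r'_{(\vec e)}(y')\,K_{\lambda_{\vec d}}^{-1},
\end{align*}
producing exactly the prefactor $q^{(\lambda_{\vec d},\mathrm{wt}(y')-\lambda_{\vec e})}$ appearing in (\ref{Leib-formula}). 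The two $K$-factors combine to $K_{\lambda_{\vec d}+\lambda_{\vec e}}^{-1}$.

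Then I would expand $F^{(\vec d)}F^{(\vec e)}=\sum_{\vec f}n_{\vec f}^{\vec d,\vec e}F^{(\vec f)}$ and reorganize the resulting sum by the index $\vec f$. Note that $n_{\vec f}^{\vec d,\vec e}$ vanishes unless $\lambda_{\vec f}=\lambda_{\vec d}+\lambda_{\vec e}$, so whenever a term contributes we may replace $K_{\lambda_{\vec d}+\lambda_{\vec e}}^{-1}$ by $K_{\lambda_{\vec f}}^{-1}$. This gives
\begin{align*}
\Delta(yy')=\sum_{\vec f}F^{(\vec f)}\otimes\Bigl(\sum_{\vec d,\vec e}q^{(\lambda_{\vec d},\,\mathrm{wt}(y')-\lambda_{\vec e})}\,n_{\vec f}^{\vec d,\vec e}\,r'_{(\vec d)}(y)\,r'_{(\vec e)}(y')\Bigr)K_{\lambda_{\vec f}}^{-1}.
\end{align*}

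Finally, comparing with the expansion $\Delta(yy')=\sum_{\vec f}F^{(\vec f)}\otimes r'_{(\vec f)}(yy')K_{\lambda_{\vec f}}^{-1}$ provided by the definition of $r'_{(\vec f)}$ and invoking the $\mathbb C(q)$-linear independence of the PBW-type elements $\{F^{(\vec f)}K_{\lambda_{\vec f}}^{-1}:\vec f\in(\mathbb Z_{\ge 0})^N\}$ in $U^-\otimes U^{\le 0}$, I read off a formula for $r'_{(\vec f)}(yy')$ for every $\vec f$; specializing to $\vec f=\vec e_i$ with $\lambda_i=\lambda$ yields (\ref{Leib-formula}). The only slightly subtle step is the commutation producing the $q$-power; once the weight of $r'_{(\vec e)}(y')$ is correctly identified the rest is bookkeeping, so I do not expect a serious obstacle.
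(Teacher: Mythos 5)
Your proposal is correct and follows essentially the same route as the paper, which simply equates the summand of the form $F_{\lambda} \otimes X K_{\lambda}^{-1}$ on the two sides of $\Delta(yy') = \Delta(y)\Delta(y')$; you have merely written out the bookkeeping (the $K$-commutation producing the $q$-power, the expansion of $F^{(\vec d)}F^{(\vec e)}$ via the structure constants $n_{\vec f}^{\vec d,\vec e}$, and the coefficient comparison) that the paper leaves implicit.
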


\begin{proof}
One equates the summand of the form $F_{\lambda} \otimes X K_{\lambda}^{-1}$ on the two sides of the equation $\Delta(y y') = \Delta(y) \Delta(y')$.  The statement follows easily from this.
\end{proof}

Let $\vec d \in (\mathbb Z_{\ge 0})^N$ and $1 \le j \le N$, by Theorem \ref{PBW-integral-version} and Lemma \ref{preserves-int-form}, we have
\begin{align} \label{unknown-coeff}
r'_{(\vec d)} (F_j) = \sum \limits_{\vec f} c^{\lambda_j}_{\vec d, \vec f} F^{(\vec f)},
\end{align}
where $c^{\lambda_j}_{\vec d, \vec f} \in \mathcal A$.

\begin{lem} \label{Leib-lem}
For all $1 \le i,j \le N$, $\mu \in (\mathbb Z \Phi)^+$ and $y \in U^-_{\mathcal A} \cap U^-_{-\mu}$, we have
\begin{flalign} \label{vf}
r'_i (F_j y) - F_j r'_i (y) = \sum \limits_{\vec d \neq 0} q^{(\lambda_{\vec d}, \mu - \lambda_{\vec e})} \, n_{\lambda_i}^{\vec d, \vec e} \, c^{\lambda_j}_{\vec d, \vec f} \, F^{(\vec f)} r'_{(\vec e)} (y).
\end{flalign}
\end{lem}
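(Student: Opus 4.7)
The plan is to derive the identity as a direct consequence of the Leibniz rule (Lemma \ref{Leib}) applied to the product $F_j \cdot y$. Taking $\lambda = \lambda_i$, the ``first factor'' to be $F_j$ and the ``second factor'' to be $y$ (whose weight is $\mu$), Lemma \ref{Leib} yields
\begin{align*}
r'_i(F_j y) = \sum_{\vec d, \vec e} q^{(\lambda_{\vec d},\, \mu - \lambda_{\vec e})} \, n_{\lambda_i}^{\vec d, \vec e} \, r'_{(\vec d)}(F_j) \, r'_{(\vec e)}(y).
\end{align*}

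The next step is to peel off the contribution of $\vec d = 0$. Inspecting the defining formula (\ref{int-Kas-op}), the $\vec d = 0$ summand of $\Delta(y)$ is $1 \otimes r'_{(0)}(y)$, and comparing with the known decomposition $\Delta(y) = 1\otimes y + \text{higher terms}$ shows that $r'_{(0)}$ is the identity operator on $U^-$; in particular $r'_{(0)}(F_j) = F_j$. On the other hand, $n_{\lambda_i}^{0,\vec e}$ is the coefficient of $F_{\lambda_i} = F^{(\vec e_i)}$ in $F^{(0)} F^{(\vec e)} = F^{(\vec e)}$, so $n_{\lambda_i}^{0,\vec e} = \delta_{\vec e, \vec e_i}$. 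Since also $r'_{(\vec e_i)} = r'_i$ (because $F^{(\vec e_i)} = F_{\lambda_i}$), the $\vec d = 0$ contribution collapses to $q^{(0,\mu - \lambda_i)} F_j \, r'_i(y) = F_j \, r'_i(y)$.

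Moving this term to the left-hand side gives
\begin{align*}
r'_i(F_j y) - F_j r'_i(y) = \sum_{\vec d \neq 0,\, \vec e} q^{(\lambda_{\vec d},\, \mu - \lambda_{\vec e})} \, n_{\lambda_i}^{\vec d, \vec e} \, r'_{(\vec d)}(F_j) \, r'_{(\vec e)}(y).
\end{align*}
Substituting the expansion $r'_{(\vec d)}(F_j) = \sum_{\vec f} c^{\lambda_j}_{\vec d, \vec f} F^{(\vec f)}$ from (\ref{unknown-coeff}) and re-indexing produces exactly formula (\ref{vf}).

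There is no genuine obstacle here: the argument is a bookkeeping exercise on top of Lemma \ref{Leib}. The only subtle point is confirming that $r'_{(0)} = \mathrm{id}$ and that $n_{\lambda_i}^{0,\vec e}$ vanishes except when $\vec e = \vec e_i$, so that the $\vec d = 0$ contribution collects precisely into $F_j r'_i(y)$ with no stray $q$-powers. Once that is verified, the stated formula drops out immediately.
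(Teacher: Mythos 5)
Your argument is correct and is essentially the paper's own proof: both specialize Lemma \ref{Leib} to $\lambda = \lambda_i$ with first factor $F_j$, isolate the $\vec d = 0$ summand as $F_j r'_i(y)$, and then substitute the expansion (\ref{unknown-coeff}) of $r'_{(\vec d)}(F_j)$. Your explicit verification that $r'_{(0)} = \mathrm{id}$ and $n_{\lambda_i}^{0,\vec e} = \delta_{\vec e, \vec e_i}$ is a detail the paper leaves implicit, but the route is the same.
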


\begin{proof}
In Lemma \ref{Leib} take $\lambda = \lambda_i$ and $y = F_j$.  Note that the summand corresponding to $\vec d = 0$ on the right-hand side of formula (\ref{Leib-formula}) is nothing but $F_j r'_i (y')$.  Moving this summand to the left-hand side, we get
\begin{flalign}
r'_i (F_j y') - F_j r'_i (y') = \sum \limits_{\vec d \neq 0, \lambda_{\vec d} + \lambda_{\vec e} = \lambda_i} q^{(\lambda_{\vec d}, \text{wt} y' - \lambda_{\vec e})} n_{\lambda_i}^{\vec d, \vec e} r'_{(\vec d)} (F_j) r'_{(\vec e)} (y').
\end{flalign}
The result follows by replacing $y'$ with $y$ and plugging in formula (\ref{unknown-coeff}).
\end{proof}

Introduce the notation $|\vec d| := \sum_{i=1}^N d_i$, for $\vec d \in (\mathbb Z_{\ge 0})^N$.  The following lemma is helpful to simplify formula (\ref{vf}).

\begin{lem} \label{a-coeff}
For all $1 \le j \le N$ and $\vec d, \vec f \in (\mathbb Z_{\ge 0})^N$, the element $c_{\vec d, \vec f}^{\lambda_j}$ is divisible by $(1-q)^{|\vec d| + |\vec f| - 1}$ in $\mathcal A$.
\end{lem}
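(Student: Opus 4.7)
The plan is to compute $c^{\lambda_j}_{\vec d,\vec f}$ explicitly via the Drinfeld-Killing pairing and then read off its $(1-q)$-adic valuation on the UFD $\mathcal A$. Starting from the defining identity
\begin{align*}
\Delta(F_j) = \sum_{\vec d,\vec f} c^{\lambda_j}_{\vec d,\vec f}\, F^{(\vec d)} \otimes F^{(\vec f)} K^{-1}_{\lambda_{\vec d}},
\end{align*}
I would pair both sides with $E^{\vec d} \otimes E^{\vec f}$. The duality $\langle \Delta(y), x'\otimes x\rangle = \langle y, xx'\rangle$ of Proposition \ref{Dri-Kil}, together with the orthogonality statement of Proposition \ref{Dri-Kil-val}(b), isolates a single term on the right, yielding
\begin{align*}
c^{\lambda_j}_{\vec d,\vec f}\, M_{\vec d}\, M_{\vec f} = \langle F_j,\, E^{\vec f} E^{\vec d}\rangle, \qquad M_{\vec d} := \prod_i (-1)^{d_i} q_{\lambda_i}^{d_i(d_i-1)/2}(q_{\lambda_i}-q_{\lambda_i}^{-1})^{-d_i}.
\end{align*}
Expanding $E^{\vec f} E^{\vec d} = \sum_{\vec g} \alpha_{\vec g} E^{\vec g}$ in the PBW basis, and observing that $F_j = F^{\vec e_j}$ pairs nontrivially only with $E_j = E^{\vec e_j}$, I obtain $\langle F_j, E^{\vec f} E^{\vec d}\rangle = -\alpha_{\vec e_j}\,(q_{\lambda_j}-q_{\lambda_j}^{-1})^{-1}$ by Proposition \ref{Dri-Kil-val}(c).

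The key structural input is that $\alpha_{\vec e_j} \in \mathcal A$. Since the root vectors $E_{\lambda_k} = E^{(\vec e_k)}$ lie in $U^+_{\mathcal A}$ by Theorem \ref{PBW-integral-version}, the product $E^{\vec f} E^{\vec d}$ also lies in $U^+_{\mathcal A}$. Hence in the $\mathcal A$-PBW expansion $E^{\vec f} E^{\vec d} = \sum_{\vec g} \alpha'_{\vec g}\, E^{(\vec g)}$ the coefficients $\alpha'_{\vec g}$ are in $\mathcal A$. Since $E^{(\vec e_j)} = E_j$ (no divided-power rescaling for a unit vector), one has $\alpha_{\vec e_j} = \alpha'_{\vec e_j} \in \mathcal A$; in particular $v_{(1-q)}(\alpha_{\vec e_j}) \geq 0$ for the $(1-q)$-adic valuation.

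It then remains to tally valuations. Using the factorization $q_{\lambda_i} - q_{\lambda_i}^{-1} = q_{\lambda_i}^{-1}(q-1)(1+q+\cdots+q^{(\lambda_i,\lambda_i)-1})$, whose trailing polynomial factor evaluates to $(\lambda_i,\lambda_i) \neq 0$ at $q = 1$, each such difference contributes exactly $1$ to $v_{(1-q)}$. Hence $v_{(1-q)}((M_{\vec d} M_{\vec f})^{-1}) = |\vec d|+|\vec f|$ and $v_{(1-q)}((q_{\lambda_j} - q_{\lambda_j}^{-1})^{-1}) = -1$, and solving for $c^{\lambda_j}_{\vec d,\vec f}$ gives
\begin{align*}
v_{(1-q)}(c^{\lambda_j}_{\vec d,\vec f}) \;=\; v_{(1-q)}(\alpha_{\vec e_j}) + |\vec d| + |\vec f| - 1 \;\geq\; |\vec d| + |\vec f| - 1.
\end{align*}
Since $c^{\lambda_j}_{\vec d,\vec f} \in \mathcal A$ by Lemma \ref{preserves-int-form} and $(1-q)$ is a prime in the UFD $\mathcal A$, this valuation bound is equivalent to the claimed divisibility. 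The main piece of care in this approach is the initial setup: correctly relating the two expansions of $\Delta(F_j)$ in the $\{F^{\vec d}\}$ versus $\{F^{(\vec d)}\}$ bases, and tracking the signs and $q$-powers accumulated in $M_{\vec d}$. Once that bookkeeping is in place, the valuation count is elementary and does not even require the strengthening in Theorem \ref{LS-str}.
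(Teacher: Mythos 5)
Your argument is correct, and it is worth noting that the paper itself omits the proof of this lemma (declaring it ``tedious''), so your write-up actually supplies a complete argument where the text has none. The chain of deductions checks out: the duality $\langle\Delta(y),x'\otimes x\rangle=\langle y,xx'\rangle$ together with the orthogonality of Proposition \ref{Dri-Kil-val}(b) and the $K$-invariance $\langle yK_\lambda,x\rangle=\langle y,x\rangle$ correctly isolates $c^{\lambda_j}_{\vec d,\vec f}M_{\vec d}M_{\vec f}=\langle F_j,E^{\vec f}E^{\vec d}\rangle$; the identification $\alpha_{\vec e_j}=\alpha'_{\vec e_j}\in\mathcal A$ is legitimate because $E^{\vec f}E^{\vec d}\in U^+_{\mathcal A}$ and $E^{(\vec e_j)}=E_j$; and since $(1-q)$ is prime in $\mathcal A$ (it does not divide $[3]!$, which equals $6$ at $q=1$), the valuation count $v_{(1-q)}(q_\lambda-q_\lambda^{-1})=1$ gives exactly the exponent $|\vec d|+|\vec f|-1$. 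You are also right to invoke Lemma \ref{preserves-int-form} separately for membership of $c^{\lambda_j}_{\vec d,\vec f}$ in $\mathcal A$, since a bound on the $(1-q)$-adic valuation alone would not rule out other denominators. The most interesting feature of your route is the one you flag at the end: the only structural input is that the PBW structure constants of $U^+_{\mathcal A}$ lie in $\mathcal A$, so the refined divisibility of Theorem \ref{LS-str} is not needed here --- all the powers of $(1-q)$ come from the normalizations $\langle F^{(\vec d)},E^{(\vec d)}\rangle$ and the single factor $\langle F_j,E_j\rangle$. This is in the same spirit as the paper's proof of Proposition \ref{ast} (which does use the pairing but also leans on Theorem \ref{LS-str}), and is likely cleaner than whatever inductive computation the author had in mind. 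One cosmetic remark: the statement is vacuous unless $\lambda_{\vec d}+\lambda_{\vec f}=\lambda_j$ (otherwise $c^{\lambda_j}_{\vec d,\vec f}=0$ by weight reasons), and for $|\vec d|+|\vec f|\le 1$ the exponent is $\le 0$, so the content of the lemma begins at $|\vec d|+|\vec f|\ge 2$; your proof covers all cases uniformly.
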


The proof of the lemma is tedious and will be omitted.

Taking quasi-classical limit of formula (\ref{vf}), we deduce that


\begin{cor}
For $1 \le i,j \le N$, the image of the commutator $[r'_i, F_j]$ in $\text{End}_{\mathcal A} (U^-_{\mathcal A})/(1-q)\text{End}_{\mathcal A} (U^-_{\mathcal A})$ belongs to $C^+_{\mathcal A}/ (1-q)C^+_{\mathcal A}$.
\end{cor}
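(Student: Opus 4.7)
The plan is to start from the explicit Leibniz-type formula in Lemma \ref{Leib-lem}, reduce it modulo $(1-q)$, and then use Lemma \ref{a-coeff} to see that only a very restricted collection of indices contributes, and that the surviving contributions lie in $C^+_{\mathcal A}$.

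First, I would fix a weight $\mu \in (\mathbb Z \Phi)^+$ and a homogeneous $y \in U^-_{\mathcal A} \cap U^-_{-\mu}$ and begin with
\begin{align*}
[r'_i, F_j](y) = \sum_{\vec d \neq 0,\, \vec e,\, \vec f} q^{(\lambda_{\vec d},\, \mu - \lambda_{\vec e})} \, n_{\lambda_i}^{\vec d, \vec e} \, c^{\lambda_j}_{\vec d, \vec f} \, F^{(\vec f)} r'_{(\vec e)}(y).
\end{align*}
Modulo $(1-q)$ every power of $q$ collapses to $1$, so one is reduced to analyzing the plain sum of $\mathcal A$-scalars times $F^{(\vec f)} r'_{(\vec e)}$. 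By Lemma \ref{a-coeff}, $c^{\lambda_j}_{\vec d, \vec f}$ is divisible by $(1-q)^{|\vec d|+|\vec f|-1}$, so its image in $\mathcal A/(1-q)\mathcal A = \mathbb C$ can be nonzero only when $|\vec d|+|\vec f|=1$. Combined with $\vec d \neq 0$, this forces $|\vec d|=1$ and $\vec f = 0$, i.e.\ $\vec d = \vec e_k$ for some $k$.

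Next, the weight constraint $\lambda_{\vec d}+\lambda_{\vec f}=\lambda_j$ implicit in the nonvanishing of $c^{\lambda_j}_{\vec d, \vec f}$ pins $k=j$, and Lemma \ref{fund-comp} identifies $c^{\lambda_j}_{\vec e_j,0}$ with the coefficient of $F^{(0)}=1$ in $r'_j(F_j)=1$, namely $1$. Since $F^{(0)}$ is the identity operator, the reduction collapses to
\begin{align*}
[r'_i, F_j] \equiv \sum_{\vec e:\, \lambda_{\vec e} = \lambda_i - \lambda_j} \overline{n_{\lambda_i}^{\vec e_j, \vec e}} \, \overline{r'_{(\vec e)}} \pmod{(1-q)\,\mathrm{End}_{\mathcal A}(U^-_{\mathcal A})},
\end{align*}
where the sum is empty unless $\lambda_i - \lambda_j \in (\mathbb Z \Phi)^+$. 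By Lemma \ref{mix} every $r'_{(\vec e)}$ belongs to $C^+_{\mathcal A}$, so the right-hand side is the image of a bona fide element of $C^+_{\mathcal A}$, which is exactly the statement of the corollary. As a sanity check, when $i=j$ only $\vec e = 0$ contributes and one recovers the expected classical relation $[r'_j,F_j] \equiv \mathrm{id}$.

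The main subtlety is the simultaneous weight bookkeeping $\lambda_{\vec d}+\lambda_{\vec e}=\lambda_i$, $\lambda_{\vec d}+\lambda_{\vec f}=\lambda_j$, and the size constraint $|\vec d|+|\vec f|=1$ coming from Lemma \ref{a-coeff}; once these are disentangled, the argument is purely mechanical and yields an explicit $\mathbb C$-linear combination of the PBW generators $\overline{r'_{(\vec e)}}$ of $C^+_{\mathcal A}/(1-q)C^+_{\mathcal A}$. The only nontrivial input is Lemma \ref{a-coeff}, whose divisibility bound is what makes the quasi-classical limit land inside the positive half of the quantum boson algebra rather than merely inside $C_{\mathcal A}$.
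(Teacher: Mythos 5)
Your proof is correct and takes essentially the same route as the paper: starting from the Leibniz-type formula of Lemma \ref{Leib-lem} and using the divisibility bound of Lemma \ref{a-coeff} together with $\vec d \neq 0$ to kill, modulo $(1-q)$, every summand with $\vec f \neq 0$, so that only scalar multiples of the $r'_{(\vec e)} \in C^+_{\mathcal A}$ survive. Your further refinement pinning down $\vec d = \vec e_j$ and $c^{\lambda_j}_{\vec e_j, 0} = 1$ is accurate but not needed for the corollary; the paper records that extra precision separately in formula (\ref{def-act}) and in the proof of Theorem \ref{sim-mod}.
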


\begin{proof}
By Lemma \ref{a-coeff}, $(1 - q)^{|\vec d| + |\vec f| - 1}$ divides $c_{\vec d, \vec f}^{\lambda_j}$.  Suppose $\vec f \neq 0$.  Since $\vec d \neq 0$ for all summands on the right-hand side of formula (\ref{vf}), the coefficient $q^{(\lambda_{\vec d}, \mu - \lambda_{\vec e})} n_{\lambda_i}^{\vec d, \vec e} c^{\lambda_j}_{\vec d, \vec f}$ of $F^{(\vec f)} r'_{(\vec e)} (y)$ must be divisible by $(1 - q)^{|\vec d| + |\vec f| - 1}$.  Since $|\vec d| + |\vec f| - 1 \ge 1 + 1 - 1 = 1$, we see that $q^{(\lambda_{\vec d}, \mu - \lambda_{\vec e})} n_{\lambda_i}^{\vec d, \vec e} c^{\lambda_j}_{\vec d, \vec f}$ is divisible by $(1 - q)$.  Therefore, after quotienting out by the (2-sided) ideal generated by $1-q$ in $\text{End}_{\mathcal A} U^-_{\mathcal A}$, the only summands in the image of the commutator $[r'_i, F_j]$ in $\text{End}_{\mathcal A} (U^-_{\mathcal A})/(1-q)\text{End}_{\mathcal A} (U^-_{\mathcal A})$ that can possibly survive are those with $\vec f = 0$.
\end{proof}

In words, for $1 \le j \le N$, taking commutator with the operator of left multiplication by $F_j$ gives rise to a well-defined operator on $C^+_{\mathcal A}/ (1-q)C^+_{\mathcal A}$.  These operators will be studied in detail in section \ref{compatible-Pois-bra-sect} below.

\section{Quasi-classical Limit of the Quantum Boson Algebra} \label{cl-sect}

In this section we study algebraic and Poisson-geometric properties of the quasi-classical limit $C_{cl}$ of the quantum boson algebra $C_q$, using facts we collected in previous sections.

Recall our definition of the quantum boson algebra $C_q$, its positive half $C_q^+$ (resp. negative half $C_q^-$) and their integral forms $C_{\mathcal A}$ and $C^+_{\mathcal A}$ (resp. $C^-_{\mathcal A}$).

\begin{defn}
Let $C_{cl} := C_{\mathcal A}/(1-q)C_{\mathcal A}$, $C^+_{cl} := C^+_{\mathcal A}/(1-q)C^+_{\mathcal A}$ and $C^-_{cl} := C^-_{\mathcal A}/(1-q)C^-_{\mathcal A}$.  $C_{cl}$ (resp. $C^+_{cl}$ and $C^-_{cl}$) is called the quasi-classical limit of $C_q$ (resp. $C^+_q$ and $C^-_q$).  The quasi-classical limit $C^+_{cl}$ of $C_q^+$ will also be denoted by $P$.
\end{defn}

\begin{rem}
Here, by definition we have $C^-_{cl} = \mathcal U^-$, a non-commutative algebra.
\end{rem}

\subsection{Algebraic Aspects} \label{alg-asp-sect}

In this section we will construct a PBW basis for $C_{cl}$ and give a presentation of $C_{cl}$ by generators and relations.  We will also prove that $C_{cl}$ is a simple $\mathbb C$-algebra whose center is $\mathbb C$.

Let $\overline F_i$ be the image of $F_i$, $1 \le i \le N$, under the natural quotient map $C_{\mathcal A} \rightarrow C_{cl}$.  Let $\overline F^{(\vec d)}$, $\overline r'_i$ and $\overline r'_{(\vec d)}$, for $\vec d \in (\mathbb Z_{\ge 0})^N$, be defined similarly.

We have the following analog of Proposition \ref{PBW-boson}.

\begin{prop} \label{PBW-qc}
(a) The elements $\overline F^{(\vec d)} (\overline r'_1)^{e_1} \cdots (\overline r'_N)^{e_N}$, for all $\vec d, \vec e \in (\mathbb Z_{\ge 0})^N$, form a $\mathbb C$-basis for $C_{cl}$.  Similarly, the elements $(\overline r'_1)^{e_1} \cdots (\overline r'_N)^{e_N} \overline F^{(\vec d)}$, for all $\vec d, \vec e \in (\mathbb Z_{\ge 0})^N$, form a $\mathbb C$-basis for $C_{cl}$.

(b) The elements $(\overline r'_1)^{d_1} \cdots (\overline r'_N)^{d_N}$, for all $\vec d \in (\mathbb Z_{\ge 0})^N$, form a $\mathbb C$-basis for $P$.
\end{prop}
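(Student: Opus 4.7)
The plan is to leverage the integral-form PBW theorem of Proposition~\ref{PBW-boson-int} together with the Leibniz-type relation of Lemma~\ref{Leib-lem}, specialize the whole picture at $q=1$, and then use the action of $C_{cl}$ on $\mathcal U^-$ to pin down linear independence.

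\textbf{Part (b)} is essentially a base change. Proposition~\ref{PBW-boson-int} presents $C_{\mathcal A}^+$ as a free $\mathcal A$-module on the ordered monomials $(r'_N)^{d_N}\cdots(r'_1)^{d_1}$; since $\mathcal A = \mathbb C[q^{\pm1}]_{[3]!}$ is chosen so that $[3]!|_{q=1}\neq 0$, the specialization $\mathcal A/(1-q)\mathcal A \simeq \mathbb C$ is clean and freeness is preserved. The corollary at the end of section~\ref{LS-sect} then says $P$ is commutative, so the factors may be reordered, giving the basis $(\overline r'_1)^{d_1}\cdots(\overline r'_N)^{d_N}$ as asserted.

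\textbf{Part (a), spanning.} I would first prove the integral analog: that $F^{(\vec d)}(r'_1)^{e_1}\cdots(r'_N)^{e_N}$ spans $C_{\mathcal A}$ over $\mathcal A$. Any element of $C_{\mathcal A}$ is a linear combination of arbitrary words in the generators $F_\alpha^{(n)}$ and $r'_k$. Lemma~\ref{Leib-lem} rewrites $r'_i F_j$ as $F_j r'_i$ plus terms of the shape $F^{(\vec f)} r'_{(\vec e)}$ (again of the form ``$F$'s followed by $r'$'s''), so an induction ordered first by total $r'$-weight and then by the number of $r'$-generators appearing to the left of some $F$-generator rewrites any word as $\sum F^{(\vec d)} R$ with $R\in C_{\mathcal A}^+$. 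Theorem~\ref{PBW-integral-version} and Proposition~\ref{PBW-boson-int} then expand each factor on its PBW basis, and reducing modulo $(1-q)$ together with commutativity of $P$ gives spanning by $\overline F^{(\vec d)}(\overline r'_1)^{e_1}\cdots(\overline r'_N)^{e_N}$; the reverse-order basis follows because $[\overline r'_i,\overline F_j]\in P$ by the corollary of section~\ref{Leib-sect}, so any product in reversed order can be brought back to the normal form with corrections of lower order in $|\vec e|$.

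\textbf{Part (a), linear independence.} Let $C_{cl}$ act on $\mathcal U^- = U_{\mathcal A}^-/(1-q)U_{\mathcal A}^-$ (valid by Lemma~\ref{preserves-int-form}). Suppose
\begin{equation*}
\sum_{\vec d,\vec e} a_{\vec d,\vec e}\,\overline F^{(\vec d)}(\overline r'_1)^{e_1}\cdots(\overline r'_N)^{e_N} = 0.
\end{equation*}
Using the $\mathbb Z\Phi$-grading on $C_{cl}$, I may assume $\lambda_{\vec e}-\lambda_{\vec d}$ is a fixed weight $\mu$. Combining Lemma~\ref{mix} with Lemma~\ref{fund-comp} and specializing at $q=1$ shows $(\overline r'_1)^{e_1}\cdots(\overline r'_N)^{e_N}(\overline F^{(\vec e')}) \in \mathbb C^{\times}$ when $\vec e'=\vec e$ and vanishes when $\vec e'\ne\vec e$ has the same weight $\lambda_{\vec e'}=\lambda_{\vec e}$. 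Picking $\vec e^*$ with $\lambda_{\vec e^*}$ maximal among surviving summands and evaluating on $\overline F^{(\vec e^*)}$, only the $\lambda_{\vec e}=\lambda_{\vec e^*}$ terms contribute a constant; nonmaximal terms land in strictly smaller weight spaces of $\mathcal U^-$. Reading off the top-weight component yields $\sum_{\vec d} a_{\vec d,\vec e^*}\overline F^{(\vec d)} = 0$ in $\mathcal U^-$, and the classical PBW theorem for $\mathcal U^-$ gives $a_{\vec d,\vec e^*}=0$; iterating on the finite set of weights $\lambda_{\vec e}$ completes the argument.

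The main obstacle is the last step: separating the ``leading'' contributions from the lower-weight ones after left-multiplying by $\overline F^{(\vec d)}$, since all resulting elements lie in the same overall weight space of $\mathcal U^-$. The cleanest remedy is to introduce the filtration on $C_{cl}$ by total $r'$-weight $|\vec e|$ and argue on the associated graded, where only the leading Kashiwara factors act and the pairing with the PBW basis of $\mathcal U^-$ reduces to the specialization at $q=1$ of the Drinfeld--Killing pairing between the PBW bases of $U^\pm$. This filtration bookkeeping is the technical point that merits the most care.
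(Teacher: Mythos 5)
Parts (b) and the spanning half of (a) are fine, and your base-change argument for (b) and your Leibniz-rule induction for spanning are reasonable (the paper simply asserts spanning, and proves independence in (b) by evaluating on the $\overline F^{(\vec e)}$'s instead of invoking freeness of $C^+_{\mathcal A}$; both routes work). The genuine gap is in the linear-independence argument of part (a), and it is exactly the point you flag at the end without resolving. You choose $\vec e^{\ast}$ with $\lambda_{\vec e^{\ast}}$ \emph{maximal} and evaluate on $\overline F^{(\vec e^{\ast})}$. A summand with $\lambda_{\vec e} < \lambda_{\vec e^{\ast}}$ then contributes $\overline F^{(\vec d)}\cdot\bigl((\overline r'_1)^{e_1}\cdots(\overline r'_N)^{e_N}(\overline F^{(\vec e^{\ast})})\bigr)$, where the inner factor is a generally nonzero element of $\mathcal U^-$ of degree $\lambda_{\vec e^{\ast}}-\lambda_{\vec e}>0$; since $R$ is homogeneous, after left multiplication by $\overline F^{(\vec d)}$ every summand lands in the \emph{same} weight space of $\mathcal U^-$, so there is no ``top-weight component'' to read off. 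Your proposed repair — filtering by $|\vec e|$ and passing to an associated graded — does not separate these contributions either, because the offending terms are indexed by $\lambda_{\vec e}<\lambda_{\vec e^{\ast}}$ in the root-lattice order, which is unrelated to the integer $|\vec e|$; moreover ``the specialization at $q=1$ of the Drinfeld--Killing pairing'' is not available as stated, since $\langle F_\lambda^d,E_\lambda^d\rangle$ has poles at $q=1$.

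The correct fix, which is the paper's argument, is to flip the extremal choice: take $\mu$ \emph{minimal} among the degrees $\lambda_{\vec e}$ occurring with nonzero coefficient, and evaluate on $\overline F^{(\vec f)}$ with $\deg\overline F^{(\vec f)}=\mu$. Then every summand whose $r'$-part has degree $\nu\neq\mu$ satisfies $\nu\not\le\mu$ by minimality, so $(\overline r'_1)^{e_1}\cdots(\overline r'_N)^{e_N}(\overline F^{(\vec f)})$ would have to lie in degree $\mu-\nu\notin(\mathbb Z\Phi)^+$ and hence vanishes outright, while the minimal-degree summands give Kronecker deltas $\delta_{\vec e,\vec f}$ by Lemmas \ref{mix} and \ref{fund-comp}. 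The equation collapses to $\sum_{\vec d}a_{\vec d,\vec f}\,\overline F^{(\vec d)}=0$ in $\mathcal U^-$, and the classical PBW theorem finishes the proof with no filtration bookkeeping at all.
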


\begin{proof}
(a) We prove that the elements $\overline F^{(\vec d)} (\overline r'_1)^{e_1} \cdots (\overline r'_N)^{e_N}$, $\vec d, \vec e \in (\mathbb Z_{\ge 0})^N$, form a $\mathbb C$-basis for $C_{cl}$.  The proof for the other statement is similar.

It is clear that $\overline F^{(\vec d)} (\overline r'_1)^{e_1} \cdots (\overline r'_N)^{e_N}$, $\vec d, \vec e \in (\mathbb Z_{\ge 0})^N$, span $C_{cl}$ over $\mathbb C$.  We must prove that they are linearly independent over $\mathbb C$.

Recall that $C_q$ a $\mathbb Z \Phi$-graded algebra.  This grading induces a $\mathbb Z \Phi$-grading on $C_{cl}$.  Namely, for each $1 \le i \le N$, $\overline F_i$ has degree $-\lambda_i$ and $\overline r'_i$ has degree $\lambda_i$.  Recall that we have the standard partial order $\le$ on the root lattice $\mathbb Z \Phi$.

Assume that a non-trivial linear combination $R = \sum a_{\vec d, \vec e} \overline F^{(\vec d)} (\overline r'_1)^{e_1} \cdots (\overline r'_N)^{e_N}$ equals zero in $C_{cl}$.  Without loss of generality, we may assume that $R$ is homogeneous.  Write $R'$ for the sum of the summands $a_{\vec d, \vec e} \overline F^{(\vec d)} (\overline r'_1)^{e_1} \cdots (\overline r'_N)^{e_N}$ in $R$ such that $a_{\vec d, \vec e} \neq 0$ and $\text{deg} ((\overline r'_1)^{e_1} \cdots (\overline r'_N)^{e_N})$ is minimal with respect to the partial order $\le$ on the root lattice.  Define
\begin{align}
\mu := \min \{\text{deg} ((\overline r'_1)^{e_1} \cdots (\overline r'_N)^{e_N}): & ~ \exists \vec d ~ \text{such that} ~ a_{\vec d, \vec e} \overline F^{(\vec d)} (\overline r'_1)^{e_1} \cdots (\overline r'_N)^{e_N} \nonumber \\
& \text{is a summand of} ~ R ~ \text{and} ~ a_{\vec d, \vec e} \neq 0\}.
\end{align}
Write $R''$ for $R - R'$.

For any $\vec f \in (\mathbb Z_{\ge 0})^N$ such that $\overline F^{(\vec f)} \in \mathcal U^-$ satisfies $\text{deg} \overline F^{(\vec f)} = \mu$, we have
\begin{align}
R''(\overline F^{(\vec f)}) = 0
\end{align}
by degree considerations.  It follows that $0 = R(\overline F^{(\vec f)}) = R'(\overline F^{(\vec f)})$.  By Lemma \ref{mix} and Lemma \ref{fund-comp}, it is clear that for all $\vec e \in (\mathbb Z_{\ge 0})^N$ with $\text{deg} ((\overline r'_1)^{e_1} \cdots (\overline r'_N)^{e_N}) = \mu$, we have
\begin{align}
(\overline r'_1)^{e_1} \cdots (\overline r'_N)^{e_N} (\overline F^{(\vec f)}) = \delta_{\vec e, \vec f},
\end{align}
where $\delta_{\vec e, \vec f}$ is the Kronecker delta.  Therefore, it follows from the equality $R'(\overline F^{(\vec f)}) = 0$ that
\begin{align}
0 = \sum\limits_{\vec d} a_{\vec d, \vec f} \overline F^{(\vec d)}.
\end{align}
Since the $\overline F^{(\vec d)}$'s, for $\vec d \in (\mathbb Z_{\ge 0})^N$, are linearly independent over $\mathbb C$, $a_{\vec d, \vec f}$ must be $0$ for all $\vec d \in (\mathbb Z_{\ge 0})^N$.  Varying $\vec f$, we see that $a_{\vec d, \vec f} = 0$ for all $\vec d, \vec f \in (\mathbb Z_{\ge 0})^N$ such that $\overline F^{(\vec f)} \in \mathcal U^-$ satisfies $\text{deg} \overline F^{(\vec f)} = \mu$.  It follows that all coefficients in $R'$ are zero.  This contradicts our choice of $R'$.

(b) That the elements $(\overline r'_1)^{d_1} \cdots (\overline r'_N)^{d_N}$, for all $\vec d \in (\mathbb Z_{\ge 0})^N$, span $P$ over $\mathbb C$ follows easily from Proposition \ref{PBW-boson-int}.

Let $R = \sum a_{\vec d} (\overline r'_1)^{d_1} \cdots (\overline r'_N)^{d_N}$, $a_{\vec d} \in \mathbb C$, be a $\mathbb C$-linear relation among the $(\overline r'_1)^{d_1} \cdots (\overline r'_N)^{d_N}$'s.  For any $\vec e \in (\mathbb Z_{\ge 0})^N$, we have by Lemma \ref{mix} and Lemma \ref{fund-comp} that
\begin{align}
0 = R(\overline F^{(\vec e)}) = a_{\vec e}.
\end{align}
Hence, all coefficients in $R$ are zero.  Therefore, the elements $(\overline r'_1)^{d_1} \cdots (\overline r'_N)^{d_N}$, for $\vec d \in (\mathbb Z_{\ge 0})^N$, are linearly independent over $\mathbb C$.
\end{proof}

We have an analog of Propositions \ref{Mat} and \ref{Mat-int}.

\begin{prop} \label{Mat-qc}
The $\mathbb C$-algebras $C_{cl}$ and $P$ are independent of the reduced expression (\ref{red-exp}).
\end{prop}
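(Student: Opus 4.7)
The plan is to deduce this proposition as an immediate consequence of the corresponding integral-form statements already established, namely Propositions \ref{Mat} and \ref{Mat-int}.

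First I would handle $P = C^+_{\mathcal A}/(1-q) C^+_{\mathcal A}$. By Proposition \ref{Mat-int}, the $\mathcal A$-module $C^+_{\mathcal A}$ does not depend on the choice (\ref{red-exp}) of reduced expression for $w_0$. Since $P$ is obtained from $C^+_{\mathcal A}$ by reducing modulo the principal ideal generated by $1-q$, an operation whose definition involves no data beyond $C^+_{\mathcal A}$ itself, the independence of $P$ from (\ref{red-exp}) is immediate.

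For $C_{cl} = C_{\mathcal A}/(1-q) C_{\mathcal A}$, the key step is to verify that $C_{\mathcal A}$ itself is independent of (\ref{red-exp}), after which the same quotient argument yields the conclusion. For this, recall from Definition \ref{def-int-boson-alg} that $C_{\mathcal A}$ is the $\mathcal A$-subalgebra of $\text{End}_{\mathcal A}(U^-_{\mathcal A})$ generated by two families of operators: the operators of left multiplication by $F_{\alpha}^{(n)}$, which involve no reduced expression at all; and the Kashiwara operators $r'_k$ for $1 \le k \le N$, which together generate $C^+_{\mathcal A}$. Since by Proposition \ref{Mat-int} the subalgebra $C^+_{\mathcal A}$ is independent of (\ref{red-exp}), the $\mathcal A$-subalgebra of $\text{End}_{\mathcal A}(U^-_{\mathcal A})$ generated by the first family together with $C^+_{\mathcal A}$ does not depend on (\ref{red-exp}) either. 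This subalgebra is precisely $C_{\mathcal A}$, proving its invariance, and hence that of the quotient $C_{cl}$.

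I do not anticipate a genuine obstacle here: the substantive work has already been carried out in Propositions \ref{Mat} and \ref{Mat-int}, and the present proposition merely records the automatic passage to the quotient by $1-q$. The only care needed is to observe that the left multiplication generators appearing in Definition \ref{def-int-boson-alg} are reduced-expression-free, so that all of the dependence on (\ref{red-exp}) is concentrated in the Kashiwara half $C^+_{\mathcal A}$, where it has already been eliminated.
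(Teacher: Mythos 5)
Your proposal is correct and follows the same route as the paper, whose entire proof is the one-line remark that the statement follows from Proposition \ref{Mat-int}; you have simply spelled out the two observations the paper leaves implicit (that passing to the quotient by $(1-q)$ introduces no new dependence, and that the left-multiplication generators of $C_{\mathcal A}$ are reduced-expression-free, so all dependence sits in $C^+_{\mathcal A}$). No gap.
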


\begin{proof}
This follows easily from Proposition \ref{Mat-int}.
\end{proof}

As a $\mathbb C$-algebra, $C_{cl}$ (resp. $P$) is generated by $\overline F_1, \cdots, \overline F_N, \overline r'_1, \cdots, \overline r'_N$ (resp. $\overline r'_1, \cdots, \overline r'_N$).  The $\overline F_i$'s satisfy their usual relations in $\mathcal U^-$: $\overline F_i \overline F_j - \overline F_j \overline F_i = N_{i,j} [\overline F_i, \overline F_j]$, where the square bracket on the right hand side is the Lie bracket on $\mathfrak n^-$ and the $N_{i,j}$'s are the Chevalley coefficients.  The $\overline r'_i$'s, according to our discussion at the end of section \ref{LS-sect}, commute.  (See the last corollary of section \ref{LS-sect}.)  Commutation relation between $\overline r'_i$ and $\overline F_j$ can be derived easily from formula (\ref{vf}) and discussions at the end of section \ref{Leib-sect}.  Putting these together, we have obtained a presentation of the $\mathbb C$-algebras $C_{cl}$ and $P$.

\begin{prop} \label{a-presentation}
(a) As a $\mathbb C$-algebra, $C_{cl}$ is generated by $\overline F_1, \cdots, \overline F_N, \overline r'_1, \cdots, \overline r'_N$ subject to the relations
\begin{align}
\label{def-rel-1} & \overline F_i \overline F_j - \overline F_j \overline F_i - N_{i,j} [\overline F_i, \overline F_j] = 0, & \\
\label{def-rel-2} & \overline r'_i \overline r'_j - \overline r'_j \overline r'_i = 0, & \\
\label{def-rel-3} & \overline r'_i \overline F_j - \overline F_j \overline r'_i - \sum \limits_{\vec d \neq 0} (n_{\lambda_i}^{\vec d, \vec e} c^{\lambda_j}_{\vec d, 0})|_{q=1} \overline r'_{(\vec e)} = 0,
\end{align}
for all $1 \le i,j \le N$, where $(n_{\lambda_i}^{\vec d, \vec e} c^{\lambda_j}_{\vec d, 0})|_{q=1}$ stands for the image of $n_{\lambda_i}^{\vec d, \vec e} c^{\lambda_j}_{\vec d, 0}$ in $\mathcal A/(1-q)\mathcal A \simeq \mathbb C$.

(b) $P$ is the polynomial algebra over $\mathbb C$ with $\overline r'_1, \cdots, \overline r'_N$ as generators.
\end{prop}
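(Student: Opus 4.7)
The strategy is a standard presentation argument: we verify that the three families of relations hold in $C_{cl}$, then show that the abstract $\mathbb C$-algebra $A$ defined by the stated generators and relations surjects onto $C_{cl}$, and finally use the PBW basis of $C_{cl}$ from Proposition \ref{PBW-qc}(a) to deduce injectivity. Part (b) is immediate from Proposition \ref{PBW-qc}(b) combined with relation (\ref{def-rel-2}): $P$ is generated by $\overline r'_1, \ldots, \overline r'_N$, these pairwise commute, and the ordered monomials $(\overline r'_1)^{d_1} \cdots (\overline r'_N)^{d_N}$ form a $\mathbb C$-basis, so $P$ is a polynomial algebra in these generators.

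For the verification in (a), relation (\ref{def-rel-1}) is a restatement of the identification $C^-_{cl} \simeq \mathcal U^-$ recorded in the remark immediately after the definition of $C_{cl}$; the Lie-bracket relations on the root vectors $\overline F_i$ are simply the Chevalley commutation relations in $\mathfrak n^-$. Relation (\ref{def-rel-2}) is the last corollary of Section \ref{LS-sect}. For relation (\ref{def-rel-3}), I would start from Lemma \ref{Leib-lem}, observe via the corollary following Lemma \ref{a-coeff} that only the summands with $\vec f = 0$ survive modulo $(1-q)$, and note that at $q = 1$ the weight-dependent prefactor $q^{(\lambda_{\vec d}, \mu - \lambda_{\vec e})}$ specializes to $1$. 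This yields exactly the claimed expression for $[\overline r'_i, \overline F_j]$ as an element of $C_{cl}$; in particular the commutator lies in $P$, which is consistent with the remark that $\overline r'_{(\vec e)}$ is (up to the sign/power prefactor of Lemma \ref{mix} that trivializes at $q = 1$) a product of powers of the generators $\overline r'_k$.

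For completeness of the presentation, let $A$ be the algebra defined by these generators and relations and let $\varphi: A \to C_{cl}$ be the obvious surjection. I would show that $A$ is spanned as a $\mathbb C$-vector space by the monomials $\overline F^{(\vec d)} (\overline r'_1)^{e_1} \cdots (\overline r'_N)^{e_N}$ by a straightening algorithm: relation (\ref{def-rel-3}) moves each $\overline r'_i$ to the right past each $\overline F_j$ at the cost of introducing a tail that is a pure polynomial in the $\overline r'_k$'s; relation (\ref{def-rel-2}) then reorders the trailing $\overline r'$-factors; and relation (\ref{def-rel-1}), together with the fact that the $\overline F_i$'s satisfy the defining relations of $\mathcal U^-$, rewrites any word in the $\overline F_j$'s as a $\mathbb C$-linear combination of the PBW monomials $\overline F^{(\vec d)}$. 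Since Proposition \ref{PBW-qc}(a) asserts that the images of these normal-ordered monomials form a $\mathbb C$-basis of $C_{cl}$, $\varphi$ must be injective as well as surjective, giving the desired isomorphism.

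The main point to be careful about is termination of the straightening in the last paragraph. This is the step most likely to hide subtleties, but in fact it is automatic: the right-hand side of (\ref{def-rel-3}) involves only $\overline r'_{(\vec e)}$ and no $\overline F$-factors, so each application of the commutation rule strictly decreases the lexicographic measure counting the total number of $\overline r' \overline F$ inversions (or produces a word containing only $\overline r'$'s, which is already in normal form). Everything else is bookkeeping, and the universality of $A$ together with the PBW basis of Proposition \ref{PBW-qc}(a) finishes the argument.
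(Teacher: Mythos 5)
Your proposal is correct and follows the same route the paper takes: the paper verifies the three relations via the identification $C^-_{cl}\simeq\mathcal U^-$, the last corollary of Section \ref{LS-sect}, and formula (\ref{vf}) with the corollary at the end of Section \ref{Leib-sect}, and then implicitly relies on the PBW basis of Proposition \ref{PBW-qc} to conclude that no further relations are needed. You have merely made explicit the straightening/termination argument that the paper leaves to the reader, and that step is carried out correctly.
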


Recall the discussion at the end of section \ref{Leib-sect}, in particular the last corollary of section \ref{Leib-sect}.  From there we know that, for $1 \le i,j \le N$, the commutator $[\overline F_i, \overline r'_j]$ is a polynomial in $\overline r'_1, \cdots, \overline r'_N$.  Namely,
\begin{align} \label{def-act}
[\overline F_i, \overline r'_j] = - \sum \limits_{\vec d \neq 0} (n_{\lambda_j}^{\vec d, \vec e} c^{\lambda_i}_{\vec d, 0})|_{q=1} \overline r'_{(\vec e)} = - \sum \limits_{\vec d \neq 0} (n_{\lambda_j}^{\vec d, \vec e} c^{\lambda_i}_{\vec d, 0})|_{q=1} (\overline r'_1)^{e_1} \cdots (\overline r'_N)^{e_N}.
\end{align}
For each $1 \le i \le N$, define an action of $\overline F_i$ on $P$ by derivations so that, for $1 \le j \le N$, the action of $\overline F_i$ on $\overline r'_j$ is given by (\ref{def-act}).  The following lemma clearly follows from Theorem \ref{LS-str}.

\begin{lem} \label{rep}
The relation (\ref{def-rel-1}) in Proposition \ref{a-presentation} acts on $P$ by $0$.
\end{lem}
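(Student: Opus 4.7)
The plan is to identify the derivation action of each $\overline F_i$ on $P$ with the restriction to $P$ of the inner derivation $\mathrm{ad}_{\overline F_i}(x) := \overline F_i x - x \overline F_i$ of the associative algebra $C_{cl}$, and then to deduce the lemma from the Jacobi identity for commutators.

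For the identification, the corollary at the end of Section \ref{Leib-sect} (whose proof invokes Lemma \ref{a-coeff} and hence Theorem \ref{LS-str}) guarantees that $\mathrm{ad}_{\overline F_i}$ maps $P$ into $P \subseteq C_{cl}$. Since $\mathrm{ad}_{\overline F_i}$ is automatically a derivation of the associative product of $C_{cl}$, and this product restricts to the (commutative) polynomial multiplication on $P$ by Proposition \ref{a-presentation}(b), the operator $\mathrm{ad}_{\overline F_i}|_P$ is a derivation of $P$. On the generators $\overline r'_k$ its values are exactly those prescribed by formula (\ref{def-act}). Two derivations on a polynomial algebra that agree on a set of generators coincide everywhere, so $\mathrm{ad}_{\overline F_i}|_P$ equals the derivation action of $\overline F_i$ defined in the paragraph preceding the lemma.

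With this identification in hand, the lemma asserts the vanishing on $P$ of
\[
\mathrm{ad}_{\overline F_i}\circ \mathrm{ad}_{\overline F_j} - \mathrm{ad}_{\overline F_j}\circ \mathrm{ad}_{\overline F_i} - N_{i,j}\,\mathrm{ad}_{[\overline F_i,\overline F_j]_{\mathrm{Lie}}}.
\]
By the Jacobi identity for the commutator in any associative algebra, this operator equals $\mathrm{ad}_{\Theta}|_P$, where $\Theta := (\overline F_i\overline F_j - \overline F_j\overline F_i) - N_{i,j}[\overline F_i,\overline F_j]_{\mathrm{Lie}} \in C_{cl}$. But $C_{cl}$ embeds into $\mathrm{End}_{\mathbb C}(\mathcal U^-)$ with each $\overline F_i$ acting by left multiplication by the corresponding element of $\mathfrak n^-$, and the Chevalley-type relation $\overline F_i \overline F_j - \overline F_j \overline F_i = N_{i,j}[\overline F_i, \overline F_j]_{\mathrm{Lie}}$ already holds in $\mathcal U^-$. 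Hence $\Theta = 0$ in $C_{cl}$, and $\mathrm{ad}_\Theta$ is identically zero.

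The only real obstacle is the careful setup ensuring that the adjoint action $\mathrm{ad}_{\overline F_i}$ really does restrict to a derivation on the subalgebra $P$; this is precisely where Theorem \ref{LS-str} enters the argument, through the chain Theorem \ref{LS-str} $\Rightarrow$ Lemma \ref{a-coeff} $\Rightarrow$ the corollary at the end of Section \ref{Leib-sect}. Once that is in place, no explicit manipulation of Chevalley or Levendorskii--Soibelman coefficients is needed.
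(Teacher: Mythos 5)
Your proof is correct and is essentially the argument the paper intends: the paper offers no written proof, merely asserting that the lemma ``clearly follows from Theorem \ref{LS-str}'', and your identification of the derivation action of $\overline F_i$ with $\mathrm{ad}_{\overline F_i}|_P$ followed by the Jacobi identity for commutators is the natural way to make that assertion precise. The one remark worth adding is that Theorem \ref{LS-str} enters not only through the $P$-stability of $\mathrm{ad}_{\overline F_i}$ (the route you emphasize, via Lemma \ref{a-coeff} and the corollary closing section \ref{Leib-sect}) but also, and this is presumably what the paper's citation refers to, through its $F$-version, which is what guarantees that the Chevalley relation $\overline F_i \overline F_j - \overline F_j \overline F_i = N_{i,j}[\overline F_i,\overline F_j]$ actually holds in $C_{cl}$ (equivalently, that $C^-_{cl} \simeq \mathcal U^-$) — the other pillar of your argument.
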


\begin{rem}
It follows from Lemma \ref{rep} that $P$ has a structure of a $\mathcal U^-$-module.  Recall that $\mathcal U^-$ is a Hopf algebra, where comultiplication on $\mathcal U^-$ is given by $\Delta (y) = y \otimes 1 + 1 \otimes y$ for all $y \in \mathfrak n^-$.  What we have done so far in this section implies that $P$ is a module algebra over the Hopf algebra $\mathcal U^-$ and $C_{cl}$ is isomorphic to $P \rtimes \mathcal U^-$, the smash product of $\mathcal U^-$ with the $\mathcal U^-$-module algebra $P$.
\end{rem}

Since we know quite a lot about representations of nilpotent Lie algebras, c.f. the work of Dixmier \cite{Dix1, Dix2} and Kirillov \cite{Kir}, it is an interesting problem to look for interpretations of the $\mathfrak n^-$ representation $P$ which arise naturally in other contexts.

Observe that $P$ is very far from being irreducible as an $\mathfrak n^-$ representation.  In fact, recall that $P$ is a $(\mathbb Z \Phi)^+$-graded algebra where, for $1 \le j \le N$, $\overline r'_j$ has degree $\lambda_j$.  For each $1 \le i \le N$, the action of the root vector $\overline F_i$ in $\mathfrak n^-$ decreases the degree of every homogeneous element of $P$ by $\lambda_i$.  Hence, for any non-zero $f \in P$, the smallest sub-representation of $P$ containing $f$ is a non-trivial finite dimensional sub-representation.

Define an action of $P$ on itself by left multiplication, so the two halves $C_{cl}^+ = P$ and $C_{cl}^-$ of $C_{cl}$ both act on $P$.  One evidently has the following

\begin{prop} \label{mod}
The relation (\ref{def-rel-3}) in Proposition \ref{a-presentation} acts by $0$ on $P$.  In particular, $P$ has a structure of a module over the algebra $C_{cl}$.
\end{prop}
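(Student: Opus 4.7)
The plan is to verify, by a direct computation, that each of the three defining relations for $C_{cl}$ listed in Proposition \ref{a-presentation} annihilates every element of $P$ under the proposed action. Relation (\ref{def-rel-1}) is already handled by Lemma \ref{rep}, and relation (\ref{def-rel-2}) is immediate from Proposition \ref{a-presentation}(b): since each $\overline r'_i$ acts on $P$ by left multiplication and $P$ is commutative, the corresponding operators commute on $P$.

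The heart of the matter is therefore relation (\ref{def-rel-3}). Fix $1 \le i, j \le N$ and $f \in P$. Because $\overline F_j$ acts as a derivation on $P$ while $\overline r'_i$ acts by left multiplication, the Leibniz rule gives
\[
(\overline F_j \overline r'_i)(f) = \overline F_j(\overline r'_i \cdot f) = (\overline F_j \cdot \overline r'_i)\, f + \overline r'_i \cdot \overline F_j(f) = (\overline F_j \cdot \overline r'_i)\, f + (\overline r'_i \overline F_j)(f),
\]
so $[\overline r'_i, \overline F_j](f) = -(\overline F_j \cdot \overline r'_i)\, f$ in $P$.

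Next I would invoke formula (\ref{def-act}) with indices interchanged to obtain $\overline F_j \cdot \overline r'_i = -\sum_{\vec d \neq 0}(n_{\lambda_i}^{\vec d, \vec e} c^{\lambda_j}_{\vec d, 0})|_{q=1}\, \overline r'_{(\vec e)}$. Substituting this into the previous display and using that each $\overline r'_{(\vec e)}$ also acts on $P$ by left multiplication, one obtains
\[
[\overline r'_i, \overline F_j](f) = \sum_{\vec d \neq 0}(n_{\lambda_i}^{\vec d, \vec e} c^{\lambda_j}_{\vec d, 0})|_{q=1}\, \overline r'_{(\vec e)}\, f,
\]
which is precisely the identity required by relation (\ref{def-rel-3}).

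There is no substantive obstacle here: the coefficients appearing in (\ref{def-rel-3}) were extracted, via (\ref{def-act}), exactly so that the derivation property of $\overline F_j$ combined with commutativity of $P$ forces the relation to act by zero. The only point that needs mild care is the index bookkeeping, namely the swap $i \leftrightarrow j$ between (\ref{def-act}) and (\ref{def-rel-3}), and the sign that enters when one passes from $[\overline F_j, \overline r'_i]$ to $[\overline r'_i, \overline F_j]$.
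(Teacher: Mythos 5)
Your verification is correct and is exactly the direct computation the paper has in mind: the paper offers no written proof (it introduces the left-multiplication action and says "one evidently has the following"), and the evident content is precisely your observation that $[\overline r'_i,\overline F_j](f) = -(\overline F_j\cdot\overline r'_i)f$ by the Leibniz rule, which matches the polynomial prescribed by (\ref{def-act}) and hence kills relation (\ref{def-rel-3}). The bookkeeping of the index swap and the sign is handled correctly, so nothing further is needed.
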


Next we prove

\begin{thm} \label{sim-mod}
$P$ is a simple $C_{cl}$-module.
\end{thm}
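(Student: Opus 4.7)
Plan: Let $M \subseteq P$ be a nonzero $C_{cl}$-submodule. Since $P = C_{cl}^+$ acts on $P$ by left multiplication and $P$ is commutative, $M$ is an ideal of the polynomial algebra $P$; since each $\overline F_i \in \mathfrak n^-$ is primitive in the Hopf algebra $\mathcal U^- = C_{cl}^-$, it acts on $P$ as a derivation $D_i$, and $M$ is stable under every $D_i$. As nonzero scalars are units in $P$, it suffices to produce a nonzero constant in $M$ starting from any nonzero $f \in M$.

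Formula (\ref{def-act}) combined with Lemma \ref{fund-comp} (which gives $r'_i(F_{\lambda_i}) = 1$, hence $c^{\lambda_i}_{\vec e_i, 0}|_{q=1} = 1$) yields a ``triangular'' description of each derivation $D_i$:
\begin{align*}
D_i(\overline r'_i) = -1, \qquad D_i(\overline r'_j) = 0 \text{ if } \lambda_j - \lambda_i \notin (\mathbb Z\Phi)^+,
\end{align*}
while for $\lambda_j > \lambda_i$ strictly, $D_i(\overline r'_j) \in \mathbb C[\overline r'_k : \lambda_k < \lambda_j]$ is of root-degree $\lambda_j - \lambda_i$. Thus $D_i$ behaves as $-\partial/\partial \overline r'_i$ plus corrective terms that involve only variables strictly smaller (in the root-lattice partial order) than the one being differentiated.

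To exploit this, fix a total order $\prec$ on $\{1, \ldots, N\}$ refining the root-lattice partial order ($i \prec j$ whenever $\lambda_i < \lambda_j$), and let $\prec_{\mathrm{lex}}$ be the induced lex order on exponent vectors $\vec d \in \mathbb Z_{\ge 0}^N$, comparing coordinates from the $\prec$-largest down. For $0 \neq f \in P$ let $\mathrm{lead}(f)$ be the $\prec_{\mathrm{lex}}$-maximal exponent vector appearing in its PBW expansion. The key claim is: if $\mathrm{lead}(f) = \vec d \neq \vec 0$ and $d_i > 0$, then $D_i(f)$ is nonzero with $\mathrm{lead}(D_i(f)) = \vec d - \vec e_i$. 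Indeed, the Leibniz expansion of $D_i$ on the leading monomial of $f$ contributes the monomial with exponent $\vec d - \vec e_i$ with nonzero coefficient $-d_i\,a_{\vec d}$ (from the $D_i(\overline r'_i) = -1$ term), while every other monomial produced---whether from other branches of the Leibniz rule on the leading monomial, or from any lower monomial $m_{\vec d'}$ of $f$ (necessarily with only $j' \succeq i$ contributing, by the triangular description)---has an exponent of the form $\vec d' - \vec e_{j'} + \vec g'$ with $\vec g'$ supported on $\{k : k \prec j'\}$ that is strictly $\prec_{\mathrm{lex}} \vec d - \vec e_i$, as seen by comparing at the $\prec$-largest index of difference.

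Iterating the key claim strictly decreases $\mathrm{lead}$, so after at most $|\mathrm{lead}(f)|$ steps we obtain a nonzero scalar lying in $M$; this forces $1 \in M$ and hence $M = P$. The main obstacle is the combinatorial verification inside the key claim: one must carefully confirm that the corrective contributions from each $j' \succ i$, across all monomials of $f$ of exponent $\preceq_{\mathrm{lex}} \vec d$, really do fall strictly below $\vec d - \vec e_i$ in $\prec_{\mathrm{lex}}$, which relies precisely on the compatibility of $\prec$ with the root-lattice partial order forcing the support of every $\vec g'$ to lie in $\{k : k \prec j'\}$.
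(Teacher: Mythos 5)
Your proof is correct, and it runs on the same engine as the paper's: both arguments descend inside a nonzero submodule $M$ by applying the derivations $\overline F_i$ until a nonzero constant is produced, and both hinge on the identical computational facts --- that differentiating the monomial $(\overline r'_1)^{d_1}\cdots(\overline r'_N)^{d_N}$ contributes the exponent $\vec d - \vec e_i$ with coefficient $-d_i$ (coming from $\overline F_i\cdot \overline r'_i=-1$, equivalently $n^{\lambda_j,\vec h}_{\vec d}=d_j$ in the paper), and that all other contributions are controlled by the triangularity of $\overline F_i\cdot\overline r'_j$ with respect to the root grading. The packaging differs in a way worth noting. The paper argues by contradiction against a minimal element of the set of maximal degrees of elements of $M$ (a partial-order minimality argument over the whole submodule), and it must choose $j$ to be the \emph{last} nonzero component of $\vec d$ so that the relevant structure constant in the convexly ordered PBW basis is visibly $[d_j]$. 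You instead fix a total lex order refining the root-lattice order, track the leading exponent of a single element, and may differentiate with respect to any variable occurring in the lead; your case check that every corrective exponent $\vec d'-\vec e_{j'}+\vec g'$ (with $\vec g'$ supported on indices $\prec j'$) falls strictly below $\vec d-\vec e_i$ is the right verification and does go through, precisely because $\prec$ refines the root order. So the leading-term formalism buys you a direct iteration with no minimality-over-$M$ bookkeeping and no dependence on the position of $j$ in the convex order, at the cost of the lex comparison you flag; both routes are complete.
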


\begin{proof}
Let $M$ be a nonzero $C_{cl}$-submodule of $P$.  We choose a nonzero element $v \in M$ in the following way.  For each $0 \neq m \in M$, let $D_m$ be the set of degrees of homogeneous components of $m$.  $D_m$, as a subset of $\mathbb Z \Phi$, is partially ordered by the standard partial order $\le $ on $\mathbb Z \Phi$.  Write $H_m$ for the subset of $D_m$ consisting of all those elements that are maximal in $D_m$ with respect to $\le$.  Consider the set $\bigcup \limits_{0 \neq m \in M} H_m$.  This is a subset of $\mathbb Z \Phi$ and, hence, is partially ordered by $\le$.  By the well-ordering principle, this set has a minimal element $\mu$.  $\mu$ belongs to $H_v$ for some $v \in M$.  This is our choice of $v$.  Assume that $v$ is not a scalar, i.e. $v$ is a non-constant polynomial in $\overline r'_1, \cdots, \overline r'_N$.

Since $\mu \in H_v$, among all homogeneous components of $v$, there is one whose degree is maximal with respect to the partial order $\le$, and this maximal degree is $\mu$.  Up to a nonzero constant, the homogeneous component of $v$ whose degree equals $\mu$ has a summand of the form $(\overline r'_1)^{d_1} \cdots (\overline r'_N)^{d_N}$ for some $\vec d \in (\mathbb Z_{\ge 0})^N$ with $\lambda_{\vec d} = \mu$.

Let $j$ be the last nonzero component of $\vec d$.  We compute $\overline F_j \cdot ((\overline r'_1)^{d_1} \cdots (\overline r'_N)^{d_N})$.  Note that this amounts to computing the commutator of $F_j$ with $(r'_1)^{d_1} \cdots (r'_N)^{d_N}$ in $C_q$, by our definition of the action of $\overline F_j$ on $P$.  For this we compute in a similar way as in the proof of Lemma \ref{Leib}.  The result we get is
\begin{align}
\overline F_j \cdot ((\overline r'_1)^{d_1} \cdots (\overline r'_N)^{d_N}) = - \sum (n_{\vec d}^{\lambda_j, \vec e})|_{q=1} \overline r'_{(\vec e)},
\end{align}
where $n^{\lambda_j, \vec e}_{\vec d} := n^{\vec e_j, \vec e}_{\vec d}$ and $(n^{\lambda_j, \vec e}_{\vec d})|_{q=1}$ is the image in $\mathcal A/(1-q)\mathcal A \simeq \mathbb C$ of $n^{\lambda_j, \vec e}_{\vec d}$.  We will not present the details of the computation.  Instead, let us demonstrate a special case.  The computation in general is similar.  By Proposition \ref{a-presentation}, for $1 \le i,j \le N$, $\overline F_j \cdot \overline r'_i = - \sum \limits_{\vec e \neq 0} (n_{\lambda_i}^{\vec e, \vec f} c^{\lambda_j}_{\vec e, 0})|_{q=1} \overline r'_{(\vec f)}$.  Recall that $r'_{(\vec e)} (F_j) = \sum c_{\vec e, \vec f}^{\lambda_j} F^{(\vec f)}$ for all $\vec e \in (\mathbb Z_{\ge 0})^N$ and $1 \le j \le N$.  So we have
\begin{align}
<F_j, E^{(\vec e)}> = <\Delta (F_j), E^{(\vec e)} \otimes 1> = <F^{(\vec e)}, E^{(\vec e)}> <r'_{(\vec e)} (F_j), 1> = <F^{(\vec e)}, E^{(\vec e)}> c^{\lambda_j}_{\vec e, 0}.
\end{align}
It follows that when $\vec e = \vec e_j$, $c^{\lambda_j}_{\vec e, 0} = 1$ and when $\vec e \neq \vec e_j$, $c^{\lambda_j}_{\vec e, 0} = 0$.  Consequently, we get $\overline F_j \cdot \overline r'_i = - \sum (n_{\lambda_i}^{\lambda_j, \vec f} )|_{q=1} \overline r'_{(\vec f)}$, as desired.

Define $\vec h := (d_1, \cdots, d_{j-1}, d_j - 1, 0, \cdots, 0)$.  It is clear that $n_{\vec d} ^{\lambda_j, \vec h} = d_j$ and $n_{\vec d'} ^{\lambda_j, \vec h} = 0$ for all other $\vec d'$ with $\text{deg} ((\overline r'_1)^{d'_1} \cdots (\overline r'_N)^{d'_N}) = \mu$.  It follows that
\begin{align}
\overline F_j \cdot ((\overline r'_1)^{d_1} \cdots (\overline r'_N)^{d_N}) = - d_j (\overline r'_1)^{h_1} \cdots (\overline r'_N)^{h_N} - \sum \limits_{\vec e \neq \vec h} (n_{\vec d}^{\lambda_j, \vec e})|_{q=1} \overline r'_{(\vec e)}
\end{align}
and the coefficient of $(\overline r'_1)^{h_1} \cdots (\overline r'_N)^{h_N}$ in $\overline F_j \cdot v$ equals $- \sum \limits_{\vec d'} (n^{\lambda_j, \vec h}_{\vec d'})|_{q=1} = - d_j$.  In particular, $\overline F_j \cdot v \neq 0$.  By construction, $\mu - \lambda_j$ is a maximal element, with respect to the usual partial order $\le$, in the set of degrees of homogeneous components of $\overline F_j \cdot v$.  So $\mu - \lambda_j$ is an element of the set $\bigcup \limits_{0 \neq m \in M} H_m$.  But $\mu - \lambda_j < \mu$.  This contradicts minimality of $\mu$.  It follows that $v$ must be a nonzero constant.

Note that the $C_{cl}$-submodule $M$ of $P$ is, in particular, an ideal in $P$.  Since $M$ contains a nonzero constant $v$, it must be all of $P$.  Therefore, the $C_{cl}$-module $P$ is simple.
\end{proof}

The structures of $C_{cl}$ that we have explained so far have a strong linear-algebraic or representation-theoretic flavor.  We next explore some structures of $C_{cl}$ that are purely algebraic in nature.

\begin{thm} \label{sim-alg}
$C_{cl}$ is a simple $\mathbb C$-algebra.
\end{thm}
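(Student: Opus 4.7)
The plan is to prove that every nonzero two-sided ideal $I \subseteq C_{cl}$ contains $1$, hence equals $C_{cl}$. I would reduce this to showing $I \cap P \neq 0$, and then produce a nonzero element of $I \cap P$ via a symbol argument in an associated graded algebra.

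For the reduction, observe that $I \cap P$ is automatically a $P$-submodule of $P$ (since $I$ is a left ideal in $C_{cl}$ and $P \subseteq C_{cl}$), and for any $p \in I \cap P$ and any $j$, the commutator $[\overline F_j, p]$ belongs to $I$ (as $I$ is two-sided) and lies in $P$ by the very definition of the derivation action of $\overline F_j$ on $P$ (see (\ref{def-act})). So $I \cap P$ is a $C_{cl}$-submodule of $P$ with respect to the action of Proposition \ref{mod}; if nonzero, Theorem \ref{sim-mod} forces $I \cap P = P$, so $1 \in I$.

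For the main step, I would argue by induction on an ``$\overline F$-degree''. Write $y \in C_{cl}$ using the PBW basis of Proposition \ref{PBW-qc}(a) with the $\mathcal U^-$ factors on the right: $y = \sum_{\vec e, \vec d} c_{\vec e, \vec d} (\overline r'_1)^{e_1} \cdots (\overline r'_N)^{e_N}\, \overline F^{(\vec d)}$, and set $k(y) := \max\{|\vec d| : c_{\vec e, \vec d} \neq 0\}$ with the associated filtration $F_k C_{cl}$. Because $[\overline r'_i, \overline F_j] \in P = F_0 C_{cl}$ by Proposition \ref{a-presentation}(a) and $[\overline F_i, \overline F_j] \in F_1 C_{cl}$, a Leibniz expansion yields $[F_k C_{cl}, F_l C_{cl}] \subseteq F_{k+l-1} C_{cl}$, so $\mathrm{gr}\, C_{cl}$ is commutative and inherits a Poisson bracket. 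Identifying
\[
\mathrm{gr}\, C_{cl} \;\cong\; \mathbb C[\overline r'_1, \ldots, \overline r'_N, \xi_1, \ldots, \xi_N], \qquad \xi_j := \mathrm{sym}(\overline F_j),
\]
the bracket is determined by $\{\overline r'_i, \xi_j\} = [\overline r'_i, \overline F_j] \in P$. Tracing formula (\ref{def-act}) with the aid of Lemma \ref{fund-comp} shows that the only contribution to the constant term of $[\overline r'_i, \overline F_j]$ arises from $\vec d = \vec e_j$, $\vec e = 0$ (which forces $\lambda_i = \lambda_j$), and both $n^{\vec e_j, 0}_{\lambda_j}$ and $c^{\lambda_i}_{\vec e_j, 0}$ evaluate to $1$, giving $\{\overline r'_i, \xi_j\}|_{\overline r' = 0} = \delta_{i,j}$. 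In particular, the $N \times N$ matrix $M := (\{\overline r'_i, \xi_j\})_{i,j}$ reduces to the identity at the origin, so $\det M$ is a nonzero element of $P$ and $M$ is invertible over the fraction field of $P$.

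Now pick $x \in I$ nonzero with $k(x)$ minimal; if $k(x) = 0$ then $x \in I \cap P$ and we are done. Otherwise, if $[\overline r'_i, x] = 0$ for every $i$, passing to principal symbols in degree $k(x) - 1$ gives $\sum_j M_{ij}\, \partial \sigma_{k(x)}(x) / \partial \xi_j = 0$ for every $i$; invertibility of $M$ over the fraction field of $P$ then forces $\sigma_{k(x)}(x)$ to be independent of the $\xi$'s, yet it is homogeneous of positive $\xi$-degree, so $\sigma_{k(x)}(x) = 0$, contradicting the definition of $k(x)$. Hence some $[\overline r'_i, x]$ is nonzero and lies in $I$ with $\overline F$-degree strictly less than $k(x)$, contradicting minimality. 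The main obstacle is the symbol-level computation $\{\overline r'_i, \xi_j\}|_{\overline r' = 0} = \delta_{i,j}$; once this is in place, the rest is linear algebra over the fraction field of $P$ together with an appeal to Theorem \ref{sim-mod}.
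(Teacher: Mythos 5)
Your proposal is correct, and it shares the paper's overall skeleton: both arguments reduce everything to showing that a nonzero two-sided ideal $I$ meets $P$ nontrivially, and then invoke the simplicity of $P$ as a $C_{cl}$-module (Theorem \ref{sim-mod}) to conclude $1 \in I$. Where you genuinely diverge is in how that key step is carried out. The paper works directly with the PBW expansion $f = \sum \overline F^{(\vec d')} f_{\vec d'}$: it selects a $\vec d$ maximal for the root-lattice partial order, takes $j$ to be its first nonzero component, and computes via the Drinfeld pairing that the coefficient of $\overline F^{(\vec h)}$ (with $\vec h = \vec d - \vec e_j$) in $[\overline r'_j, f]$ is a nonzero multiple of $f_{\vec d}$; iterating strictly lowers the $\overline F$-degree until one lands in $I \cap P$. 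You instead package the same underlying nondegeneracy into the associated graded Poisson algebra $\mathrm{gr}\, C_{cl} \cong \mathbb C[\overline r'_1,\dots,\overline r'_N,\xi_1,\dots,\xi_N]$ and observe that the matrix $M = (\{\overline r'_i,\xi_j\})$ reduces to the identity at the origin — which is exactly the computation $c^{\lambda_j}_{\vec e,0} = \delta_{\vec e, \vec e_j}$ and $n^{\vec e_j,0}_{\lambda_i} = \delta_{ij}$ that the paper performs inside its coefficient analysis — so that $\det M \neq 0$ in $P$ and a nonzero top symbol cannot be annihilated by all the $\mathrm{ad}\,\overline r'_i$. Your route buys a cleaner termination argument (a single minimality contradiction rather than tracking which extremal monomial survives at each stage) and makes transparent the structural reason the descent works, at the cost of setting up the filtration $F_k C_{cl}$ and verifying $[F_k, F_l] \subseteq F_{k+l-1}$; the paper's route is more computational but needs no machinery beyond Lemma \ref{fund-comp} and the pairing. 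One cosmetic remark: your indices on $n^{\vec d,\vec e}_{\lambda}$ and $c^{\lambda}_{\vec d,0}$ are permuted relative to formula (\ref{def-act}), but the conclusion $\{\overline r'_i,\xi_j\}\big|_{\overline r' = 0} = \delta_{i,j}$ is exactly right.
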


\begin{proof}
Let $I \subseteq C_{cl}$ be a 2-sided ideal.  We must prove that $I$ is either $0$ or $C_{cl}$.

Note first that $I \cap P$ is an ideal in $P$.  Moreover, $I \cap P$ is stable under the action of $\mathcal U^-$.  In fact, for $f \in I \cap P$ and $1 \le i \le N$, we have $\overline F_i \cdot f = \overline F_i f - f \overline F_i$.  Since $I$ is a 2-sided ideal and $f \in I$, both $\overline F_i f$ and $f \overline F_i$ are in $I$.  So, we must have $\overline F_i \cdot f \in I$.  We already know that $\overline F_i \cdot f \in P$.  Hence, $\overline F_i \cdot f \in I \cap P$.

It follows from our argument in the previous paragraph that $I \cap P$ is a $C_{cl}$-submodule of $P$.  By Theorem \ref{sim-mod}, $I \cap P$ is either $P$ or $0$.

If $I \cap P = P$, then $1 \in P \subseteq I$.  It follows that $I = C_{cl}$, since $I$ is a 2-sided ideal in $C_{cl}$.  We are done with this case.

For the rest of the proof we assume that $I \cap P = 0$.

Suppose that $I \not\subseteq P$.  Let $f \in I$ be such that $f \not\in P$.  By the PBW theorem for $C_{cl}$ (Proposition \ref{PBW-qc}), we can expand $f$ as
\begin{align}
f = \sum \overline F^{(\vec d')} f_{\vec d'},
\end{align}
where, for each $\vec d' \in (\mathbb Z_{\ge 0})^N$, $f_{\vec d'}$ is an element in $P$.  Define $S := \{\vec d': \vec d' \neq 0, ~ f_{\vec d'} \neq 0 \}$.  $S$ is partially ordered by declaring that $\vec d' \le \vec d''$ if and only if $\lambda_{\vec d'} \le \lambda_{\vec d''}$.  Since $S$ is a finite set, it must have a maximal element with respect to $\le$.  Let $\vec d$ be one such.  Write $j$ for the first nonzero component of $\vec d$ and define $\vec h := (0, \cdots, 0, d_j - 1, d_{j+1}, \cdots, d_N)$.

Since $I$ is a 2-sided ideal in $C_{cl}$, we have $[\overline r'_j, f] = \overline r'_j f - f \overline r'_j \in I$.  By Proposition \ref{PBW-qc}, we can write
\begin{align}
[\overline r'_j, f] = \sum \overline F^{(\vec h')} g_{\vec h'},
\end{align}
where, for each $\vec h' \in (\mathbb Z_{\ge 0})^N$, $g_{\vec h'}$ is an element in $P$.  We analyze the coefficient $g_{\vec h}$ of $\overline F^{(\vec h)}$ in $[\overline r'_j, f]$.  Suppose that the summand $\overline F^{(\vec d')} f_{\vec d'}$, where $\vec d' \in (\mathbb Z_{\ge 0})^N$, in $f$ contributes to $\overline F^{(\vec h)}$ after taking commutator with $\overline r'_j$.  Then we must have $\lambda_{\vec d'} - \lambda_j \ge \lambda_{\vec h}$.  So $\lambda_{\vec d'} \ge \lambda_{\vec h} + \lambda_j = \lambda_{\vec d}$.  Since $\vec d$ is chosen to be maximal in $S$, the last inequality implies that $\lambda_{\vec d'} = \lambda_{\vec d}$.  Let $\vec d' \in (\mathbb Z_{\ge 0})^N$ satisfy $\lambda_{\vec d'} = \lambda_{\vec d}$.  By Lemma \ref{Leib}, we have
\begin{align}
[\overline r'_j, \overline F^{(\vec d')}] = \sum \limits_{\vec k \neq 0} (n^{\vec k, \vec l}_{\lambda_j} r'_{(\vec k)} (F^{(\vec d')}))|_{q=1} \overline r'_{(\vec l)}.
\end{align}
Since $\lambda_{\vec d'} = \lambda_{\vec d} = \lambda_{\vec h} + \lambda_j$, to compute the coefficient of $\overline F^{(\vec h)}$, we only need to concentrate on those summands in the last displayed expression with $\lambda_{\vec k} = \lambda_j$.  If $\vec k \in (\mathbb Z_{\ge 0})^N$ satisfies $\lambda_{\vec k} = \lambda_j$, then for $n^{\vec k, \vec l}_{\lambda_j}$ to be nonzero,  we must have $\vec l = 0$.  Note that $n^{\vec k, 0}_{\lambda_j}$ equals $1$ if $\vec k = \vec e_j$ and equals $0$ in all other cases.  So to compute the coefficient of $\overline F^{(\vec h)}$ in $[\overline r'_j, f]$, we need only compute $r'_j (F^{(\vec d')})$.  Note that
\begin{align}
<F_j, E_j> <r'_j (F^{(\vec d')}), E^{(\vec h)}> = & <\Delta (F^{(\vec d')}), E_j \otimes E^{(\vec h)}> \nonumber \\
= & <F^{(\vec d')}, E^{(\vec h)} E_j> & \nonumber \\
= & \frac 1 {[d_j]_{\lambda_j}} <F^{(\vec d')}, E^{(\vec d)}>.
\end{align}
The last expression is nonzero if and only if $\vec d = \vec d'$.  It follows that for the purpose of computing the coefficient of $\overline F^{(\vec h)}$ in $[\overline r'_j, f]$, we only need to concentrate on the summand $\overline F^{(\vec d)} f_{\vec d}$ of $f$.  Moreover, the coefficient of $\overline F^{(\vec h)}$ in $[\overline r'_j, f]$ is
\begin{align}
(\frac 1 {[d_j]_{\lambda_j}} <F_j, E_j>^{-1} <F^{(\vec h)}, E^{(\vec h)}>^{-1} <F^{(\vec d)}, E^{(\vec d)}>)|_{q=1} f_{\vec d},
\end{align}
which is nonzero.

From the argument in the last paragraph it follows that, for all $f \in I$ such that $f \not\in P$, there exists a natural number $j$ between $1$ and $N$ such that $[\overline r'_j , f] \neq 0$ and $[\overline r'_j , f] \in I$.  Moreover, the procedure of taking commutator with $\overline r'_j$ increases the degree in the $\overline F$'s.

Now, take an element $f \in I$.  If $f$ happens to be in $P$, then nothing needs to be done.  Otherwise, find $j_1$ such that $1 \le j_1 \le N$ and $0 \neq [\overline r'_{j_1}, f] \in I$.  If $[\overline r'_{j_1}, f]$ happens to be in $P$, then we stop here.  Otherwise, find $j_2$ such that $1 \le j_2 \le N$ and $0 \neq [\overline r'_{j_2}, [\overline r'_{j_1}, f]] \in I$.  We iterate this procedure.  Since taking commutator with the $\overline r'$'s increases the degree in the $\overline F$'s, this procedure terminates in finitely many steps, producing a nonzero element in $I \cap P$.  This contradicts our assumption that $I \cap P = 0$.  Hence, $I$ must be contained in $P$.  It then follows that $I = I \cap P = 0$.
\end{proof}

The following theorem describes the center of the $\mathbb C$-algebra $C_{cl}$.  Its proof uses techniques very similar to those that have been used in the proof of Theorem \ref{sim-alg}.  So we omit the proof.

\begin{thm} \label{center}
The natural inclusion map $\mathbb C \hookrightarrow C_{cl}$ is an isomorphism from $\mathbb C$ to the center $Z(C_{cl})$ of $C_{cl}$.
\end{thm}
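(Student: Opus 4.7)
The plan is to take an arbitrary $z \in Z(C_{cl})$ and show that $z \in \mathbb C$, paralleling the two-stage strategy that underlies Theorems \ref{sim-mod} and \ref{sim-alg}. Using the PBW basis of Proposition \ref{PBW-qc}(a), write $z = \sum_{\vec d'} \overline F^{(\vec d')} f_{\vec d'}$ with $f_{\vec d'} \in P$. The first step is to show that $z \in P$; the second is to show that any central element of $P$ is a scalar.

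For the first step, suppose for contradiction that $z \notin P$, so that the set $S := \{\vec d' \neq 0 : f_{\vec d'} \neq 0\}$ is nonempty. Choose $\vec d \in S$ maximal with respect to the partial order induced by $\le$ on the root lattice, let $j$ be the first nonzero component of $\vec d$, and set $\vec h := (0, \ldots, 0, d_j - 1, d_{j+1}, \ldots, d_N)$. The pairing calculation carried out in the proof of Theorem \ref{sim-alg} shows that, when $[\overline r'_j, z]$ is expanded in the PBW basis, the coefficient of $\overline F^{(\vec h)}$ receives a contribution only from the summand $\overline F^{(\vec d)} f_{\vec d}$ (because of the maximality of $\vec d$ in $S$ together with the Drinfeld--Killing pairing vanishing), and this contribution is a nonzero scalar multiple of $f_{\vec d}$. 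Hence $[\overline r'_j, z] \neq 0$, contradicting $z \in Z(C_{cl})$.

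For the second step, suppose $z \in P \cap Z(C_{cl})$ is not a scalar. Expand $z$ in the monomial basis $(\overline r'_1)^{d_1} \cdots (\overline r'_N)^{d_N}$, let $\mu$ be a maximal element (under $\le$) of the set of degrees of nonzero homogeneous components of $z$, and pick a summand $(\overline r'_1)^{d_1} \cdots (\overline r'_N)^{d_N}$ of degree $\mu$ occurring in $z$ with nonzero coefficient. Let $j$ be the last nonzero component of $\vec d$ and put $\vec h := (d_1, \ldots, d_{j-1}, d_j - 1, 0, \ldots, 0)$. The key calculation in the proof of Theorem \ref{sim-mod} then shows that the coefficient of $(\overline r'_1)^{h_1} \cdots (\overline r'_N)^{h_N}$ in $\overline F_j \cdot z$ equals $-d_j \neq 0$ (the choice of $j$ as the last nonzero index ensures that no other monomial of degree $\mu$ can contribute to this coefficient). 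Consequently $[\overline F_j, z] \neq 0$ in $C_{cl}$, which again contradicts centrality.

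Combining the two steps forces $z \in \mathbb C$, proving the theorem. The substantive content is already in place from the earlier proofs; what requires care is keeping track of which summand of $z$ produces a given PBW monomial after taking commutators, and in particular ruling out cancellations from other summands. This is precisely the combinatorial role played by the maximality of $\vec d$ in $S$ (in the first step) and by choosing $j$ to be the last nonzero index of $\vec d$ (in the second step); neither of these requires new calculations beyond what has already been carried out.
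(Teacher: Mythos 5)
Your two-step strategy (first force $z\in P$ by commuting with the $\overline r'_j$, then force $z\in\mathbb C$ by commuting with the $\overline F_j$) is exactly the route the paper intends: it omits the proof of Theorem \ref{center} and points to the techniques of Theorems \ref{sim-mod} and \ref{sim-alg}. Your second step is sound as written: $\overline F_j$ acts as a homogeneous derivation of degree $-\lambda_j$, so only the degree-$\mu$ component of $z$ can contribute to the coefficient of $(\overline r'_1)^{h_1}\cdots(\overline r'_N)^{h_N}$, and since $F_jF^{(\vec h)}=[d_j]F^{(\vec d)}$ exactly (because $\vec h$ has no nonzero entries past position $j$), that coefficient is $-d_j$ times the coefficient of $(\overline r'_1)^{d_1}\cdots(\overline r'_N)^{d_N}$ in $z$, hence nonzero.

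The gap is in step 1, and it is inherited verbatim from the proof of Theorem \ref{sim-alg}: maximality of $\vec d$ in $S$ does not isolate the contribution of $\overline F^{(\vec d)}f_{\vec d}$. By Lemma \ref{Leib}, $[\overline r'_j,\overline F^{(\vec d')}]$ is a sum of terms $\overline{r'_{(\vec k)}(F^{(\vec d')})}\,\overline r'_{(\vec l)}$ with $\vec k\neq 0$ and $\lambda_{\vec k}+\lambda_{\vec l}=\lambda_j$; the $\overline F$-part has weight $\lambda_{\vec d'}-\lambda_{\vec k}$ with $0<\lambda_{\vec k}\le\lambda_j$. Hence the coefficient of $\overline F^{(\vec h)}$, where $\lambda_{\vec h}=\lambda_{\vec d}-\lambda_j$, receives contributions from every $\vec d'\in S$ with $\lambda_{\vec h}<\lambda_{\vec d'}\le\lambda_{\vec d}$: the correct inequality is $\lambda_{\vec d'}-\lambda_j\le\lambda_{\vec h}$, not $\ge$ as asserted in the proof of Theorem \ref{sim-alg}, and maximality excludes none of these. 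Concretely, in type $A_2$ with the enumeration $\lambda_1=\alpha$, $\lambda_2=\alpha+\beta$, $\lambda_3=\beta$, one has $[\overline r'_2,\overline F_1]=c\,\overline r'_3$ with $c\neq 0$, so for $f=\overline F_1-c\,\overline F_2\,\overline r'_3$ the coefficient of $\overline F^{(0)}$ in $[\overline r'_2,f]$ equals $f_{\vec e_2}+c\,\overline r'_3 f_{\vec e_1}=0$ even though $\vec e_2$ is the maximal element of $S$ and $f_{\vec e_2}\neq 0$. The repair is local: choose $\vec d$ \emph{minimal} in $S$ instead. Then any contributing $\vec d'$ satisfies $\lambda_{\vec d'}\le\lambda_{\vec d}$, minimality forces $\lambda_{\vec d'}=\lambda_{\vec d}$, and the Drinfeld--Killing pairing computation you cite then isolates $\vec d'=\vec d$ and yields a nonzero multiple of $f_{\vec d}$. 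With that change both steps go through and the theorem follows.
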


\subsection{Poisson-Geometric Aspects} \label{Pois-asp-sect}

In this section we study various Poisson geometric properties of the quasi-classical limit $P$.

\subsubsection{Hayashi Construction and the Poisson Bracket on $P$} \label{Hayashi-sect}

To define the Poisson bracket on $P$, let us first recall the Hayashi construction.

Let $R$ be a commutative $\mathbb C$-algebra of Krull dimension $1$.  Assume that there exists an element $t \in R$ such that $\mathbb C \simeq R/tR$ via the natural inclusion of $\mathbb C$ into $R$ followed by the natural quotient map from $R$ to $R/tR$.  Left multiplication by $t$ defines an endomorphism of $R$.  It is easy to see that this endomorphism induces maps $R/tR \rightarrow tR/t^2R$ and $tR/t^2R \rightarrow t^2R/t^3R$.  Assume, furthermore, that the maps $R/tR \rightarrow tR/t^2R$ and $tR/t^2R \rightarrow t^2R/t^3R$ are isomorphisms.

Let $A$ be an associative $R$-algebra.  Left multiplication by $t \in R$ defines an endomorphism of $A$.  This endomorphism induces maps $A/tA \rightarrow tA/t^2A$ and $tA/t^2A \rightarrow t^2A/t^3A$.  Assume that the two maps $A/tA \rightarrow tA/t^2A$ and $tA/t^2A \rightarrow t^2A/t^3A$ are isomorphisms.

Given the data as above, the Hayashi construction equips the center $Z := Z(A/tA)$ of the $\mathbb C (\simeq R/tR)$-algebra $A/tA$ with a Poisson algebra structure in the following way.  For any $\overline a, \overline b \in Z$, choose representatives $a, b \in A$ of the classes $\overline a, \overline b \in A/tA$.  Since
\begin{align}
\overline a \overline b - \overline b \overline a = 0
\end{align}
in $A/tA$, we must have
\begin{align}
ab - ba \in tA.
\end{align}
Since multiplication by $t$ is an isomorphism from $A/tA$ to $tA/t^2A$, and $ab - ba$ represents a class in $tA/t^2A$, there exists a unique class, denoted by $\overline{\frac 1t (ab - ba)}$, in $A/tA$ which is mapped to the class of $ab - ba$ under the isomorphism $A/tA \rightarrow tA/t^2A$.  We define
\begin{align}
\{\overline a, \overline b\} := \overline{\frac 1t (ab - ba)}.
\end{align}
The Hayashi construction says that this is a well-defined Poisson bracket on $Z$.

Applying this construction to $R = \mathcal A$, $t = 1 - q$ and $A = C^+_{\mathcal A}$, we obtain a Poisson bracket $\{~~,~~\}$ on $Z(P)$.  Note that the conditions on $A = C^+_{\mathcal A}$ in the Hayashi construction are satisfied because of Proposition \ref{PBW-boson-int}.  Since $P$ is a commutative algebra, we have $Z(P) = P$.  Thus the Hayashi construction equips $P$ with a Poisson bracket $\{~~,~~\}$.  The goal of this section is to study the Poisson-geometric properties of the Poisson algebra $(P, \{~~,~~\})$ in detail.

We first work out explicitly the Poisson bracket in the case where $\mathfrak g$ is of type $A_n$.  In this case the Weyl group $W$ is isomorphic to the symmetric group $S_{n+1}$, which we identify with the group of permutations of the integers $\{1, 2, \cdots, n+1\}$.  Let us label the simple refections so that $s_i$, for $1 \le i \le n$, swaps $i$ and $i+1$ and fixes all the other integers.  The longest element $w_0$ in this case has
\begin{align}
(s_1 s_2 \cdots s_n) (s_1 s_2 \cdots s_{n-1}) \cdots (s_1 s_2) s_1
\end{align}
as one of its reduced expressions.  To simplify notation, for $1 \le i \le j \le n$, we write $x_{i,j}$ for $\overline r'_{\alpha_i + \cdots + \alpha_j}$.  Then we have the following formulas for the Poisson bracket in type $A_n$.

\begin{prop} \label{bra-An}
For $1 \le i \le j \le n$ and $1 \le k \le l \le n$, the Poisson bracket on $P$ in the case where $\mathfrak g$ is of type $A_n$ is given by
\begin{align}
\{x_{i,j}, x_{k,l}\} = 
\begin{cases}
0 & \mbox{if } j \le k-2 \\
x_{i,j}x_{k,l} + 2 x_{i,l} & \mbox{if } j = k - 1 \\
-2 x_{k,j}x_{i,l} & \mbox{if } i < k \le j < l \\
0 & \mbox{if } k < i \le j < l \\
-x_{i,j}x_{k,l} & \mbox{if } i = k, j < l \\
x_{i,j}x_{k,l} & \mbox{if } k < i, j = l.
\end{cases}
\end{align}
\end{prop}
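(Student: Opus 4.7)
The plan is to apply Lemma \ref{com-rel} to compute the commutator $[r'_\lambda, r'_\mu]$ for $\lambda = \alpha_i + \cdots + \alpha_j$ and $\mu = \alpha_k + \cdots + \alpha_l$ ordered by the PBW index, and extract the Poisson bracket via the Hayashi construction of section \ref{Hayashi-sect}: $\{x_\lambda, x_\mu\}$ is the image of $(1-q)^{-1}[r'_\lambda, r'_\mu]$ in $P = C^+_{cl}$.

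Expanding Lemma \ref{com-rel} in powers of $1-q$ reveals three sources of contributions. The leading factor $q^{(\lambda,\mu)}-1 = -(\lambda,\mu)(1-q) + O((1-q)^2)$ produces the term $-(\lambda,\mu)\,x_\lambda x_\mu$. The correction sum is indexed by $\vec d$ with $\lambda_{\vec d} = \lambda + \mu$ and support lying strictly between $\lambda$ and $\mu$ in the PBW order; combining Theorem \ref{LS-str} with the identity $-(q - q^{-1}) = (1-q)(1+q)/q$ coming from Proposition \ref{Dri-Kil-val} shows that only $|\vec d| \le 2$ survives the limit. Explicitly, $|\vec d| = 1$ (i.e., $\lambda + \mu$ is itself a positive intermediate root) contributes $2\,c_{\vec d}|_{q=1}\,x_{\lambda_{\vec d}}$, while $|\vec d| = 2$ (a decomposition $\lambda + \mu = \lambda_a + \lambda_b$ with both roots intermediate, $a < b$) contributes $(c_{\vec d}/(1-q))|_{q=1}\,x_{\lambda_b}x_{\lambda_a}$.

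Next, I would specialize to type $A_n$ using $\alpha_i + \cdots + \alpha_j \leftrightarrow \epsilon_i - \epsilon_{j+1}$ and the lexicographic enumeration of positive roots produced by the fixed reduced expression $w_0 = (s_1 \cdots s_n)(s_1 \cdots s_{n-1}) \cdots s_1$. A direct inner product calculation yields $(\lambda,\mu) = 0$ in cases (a), (c), (d) and $(\lambda,\mu) = -1, 1, 1$ in cases (b), (e), (f) respectively, accounting for the diagonal $x_{i,j} x_{k,l}$ contributions. A combinatorial check of intermediate decompositions then matches each case with its correction: case (b) gives the intermediate single root $\alpha_i + \cdots + \alpha_l$; case (c) gives the intermediate two-root decomposition $(\alpha_i + \cdots + \alpha_l) + (\alpha_k + \cdots + \alpha_j)$; in case (d) the analogous decomposition exists combinatorially but fails the intermediateness test in lex order, and in cases (a), (e), (f) no decomposition of $\lambda + \mu$ into positive roots other than itself is possible.

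The final step is to pin down two Levendorskii-Soibelman constants in type $A$: $c_{\vec d}|_{q=1} = 1$ for the single intermediate root in case (b), yielding the $2x_{i,l}$ term; and $(c_{\vec d}/(1-q))|_{q=1} = -2$ for the two-root decomposition in case (c), yielding the $-2x_{k,j}x_{i,l}$ term. Both reduce to the identity $E_{\alpha_i + \cdots + \alpha_j}\,E_{\alpha_{j+1} + \cdots + \alpha_l} - q^{-1} E_{\alpha_{j+1} + \cdots + \alpha_l}\,E_{\alpha_i + \cdots + \alpha_j} = E_{\alpha_i + \cdots + \alpha_l}$ for adjacent intervals (proved by induction on $l-i$ via the Lusztig braid action), together with a straightening computation for case (c). Verifying these constants with the correct signs and powers of $q$ under the chosen PBW normalization is the main technical obstacle; small-rank sanity checks in $A_2$ and $A_3$ should corroborate them before one commits to the general argument.
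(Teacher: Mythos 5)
Your proposal follows essentially the same route as the paper, which derives the formulas from Proposition \ref{Dri-Kil-val}, Lemma \ref{com-rel} and the Hayashi construction and explicitly omits the computation of the Levendorskii--Soibelman structure constants as "not illuminating." Your outline in fact supplies more detail than the paper does (the $(1-q)$-order bookkeeping, the case-by-case inner products, and the identification of the intermediate decompositions), and your flagged remaining task --- pinning down the constants and signs under the chosen PBW normalization --- is precisely the step the paper declines to write out.
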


\begin{proof}
This follows from Proposition \ref{Dri-Kil-val}, Lemma \ref{com-rel} and the Hayashi construction reviewed above.  Computation of the structure constants in Lemma \ref{com-rel} is not illuminating and is omitted.
\end{proof}

To make this proposition more accessible, we give a few examples.

\begin{exmp} \label{type-A2}
In the case where $n=2$, we write $x$ for $x_1$, $y$ for $x_2$ and $u$ for $x_{1,2}$.  The Poisson bracket is given explicitly as follows
\begin{align}
\{x,y\} = xy + 2u, ~ \{x,u\} = -xu, ~ \{y,u\} = yu.
\end{align}

Let $G$ be a connected algebraic group whose Lie algebra is $\mathfrak g$ and $B$ the Borel subgroup of $G$ whose Lie algebra is $\mathfrak b$.  The flag variety $G/B$ has a Poisson structure called the standard Poisson structure.  Basic definitions about the standard Poisson structure on $G/B$ will be reviewed in section \ref{rel-open-Bru-cell-sect} below.  Explicit formulas for the standard Poisson structure on the open Bruhat cell $Bw_0B/B$ in the flag variety $G/B$ has been computed by Elek and Lu in \cite{EL}.  In the case where the root system is of type $A_2$, their formulas read
\begin{align}
\{z_1, z_2\} = -z_1z_2, ~ \{z_1, z_3\} = z_1z_3 - 2z_2, ~ \{z_2, z_3\} = -z_2z_3.
\end{align}
It is easy to see that the $\mathbb C$-algebra map sending $x$ to $-z_1$, $y$ to $z_3$ and $u$ to $z_2$ is a Poisson isomorphism from $P$ to the coordinate ring of $Bw_0B/B$, thus establishing a Poisson isomorphism $Bw_0B/B \xrightarrow{\sim} \text {Spec} P$ of Poisson varieties.
\end{exmp}

\begin{exmp}
In the case where $n=3$, we write $x, y, z$ for $x_1, x_2, x_3$, respectively.  Also, we write $u, v$ for $x_{1,2}, x_{2,3}$, respectively, and $s$ for $x_{1,3}$.  Then the Poisson bracket on $P$ is given explicitly as follows
\begin{align}
& \{x,y\} = xy + 2u, ~ \{x,z\} = 0, ~ \{x,u\} = -xu, ~ \{x,v\} = xv + 2s, ~ \{x,s\} = -xs, \nonumber \\
& \{y,z\} = yz + 2v, ~ \{y,u\} = yu, ~ \{y,v\} = -yv, ~ \{y,s\} = 0, \nonumber \\
& \{z,u\} = -uz - 2s, ~ \{z,v\} = zv, ~ \{z,s\} = zs, \nonumber \\
& \{u,v\} = -2ys, ~ \{u,s\} = -us, \nonumber \\
& \{v,s\} = vs.
\end{align}
\end{exmp}

We proceed to write down explicit formulas for the Poisson bracket on $P$ in the case where $\mathfrak g$ is of type $G_2$.  In this case we write $\alpha_1, \alpha_2$ for the two simple roots, with $\alpha_1$ shorter than $\alpha_2$.  For the reduced expression (\ref{red-exp}) for $w_0$, we choose
\begin{align}
w_0 = s_{\alpha_1} s_{\alpha_2} s_{\alpha_1} s_{\alpha_2} s_{\alpha_1} s_{\alpha_2}.
\end{align}
Then the induced enumeration of the set of positive roots is
\begin{align}
\lambda_1 = \alpha_1 \preceq \lambda_2 = 3\alpha_1 + \alpha_2 \preceq \lambda_3 = 2\alpha_1 + \alpha_2 \preceq \lambda_4 = 3\alpha_1 + 2\alpha_2 \preceq \lambda_5 = \alpha_1 + \alpha_2 \preceq \lambda_6 = \alpha_2.
\end{align}
Write $x_i$ for $\overline r'_i$ for $1 \le i \le 6$.  Then we have the following explicit formulas for the Poisson bracket on $P$.

\begin{prop} \label{type-G2}
The Poisson bracket on $P$ in the case where $\mathfrak g$ is of type $G_2$ is given by
\begin{flalign}
& \{x_1, x_2\} = -3x_1x_2, ~ \{x_1, x_3\} = -x_1x_3 + 2x_2, ~ \{x_1, x_4\} = -6x_3^2, ~ \{x_1, x_5\} = x_1x_5 + 4x_3, & \nonumber \\
& \{x_1, x_6\} = 3x_1x_6 + 6x_5, ~ \{x_2, x_3\} = -3x_2x_3, ~ \{x_2, x_4\} = -3x_2x_4 + 6x_3^3, ~ \{x_2, x_5\} = -6x_3^2, & \nonumber \\
& \{x_2, x_6\} = 3x_2x_6 - 18x_3x_5 - 6x_4, ~ \{x_3, x_4\} = -3x_3x_4, ~ \{x_3, x_5\} = -x_3x_5 + 2x_4, ~ \{x_3, x_6\} = -6x_5^2, & \nonumber \\
& \{x_4, x_5\} = -3x_4x_5, ~ \{x_4, x_6\} = -3x_4x_6 + 6x_5^3, ~ \{x_5, x_6\} = -3x_5x_6.
\end{flalign}
\end{prop}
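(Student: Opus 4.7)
The plan is to follow exactly the same recipe that was used to establish Proposition \ref{bra-An} in type $A_n$: for each ordered pair $1\le i<j\le 6$, compute the commutator $[r'_i,r'_j]$ in $C_q^+$ using Lemma \ref{com-rel}, then pass to the quasi-classical limit under the Hayashi construction described in section \ref{Hayashi-sect}. By Proposition \ref{ast} each coefficient that appears in $[r'_i,r'_j]$, expanded in the PBW basis of Proposition \ref{PBW-boson}, is divisible by $1-q$, so the Poisson bracket
\[
\{x_i,x_j\}=\overline{\tfrac{1}{1-q}[r'_i,r'_j]}
\]
is computed term-by-term by dividing each structure constant by $1-q$ and specializing at $q=1$.

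The specific data needed for $G_2$ are straightforward to assemble. Normalize $(\,,\,)$ so that $(\alpha_1,\alpha_1)=2$, $(\alpha_2,\alpha_2)=6$, $(\alpha_1,\alpha_2)=-3$. The chosen reduced expression $w_0=s_{\alpha_1}s_{\alpha_2}s_{\alpha_1}s_{\alpha_2}s_{\alpha_1}s_{\alpha_2}$ yields the enumeration $\lambda_1,\dots,\lambda_6$ listed in the statement, and Lusztig's braid operators $T_{\alpha_1},T_{\alpha_2}$ produce the root vectors $E_{\lambda_k}=T_{\alpha_1}T_{\alpha_2}\cdots(E_{\alpha_{i_k}})$ (and similarly for $F_{\lambda_k}$) as in section \ref{review}. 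Compute the fifteen inner products $(\lambda_i,\lambda_j)$ once and for all; these determine the scalar $q^{(\lambda_i,\lambda_j)}$ that appears in the Levendorskii-Soibelman law of Theorem \ref{LS} and in the leading term $(q^{(\lambda_i,\lambda_j)}-1)r'_{i,j}$ of Lemma \ref{com-rel}.

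The heart of the computation is the following per-pair routine. First, apply the Levendorskii-Soibelman straightening law (Theorem \ref{LS}, in its strengthened form Theorem \ref{LS-str}) to expand
\[
E_{\lambda_i}E_{\lambda_j}-q^{(\lambda_i,\lambda_j)}E_{\lambda_j}E_{\lambda_i}=\sum_{\vec d}c_{\vec d}E^{\vec d},
\]
where the only surviving $\vec d$ satisfy $\lambda_{\vec d}=\lambda_i+\lambda_j$ with all components in positions strictly between $i$ and $j$. Second, substitute into Lemma \ref{com-rel}, using Proposition \ref{Dri-Kil-val} to evaluate each pairing $\langle F^{\vec d},E^{\vec d}\rangle$ and $\langle F_{\lambda_k},E_{\lambda_k}\rangle$ in closed form; by Lemma \ref{mix} this converts each $r'_{\vec d}$ into a scalar multiple of $(r'_N)^{d_N}\circ\cdots\circ(r'_1)^{d_1}$. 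Third, regroup: each resulting coefficient is a rational function of $q$ lying in $(1-q)\mathcal A$, and dividing by $1-q$ and specializing at $q=1$ produces a numerical coefficient that is the desired structure constant. Writing the result in terms of $x_i=\overline{r'_i}$ gives the formulas of the proposition.

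The main obstacle is simply the bookkeeping, which is more substantial than in type $A$. Because $G_2$ has two root lengths, the pairings $\langle F^{\vec d},E^{\vec d}\rangle$ involve powers of $q_{\lambda_i}=q^{(\lambda_i,\lambda_i)/2}$ with $(\lambda_i,\lambda_i)\in\{2,6\}$, so the factors of $[3]!$ that motivated the choice $\mathcal A=\mathbb C[q^{\pm 1}]_{[3]!}$ really do appear in denominators and must be tracked carefully through the Hayashi limit. The nontrivial nonlinear terms ($6x_3^2$, $6x_3^3$, $-18x_3x_5$, $6x_5^3$, etc.) come from pairs such as $(\lambda_1,\lambda_4)$, $(\lambda_2,\lambda_4)$, $(\lambda_2,\lambda_6)$, and $(\lambda_4,\lambda_6)$, where the Levendorskii-Soibelman relation has a multi-term right-hand side supported on the middle positive roots; these are the cases where the explicit Lusztig-style formulas for $E^{\vec d}$ in $G_2$ must be carefully manipulated. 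Once those handful of key commutators are computed, the remaining pairs are either trivial (when $\lambda_i+\lambda_j\notin\Phi^+$ and no intermediate roots contribute) or mirror the type-$A$ pattern already established.
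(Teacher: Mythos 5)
Your proposal follows exactly the paper's own route: the paper proves Proposition \ref{type-G2} by citing Proposition \ref{Dri-Kil-val}, Lemma \ref{com-rel} and the Hayashi construction, and omits the explicit computation of structure constants just as you defer the per-pair bookkeeping. Your outline is correct and, if anything, more explicit than the paper about where the nonlinear terms and the two root lengths enter.
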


\begin{proof}
This again follows from Proposition \ref{Dri-Kil-val}, Lemma \ref{com-rel} and the Hayashi construction reviewed above.  Computation of the structure constants in Lemma \ref{com-rel} involves less work than in the case where $\mathfrak g$ is of type $A_n$, but is still unpleasant.  This computation is also omitted.
\end{proof}

\begin{rem}
Observe that the $\mathbb C$-algebra map sending $x_1$ to $-z_1$, $x_2$ to $z_2$, $x_3$ to $z_3$, $x_4$ to $-z_4$, $x_5$ to $z_5$, $x_6$ to $z_6$ is an isomorphism of Poisson algebras from $P$ to the coordinate ring of $Bw_0B/B$.  Here, $z_1, \cdots, z_6$ are the Bott-Samelson coordinates used by Elek and Lu in \cite{EL}.  Hence, we again have a Poisson isomorphism $Bw_0B/B \xrightarrow{\sim} \text {Spec} P$ of Poisson varieties.
\end{rem}

In general, using Proposition \ref{Dri-Kil-val}, Lemma \ref{mix} and Lemma \ref{com-rel}, for the Poisson bracket on $P$, we have the following

\begin{prop} \label{Pois-bra}
For any $1 \le i < j \le N$, the Poisson bracket of $\overline r'_i$ with $\overline r'_j$ in $P$ is given by
\begin{flalign}
& \{\overline r'_{i}, \overline r'_{j}\} = & \nonumber \\
& -(\lambda_i, \lambda_j) \overline r'_{i} \overline r'_{j} + \sum\limits_{\lambda_{\vec d} = \lambda_1 + \lambda_2} [(1-q)^{-1} (q_{\lambda_i} - q_{\lambda_i}^{-1}) (q_{\lambda_j} - q_{\lambda_j}^{-1}) (-1)^{|\vec d|} c_{\vec d} \prod\limits_{k=1}^N (q_{\lambda_k} - q_{\lambda_k}^{-1})^{-\lambda_k}]|_{q=1} \prod\limits_{k=1}^N (\overline r'_k)^{d_k}.
\end{flalign}
\end{prop}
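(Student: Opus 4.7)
The plan is to derive the formula directly from Lemma \ref{com-rel} by applying the Hayashi construction, which instructs us to divide the commutator $[r'_i, r'_j]$ by $1-q$ and specialize at $q = 1$. The well-definedness of this limit is already guaranteed by Proposition \ref{ast} (which in turn rests on Theorem \ref{LS-str}), so the real work lies in simplifying the resulting coefficients. I would first handle the term $(q^{(\lambda_i,\lambda_j)} - 1)\, r'_{i,j}$ on the right-hand side of Lemma \ref{com-rel}: by a Taylor expansion the scalar satisfies $(q^{(\lambda_i,\lambda_j)} - 1)/(1-q) \to -(\lambda_i,\lambda_j)$ as $q \to 1$, while Lemma \ref{mix} applied to $\vec d = \vec e_i + \vec e_j$ gives $r'_{i,j} = r'_j \circ r'_i$, whose image in the commutative algebra $P$ is $\overline r'_i \overline r'_j$.

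For the sum over $\vec d$ with $\lambda_{\vec d} = \lambda_i + \lambda_j$, I would evaluate the three pairing factors using Proposition \ref{Dri-Kil-val}: the case $d = 1$ of part (c) yields $\langle F_{\lambda_i}, E_{\lambda_i}\rangle^{-1} = -(q_{\lambda_i} - q_{\lambda_i}^{-1})$ and likewise for $\lambda_j$, while the combination of parts (b) and (c) gives
\begin{equation*}
\langle F^{\vec d}, E^{\vec d}\rangle = (-1)^{|\vec d|}\, \prod_{k=1}^N q_{\lambda_k}^{d_k(d_k-1)/2}\, \frac{[d_k]_{\lambda_k}^!}{(q_{\lambda_k} - q_{\lambda_k}^{-1})^{d_k}}.
\end{equation*}
Simultaneously, Lemma \ref{mix} rewrites $r'_{\vec d}$ as
\begin{equation*}
r'_{\vec d} = \Bigl(\prod_{k=1}^N q_{\lambda_k}^{-d_k(d_k-1)/2}\Bigr)\Bigl(\prod_{k=1}^N [d_k]_{\lambda_k}^!\Bigr)^{-1} (r'_N)^{d_N} \circ \cdots \circ (r'_1)^{d_1}.
\end{equation*}

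Multiplying these together, the $q_{\lambda_k}^{\pm d_k(d_k - 1)/2}$ factors and the quantum factorials $[d_k]_{\lambda_k}^!$ cancel in pairs, leaving precisely the sign $(-1)^{|\vec d|}$, the denominators $\prod_k (q_{\lambda_k} - q_{\lambda_k}^{-1})^{d_k}$ (contributing the $-d_k$ exponents in the stated formula), the pair of numerator factors $(q_{\lambda_i} - q_{\lambda_i}^{-1})(q_{\lambda_j} - q_{\lambda_j}^{-1})$, and the coefficient $c_{\vec d}$. Specializing $(r'_N)^{d_N} \circ \cdots \circ (r'_1)^{d_1}$ to $\prod_k (\overline r'_k)^{d_k}$ in the commutative algebra $P$ and collecting with the Hayashi factor $(1-q)^{-1}$ then produces exactly the bracketed coefficient displayed in Proposition \ref{Pois-bra}.

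I expect the main obstacle to be purely bookkeeping: keeping track of signs, quantum integer cancellations, and ordering conventions across the three inputs (Proposition \ref{Dri-Kil-val}, Lemma \ref{mix}, and Lemma \ref{com-rel}) without error. The only genuinely substantive point—that each coefficient in the expansion of $[r'_i, r'_j]$ is divisible by $1-q$ in $\mathcal A$, so that the quotient by $1-q$ makes sense at $q=1$—has already been established in Proposition \ref{ast}; once that is cited, the remainder is a direct substitution and limit computation.
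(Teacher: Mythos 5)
Your proposal is correct and follows exactly the route the paper intends: the paper offers no written proof beyond the remark that the formula follows from Proposition \ref{Dri-Kil-val}, Lemma \ref{mix} and Lemma \ref{com-rel} via the Hayashi construction, and your substitution-and-cancellation computation (with divisibility by $1-q$ supplied by Proposition \ref{ast}) is precisely that argument carried out. Your reading also silently corrects two typos in the stated formula (the sum should run over $\lambda_{\vec d} = \lambda_i + \lambda_j$, and the exponent should be $-d_k$ rather than $-\lambda_k$), which is consistent with what the derivation actually yields.
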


Recall that, according to Proposition \ref{Mat-qc}, the $\mathbb C$-algebra $P$ is independent of the choice of the reduced expression (\ref{red-exp}).  It is natural to ask whether or not the Poisson algebra $(P, \{~~,~~\})$ depends on the reduced expression for $w_0$.  The following result is an answer to this question.

\begin{prop} \label{indep}
Let $\tilde r'_1, \cdots, \tilde r'_N$ be Kashiwara operators defined by a choice of reduced expression for $w_0$ which is different than (\ref{red-exp}).  Let $\{~~,~~\}^{\wedge}$ be the corresponding Poisson bracket on $P$ and $\overline r'_i = \overline r'_i(\overline {\tilde r'_1}, \cdots, \overline {\tilde r'_N}) \in \mathbb C[\overline {\tilde r'_1}, \cdots, \overline {\tilde r'_N}]$ be the induced coordinate changes.  Then, for all $1 \le i,j \le N$, we have
\begin{align} \label{compare-Poisson-brackets}
\{\overline r'_i(\overline {\tilde r'_1}, \cdots, \overline {\tilde r'_N}), \overline r'_j(\overline {\tilde r'_1}, \cdots, \overline {\tilde r'_N})\}^{\wedge} = \{\overline r'_i, \overline r'_j\} (\overline {\tilde r'_1}, \cdots, \overline {\tilde r'_N}).
\end{align}
\end{prop}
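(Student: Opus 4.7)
The strategy is to observe that both Poisson brackets $\{~,~\}$ and $\{~,~\}^{\wedge}$ are obtained by applying the \emph{same} Hayashi construction to the \emph{same} input data. Concretely, the construction reviewed in section \ref{Hayashi-sect} takes as input only the pair consisting of the $\mathcal A$-algebra $A := C^+_{\mathcal A}$ and the element $t := 1-q \in \mathcal A$; the resulting Poisson bracket on $Z(A/tA) = P$ is intrinsic to this pair and makes no reference to any choice of generators. Hence the plan is to argue that $\{~,~\}$ and $\{~,~\}^{\wedge}$ are literally the same bracket on $P$, from which (\ref{compare-Poisson-brackets}) will follow as a tautology.

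First, I would recall that Proposition~\ref{Mat-int} asserts that the $\mathcal A$-subalgebra $C^+_{\mathcal A}$ of $\operatorname{End}_{\mathcal A}(U^-_{\mathcal A})$ is independent of the reduced expression (\ref{red-exp}) of $w_0$: different reduced expressions merely produce two sets of generators $\{r'_i\}$ and $\{\tilde r'_i\}$ of the same $\mathcal A$-algebra. Consequently, passing to the quotient by $(1-q)$ gives one and the same commutative $\mathbb C$-algebra $P$, inside which both $\overline r'_i$ and $\overline{\tilde r'_j}$ are elements (each family being a set of polynomial generators by Proposition \ref{PBW-qc}(b)). Applying the Hayashi construction produces a unique Poisson bracket on $P$, and both $\{~,~\}$ and $\{~,~\}^{\wedge}$ are, by definition, equal to it. The verification that the Hayashi bracket depends only on $(A,t)$ is immediate from its construction: for $\overline a,\overline b\in P$, one lifts to $a,b\in A$ and sets $\{\overline a,\overline b\}:=\overline{t^{-1}(ab-ba)}$; this formula only uses the multiplication in $A$ and the element $t$.

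Finally, I would deduce (\ref{compare-Poisson-brackets}) as follows. Write $\overline r'_i = p_i(\overline{\tilde r'_1},\dots,\overline{\tilde r'_N})$ for the polynomial expressing $\overline r'_i$ in the new coordinates. Since $\{~,~\}^{\wedge} = \{~,~\}$ on $P$, we compute
\begin{align*}
\{p_i(\overline{\tilde r'_1},\dots,\overline{\tilde r'_N}),\, p_j(\overline{\tilde r'_1},\dots,\overline{\tilde r'_N})\}^{\wedge}
= \{\overline r'_i,\overline r'_j\}^{\wedge}
= \{\overline r'_i,\overline r'_j\},
\end{align*}
and the right-hand side, being a specific element of $P$, can equivalently be written as a polynomial in the generators $\overline{\tilde r'_1},\dots,\overline{\tilde r'_N}$, which is by definition what $\{\overline r'_i,\overline r'_j\}(\overline{\tilde r'_1},\dots,\overline{\tilde r'_N})$ denotes.

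The main conceptual point (and the only potential obstacle) is justifying that the Hayashi bracket is truly intrinsic to the algebra $(A,t)$ rather than to a chosen presentation; however, this is built into its definition, so no genuine computation is needed. In particular, no direct comparison via the explicit change-of-coordinates formulas (of the kind appearing in the proof of Proposition \ref{Mat}) is required, which is fortunate since those formulas rapidly become intractable for arbitrary braid moves.
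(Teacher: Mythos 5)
Your argument is correct. The paper in fact states Proposition \ref{indep} without any proof, so there is nothing to compare against; but your route --- the Hayashi bracket is intrinsic to the pair $(C^+_{\mathcal A},\,1-q)$, and $C^+_{\mathcal A}$ is independent of the reduced expression by Proposition \ref{Mat-int}, so $\{~,~\}$ and $\{~,~\}^{\wedge}$ are literally the same bracket on the same algebra $P$ --- is the natural one and is evidently what the placement of the proposition (immediately after Propositions \ref{Mat-int} and \ref{Mat-qc}) is meant to suggest. The one point worth making explicit is that the identification of the two copies of $P$ implicit in the statement is the canonical one obtained by viewing both integral forms as one and the same $\mathcal A$-subalgebra of $\text{End}_{\mathcal A}(U^-_{\mathcal A})$; granting that, equation (\ref{compare-Poisson-brackets}) is exactly the tautological change-of-variables identity you derive, and no computation with explicit braid-move formulas is needed.
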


\subsubsection{Generic Rank of $P$: Comparison with the Kirillov-Kostant Poisson Bracket} \label{gen-rk-sect}

We first recall the Kirillov-Kostant Poisson bracket on $\mathfrak n^{\ast}$, the dual vector space of $\mathfrak n$, in the case where $\mathfrak g$ is of type $A_n$.  Abstractly, $\mathfrak n^{\ast}$ can be identified with the affine space $\mathbb A^{n(n+1)/2}$.  For any pair $(i,j)$ satisfying $1 \le i \le j \le n$, we have a coordinate function $y_{i,j}$ on the  algebraic variety $\mathfrak n^{\ast}$.  Specifically, choose $\mathfrak n$ to be the Lie subalgebra of $\mathfrak {sl}_{n+1}$ consisting of strictly upper triangular matrices.  Then, for $M \in \mathfrak n$, $y_{i,j}(M)$ equals the $(i,j)$th entry of $M$.  The Kirillov-Kostant Poisson bracket is given explicitly by the following formulas.
\begin{align} \label{Kir-bra}
\{y_{i,j}, y_{k,l}\} = 
\begin{cases}
y_{i,l} & \mbox{if } k = j + 1 \\
- y_{k,j} & \mbox{if } i = l + 1 \\
0 & \mbox{else}.
\end{cases}
\end{align}

Recall that, by Proposition \ref{a-presentation}, $P$ is a polynomial algebra in $N$ variables, where, when $\mathfrak g$ is of type $A_n$, $N$ equals $n(n+1)/2$.  It follows that $\text{Spec} P$ can be identified abstractly with the affine space $\mathbb A^{n(n+1)/2}$.  Thus we identify $\text{Spec} P$ with $\mathfrak n^{\ast}$ as algebraic varieties.  Comparing the formula above for the Kirillov-Kostant Poisson bracket and the formula in Proposition \ref{bra-An}, one sees readily that the Poisson bracket on $P$ degenerates to the Kirillov-Kostant Poisson bracket in the case where $\mathfrak g$ is of type $A_n$, in the sense that the Kirillov-Kostant Poisson bracket is the linear term of the Poisson bracket on $P$.

\begin{defn}
(a) The generic rank $\text {gr}_{\mathfrak g}$ of $(P, \{~~,~~\})$ is the maximum of the dimension of the symplectic leaves in $(\text {Spec} P, \{~~,~~\})$.

(b) The generic rank $\text {gr}'_{\mathfrak g}$ of the Kirillov-Kostant Poisson bracket is the maximum of the dimension of the symplectic leaves in $\mathfrak n^{\ast}$ equipped with the Kirillov-Kostant Poisson bracket.
\end{defn}

In the case where $\mathfrak g$ is of type $A_n$, we write $\text {gr}_n$ for $\text {gr}_{\mathfrak g}$ and $\text{gr}'_n$ for $\text{gr}'_{\mathfrak g}$.  Our observation that the Kirillov-Kostant Poisson bracket is a degeneration of the Poisson bracket on $P$ readily implies

\begin{lem} \label{gen-rk-ez}
$\text {gr}_n \ge \text {gr}'_n$.
\end{lem}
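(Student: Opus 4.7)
The plan is to exploit the observation made just before the lemma that the Kirillov-Kostant Poisson bracket on $\mathfrak{n}^\ast$ is the linear (lowest-degree) part of the bracket on $P \simeq \mathbb{C}[x_{i,j}]$ under the identification $\text{Spec}\, P \simeq \mathfrak{n}^\ast$.  Concretely, by comparing Proposition \ref{bra-An} with formula (\ref{Kir-bra}), the Poisson matrix $M(x) = \bigl(\{x_{i,j}, x_{k,l}\}\bigr)$ of $\pi$ decomposes as $M(x) = L(x) + H(x)$, where $L(x)$ is homogeneous of total degree one in the generators $x_{i,j}$ and coincides, up to an overall non-zero scalar, with the Kirillov-Kostant Poisson matrix, while every entry of $H(x)$ has total degree $\ge 2$.

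From there the proof is a short Pfaffian comparison.  By definition of $\text{gr}'_n$, together with the standard fact that a skew-symmetric matrix has rank $2k$ if and only if some $2k \times 2k$ principal Pfaffian is non-zero, there exists a set $I$ of coordinate indices with $|I| = \text{gr}'_n$ (necessarily even) such that $\text{Pf}\, L_I(x) \ne 0$ in $\mathbb{C}[x]$.  Passing to the corresponding principal submatrix $M_I(x) = L_I(x) + H_I(x)$ of $M(x)$, the Pfaffian $\text{Pf}\, M_I(x)$ is a signed sum of products of $\text{gr}'_n/2$ entries of $M_I$; since each entry of $L_I$ has $x$-degree exactly one and each entry of $H_I$ has $x$-degree at least two, the homogeneous component of lowest $x$-degree (namely degree $\text{gr}'_n/2$) in $\text{Pf}\, M_I(x)$ coincides with $\text{Pf}\, L_I(x)$, which is non-zero.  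Hence $\text{Pf}\, M_I(x) \ne 0$, so some $\text{gr}'_n \times \text{gr}'_n$ principal Pfaffian of $M(x)$ is non-zero, forcing $\text{gr}_n \ge \text{gr}'_n$.

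The main (and essentially the only) obstacle is the bookkeeping in the first paragraph, namely confirming that $L(x)$ really is, up to a scalar, the Kirillov-Kostant Poisson matrix under the coordinate identification $x_{i,j} \leftrightarrow y_{i,j}$.  This amounts to a routine case-by-case check against Proposition \ref{bra-An} and formula (\ref{Kir-bra}); the factor of $2$ visible in the case $j = k-1$ of Proposition \ref{bra-An} is a harmless overall rescaling that does not affect the generic rank.
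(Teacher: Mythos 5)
Your proposal is correct and follows exactly the route the paper intends: the paper simply asserts that the lemma follows from the observation that $\pi_{KK}$ is the linear (lowest-degree) part of $\pi$ under the identification $x_{i,j} \leftrightarrow y_{i,j}$, and your Pfaffian argument is the standard way of making that degeneration statement rigorous. The only detail worth recording is the one you already flag — that the linear part of Proposition \ref{bra-An} is $2\pi_{KK}$ rather than $\pi_{KK}$ — and, as you say, this global factor is irrelevant for the generic rank.
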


In fact, for a general $\mathfrak g$, one has the following much stronger statement.

\begin{thm} \label{gen-rk}
$\text {gr}_{\mathfrak g} = \text {gr}'_{\mathfrak g}$.
\end{thm}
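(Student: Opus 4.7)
The plan is to prove the two inequalities $\text{gr}_{\mathfrak g} \ge \text{gr}'_{\mathfrak g}$ and $\text{gr}_{\mathfrak g} \le \text{gr}'_{\mathfrak g}$ separately.

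For the first inequality, I would extend the degeneration argument underlying Lemma \ref{gen-rk-ez} from type $A$ to arbitrary $\mathfrak g$. Using Proposition \ref{Pois-bra}, one sees that the summands of total polynomial degree one in $\{\overline r'_i, \overline r'_j\}$ come exactly from those $\vec d = \vec e_k$ with $\lambda_k = \lambda_i + \lambda_j$, so the degree-one part is non-zero precisely when $\lambda_i + \lambda_j \in \Phi^+$, in which case it is a scalar multiple of $\overline r'_{\lambda_i+\lambda_j}$. A direct computation of this scalar, using Theorem \ref{LS-str} to cancel the $(1-q)^{-1}$ prefactor in Proposition \ref{Pois-bra} against the $(1-q)$-divisibility of $c_{\vec d}$, shows that the degree-one part agrees with the Kirillov-Kostant bracket after identifying each $\overline r'_\lambda$ with the coordinate on $\mathfrak n^*$ dual to a fixed rescaling of $E_\lambda$. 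The $\mathbb C^*$-dilation $\phi_s: y \mapsto s y$ of $\mathfrak n^*$ then yields a one-parameter family $\pi_s := s^{-1} (\phi_s)^\ast \pi$ of Poisson structures, each isomorphic to $\pi$ for $s \ne 0$, with $\lim_{s \to \infty} \pi_s = \pi_{KK}$. Lower semi-continuity of the rank function then forces $\text{gr}'_{\mathfrak g} \le \text{gr}_{\mathfrak g}$.

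For the harder inequality $\text{gr}_{\mathfrak g} \le \text{gr}'_{\mathfrak g}$, I would exploit the $(\mathbb Z \Phi)^+$-grading on $P$, equivalently the action of the maximal torus $T \subset G$ on $\mathrm{Spec}\,P = \mathfrak n^*$. Both $\pi$ and $\pi_{KK}$ are $T$-equivariant, and the bracket of two weight-vector generators $\overline r'_{\lambda_i}$ and $\overline r'_{\lambda_j}$ is a $T$-eigenvector of the fixed weight $\lambda_i + \lambda_j$, supported on monomials whose weights add up to $\lambda_i + \lambda_j$ --- a combinatorial constraint identical for the two brackets. Organising the computation of the generic rank as the codimension of a ``Casimir subalgebra'' that depends only on this weight pattern, one concludes that the two brackets share the same generic rank. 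Concretely, I would identify this Casimir subalgebra via the symmetric Poisson CGL extension structure on $P$ alluded to in the introduction, and show its transcendence degree equals the index of $\mathfrak n$.

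The main obstacle is precisely this second inequality. The higher-degree monomials appearing in $\{\overline r'_i, \overline r'_j\}$ (such as the cubic summands in the $G_2$ formulas of Proposition \ref{type-G2}) are absent from $\pi_{KK}$, and one must show that they cannot enlarge the symplectic foliation. An alternative, more direct route is to invoke the Poisson isomorphism of Theorem \ref{second-thm}, which transports the generic rank of $\pi$ to that of the standard Poisson structure on the open Bruhat cell $Bw_0B/B$; the latter is known to equal $\text{gr}'_{\mathfrak g}$ by classical results on the symplectic foliation of $G/B$. This second route sidesteps the combinatorial analysis at the cost of a logical dependence on a later section of the paper.
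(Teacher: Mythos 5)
Your closing ``alternative, more direct route'' is in fact the paper's entire proof: one invokes the Poisson isomorphism of Theorem \ref{main-conj} to identify $\text{gr}_{\mathfrak g}$ with the maximal leaf dimension of $(Bw_0B/B, \pi_{st})$, which by Example 4.9 of Evens--Lu equals $\dim \mathfrak h^{-w_0}$, and then cites Proposition 1.10 of Kostant for the fact that $\text{gr}'_{\mathfrak g}$ equals the same number. So if you commit to that route (and accept the forward dependence on Theorem \ref{main-conj}, which the paper itself accepts --- the proof of Theorem \ref{gen-rk} is deliberately placed after the proof of Theorem \ref{main-conj}), you have the paper's argument, modulo naming the two ``classical results.''

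The route you actually develop, however, has a genuine gap in the second inequality. The claim that the generic rank is controlled by a ``Casimir subalgebra that depends only on the weight pattern'' is not correct: two $T$-equivariant brackets supported on the same weight-homogeneous monomials can have different Casimir fields and different generic ranks, since both depend on the actual structure constants and not merely on which monomials are allowed to appear. The higher-degree terms you worry about (e.g.\ the cubic summands $6x_3^3$, $6x_5^3$ in Proposition \ref{type-G2}) are precisely the terms that this weight-counting cannot rule out as rank-changing, and nothing in the symmetric Poisson CGL structure by itself forces the transcendence degree of the Poisson center to match that of $\pi_{KK}$. A smaller issue on the first inequality: the degeneration statement underlying Lemma \ref{gen-rk-ez} (that $\pi_{KK}$ is the linear part of $\pi$) is only established in the paper in type $A$; to run your dilation argument in general type you would additionally have to verify from Proposition \ref{Pois-bra} and Theorem \ref{LS-str} that the degree-one part of $\{\overline r'_i, \overline r'_j\}$ reproduces the Kirillov--Kostant structure constants for every $\mathfrak g$, which is plausible but is not done anywhere in the paper and is not needed once one uses the Bruhat-cell route.
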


We will be able to prove Theorem \ref{gen-rk} after we identify $(\text{Spec} P, \{~~,~~\})$ with a more familiar Poisson algebra in Theorem \ref{main-conj}.  For now we just remark that $\text {gr}'_n$ has been computed by Panov in \cite{Pan}.  According to him, $\text {gr}'_n = \left \lfloor \frac 12 n^2 \right \rfloor$, where $\left \lfloor ~~ \right \rfloor$ is the standard floor function.  Using computer programs, one can verify that $\text {gr}_n = \left \lfloor \frac 12 n^2 \right \rfloor$ for small $n$.


\subsubsection{Poisson Center} \label{Pois-cen-sect}

\begin{defn}
The Poisson center $Z_{\text{Pois}}(P)$ of $P$ is the set of all $f \in P$ such that $\{f,g\} = 0$ for all $g \in P$.  Elements of $Z_{\text{Pois}}(P)$ are called Casimir functions.
\end{defn}

We start with two examples.

\begin{exmp}
Consider the example where $\mathfrak g$ is of type $A_1$.  In this case, $P$ is a polynomial algebra with one single generator $x$.  So $P$ is Poisson commutative, i.e. the Poisson center $Z_{\text {Pois}}(P)$ is a polynomial algebra with $x$ as its generator.
\end{exmp}

\begin{exmp}
Consider the example where $\mathfrak g$ is of type $A_2$.  Let us use the same notations as in Example \ref{type-A2}.  It is easy to verify that $\psi := (xy + u)u \in Z_{\text {Pois}}(P)$.  In fact, one can prove that $Z_{\text {Pois}}(P)$ is freely generated by $\psi$, i.e. the Poisson center $Z_{\text {Pois}}(P)$ is a polynomial algebra with $\psi$ as its generator.
\end{exmp}

In general, to write down Casimir functions in $P$ for $\mathfrak g$ of type $A_n$, we need some notations.  Let
\begin{align}
\kappa = \{1 \le \kappa_1 < \kappa_2 < \cdots < \kappa_k = n\}
\end{align}
be a partition of $n$.  For such a partion $\kappa$, we say that $k$ is the size of $\kappa$ and denote the size of $\kappa$ by $|\kappa|$.  The intervals $[1, \kappa_1], [\kappa_1 + 1, \kappa_2], \cdots, [\kappa_{k-1} + 1, \kappa_k]$ are called the parts of $\kappa$.  We write $x_{\kappa}$ for the monomial 
\begin{align}
x_{1,\kappa_1} x_{\kappa_1+1, \kappa_2} \cdots x_{\kappa_{k-1}+1, \kappa_k},
\end{align}
and define $\psi \in P$ by 
\begin{align}
\psi := (\sum\limits_{\kappa \vdash n} x_{\kappa})x_{1,n},
\end{align}
where $\kappa \vdash n$ indicates that $\kappa$ is a partition of $n$.

\begin{prop} \label{Poi-cen}
If $\mathfrak g$ is of type $A_n$, then $\psi \in Z_{\text {Pois}}(P)$.
\end{prop}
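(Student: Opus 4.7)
The strategy is to reduce the claim to a single ``covariance'' identity
\begin{equation} \label{cov-id}
\{F, x_{a,b}\} \;=\; \bigl(\mathbb{1}_{b=n} - \mathbb{1}_{a=1}\bigr)\, x_{a,b}\, F \qquad (1 \le a \le b \le n),
\end{equation}
where $F := \sum_{\kappa \vdash n} x_\kappa$, so that $\psi = x_{1,n} F$. Since $P$ is generated by the $x_{a,b}$ and $\{\,,\,\}$ is a biderivation, it suffices to check $\{\psi, x_{a,b}\} = 0$ for every pair $(a,b)$. Leibniz expands this as $\{x_{1,n}, x_{a,b}\} F + x_{1,n}\{F, x_{a,b}\}$. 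A direct inspection of the six cases of Proposition \ref{bra-An} — using that $[a,b] \subseteq [1,n]$ forces only the shared-endpoint subcases to contribute — gives $\{x_{1,n}, x_{a,b}\} = (\mathbb{1}_{a=1} - \mathbb{1}_{b=n})\, x_{a,b}\, x_{1,n}$, which is precisely the negative of the right-hand coefficient in \eqref{cov-id}. Thus $\{\psi, x_{a,b}\} = 0$ for all $(a,b)$ is equivalent to \eqref{cov-id}.

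To prove \eqref{cov-id}, I would induct on $n$, the base $n=1$ being trivial. The engine of the induction is the recursion
\[ F_{[p,q]} \;=\; \sum_{c=p}^{q} x_{p,c}\, F_{[c+1,q]}, \qquad F_{[q+1,q]}:=1, \]
for the analogous sum $F_{[p,q]} := \sum_{\kappa \vdash [p,q]} x_\kappa$, so that $F = F_{[1,n]}$ and each $F_{[c+1,n]}$ is (after shifting indices) the analogue of $F$ for the sub-root-system $A_{n-c-1}$ on $[c+1,n]$. Applying Leibniz to this recursion gives
\[ \{F, x_{a,b}\} \;=\; \sum_{c=1}^{n} \{x_{1,c}, x_{a,b}\}\, F_{[c+1,n]} \;+\; \sum_{c=1}^{n} x_{1,c}\,\{F_{[c+1,n]}, x_{a,b}\}. \]
I would then partition the sum according to how $c$ sits relative to $[a,b]$: the regimes $c < a-1$ (disjoint), $c = a-1$ (left-adjacent), $a \le c < b$ (crossing), $c = b$ (right-adjacent), and $c > b$ (strictly to the right of or containing). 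Outside the crossing regime, the inductive hypothesis, applied inside $[c+1,n]$, controls $\{F_{[c+1,n]}, x_{a,b}\}$, and the brackets $\{x_{1,c}, x_{a,b}\}$ are read off directly from Proposition \ref{bra-An}: only the shared-left-endpoint cases (when $a=1$) and the left-adjacency case contribute nontrivially, and the resulting terms organise into $-\mathbb{1}_{a=1}\, x_{a,b}\, F$ after reapplying the recursion for $F$.

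The main obstacle is the crossing regime $a \le c < b$. Here $[a,b]$ straddles the recursion boundary between $[1,c]$ and $[c+1,n]$, so the inductive hypothesis does not directly apply to $\{F_{[c+1,n]}, x_{a,b}\}$, and $\{x_{1,c}, x_{a,b}\}$ falls into case~3 of Proposition \ref{bra-An}, contributing a cross-term $-2 x_{a,c} x_{1,b}$. The expected resolution is a combinatorial telescoping: the extra $+2 x_{i,l}$ appearing in the adjacency formula of Proposition \ref{bra-An} has the effect of merging two consecutive blocks $[s,a-1]$ and $[a,b]$ of a partition into the single block $[s,b]$, so that summing over all partitions $\kappa$ and over all crossing positions $c$, the crossing contributions pair up and cancel, leaving only the shared-right-endpoint terms which, by the same recursion, assemble into $+\mathbb{1}_{b=n}\, x_{a,b}\, F$. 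The symmetric difficulty at the left endpoint is handled by invoking the dual recursion $F_{[p,q]} = \sum_{c=p}^{q} F_{[p,c-1]}\, x_{c,q}$; combining the two gives \eqref{cov-id} and completes the induction. The hardest part is verifying that the signs and multiplicities in the telescoping cancellation match precisely; I would expect this to be a mechanical but lengthy bookkeeping exercise, and would organise it by indexing contributions by the finer partition $\kappa' = \kappa \cup \{c\}$ that results after a merge.
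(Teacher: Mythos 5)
Your proposal is correct in outline but takes a genuinely different route from the paper, so let me compare the two. The paper's first move is to observe that $P$ is Poisson-generated by $x_1,\dots,x_n$ alone, so that it suffices to check $\{\psi,x_i\}=0$ for the $n$ singleton intervals $[i,i]$; a singleton can never properly cross another interval, so case 3 of Proposition \ref{bra-An} (the $-2x_{k,j}x_{i,l}$ term) never enters, and the whole argument reduces to the endpoint computations for $i=1,n$ via the merge bijection $\kappa\mapsto\tilde\kappa$ from $T'$ to $T$, plus an induction on $n$ by restriction to sub-root-systems for $1<i<n$. You instead prove the stronger covariance identity $\{F,x_{a,b}\}=(\delta_{b,n}-\delta_{a,1})\,x_{a,b}F$ against \emph{all} generators $x_{a,b}$, which is a correct and rather clean statement (it checks out in ranks $2$ and $3$) and which pairs against $\{x_{1,n},x_{a,b}\}=(\delta_{a,1}-\delta_{b,n})\,x_{a,b}x_{1,n}$ exactly as you say; your block recursion for $F$ and your merge-indexed telescoping use the same combinatorial device as the paper's bijection $T'\to T$. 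What your route buys is a sharper structural statement about $F$ itself; what it costs is that you must confront the crossing regime $a\le c<b$ --- precisely the case the paper's reduction to Poisson generators sidesteps --- and that is the one step you leave as an ``expected'' bookkeeping exercise rather than carrying out. The cancellation does work (for $n=3$ and $[a,b]=[2,3]$, the crossing term $-2x_2x_3x_{1,3}$ arising from $\{x_{1,2},x_{2,3}\}$ inside the partition $\{2<3\}$ cancels the adjacency term $+2x_2x_3x_{1,3}$ arising from $\{x_1,x_{2,3}\}$ inside $\{1<2<3\}$), but to have a complete proof you must either execute that telescoping in general or simply prepend the paper's observation that it suffices to treat $b=a$, in which case the crossing regime is empty and your induction closes with only the adjacency and shared-endpoint cases.
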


\begin{proof}
Note that $P$ is Poisson-generated by $x_1, \cdots, x_n$, i.e. the smallest Poisson subalgebra of $P$ containing $x_1, \cdots, x_n$ is $P$.  Hence, to prove that $\psi$ is a Casimir function, it suffices to prove that $\psi$ Poisson-commutes with $x_1, \cdots, x_n$.

Define $\psi_1 := \sum\limits_{\lambda \vdash n} x_{\kappa}$.  So $\psi = x_{1,n} \psi_1$.

We first compute $\{x_1, \psi\}$.  For this purpose we define $T := \{\kappa \vdash n: \kappa_1 > 1\}$ and $T' := \{\kappa \vdash n: \kappa_1 = 1\}$.  By Proposition \ref{bra-An}, we have
\begin{align}
\{x_1, \psi\} = & \{x_1, x_{1,n} \psi_1\} \nonumber \\
= & \{x_1, x_{1,n}\} \psi_1 + x_{1,n} \{x_1, \psi_1\} \nonumber \\
= & - x_1 x_{1,n} \psi_1 + x_{1,n} \{x_1, \psi_1\}.
\end{align}
By our definition of $T$, $T'$ and Proposition \ref{bra-An},  we have
\begin{align}
\{x_1, \psi_1\} = & \sum\limits_{\kappa \in T} \{x_1, x_{\kappa}\} + \sum\limits_{\kappa \in T'} \{x_1, x_{\kappa}\} \nonumber \\
= & - x_1 \sum\limits_{\kappa \in T} x_{\kappa} + (x_1 \sum\limits_{\kappa \in T'} x_{\kappa} + 2 x_1 \sum\limits_{\kappa \in T'} x_{\tilde{\kappa}}),
\end{align}
where $\tilde{\kappa}$ is obtained from $\kappa$ by merging its first two parts into one.  More concretely, if $\kappa = \{1 \le \kappa_1 < \kappa_2 < \cdots < \kappa_k = n\}$, then $\tilde {\kappa}$ is the partition $\{1 \le \tilde{\kappa}_1 < \tilde{\kappa}_2 < \cdots < \tilde{\kappa}_{k-1} = n\}$, where $\tilde{\kappa}_i = \kappa_{i+1}$ for all $1 \le i \le k-1$.  Noticing that the function
\begin{align}
T' \rightarrow T: \kappa \mapsto \tilde{\kappa}
\end{align}
is a bijection, we see that $\sum\limits_{\kappa \in T'} x_{\tilde{\kappa}} = \sum\limits_{\kappa \in T} x_{\kappa}$.  It follows that
\begin{align}
\{x_1, \psi_1\} = & - x_1 \sum\limits_{\kappa \in T} x_{\kappa} + x_1 \sum\limits_{\kappa \in T'} x_{\kappa} + 2 x_1 \sum\limits_{\kappa \in T} x_{\kappa} \nonumber \\
= & x_1 \sum\limits_{\kappa \in T} x_{\kappa} + x_1 \sum\limits_{\kappa \in T'} x_{\kappa} \nonumber \\
= & x_1 \sum\limits_{\kappa \vdash n} x_{\kappa} \nonumber \\
= & x_1 \psi_1.
\end{align}
Therefore, $\{x_1, \psi\} = 0$.

Using a similar argument as in the previous paragraph, one can show that $\{x_n, \psi\} = 0$.

Let $i$ be an integer satisfying $1 < i < n$.  We prove that $\{x_i, \psi\} = 0$.  Note that, by Proposition \ref{bra-An}, we have
\begin{align}
\{x_i, \psi\} = \{x_i, x_{1,n} \psi_1\} = x_{1,n} \{x_i, \psi_1\}.
\end{align}
It thus suffices to prove that $\{x_i, \psi_1\} = 0$.  We do this by induction on $n$.

The base case $n = 3$ can be verified easily by hand, using Proposition \ref{bra-An}.

Now assume that $n > 3$.  A close look at the formula in Proposition \ref{bra-An} tells us that we have six different cases for the Poisson bracket of $x_{i,j}$ with $x_{k,l}$ because we have six different `relative positions' of the two intervals $[i,j]$ and $[k,l]$.

Suppose $i > 2$.  Notice that the relative position of the intervals $[i,i]$ and $[1, \kappa_1]$ is the same as the relative position of the intervals $[i,i]$ and $[2, \kappa_1]$, whenever $\kappa_1 > 1$.  So, in order to compute $\{x_i, x_{1, \kappa_1}\}$, for $\kappa_1 > 1$, we can first replace the index $1$ by $2$, then compute the Poisson bracket $\{x_i, x_{2, \kappa_1}\}$ as if we are in type $A_{n-1}$, and finally replace the index $2$ by $1$.  Using this observation, we compute
\begin{align}
\{x_i, \psi_1\} = & \sum\limits_{\kappa \in T} \{x_i, x_{\kappa}\} + \sum\limits_{\kappa \in T'} \{x_i, x_{\kappa}\} \nonumber \\
= & 0 + \sum\limits_{\kappa \in T'} \{x_i, x_{\kappa}\} \nonumber \\
= & \sum\limits_{\kappa \in T'} x_1 \{x_i, x_{\hat{\kappa}}\} \nonumber \\
= & 0,
\end{align}
where $\hat{\kappa}$ is obtained from $\kappa$ by deleting its first part.  More concretely, if $\kappa = \{1 \le \kappa_1 < \kappa_2 < \cdots < \kappa_k = n\}$, then $\tilde {\kappa} = \{1 < \tilde{\kappa}_1 < \tilde{\kappa}_2 < \cdots < \tilde{\kappa}_{k-1} = n\}$, where $\tilde{\kappa}_i = \kappa_{i+1}$ for all $1 \le i \le k-1$.  Here, in the second and last steps of the computation, we have used our observation above and the induction hypothesis.

Suppose $i < n-1$.  Notice that the relative position of the intervals $[i,i]$ and $[\kappa_{k-1} + 1, \kappa_k]$ is the same as the relative position of the intervals $[i,i]$ and $[\kappa_{k-1} + 1, \kappa_k - 1]$, whenever $\kappa_{k-1} + 1 < n$.  So, in order to compute $\{x_i, x_{\kappa_{k-1} + 1, \kappa_k}\}$, we can first replace the index $n$ by $n-1$, then compute the Poisson bracket $\{x_i, x_{\kappa_{k-1} + 1, n-1}\}$ as if we are in type $A_{n-1}$, and finally replace the index $n-1$ by $n$.  This observation allows us to use a similar argument as in the previous paragraph to show that $\{x_i, \psi_1\} = 0$ when $i < n-1$.

The last two paragraphs cover all possible cases. So the induction step is finished.
\end{proof}

\begin{rem}
For the Kirillov-Kostant Poisson bracket on $\mathfrak n^{\ast}$, the function $y_{1,n}$ is Casimir, as can be easily verified using formula (\ref{Kir-bra}).  Our $\psi$ is meant to be a replacement of $y_{1,n}$.
\end{rem}

The two examples above are clearly special cases of the last proposition.  We point out here that, for a general $n$, $\psi$ does not generate $Z_{\text {Pois}}(P)$, as the next example shows.

\begin{exmp}
Let $\mathfrak g$ be of type $A_3$.  An easy computation shows that $\psi' := x_{1,2} x_{2,3} - x_2 x_{1,3}$ is also a Casimir function.  It is clear that $\psi$ and $\psi'$ are algebraically independent.

The phenomenon that is worth noticing about this example is that $y_{1,3}$ and $y_{1,2} y_{2,3} - y_2 y_{1,3}$ are Casimir functions with respect to the Kirillov-Kostant Poisson bracket.  For a general $n$, we know that $y_{1,n}$ and $y_{1,n-1} y_{2,n} - y_{2,n-1} y_{1,n}$ are Casimir functions with respect to the Kirillov-Kostant Poisson bracket.  We also know that $\psi$ is a replacement of $y_{1,n}$.  Now the question is: what is a reasonable replacement of $y_{1,n-1} y_{2,n} - y_{2,n-1} y_{1,n}$?  When $n=3$ this example tells us that $\psi'$ is a good candidate.  It would be very interesting to find the answer to this question for a general $n$.  It would also be interesting to look for deeper reasons that explain the similarity of the Casimir functions for our Poisson bracket and those for the Kirillov-Kostant Poisson bracket.
\end{exmp}

Let $(Q, \{~~,~~\})$ be a Poisson algebra over $\mathbb C$.  Recall that a quantization of $(Q, \{~~,~~\})$ is an $\mathcal A$-algebra $D$ satisfying the conditions in the Hayashi construction, such that when we apply the Hayashi construction to the data $(R = \mathcal A, t = 1-q, A = D)$, the $\mathbb C$-Poisson algebra we get is Poisson isomorphic to $(Q, \{~~,~~\})$.  Let $D$ be a quantization of $(Q, \{~~,~~\})$ and $f \in Q$ be a Casimir function.  A quantization of the Casimir function $f$ is a lift of $f \in D/(1-q)D$ to an element in the center $Z(D)$ of $D$.

Now we quantize the Casimir function $\psi$ in Proposition \ref{Poi-cen}.  For $1 \le i \le j \le n$, to simplify notation, we write $r_{i,j}$ for the Kashiwara operator $r'_{\alpha_i + \cdots + \alpha_j}$.

\begin{prop}  \label{quan-of-Casimir}
If $\mathfrak g$ is of type $A_n$, then the element
\begin{align}
\Psi := \sum\limits_{\kappa \vdash n} q^{n - |\kappa|} r_{1,\kappa_1} r_{\kappa_1 + 1, \kappa_2} \cdots r_{\kappa_{|\kappa| - 1} + 1, \kappa_{|\kappa|}} r_{1,n}
\end{align}
is a quantization of the Casimir function $\psi$.
\end{prop}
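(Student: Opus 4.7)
The plan has two components: verify that $\Psi$ specializes to $\psi$ at $q=1$, and verify that $\Psi$ lies in the center of $C_q^+$. The first step is immediate: setting $q=1$ kills each factor $q^{n-|\kappa|}$ and sends $r_{i,j}$ to $\overline r_{i,j} = x_{i,j}$, so the image of $\Psi$ under the quotient $C^+_{\mathcal A} \twoheadrightarrow C^+_{\mathcal A}/(1-q)C^+_{\mathcal A} = P$ is exactly $(\sum_{\kappa \vdash n} x_{1,\kappa_1} x_{\kappa_1+1,\kappa_2} \cdots x_{\kappa_{|\kappa|-1}+1,n}) x_{1,n} = \psi$. Flatness of the deformation is guaranteed by Proposition \ref{PBW-boson-int} together with Proposition \ref{PBW-qc}, so this specialization makes sense.

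For centrality, the approach is to mimic the proof of Proposition \ref{Poi-cen} at the quantum level. By the PBW theorem (Proposition \ref{PBW-boson}) it suffices to show $[\Psi, r_k] = 0$ for the Kashiwara operators $r_k = r'_{\alpha_k}$ associated to the simple roots (the case of non-simple $r'_\ell$ then follows from Lemma \ref{mix}, which expresses $r'_\ell$ as a scalar multiple of a composition of simple-root Kashiwara operators in a suitable reduced expression). Write $\Psi = \Psi_1 \, r_{1,n}$ with $\Psi_1 = \sum_{\kappa \vdash n} q^{n-|\kappa|} r_{1,\kappa_1} \cdots r_{\kappa_{|\kappa|-1}+1,n}$, and expand $[r_k, \Psi] = [r_k, \Psi_1] \, r_{1,n} + \Psi_1 \, [r_k, r_{1,n}]$ using Lemma \ref{com-rel}. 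By Theorem \ref{LS}, each $[r_k, r_{i,j}]$ consists of a main term $(q^{(\alpha_k, \alpha_i + \cdots + \alpha_j)} - 1)\, r_k \circ r_{i,j}$ together with Levendorskii-Soibelman corrections whose effect on the final sum must be tracked.

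The cases $k=1$ and (symmetrically) $k=n$ follow the classical pattern: split the index set $\{\kappa \vdash n\}$ into $T' = \{\kappa : \kappa_1 = 1\}$ and $T = \{\kappa : \kappa_1 > 1\}$ and use the bijection $\kappa \mapsto \widetilde\kappa$ that merges the first two parts of $\kappa \in T'$. Since this bijection decreases the size by one, we have $q^{n-|\widetilde\kappa|} = q \cdot q^{n-|\kappa|}$, and this extra factor of $q$ is precisely what is required to match the quantum twist $q^{(\alpha_1, \alpha_i + \cdots + \alpha_j)} - 1$ produced by Lemma \ref{com-rel}, so that the classical telescoping cancellation of Proposition \ref{Poi-cen} lifts to a quantum telescoping. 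For intermediate indices $1 < k < n$, I would induct on $n$, exactly as in Proposition \ref{Poi-cen}, using that the relative position of $[k,k]$ with respect to the parts $[\kappa_{j-1}+1, \kappa_j]$ is preserved when one trims the endpoint indices.

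The main obstacle is the calibration of the $q$-exponents: the weights $q^{n-|\kappa|}$ in $\Psi$ are chosen precisely so that the shift $|\widetilde\kappa| = |\kappa| - 1$ produces the single factor of $q$ needed to cancel the dominant quantum correction from Lemma \ref{com-rel}, while the higher-order Levendorskii-Soibelman corrections (which involve strictly more Kashiwara operators) must also be checked to cancel in pairs along the same bijection. This is a lengthy but essentially mechanical verification with no new conceptual input beyond the classical Casimir computation of Proposition \ref{Poi-cen}.
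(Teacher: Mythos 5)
The paper states Proposition \ref{quan-of-Casimir} with no proof at all, so there is nothing to compare your argument against; I can only assess it on its own terms. Your architecture is correct: a quantization of a Casimir function is by definition a central lift, so the two things to check are the specialization at $q=1$ (which you dispose of correctly, using Propositions \ref{PBW-boson-int} and \ref{PBW-qc} for flatness) and centrality. One step, however, is wrong as written: you reduce centrality to commutation with the simple-root operators $r'_{\alpha_1},\dots,r'_{\alpha_n}$ by citing Lemma \ref{mix}, but that lemma expresses $r'_{\vec d}$ as a scalar multiple of $(r'_N)^{d_N}\circ\cdots\circ(r'_1)^{d_1}$, i.e.\ as a composition of the Kashiwara operators attached to \emph{all} positive roots in the fixed enumeration; it never writes a non-simple $r'_{\lambda}$ as a composition of simple-root ones. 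The reduction you want is still available, but it must come from Lemma \ref{com-rel}: in type $A$ the straightening relation for $[r'_{\alpha},r'_{\mu}]$ with $\alpha$ simple and $\alpha+\mu\in\Phi^+$ has a single correction term which is a nonzero multiple of $r'_{\alpha+\mu}$, so one can solve for $r'_{\alpha+\mu}$ and induct on height (equivalently, invoke the identification of $C_q^+$ with a quantum nilpotent algebra implicit in formulas (\ref{formula-1})--(\ref{formula-2}), whose target is generated by simple root vectors). Without some such argument the restriction to simple $r_k$ is unjustified.

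The larger issue is that the heart of the proposition --- that the weights $q^{n-|\kappa|}$ make the telescoping of Proposition \ref{Poi-cen} survive quantization --- is asserted rather than verified. The factors $r_{1,\kappa_1},r_{\kappa_1+1,\kappa_2},\dots$ do not merely $q$-commute: by the quantum analogue of the $j=k-1$ case of Proposition \ref{bra-An}, adjacent intervals produce an extra term proportional to $r_{i,l}$, so the order of factors in each monomial matters and the cancellation is not a formal consequence of the classical one. The statement itself is credible: for $n=2$, writing $a=r'_{\alpha_1}$, $b=r'_{\alpha_1+\alpha_2}$, $c=r'_{\alpha_2}$, the relations are $ab=qba$, $bc=qcb$, $ac=q^{-1}ca-(q-q^{-1})b$, and a direct check shows $\Psi=qb^2+acb$ commutes with $a$ and $c$ (which generate, since $b$ is a multiple of $q^{-1}ca-ac$); so your calibration heuristic is right in the first nontrivial case. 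But for general $n$, and especially for the intermediate generators $1<k<n$ where you propose to induct ``exactly as in Proposition \ref{Poi-cen},'' one must verify that the quantum structure constants of Lemma \ref{com-rel} restrict correctly to the type $A_{n-1}$ subalgebra and that the higher Levendorskii--Soibelman corrections cancel along your bijection; none of this is carried out. As it stands the proposal is a plausible plan with one broken citation, not a proof.
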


\subsubsection{Compatible Poisson Brackets} \label{compatible-Pois-bra-sect}

Recall our formula (\ref{vf}) and discussions following Lemma \ref{a-coeff}.  For each $1 \le i \le N$, commuting with $\overline F_i$ defines a derivation on $P$.  These derivations make $P$ a representation of the Lie algebra $\mathfrak n^-$ (Lemma \ref{rep}).  In other words, we have a Lie algebra morphism $\mathfrak n^- \rightarrow \mathcal X (\text {Spec} P)$, where $\mathcal X (\text {Spec} P)$ stands for the Lie algebra of vector fields on $\text {Spec} P$ equipped with the standard commutator Lie bracket.  By abuse of notation, for all $1 \le i \le N$, we write $\overline F_i$ also for the image of the root vector $\overline F_i$ under this Lie algebra morphism.  The following general formula is an easy consequence of formula (\ref{vf}) or Proposition \ref{a-presentation}.

\begin{prop} \label{vf-formula}
For all $1 \le i,j \le N$, the action of the vector field $\overline F_i$ on the coordinate function $\overline r'_j$ is given by the formula
\begin{align} \label{vf-gen}
\overline F_i \cdot \overline r'_j = - \sum (n_{\lambda_j}^{\lambda_i, \vec e})|_{q=1} \overline r'_{(\vec e)}.
\end{align}
\end{prop}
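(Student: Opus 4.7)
The plan is to reduce the statement to the commutation relation (\ref{def-rel-3}) established in Proposition \ref{a-presentation} and then to show that, after specializing $q = 1$, the sum in (\ref{def-rel-3}) collapses to a single term indexed by $\vec d = \vec e_i$. This final collapse is the only nontrivial point; it is essentially the same calculation that appears (in transposed form) inside the proof of Theorem \ref{sim-mod}.

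First, I would recall that, by construction, the action of $\overline F_i$ on $P \subseteq C_{cl}$ is the adjoint action
\begin{align}
\overline F_i \cdot \overline r'_j \;=\; [\overline F_i,\,\overline r'_j].
\end{align}
Interchanging the roles of $i$ and $j$ in the defining relation (\ref{def-rel-3}) of Proposition \ref{a-presentation} gives
\begin{align}
[\overline F_i,\,\overline r'_j] \;=\; -\sum_{\vec d\neq 0}\bigl(n_{\lambda_j}^{\vec d,\vec e}\,c^{\lambda_i}_{\vec d,0}\bigr)\big|_{q=1}\;\overline r'_{(\vec e)}.
\end{align}
So the content of the proposition is the identity
\begin{align}
\sum_{\vec d\neq 0}\bigl(n_{\lambda_j}^{\vec d,\vec e}\,c^{\lambda_i}_{\vec d,0}\bigr)\big|_{q=1}
\;=\;\bigl(n_{\lambda_j}^{\lambda_i,\vec e}\bigr)\big|_{q=1}
\end{align}
for every $\vec e \in (\mathbb Z_{\ge 0})^N$, which in turn reduces to the assertion that $c^{\lambda_i}_{\vec d,0} = \delta_{\vec d,\vec e_i}$.

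To prove this last assertion I would compute $c^{\lambda_i}_{\vec d,0}$ via the Drinfeld--Killing pairing. By definition (formula (\ref{unknown-coeff})), $c^{\lambda_i}_{\vec d,0}$ is the coefficient of $F^{(0)} = 1$ in the expansion of $r'_{(\vec d)}(F_i)$ in the PBW basis $\{F^{(\vec f)}\}$. Pairing the defining formula (\ref{int-Kas-op}) for the coproduct of $F_i$ against $E^{(\vec d)} \otimes 1$, and using Proposition \ref{Dri-Kil} together with Proposition \ref{Dri-Kil-val}(b), gives
\begin{align}
\langle F_i,\,E^{(\vec d)}\rangle \;=\;\langle F^{(\vec d)},E^{(\vec d)}\rangle\cdot c^{\lambda_i}_{\vec d,0}.
\end{align}
Since $\langle F^{(\vec d)},E^{(\vec d)}\rangle\neq 0$ (Proposition \ref{Dri-Kil-val}(c)) while, by the orthogonality statement in Proposition \ref{Dri-Kil-val}(b), $\langle F_i,E^{(\vec d)}\rangle$ vanishes unless $\vec d = \vec e_i$ and equals $\langle F_i,E_i\rangle$ in that case, we conclude $c^{\lambda_i}_{\vec d,0} = 0$ for $\vec d\neq \vec e_i$ and $c^{\lambda_i}_{\vec e_i,0}=1$.

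Substituting this back into the expression for $[\overline F_i,\overline r'_j]$, only the term $\vec d = \vec e_i$ survives and $n_{\lambda_j}^{\vec e_i,\vec e} = n_{\lambda_j}^{\lambda_i,\vec e}$ by our notational convention at the start of Section \ref{Leib-sect}, yielding formula (\ref{vf-gen}). The only conceptual subtlety is the pairing computation in the previous paragraph; once one recognizes that the Drinfeld--Killing pairing picks out exactly one summand of $\Delta(F_i)$, the rest of the argument is bookkeeping and requires no detour through the $(1-q)$-filtration estimates of Lemma \ref{a-coeff}.
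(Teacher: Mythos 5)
Your proof is correct and takes essentially the same route as the paper: the paper states the proposition as an immediate consequence of relation (\ref{def-rel-3}), and the only nontrivial ingredient you supply --- the identity $c^{\lambda_i}_{\vec d,0}=\delta_{\vec d,\vec e_i}$ obtained by pairing $\Delta(F_i)$ against $E^{(\vec d)}\otimes 1$ --- is exactly the computation the paper carries out inside the proof of Theorem \ref{sim-mod}.
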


It is natural to ask how the vector field $\overline F_i$ ($1 \le i \le N$) changes if one chooses a different reduced expression for $w_0$ than the one we fixed in (\ref{red-exp}).  To answer this question, we have

\begin{prop}
For all positive roots $\lambda$, let $F_{\lambda}, \tilde F_{\lambda} \in U^-$ be root vectors corresponding to the positive root $\lambda$ defined by two different choices of reduced expressions for $w_0$.  Then the vector fields $\overline F_{\lambda}$ and $\overline {\tilde F}_{\lambda}$ on $\text {Spec} P$ are the same, up to a sign.
\end{prop}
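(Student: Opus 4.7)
The plan is to reduce to the case where the two reduced expressions for $w_0$ differ by a single braid move, and then analyze what happens to the corresponding root vectors at the quasi-classical limit $q=1$. Since any two reduced expressions are connected by a finite sequence of braid relations, it is enough to treat one such move at a time.

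As a first step, I would observe that $\overline F_\lambda$ and $\overline{\tilde F}_\lambda$ are both homogeneous of degree $-\lambda$ in $\mathcal U^- = C_{cl}^-$, and thus lie in the one-dimensional classical root space $\mathfrak g_{-\lambda}$. Hence their ratio is automatically a scalar $c \in \mathbb C$, and the problem reduces to verifying $c = \pm 1$. This reframes the statement as a direct computation inside $U^-_{\mathcal A}$ modulo $(1-q)$, rather than a statement about vector fields, and the transfer to vector fields at the end will be free because the assignment $\overline F \mapsto (\text{commutator action on } P)$ from Proposition \ref{vf-formula} is $\mathbb C$-linear.

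Next, I would carry out the braid-move analysis by means of Lusztig's braid action formulas. For an $m$-move with $m \in \{2,3,4,6\}$, the two root vectors associated to a fixed positive root $\lambda$ by the two expressions are related by an explicit identity of the form
\begin{align}
F_\lambda \;=\; \pm q^{\pm k}\,\tilde F_\lambda \;+\; \sum (q^{j}-q^{-j})\,(\text{monomials in the other } \tilde F_\mu\text{'s}).
\end{align}
The prototype of this identity, for the length-$3$ move in type $A_2$, is already exhibited in the proof of Proposition \ref{Mat}: there $F_{\lambda_{i+1}} = -q^{-1}\tilde F_{\lambda'_{i+1}} + (q^{-1}-q)\tilde F_{\lambda'_{i+2}}\tilde F_{\lambda'_i}$, and at $q=1$ the correction term dies while the leading coefficient becomes $-1$, giving $\overline F_{\lambda_{i+1}} = -\overline{\tilde F}_{\lambda'_{i+1}}$. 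For length-$2$ moves the two root vectors are literally equal (no sign), and for the length-$4$ and length-$6$ moves required for the non-simply-laced types $B_2$, $C_2$, $G_2$, the same mechanism applies: every correction term carries a factor $(q^j - q^{-j})$ which vanishes at $q=1$, and the leading coefficient is $\pm q^{\pm k}$, which specializes to $\pm 1$.

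Combining these two steps yields $\overline F_\lambda = \pm \overline{\tilde F}_\lambda$ in $\mathcal U^-$, and the linearity of the construction of the vector field then yields equality of vector fields on $\text{Spec} P$ up to a sign. The main obstacle I anticipate is not conceptual but rather the case-by-case verification of the length-$4$ and length-$6$ braid moves, since the Lusztig formulas become considerably longer than in the $A_2$ case; however, the pattern ``leading coefficient $\pm q^{\pm k}$, all other coefficients divisible by some $q^j - q^{-j}$'' is preserved throughout, so no new ideas are required beyond those already used in the proof of Proposition \ref{Mat}.
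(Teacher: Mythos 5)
Your proposal follows essentially the same route as the paper, whose proof is itself only a sketch: reduce to a single braid move and reuse the root-vector identities from the proof of Proposition \ref{Mat}, where at $q=1$ every correction term carries a vanishing factor of the form $q^{j}-q^{-j}$ and the leading coefficient specializes to $\pm 1$. One small caveat: homogeneity of degree $\lambda$ alone does not place $\overline F_{\lambda}$ in the one-dimensional space $\mathfrak g_{-\lambda}$ (the corresponding graded piece of $\mathcal U^-$ is larger, e.g.\ it contains products of lower root vectors), but this step is not load-bearing since your explicit braid-move computation already yields $\overline F_{\lambda} = \pm \overline{\tilde F}_{\lambda}$ directly.
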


\begin{proof}
(Sketch.)  One proves that the root vectors $\overline F_{\lambda}$ and $\overline {\tilde F}_{\lambda}$ differ at most by a sign.  To this end one uses similar arguements as in the proof of Proposition \ref{Mat}.
\end{proof}

In the case where $\mathfrak g$ is of type $A_n$, formula (\ref{vf-gen}) can be made much more concrete.  Recall our notations and conventions for Proposition \ref{bra-An}.  In this situation, for $1 \le i \le j \le n$, we write $\partial_{i,j}$ for the partial derivative with respect to the variable $x_{i,j}$.  The following proposition is not hard to prove.

\begin{prop} \label{vf-gen-a}
Let $\mathfrak g$ be of type $A_n$.  For $1 \le i \le j \le n$, the vector field $\overline F_{\alpha_i + \cdots + \alpha_j}$ is given by
\begin{align}
\overline F_{\alpha_i + \cdots + \alpha_j} = -\partial_{i,j} + \sum \limits _{k=j+1}^n x_{j+1,k} \partial_{i,k}.
\end{align}
\end{prop}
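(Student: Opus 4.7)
The plan is to apply Proposition \ref{vf-formula}: since $\overline F_{\alpha_i+\cdots+\alpha_j}$ acts on $P$ as a derivation (Lemma \ref{rep}), it suffices to compute its action on each generator $x_{k,l}$. For each $(k,l)$ one has $\overline F_{\alpha_i+\cdots+\alpha_j}\cdot x_{k,l}=-\sum_{\vec e}(n^{(i,j),\vec e}_{(k,l)})|_{q=1}\,\overline r'_{(\vec e)}$, where $n^{(i,j),\vec e}_{(k,l)}$ denotes the coefficient of the single PBW generator $F_{\alpha_k+\cdots+\alpha_l}$ in the PBW expansion of $F_{\alpha_i+\cdots+\alpha_j}\,F^{(\vec e)}$. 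Setting $q=1$ allows the computation to be carried out classically in $\mathcal U(\mathfrak{sl}_{n+1})$, via the identification $F_{\alpha_a+\cdots+\alpha_b}=E_{b+1,a}$ and the elementary commutator rule $[F_{\alpha_a+\cdots+\alpha_b},F_{\alpha_c+\cdots+\alpha_d}]=\delta_{c,b+1}F_{\alpha_a+\cdots+\alpha_d}-\delta_{a,d+1}F_{\alpha_c+\cdots+\alpha_b}$.

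A preliminary weight analysis shows that $n^{(i,j),\vec e}_{(k,l)}$ can be nonzero only if $\lambda_{\vec e}=(\alpha_k+\cdots+\alpha_l)-(\alpha_i+\cdots+\alpha_j)$ is a non-negative integer combination of simple roots, which in type $A$ forces $[i,j]\subseteq[k,l]$ with $\lambda_{\vec e}$ supported on the disjoint intervals $[k,i-1]\sqcup[j+1,l]$. I would then split into cases. If $(k,l)=(i,j)$, only $\vec e=0$ contributes with coefficient $1$, accounting for the $-\partial_{i,j}$ term. If $k=i$ and $l>j$, taking $\vec e$ to correspond to the single positive root $\alpha_{j+1}+\cdots+\alpha_l$ gives the PBW straightening $F_{\alpha_i+\cdots+\alpha_j}F_{\alpha_{j+1}+\cdots+\alpha_l}=F_{\alpha_{j+1}+\cdots+\alpha_l}F_{\alpha_i+\cdots+\alpha_j}-F_{\alpha_i+\cdots+\alpha_l}$, with coefficient $-1$ on the single generator $F_{\alpha_i+\cdots+\alpha_l}$, accounting for the $x_{j+1,l}\,\partial_{i,l}$ summand. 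For any strict refinement of $[j+1,l]$ into consecutive subintervals $[j+1,l_1],\ldots,[l_{s-1}+1,l]$, the leftmost factor $F_{\alpha_{l_{s-1}+1}+\cdots+\alpha_l}$ of the associated $F^{(\vec e)}$ satisfies $l_{s-1}+1>j+1$, so its commutator with $F_{\alpha_i+\cdots+\alpha_j}$ vanishes; hence $F_{\alpha_i+\cdots+\alpha_j}$ simply commutes past this factor, and induction on the remaining factors shows that the resulting PBW expansion produces only monomials of degree $\geq 2$, contributing nothing.

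The remaining case $k<i$ is the most delicate. I would decompose $F^{(\vec e)}=F^{(\vec e)}_+\,F^{(\vec e)}_-$ in PBW order, with $F^{(\vec e)}_+$ the product of factors whose start index lies in $[j+1,l]$ and $F^{(\vec e)}_-$ the product of factors whose start index lies in $[k,i-1]$. After straightening $F_{\alpha_i+\cdots+\alpha_j}F^{(\vec e)}_+$ to PBW form, every commutator correction $[F_{\alpha_i+\cdots+\alpha_j},F_{\alpha_c+\cdots+\alpha_d}]$ with $c\geq j+1$ yields a root vector of the form $F_{\alpha_i+\cdots+\alpha_d}$, and iterated commutators stay within the group of factors whose start index is $\geq i$. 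Consequently every PBW monomial appearing in the straightening has all its ``upper'' factors with start index $\geq i$. Multiplying on the right by $F^{(\vec e)}_-$, which is nontrivial precisely because $k<i$, yields monomials in PBW order of total degree $\geq 2$, so no single generator $F_{\alpha_k+\cdots+\alpha_l}$ can appear, which matches the fact that the claimed formula has no $\partial_{k,l}$-term with $k<i$.

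The main obstacle is precisely this bookkeeping for $k<i$: one must rule out contributions from every possible $\vec e$ simultaneously. The essential combinatorial input is that the classical commutators in $\mathfrak{sl}_{n+1}$ combine two root vectors into at most one, and only when the underlying intervals are adjacent; this restriction, together with the block structure of the PBW ordering induced by the chosen reduced expression for $w_0$ in Example \ref{type-A2}, prevents any block-$k$ single generator (with $k<i$) from surviving in the PBW expansion of $F_{\alpha_i+\cdots+\alpha_j}\,F^{(\vec e)}$.
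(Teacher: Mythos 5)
The paper offers no proof of Proposition \ref{vf-gen-a} (it is dismissed as ``not hard to prove''), so there is nothing to compare your argument against; judged on its own terms, your proposal is correct and is the natural way to fill the gap. Reducing via Proposition \ref{vf-formula} to the coefficient of a single PBW generator in $F_{\alpha_i+\cdots+\alpha_j}F^{(\vec e)}$ at $q=1$, the weight analysis, and the three cases are all sound. Two points deserve to be made explicit. First, the crux of the whole argument is the shape of the convex order induced by the reduced word $(s_1\cdots s_n)(s_1\cdots s_{n-1})\cdots s_1$ fixed before Proposition \ref{bra-An}: it is lexicographic on (start index, end index), and in a PBW monomial $F^{(\vec d)}=F_N^{(d_N)}\cdots F_1^{(d_1)}$ the larger roots sit on the \emph{left}, i.e.\ factors are arranged by decreasing start index. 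This is exactly what guarantees that after the single nontrivial commutator $[F_{[i,j]},F_{[j+1,l_1]}]=-F_{[i,l_1]}$ the resulting products (and likewise the products $F_+F_{[i,j]}F_-$ and $F_{[l_{s-1}+1,l]}\cdots F_{[l_1+1,l_2]}F_{[i,l_1]}F_-$ in your case $k<i$) are \emph{already} in PBW order, so no further straightening can collapse a degree~$\ge 2$ monomial down to a single root vector; with the opposite ordering convention such a collapse would occur (e.g.\ $F_{[l_1+1,l]}F_{[i,l_1]}$ would produce $F_{[i,l]}$), so your ``block structure'' remark is carrying real weight and should be spelled out. A minor simplification: in the refinement case for $k=i$, $l>j$, the interval $[i,j]$ is adjacent only to the \emph{rightmost} factor $F_{[j+1,l_1]}$ of $F^{(\vec e)}_+$ and commutes with all others, so no induction is actually needed. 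Second, the $q=1$ limits of the Lusztig root vectors $F_\lambda$ agree with the elementary matrices $E_{b+1,a}$ only up to signs determined by the braid-group formulas; these signs must be tracked to land on the stated formula $-\partial_{i,j}+\sum_{k>j}x_{j+1,k}\partial_{i,k}$ rather than a sign-twisted variant of it.
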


For convenience, we write $\pi$ for the Poisson bivector on $\text {Spec} P$.  Namely, for $f,g \in P$, we have
\begin{align}
\{f,g\} = \pi (df, dg).
\end{align}
Let $[~~,~~]$ be the Schouten bracket of multi-vector fields on $\text {Spec} P$.  Motivated by the infinitesimal criterion for Poisson action of Poisson-Lie groups of Semenov-Tian-Shansky \cite{STS} (see also \cite{LM}), we study in type $A_n$ the `deformed bivector' $[\overline F_i, \pi]$, for all $1 \le i \le N$.  Concretely, for $f,g \in P$, $[\overline F_i, \pi](df,dg) = \overline F_{i \cdot} \{f,g\} - \{\overline F_{i \cdot} f, g\} - \{f, \overline F_{i \cdot} g\}$, which should be familiar to readers in the field of Poisson-Lie groups.

Note that, a priori, the deformed bivector $[\overline F_i, \pi]$, $1 \le i \le N$, need not be Poisson, i.e. Jacobi identity $[[\overline F_i, \pi], [\overline F_i, \pi]] = 0$ need not hold.

\begin{exmp} \label{def-biv}
Let $\mathfrak g$ be of type $A_2$.  Recall our notations in Example \ref{type-A2}.  We have
\begin{align}
& [\overline F_1, \pi](dx,dy) = y, ~ [\overline F_1, \pi](dx,du) = -2xy - u, ~ [\overline F_1, \pi](dy,du) = y^2; \nonumber \\
& [\overline F_2, \pi](dx,dy) = -x, ~ [\overline F_2, \pi](dx,du) = 0, ~ [\overline F_2, \pi](dy,du) = -u; \nonumber \\
& [\overline F_{\alpha_1 + \alpha_2}, \pi](dx,dy) = 2, ~ [\overline F_{\alpha_1 + \alpha_2}, \pi](dx,du) = x, ~ [\overline F_{\alpha_1 + \alpha_2}, \pi](dy,du) = -y.
\end{align}
It is then easy to verify that all of $[\overline F_1, \pi]$, $[\overline F_2, \pi]$ and $[\overline F_{\alpha_1 + \alpha_2}, \pi]$ are indeed Poisson bivectors.
\end{exmp}

This example is a manifestation of a much more general fact.  In fact, if $\mathfrak g$ is of type $A_n$ for an arbitrary $n$, we have

\begin{thm} \label{deformed-bivect}
Let $\mathfrak g$ be of type $A_n$.  For $1 \le i \le j \le n$, the deformed bivector $[\overline F_{\alpha_i + \cdots + \alpha_j}, \pi]$ is Poisson, i.e. Jacobi identity $[[\overline F_{\alpha_i + \cdots + \alpha_j}, \pi], [\overline F_{\alpha_i + \cdots + \alpha_j}, \pi]] = 0$ holds.
\end{thm}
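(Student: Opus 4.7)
The plan is to deduce the Jacobi identity for $\sigma := \mathcal{L}_{\overline F}\pi$, where $\overline F := \overline F_{\alpha_i + \cdots + \alpha_j}$, from the stronger vanishing $\mathcal{L}_{\overline F}^{2}\pi = 0$. The key input is the general identity, valid for any vector field $X$ and any Poisson bivector $\pi$, that $[\mathcal{L}_X \pi, \mathcal{L}_X \pi] = -[\mathcal{L}_X^{2}\pi, \pi]$. This is a consequence of two applications of the Leibniz rule for the Schouten bracket with respect to $\mathcal{L}_X$ (itself a formal consequence of graded Jacobi together with $\mathcal{L}_X = [X, \cdot]$): first, $\mathcal{L}_X[\pi,\pi] = 2[\sigma,\pi]$, so $[\sigma,\pi] = 0$; second, applied to $[\sigma,\pi] = 0$, one obtains $[\mathcal{L}_X\sigma, \pi] + [\sigma,\sigma] = 0$. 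Thus, once $\mathcal{L}_{\overline F}^{2}\pi = 0$ is established, the theorem follows immediately.

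To prove $\mathcal{L}_{\overline F}^{2}\pi = 0$, I would work in coordinates. By Proposition \ref{vf-gen-a}, $\overline F = -\partial_{i,j} + \sum_{k=j+1}^{n} x_{j+1,k}\partial_{i,k}$, so $\overline F$ acts non-trivially only on coordinates of the form $x_{i,\ell}$ with $\ell \ge j$, sending $x_{i,j} \mapsto -1$ and $x_{i,\ell} \mapsto x_{j+1,\ell}$ for $\ell > j$. Using the six-case formula of Proposition \ref{bra-An}, one computes each entry $\sigma(dx_{a,b}, dx_{c,d}) = \overline F\{x_{a,b}, x_{c,d}\} - \{\overline F x_{a,b}, x_{c,d}\} - \{x_{a,b}, \overline F x_{c,d}\}$ explicitly, then applies the analogous formula once more with $\pi$ replaced by $\sigma$ and checks that the result vanishes. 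Direct hand-computation in types $A_2$ and $A_3$ (the $A_2$ case being essentially Example \ref{def-biv}) confirms this, and the mechanism that makes the pattern general is that the derivative of the polynomial $\sigma(dx_{a,b}, dx_{c,d})$ coming from the constant part $-\partial_{i,j}$ of $\overline F$ is cancelled exactly by the correction coming from the linear part $\sum_k x_{j+1,k}\partial_{i,k}$ acting on the arguments $x_{a,b}$ and $x_{c,d}$.

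The main obstacle is the bookkeeping for this case analysis. The six cases of Proposition \ref{bra-An}, crossed with whether $a = i$, $c = i$, or neither, and with the relative ordering of the indices $b, d, j$ and the auxiliary summation index $k$, produce a sizeable number of sub-cases. However, because the entry $\sigma(dx_{a,b}, dx_{c,d})$ is non-trivially affected by $\overline F$ only if at least one of the two coordinates is acted on non-trivially, one can restrict attention to a much smaller collection of configurations; these can be organized by the relative position of the intervals $[a,b]$, $[c,d]$, and $[i,j]$, and each configuration is then handled by a short computation exhibiting the cancellation between the constant and linear parts of $\overline F$. Once $\mathcal{L}_{\overline F}^{2}\pi = 0$ is in hand, Theorem \ref{deformed-bivect} follows at once from the identity in the first paragraph.
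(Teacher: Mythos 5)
Your argument is correct, but it takes a different (and in my view cleaner) route than the paper, which simply declares the proof of this theorem to be a direct computation and omits it. Your reduction rests on the formal Schouten-calculus identities $\mathcal L_X[\pi,\pi]=2[\mathcal L_X\pi,\pi]$ (so $[\mathcal L_X\pi,\pi]=0$ automatically) and then $0=\mathcal L_X[\mathcal L_X\pi,\pi]=[\mathcal L_X^2\pi,\pi]+[\mathcal L_X\pi,\mathcal L_X\pi]$, so that everything follows from $\mathcal L_{\overline F}^2\pi=0$; both identities are correct instances of the graded Leibniz rule. This is precisely the infinitesimal version of the argument the paper gives for the companion statement, Theorem \ref{comp-gen}: there the author pushes $\pi$ forward along $\exp(t\overline F)$, uses $[\overline F,[\overline F,\pi]]=0$ (Theorem \ref{comp-reason}) to truncate the flow at first order, and extracts the coefficient of $t$ in $[\pi-t[\overline F,\pi],\pi-t[\overline F,\pi]]=0$; the coefficient of $t^2$ in that same expansion is exactly the Jacobi identity you want, although the paper does not remark on this. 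Your version avoids having to integrate the vector field and makes explicit that a single computational input, Theorem \ref{comp-reason}, yields both Theorems \ref{deformed-bivect} and \ref{comp-gen}. The one place where your proposal is not complete is the same place the paper is not complete: the vanishing $\mathcal L_{\overline F}^2\pi=0$ is only sketched (low-rank checks plus a description of the expected cancellation between the constant part $-\partial_{i,j}$ and the linear part $\sum_k x_{j+1,k}\partial_{i,k}$ of $\overline F$), whereas a full proof requires carrying out the case analysis over the relative positions of the intervals $[a,b]$, $[c,d]$, $[i,j]$ using Propositions \ref{bra-An} and \ref{vf-gen-a}. Since the paper likewise omits the proof of Theorem \ref{comp-reason}, you are at parity with it on that point, but you should be aware that the case-by-case verification is where all the actual work lies.
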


The proof of Theorem \ref{deformed-bivect} is computational in nature.  It is omitted.

Recall that two Poisson bivectors $\pi$ and $\pi'$ on a variety $X$ are called compatible if $\pi + \pi'$ is (equivalently, all linear combinations of $\pi$ and $\pi'$ are) again Poisson.  By general properties of the Schouten bracket, it is easy to verify that $\pi$ and $\pi'$ are compatible if and only if the equation $[\pi, \pi'] = 0$ holds.

\begin{exmp} \label{comp}
By an easy computation, one verifies that all three Poisson brackets $[\overline F_1, \pi]$, $[\overline F_2, \pi]$ and $[\overline F_{\alpha_1 + \alpha_2}, \pi]$ on $\text{Spec} P$ in Example \ref{def-biv} are compatible with $\pi$.
\end{exmp}

Example \ref{comp} is a manifestation of the following general result.

\begin{thm} \label{comp-gen}
Let $\mathfrak g$ be of type $A_n$.  For any $1 \le i \le j \le n$, the Poisson bracket $[\overline F_{\alpha_i + \cdots + \alpha_j}, \pi]$ is compatible with $\pi$.
\end{thm}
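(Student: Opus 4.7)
The plan is to deduce Theorem \ref{comp-gen} from Theorem \ref{deformed-bivect} via a general identity about the Schouten bracket, so that essentially no further computation is required. Recall that two Poisson bivectors $\pi_1, \pi_2$ on a variety are compatible if and only if both $[\pi_1, \pi_1] = 0$, $[\pi_2, \pi_2] = 0$, and $[\pi_1, \pi_2] = 0$, where $[\,,\,]$ denotes the Schouten bracket. Taking $\pi_1 = \pi$ and $\pi_2 = \mathcal L_{\overline F_{\alpha_i + \cdots + \alpha_j}} \pi$, the first condition holds since $\pi$ is Poisson, and the second is precisely the content of Theorem \ref{deformed-bivect}. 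So it remains only to verify the mixed vanishing $[\pi, \mathcal L_X \pi] = 0$, where $X := \overline F_{\alpha_i + \cdots + \alpha_j}$.

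I would prove the stronger statement that $[\pi, \mathcal L_X \pi] = 0$ for any vector field $X$ on any Poisson manifold $(M, \pi)$. The argument rests on two standard properties of the Schouten bracket: (i) $\mathcal L_X = [X, \,\cdot\,]$ is a derivation of $[\,,\,]$, so
\begin{equation*}
\mathcal L_X [\pi, \pi] \;=\; [\mathcal L_X \pi, \pi] \;+\; [\pi, \mathcal L_X \pi];
\end{equation*}
and (ii) graded skew-symmetry $[A,B] = -(-1)^{(|A|-1)(|B|-1)}[B,A]$ gives, for the pair of bivectors $\mathcal L_X \pi$ and $\pi$, the sign $-(-1)^{1\cdot 1} = +1$, so $[\mathcal L_X \pi, \pi] = [\pi, \mathcal L_X \pi]$. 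Combining (i) and (ii) with $[\pi, \pi] = 0$ yields $0 = 2[\pi, \mathcal L_X \pi]$, which is the desired identity.

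Specializing $X = \overline F_{\alpha_i + \cdots + \alpha_j}$ and invoking Theorem \ref{deformed-bivect} then completes the proof. There is no real obstacle to overcome at this step: all substantive content, namely the Jacobi identity $[\mathcal L_X \pi, \mathcal L_X \pi] = 0$, has already been established in Theorem \ref{deformed-bivect}, and the compatibility statement is a purely formal consequence. In particular, this route avoids re-doing the delicate computations of Proposition \ref{vf-gen-a} and the explicit structure constants of the Poisson bracket; the argument is entirely at the level of the Schouten calculus.
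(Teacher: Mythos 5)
Your argument is correct, and it takes a genuinely different and more economical route than the paper's. The key point you isolate --- that $[\pi, \mathcal L_X \pi] = 0$ for \emph{every} vector field $X$ on \emph{every} Poisson variety --- is a true identity: the graded Jacobi identity for the Schouten bracket with one argument equal to the degree-one element $X$ gives $\mathcal L_X[\pi,\pi] = [\mathcal L_X\pi,\pi] + [\pi,\mathcal L_X\pi]$, graded skew-symmetry contributes the sign $-(-1)^{(2-1)(2-1)} = +1$ for a pair of bivectors so the two terms on the right coincide, and $[\pi,\pi]=0$ kills the left side; hence the mixed bracket vanishes identically and compatibility reduces entirely to Theorem \ref{deformed-bivect}. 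The paper proceeds differently: it invokes Theorem \ref{comp-reason}, the computational (and omitted) statement $[\overline F_{\alpha_i+\cdots+\alpha_j},[\overline F_{\alpha_i+\cdots+\alpha_j},\pi]]=0$, to conclude that the pushforward of $\pi$ under the flow $\exp(tv)$ is exactly $\pi - t[\hat v,\pi]$, and then extracts $[\pi,[\hat v,\pi]]=0$ from the coefficient of $t$ in the Jacobi identity for that pushforward. Your route buys independence from Theorem \ref{comp-reason} and from the need to integrate the vector field to a group action; the paper's route, in exchange, recovers $[[\hat v,\pi],[\hat v,\pi]]=0$ (i.e.\ Theorem \ref{deformed-bivect}) as a by-product of the same $t$-expansion. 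In both cases the only substantive input is a computation --- $[\mathcal L_X\pi,\mathcal L_X\pi]=0$ for you, $[\hat v,[\hat v,\pi]]=0$ for the paper --- and your reduction of the compatibility claim itself to pure Schouten calculus is sound.
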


Let $X$ be a variety and $\phi: X \rightarrow X$ an isomorphism of varieties.  Suppose that the source is equipped with a Poisson bracket $\{~~,~~\}$ whose Poisson bivector is $\pi$.  Then there is a unique Poisson bracket $\{~~,~~\}'$ on the target such that $\phi$ is a Poisson isomorphism.  The Poisson bivector corresponding to $\{~~,~~\}'$ is the push forward of $\pi$ along $\phi$.  Concretely, for any functions $f$ and $g$ on $X$,
\begin{align}
\{f,g\}' = (\phi^{-1})^{\ast} \{\phi^{\ast} f, \phi^{\ast} g\}.
\end{align}

Suppose that an algebraic group $K$ acts on $X$.  Let $\mathfrak k$ be the Lie algebra of $K$.  For $v \in \mathfrak k$ and $t \in \mathbb C$, consider the push forward of $\pi$ along the isomorphism $\exp{(tv)}: X \rightarrow X$, where $\exp : \mathfrak k \rightarrow K$ is the exponential map and the map $\exp{(tv)}$ sends $x \in X$ to $\exp{(tv)} \cdot x \in X$.  For functions $f$ and $g$ on $X$, the push forward of $\pi$ along $\exp{(tv)}$ takes the value
\begin{align}
\pi (df, dg) - t [\hat v, \pi] (df, dg) + t^2 [\hat v, [\hat v, \pi]] (df, dg) + \cdots
\end{align}
on the pair $(df,dg)$ of $1$-forms, where $\hat v$ stands for the vector field on $X$ generated by the infinitesimal action of $v \in \mathfrak k$, and $\cdots$ stands for terms containing the factor $t^m$ for some $m \ge 3$.

Suppose it happens to be the case that $[\hat v, [\hat v, \pi]] = 0$.  Then the formula above reduces to
\begin{align}
\pi (df, dg) - t [\hat v, \pi] (df, dg).
\end{align}
Jacobi identity for the push forward of $\pi$ along the isomorphism $\exp{(tv)}$ then implies that
\begin{align}
0 = & [\pi - t [\hat v, \pi], \pi - t [\hat v, \pi]] \nonumber \\
= & [\pi, \pi] - 2t [\pi, [\hat v, \pi]] + t^2 [[\hat v, \pi], [\hat v, \pi]].
\end{align}
Since $t \in \mathbb C$ is arbitrary, it follows in particular that $[\pi, [\hat v, \pi]] = 0$, i.e. the two Poisson bivectors $\pi$ and $[\hat v, \pi]$ are compatible.

By this argument, Theorem \ref{comp-gen} follows from the following theorem whose proof is an unilluminating case-by-case computation.  The proof of the theorem will be omitted for lack of space.

\begin{thm} \label{comp-reason}
Let $\mathfrak g$ be of type $A_n$.  For any $1 \le i \le j \le n$, we have $[\overline F_{\alpha_i + \cdots + \alpha_j}, [\overline F_{\alpha_i + \cdots + \alpha_j}, \pi]] = 0$.
\end{thm}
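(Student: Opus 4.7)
The plan is to reduce $[F, [F, \pi]] = 0$ to a finite check by explicitly straightening the vector field $F := \overline F_{\alpha_i + \cdots + \alpha_j}$ to a constant vector field via a change of coordinates. First I would observe, using Proposition \ref{vf-gen-a}, that the only non-zero components of $F$ are $F^{(i,j)} = -1$ and $F^{(i,k)} = x_{j+1,k}$ for $k > j$, and each of these is annihilated by $F$ itself (since $j+1 \neq i$). Consequently the flow of $F$ is exact to first order in time, $\phi_t(x) = x + tF(x)$, and is in particular polynomial of degree one in $t$.

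I then introduce new coordinates $z_{a,b}$ on $\mathfrak{n}^{\ast}$ defined by $z_{i,j} := x_{i,j}$, $z_{i,k} := x_{i,k} + x_{i,j}x_{j+1,k}$ for $k > j$, and $z_{a,b} := x_{a,b}$ in all other cases. A direct computation yields $F(z_{i,j}) = -1$ and $F(z_{a,b}) = 0$ otherwise, so in these coordinates $F = -\partial/\partial z_{i,j}$. Because $F$ is now a constant vector field, the components of $[F,\pi]$ and $[F,[F,\pi]]$ in the $z$-coordinates are simply $-\partial_{z_{i,j}}\pi^{ab}$ and $\partial^2_{z_{i,j}}\pi^{ab}$ respectively. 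The theorem is therefore equivalent to the statement that each component $\pi^{ab}(z)$ of the Poisson bivector, written in these coordinates, is a polynomial of degree at most one in $z_{i,j}$.

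To verify the last statement I would proceed case-by-case. The inverse change of variables is $x_{i,j} = z_{i,j}$, $x_{i,k} = z_{i,k} - z_{i,j}z_{j+1,k}$ for $k > j$, and $x_{a,b} = z_{a,b}$ elsewhere, so every $x$-variable is affine in $z_{i,j}$. Since $\pi^{ab} = \{x_a,x_b\}$ is at most quadratic in the $x$'s by Proposition \ref{bra-An}, a naive substitution produces at most a quadratic polynomial in $z_{i,j}$, and the entire content of the theorem is the cancellation of the $z_{i,j}^2$ terms. These terms can only arise when at least two of the $x$-variables involved in the expansion of some $\{z_{a,b}, z_{c,d}\}$ belong to row $i$ (that is, are of the form $x_{i,m}$ with $m \ge j$) or involve $x_{j+1,m}$; using the Leibniz rule to expand $\{z_{a,b}, z_{c,d}\}$ and combining with the six cases of Proposition \ref{bra-An}, one obtains a manageable list of relevant pairs in which the cancellation has to be checked explicitly.

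The main obstacle is precisely this bookkeeping — not conceptual, but combinatorial, since the interaction between the Leibniz expansion of $\{z_{a,b},z_{c,d}\}$ and the six-case structure of Proposition \ref{bra-An} generates many terms that must group correctly. A more conceptual alternative route, which I would pursue in parallel as a sanity check, is to exploit the Poisson isomorphism $(\mathfrak{n}^{\ast}, \pi) \simeq (Bw_0B/B, \pi_{st})$ of Theorem \ref{main-conj} and attempt to realize $F$ as the infinitesimal generator of a one-parameter group $\exp(tF)$ of algebraic automorphisms of the open Bruhat cell whose push-forward $\exp(tF)_\ast \pi$ is genuinely affine in $t$; vanishing of the $t^2$-coefficient, as in the derivation preceding the theorem, would then deliver $[F,[F,\pi]] = 0$ without any coordinate computation.
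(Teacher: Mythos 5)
The paper does not actually supply a proof of this theorem --- it is declared ``an unilluminating case-by-case computation'' and omitted --- so there is no argument to compare yours against line by line; what I can say is that your reduction is sound and is a genuinely better-organized route than the brute-force expansion of $[\overline F, [\overline F, \pi]]$ that the paper presumably has in mind. The straightening step checks out: with $\overline F_{\alpha_i+\cdots+\alpha_j} = -\partial_{i,j} + \sum_{k>j} x_{j+1,k}\partial_{i,k}$ and $j+1 > j \ge i$, the coefficients of the vector field are indeed annihilated by it, the substitution $z_{i,k} = x_{i,k} + x_{i,j}x_{j+1,k}$ is a triangular (hence invertible) polynomial change of coordinates, and one verifies $F(z_{i,j}) = -1$, $F(z_{a,b}) = 0$ otherwise, so that $\mathcal L_F$ acts as $-\partial_{z_{i,j}}$ on components and the theorem becomes the statement that every $\{z_{a,b}, z_{c,d}\}$ is affine in $z_{i,j}$. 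Your degree bound (at most quadratic in $z_{i,j}$ after substitution) is also correct, and the claim passes the test in type $A_2$: for $F = \overline F_{\alpha_1}$ one finds $\{z_x,z_u\} = z_xz_u$, $\{z_x,z_y\} = 2z_u - z_xz_y$, $\{z_y,z_u\} = -z_uz_y$, all affine in $z_x$.

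That said, as a proof the proposal stops exactly where the mathematical content lives: the vanishing of the $z_{i,j}^2$ coefficients is asserted to be ``a manageable list'' but is never verified, and it is not a formal triviality. For instance, in $\{z_{i,k}, z_{i,l}\}$ with $j < k < l$, the term $x_{i,j}^2\{x_{j+1,k},x_{j+1,l}\}$ and the quadratic part of $\{x_{i,k},x_{i,l}\}$ each contribute $-z_{i,j}^2\,z_{j+1,k}z_{j+1,l}$ with the \emph{same} sign; the cancellation only emerges after the adjacent-interval case $\{x_{i,j},x_{j+1,m}\} = x_{i,j}x_{j+1,m} + 2x_{i,m}$ and the overlap case contributing the factor $-2$ are fed in, so it genuinely depends on the precise constants of Proposition \ref{bra-An}. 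Until that finite list of identities is written down and checked, you have a clean and correct reformulation of the theorem, not a proof of it. Finally, the ``alternative route'' in your last paragraph is circular as stated: affineness of $\exp(tF)_{\ast}\pi$ in $t$ is \emph{equivalent} to $[F,[F,\pi]] = 0$ (this is precisely the computation the paper performs just before the theorem), so it can only yield a proof if you supply an independent geometric reason for that affineness, which you do not.
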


We write $\pi$ for the Poisson bivector of the Poisson bracket on $\text{Spec} P (\simeq \mathfrak n^{\ast})$ and $\pi_{KK}$ for the Poisson bivector for the Kirillov-Kostant Poisson bracket on $\mathfrak n^{\ast}$.

\begin{thm} \label{KK}
Let $\mathfrak g$ be of type $A_n$.  On $\mathfrak n^{\ast}$, the two Poisson bivectors $\pi$ and $\pi_{KK}$ are compatible.
\end{thm}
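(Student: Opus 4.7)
The plan is to extract the Kirillov-Kostant bracket as the linear part of $\pi$ and then use homogeneity of the Schouten bracket to conclude compatibility essentially for free.

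First, by Proposition \ref{a-presentation}(b), $P$ is the polynomial algebra $\mathbb{C}[x_{i,j} : 1 \le i \le j \le n]$, and I equip it with the standard grading in which each $x_{i,j}$ has degree $1$. An inspection of the six cases in Proposition \ref{bra-An} shows that $\{x_{i,j}, x_{k,l}\}$ is always a polynomial of total degree at most $2$ in the $x$'s, and a linear contribution occurs only in the case $j = k-1$, where it equals $2x_{i,l}$. Thus I may decompose the Poisson bivector as $\pi = \pi^{(1)} + \pi^{(2)}$, where $\pi^{(1)}$ has coefficients homogeneous of polynomial degree $1$ in the $x$'s, and $\pi^{(2)}$ has coefficients homogeneous of polynomial degree $2$.

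Next I would verify that $\pi^{(1)} = 2\pi_{KK}$ under the identification $x_{i,j} = y_{i,j}$. Comparing with formula (\ref{Kir-bra}), the case $j = k-1$ in Proposition \ref{bra-An} matches the case $k = j+1$ in (\ref{Kir-bra}) with coefficients differing by a factor of $2$; the remaining non-zero case $i = l+1$ in (\ref{Kir-bra}) is deduced from this one by antisymmetry, and is similarly consistent with Proposition \ref{bra-An}. All other cases of (\ref{Kir-bra}) are zero, matching the fact that the residual contributions in Proposition \ref{bra-An} are purely quadratic.

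The key step is a homogeneity argument. Assigning $\deg x_{i,j} = 1$ and $\deg \partial/\partial x_{i,j} = -1$ equips the space of polyvector fields on $\mathrm{Spec}\, P$ with a $\mathbb{Z}$-grading under which the Schouten bracket is additive in degrees; with respect to this grading, $\pi^{(1)}$ is homogeneous of degree $-1$ and $\pi^{(2)}$ is homogeneous of degree $0$. Expanding the Jacobi identity $[\pi,\pi] = 0$ (which holds since $\pi$ arises from the Hayashi construction of section \ref{Hayashi-sect}) then yields
\begin{align*}
0 = [\pi^{(1)}, \pi^{(1)}] + 2[\pi^{(1)}, \pi^{(2)}] + [\pi^{(2)}, \pi^{(2)}],
\end{align*}
whose three summands are trivector fields whose coefficient polynomials are homogeneous of degrees $1$, $2$, and $3$ respectively. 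By linear independence of distinct homogeneous components in a polynomial ring, each summand must vanish individually; in particular, $[\pi^{(1)}, \pi^{(2)}] = 0$.

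Finally, compatibility is now immediate: using $\pi_{KK} = \tfrac{1}{2}\pi^{(1)}$,
\begin{align*}
[\pi, \pi_{KK}] = \tfrac{1}{2}[\pi^{(1)} + \pi^{(2)}, \pi^{(1)}] = \tfrac{1}{2}[\pi^{(1)}, \pi^{(1)}] + \tfrac{1}{2}[\pi^{(2)}, \pi^{(1)}] = 0.
\end{align*}
The only substantive obstacle is the identification $\pi^{(1)} = 2\pi_{KK}$, which is a routine but slightly fiddly case-by-case comparison of Proposition \ref{bra-An} with formula (\ref{Kir-bra}); everything else is automatic from the additivity of the Schouten bracket under the polynomial grading and the fact that $\pi$ itself is Poisson. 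As a bonus, the same argument shows that $\pi^{(2)}$ is itself a Poisson bivector compatible with $\pi_{KK}$.
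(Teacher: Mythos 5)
Your proof is correct, and it takes a genuinely different --- and cleaner --- route than the paper's. The paper proves compatibility by showing directly that the bracket $\{f,g\}' := \{f,g\} - 2\{f,g\}_{KK}$ satisfies the Jacobi identity, via an induction on $n$ and a case-by-case computation of Jacobiators that it declares tedious and omits. Observe that $\{~~,~~\}'$ is precisely your $\pi^{(2)}$, since subtracting $2\pi_{KK}$ removes exactly the linear part of $\pi$; so both arguments hinge on the same object. The difference is that where the paper verifies $[\pi^{(2)},\pi^{(2)}]=0$ by hand, you obtain it --- together with $[\pi^{(1)},\pi^{(2)}]=0$, which is what compatibility actually requires --- for free from the additivity of the Schouten bracket with respect to the polynomial grading, because the three components $[\pi^{(1)},\pi^{(1)}]$, $2[\pi^{(1)},\pi^{(2)}]$ and $[\pi^{(2)},\pi^{(2)}]$ of the expansion of $[\pi,\pi]=0$ have coefficients in the distinct degrees $1$, $2$, $3$ and hence vanish separately. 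The only computation left is the identification $\pi^{(1)}=2\pi_{KK}$, which is immediate from Proposition \ref{bra-An} and formula (\ref{Kir-bra}): the only linear term in Proposition \ref{bra-An} is the $2x_{i,l}$ occurring for $j=k-1$, matching the $k=j+1$ case of (\ref{Kir-bra}) up to the factor $2$ that the paper itself builds into the definition of $\{~~,~~\}'$ (and which is harmless for compatibility in any event). Your argument thus supplies, in a more conceptual form, exactly the details the paper omits; the ``bonus'' observation that $\pi^{(2)}$ is itself a Poisson bivector compatible with both $\pi^{(1)}$ and $\pi$ is also correct and recovers the paper's intermediate claim that $\{~~,~~\}'$ is Poisson.
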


\begin{proof}
One proves that for functions $f,g$ on $\mathfrak n^{\ast}$, the bracket $\{~~,~~\}'$ defined by
\begin{align}
\{f,g\}' := \{f,g\} - 2 \{f,g\}_{KK}
\end{align}
is Poisson, where $\{~~,~~\}_{KK}$ stands for the Kirillov-Kostant Poisson bracket on $\mathfrak n^{\ast}$.  One proves this by induction on $n$.  The base case $n=1$ is trivial because $\pi = \pi_{KK} = 0$ in this situation.  For the induction step one only needs to compute, for $1 \le i \le j \le n$, $1 \le k \le l \le n$ and $1 \le r \le s \le n$, the Jacobiator
\begin{align}
J(x_{i,j},x_{k,l},x_{r,s}) := & \{\{x_{i,j}, x_{k,l}\}', x_{r,s}\}' + \{\{x_{k,l}, x_{r,s}\}', x_{i,j}\}' \nonumber \\
& + \{\{x_{r,s}, x_{i,j}\}', x_{k,l}\}'
\end{align}
in the case where at least one of $j, l, s$ is equal to $n$.  This can be done by another case-by-case analysis.  This analysis is tedius and unilluminating.  It is omitted.
\end{proof}

Many interesting results in this section work only in type $A$.  It is very interesting to explore whether or not analogous statements hold in other types.  Another interesting question that is worth thinking about is how to interpret the compatibility results (at least in type $A$) in terms of the geometry of the variety of Lagrangian subalgebras of Evens and Lu \cite{EvLu,EvLu2}.

Before closing this section, let us very quickly point out a way in which the vector fields $\overline F_i$, $1 \le i \le N$, may potentially help us determine the Poisson center $Z_{\text {Pois}}(P)$ of $P$.  Recall our notations prior to Proposition \ref{Poi-cen}.

\begin{lem}
Let $\mathfrak g$ be of type $A_n$.  Then the Poisson center of $P$ equipped with the deformed Poisson bivector $[\overline F_{\alpha_1 + \cdots + \alpha_n}, \pi]$ is freely generated as a $\mathbb C$-algebra by $x_{i,j}$ for $1 < i \le j <n$ and the function
\begin{align}
(\sum \limits_{\substack {\kappa \vdash n \\ \kappa \text{ non-trivial}}} x_{\kappa}) + 2x_{1,n}.
\end{align}
In particular, the Poisson center for the deformed Poisson bivector $[\overline F_{\alpha_1 + \cdots + \alpha_n}, \pi]$ is a polynomial algebra.
\end{lem}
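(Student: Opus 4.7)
The plan is to verify that $C := \bigl(\sum_{\kappa \text{ non-trivial}} x_\kappa\bigr) + 2 x_{1,n}$ is Casimir for $\pi' := [\overline F_{\alpha_1 + \cdots + \alpha_n}, \pi]$, and then to show that $C$ together with the interior variables $\{x_{i,j} : 1 < i \le j < n\}$ generate the full Poisson center. First, by Proposition \ref{vf-gen-a} the inner sum in the formula for $\overline F_{\alpha_1 + \cdots + \alpha_n}$ is empty, so that vector field equals $-\partial_{1,n}$. Combined with the Lie-derivative identity $\{f, g\}' = -\partial_{1,n}\{f, g\} + \{\partial_{1,n} f, g\} + \{f, \partial_{1,n} g\}$ and the brackets of Proposition \ref{bra-An}, a short case analysis shows that the only non-vanishing $\pi'$-brackets among coordinate functions are
\begin{align*}
\{x_{1,j}, x_{j+1,n}\}' &= -2, \quad 1 \le j < n, \\
\{x_{1,j}, x_{k,n}\}' &= 2 x_{k,j}, \quad 1 < k \le j < n, \\
\{x_{1,j}, x_{1,n}\}' &= x_{1,j}, \quad 1 \le j < n, \\
\{x_{k,n}, x_{1,n}\}' &= -x_{k,n}, \quad 1 < k \le n,
\end{align*}
so every $x_{i,j}$ with $1 < i \le j < n$ is automatically Poisson central.

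Second, writing $\psi_1 := \sum_{\kappa \vdash n} x_\kappa$ (so that $C = \psi_1 + x_{1,n}$), I would verify $\{C, g\}' = 0$ for each remaining coordinate function $g$. The key combinatorial input is the recursion
$$\Phi_j = x_{1,j} + \sum_{\nu = 1}^{j-1} \Phi_\nu \, x_{\nu+1, j}, \qquad \Phi_j := \sum_{\mu \vdash j} x_\mu,$$
obtained by splitting off the last part of each partition $\mu \vdash j$, together with its analogue $\bar\Phi_m = x_{m+1, n} + \sum_{\mu = m+1}^{n-1} x_{m+1, \mu} \, \bar\Phi_\mu$, where $\bar\Phi_m$ is the corresponding sum for ordered sequences $m < \nu_1 < \cdots < \nu_s = n$. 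Expanding $\{\psi_1, x_{1,j}\}'$ and $\{\psi_1, x_{k,n}\}'$ via the brackets above and applying these two identities collapses the sums to $\{\psi_1, x_{1,j}\}' = x_{1,j}$ and $\{\psi_1, x_{k,n}\}' = -x_{k,n}$; a term-by-term cancellation between the first and last factors of each non-trivial monomial $x_\kappa$ gives $\{\psi_1, x_{1,n}\}' = 0$. Adding the contributions from $\{x_{1,n}, \cdot\}'$ in each case proves $\{C, g\}' = 0$.

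Third, since $\partial_{x_{1,n}} C = 2$, the variable $x_{1,n}$ may be traded for $C$: setting $R := \mathbb C[x_{i,j} : 1 < i \le j < n]$, $y_j := x_{1,j}$, and $z_k := x_{k,n}$, we obtain $P \simeq R[y_1, \ldots, y_{n-1}, z_2, \ldots, z_n, C]$ as a polynomial algebra in which $R$ and $C$ are Poisson central and $R[y, z]$ is a Poisson subalgebra. Hence $Z(P, \pi') = Z(R[y, z], \pi'|_{R[y, z]})[C]$. From Step 1 the $y$--$z$ bracket matrix $A$ with $A_{j, j'} := \{y_j, z_{j' + 1}\}'$ is lower triangular over $R$ with constant diagonal $-2$, so $A$ is invertible over $R$; any $f \in Z(R[y, z], \pi')$ then satisfies $A \cdot (\partial_{z_k} f)_{k = 2}^{n} = 0$ and, symmetrically, $(\partial_{y_j} f)_{j=1}^{n-1} \cdot A = 0$, forcing $f \in R$. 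Therefore $Z(P, \pi') = R[C]$, which is the polynomial algebra on the stated generators. The main obstacle is pinpointing the correct telescoping identities in Step 2; once those are in hand, the final non-degeneracy step is uniform and uses only the lower-triangular structure of $A$ over $R$.
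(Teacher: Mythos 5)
Your argument is correct, and it is worth noting that the paper states this lemma without any proof at all, so there is nothing to compare it against; your proposal supplies the missing argument in full. I checked the key points. Since $\overline F_{\alpha_1+\cdots+\alpha_n}=-\partial_{1,n}$ by Proposition \ref{vf-gen-a}, one indeed has $\{f,g\}'=-\partial_{1,n}\{f,g\}$ on coordinate functions, and reading off which entries of the bracket table in Proposition \ref{bra-An} contain $x_{1,n}$ gives exactly your four families of nonzero brackets (so the interior variables are automatically central). Your Step 2 identities $\{\psi_1,x_{1,j}\}'=x_{1,j}$, $\{\psi_1,x_{k,n}\}'=-x_{k,n}$, $\{\psi_1,x_{1,n}\}'=0$ all check out: the first follows from the bijection between partitions of $n$ with last part $[m+1,n]$, $m\le j$, and partitions of $j$ (your recursion for $\Phi_j$), the second from the symmetric recursion for $\bar\Phi_m$, and the third from the cancellation between the first and last factors of each nontrivial $x_\kappa$; adding the $\{x_{1,n},\cdot\}'$ contributions then kills everything. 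Step 3 is clean and is the genuinely nontrivial part (the paper's Casimir computations elsewhere only produce central elements, never prove completeness): the change of variables $x_{1,n}\mapsto C$ is legitimate because $C=2x_{1,n}+F$ with $F\in R[y,z]$, the splitting $Z(P,\pi')=Z(R[y,z],\pi')[C]$ is valid because $R[y,z]$ is a Poisson subalgebra and $C$ is central, and the lower-triangularity of $A$ with constant diagonal $-2$ forces all $y$- and $z$-derivatives of a central element to vanish. One caveat you should be aware of but which does not affect your proof: the paper's Example \ref{def-biv} lists $[\overline F_{\alpha_1+\alpha_2},\pi](dx,dy)=2$, whereas Proposition \ref{vf-gen-a} together with the definition of the deformed bivector yields $-2$; this internal sign inconsistency in the paper is an overall sign on $\pi'$ and leaves the Poisson center unchanged.
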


Note that the Casimir function $\psi$ in Proposition \ref{Poi-cen} is an anti-derivative of $(\sum \limits_{\substack {\kappa \vdash n \\ \kappa \text{ non-trivial}}} x_{\kappa}) + 2x_{1,n}$ with respect to the variable $x_{1,n}$.  It is interesting to study what the anti-derivatives of other functions in the Poisson center for $[\overline F_{\alpha_1 + \cdots + \alpha_n}, \pi]$ tell us about the Poisson center for $\pi$.  If we can determine the Poisson center for $[\overline F_{\alpha_i + \cdots + \alpha_j}, \pi]$ for some arbitrary $i,j$ with $1 \le i \le j \le n$, it is interesting to ask what we can say about $Z_{\text{Pois}}(P)$ using this information.

\subsubsection{Relation to the Open Bruhat Cell in the Flag Variety} \label{rel-open-Bru-cell-sect}

Recall that $G$ is a connected semi-simple algebraic group whose Lie algebra is $\mathfrak g$ and $B \subseteq G$ be the Borel subgroup of $G$ whose Lie algebra is $\mathfrak b$.  Using the so-called Bott-Samelson coordinates, the standard Poisson bracket on the open Bruhat cell $Bw_0B/B$ in the flag variety $G/B$ has been computed explicitly by Elek and Lu in \cite{EL}.  They have also shown that the coordinate ring of the open Bruhat cell has a structure of a symmetric Poisson CGL extension in the sense of Goodearl and Yakimov \cite{GY}.  In this section, we first show that $(P, \{~~,~~\})$ also has a structure of a symmetric Poisson CGL extension.  Then we establish a Poisson isomorphism from $(P,\{~~,~~\})$ to the coordinate ring of the open Bruhat cell equipped with the standard Poisson structure.

To see the symmetric Poisson CGL extension structure on $P$, we need a torus action on $P$.  Let $\mathfrak t$ be the abelian Lie algebra with basis $\{h_{\lambda}: \lambda \in \Phi^+\}$, where the $h_{\lambda}$'s are formal symbols.  For $\lambda, \mu \in \Phi^+$, we make $h_{\lambda}$ act on $\overline r'_{\mu}$ by multiplication by $(\lambda,\mu)$.  Write $T$ for the torus whose Lie algebra is $\mathfrak t$.  The $\mathfrak t$-action on $P$ integrates to a $T$-action on $P$.  The following lemma is obvious.

\begin{lem} \label{Poisson-action}
The action of $T$ on $P$ is Poisson.
\end{lem}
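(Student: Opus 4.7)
The plan is to verify the claim at the infinitesimal level: it suffices to show that each generator $h_\lambda \in \mathfrak t$ acts on $P$ as a Poisson derivation, since the statement for the integrated $T$-action then follows by exponentiation. By Proposition \ref{a-presentation}(b), $P$ is a polynomial algebra on $\overline r'_1, \dots, \overline r'_N$, so the prescribed formulas on generators extend uniquely to a derivation of $P$. Moreover, by the standard Leibniz-rule argument (the set of pairs $(f,g) \in P \times P$ for which $h_\lambda \cdot \{f,g\} = \{h_\lambda \cdot f, g\} + \{f, h_\lambda \cdot g\}$ is closed under multiplication in either variable, so agreement on algebra generators implies agreement everywhere), it will suffice to verify
\begin{equation*}
h_\lambda \cdot \{\overline r'_i, \overline r'_j\} = \{h_\lambda \cdot \overline r'_i, \overline r'_j\} + \{\overline r'_i, h_\lambda \cdot \overline r'_j\}
\end{equation*}
for every pair of indices $i, j$ and every $\lambda \in \Phi^+$.

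The key observation is that the $T$-action coincides with the natural $\mathbb Z \Phi$-grading on $P$ (the grading in which $\overline r'_k$ has degree $\lambda_k$): by construction, $h_\lambda$ acts on any homogeneous element of degree $\mu$ by the scalar $(\lambda, \mu)$. Inspecting the explicit formula in Proposition \ref{Pois-bra}, each term appearing in $\{\overline r'_i, \overline r'_j\}$, namely $\overline r'_i \overline r'_j$ and the monomials $\prod_{k=1}^N (\overline r'_k)^{d_k}$ subject to $\lambda_{\vec d} = \lambda_i + \lambda_j$, has $\mathbb Z \Phi$-degree exactly $\lambda_i + \lambda_j$. Hence the entire bracket $\{\overline r'_i, \overline r'_j\}$ is homogeneous of degree $\lambda_i + \lambda_j$, so
\begin{equation*}
h_\lambda \cdot \{\overline r'_i, \overline r'_j\} = (\lambda, \lambda_i + \lambda_j) \{\overline r'_i, \overline r'_j\}.
\end{equation*}
On the other hand, bilinearity of $(\,,\,)$ and the defining action on generators give
\begin{equation*}
\{h_\lambda \cdot \overline r'_i, \overline r'_j\} + \{\overline r'_i, h_\lambda \cdot \overline r'_j\} = \bigl((\lambda, \lambda_i) + (\lambda, \lambda_j)\bigr) \{\overline r'_i, \overline r'_j\},
\end{equation*}
and the two expressions agree.

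No serious obstacle arises: the whole argument is essentially the observation that the Poisson bracket constructed via the Hayashi procedure is homogeneous with respect to the $\mathbb Z \Phi$-grading inherited from $C_q^+$, while the torus $T$ acts on $P$ precisely through this grading (composed with the pairing with $\lambda$). Once these two compatibilities are aligned, the Poisson-derivation identity for $h_\lambda$ is automatic on generators, and hence on all of $P$; exponentiating then shows that the $T$-action is by Poisson automorphisms.
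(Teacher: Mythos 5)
Your argument is correct and is precisely the reasoning the paper has in mind: the paper simply declares the lemma obvious, and the implicit justification is exactly your observation that $T$ acts through the $\mathbb Z\Phi$-grading while the bracket of Proposition \ref{Pois-bra} is homogeneous of degree $\lambda_i+\lambda_j$ on generators.
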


\begin{prop} \label{CGL}
With the action of $T$ on $P$ described above, $P$ has a structure of a symmetric Poisson CGL extension.
\end{prop}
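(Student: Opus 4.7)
The plan is to exhibit $P$ as an iterated Poisson-Ore extension with respect to the PBW ordering of Proposition \ref{PBW-qc}(b), read off the Ore data directly from Proposition \ref{Pois-bra} together with the Levendorskii-Soibelman interval condition of Theorem \ref{LS}, and use the torus $T$ from Lemma \ref{Poisson-action} to supply the CGL structure. Set $P_0 := \mathbb{C}$ and $P_k := \mathbb{C}[\overline r'_1, \ldots, \overline r'_k]$ for $1 \le k \le N$, so that Proposition \ref{PBW-qc}(b) identifies $P$ with the polynomial tower $P_0 \subset P_1 \subset \cdots \subset P_N = P$.

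First I would show that each $P_k$ is a Poisson subalgebra and extract the Ore data on generators. The key input is that, by Proposition \ref{Pois-bra} combined with Theorem \ref{LS}, for $i < j$ every monomial occurring in the correction term of $\{\overline r'_i, \overline r'_j\}$ involves only indices strictly between $i$ and $j$, so
\[
\{\overline r'_i, \overline r'_j\} = -(\lambda_i, \lambda_j)\, \overline r'_i\, \overline r'_j + \text{polynomial in } \overline r'_{i+1}, \ldots, \overline r'_{j-1}.
\]
Hence for every $f \in P_{k-1}$ the bracket $\{\overline r'_k, f\}$ is of degree at most one in $\overline r'_k$ with coefficients in $P_{k-1}$, first verified on generators and then extended by the derivation property. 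Writing $\{\overline r'_k, f\} = \sigma_k(f)\, \overline r'_k + \delta_k(f)$ then defines a derivation $\sigma_k$ of $P_{k-1}$ and a $\sigma_k$-derivation $\delta_k$ of $P_{k-1}$, so $P_k = P_{k-1}[\overline r'_k; \sigma_k, \delta_k]$ is a Poisson-Ore extension.

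Next I would match this Ore data with the $T$-action and then run the argument in reverse. Taking $h_k := h_{\lambda_k} \in \mathfrak t$, the definition of the $T$-action yields $h_k \cdot \overline r'_i = (\lambda_k, \lambda_i)\, \overline r'_i = \sigma_k(\overline r'_i)$ for $i < k$; since both $h_k$ and $\sigma_k$ are derivations of $P_{k-1}$ agreeing on generators, they agree on all of $P_{k-1}$. Moreover $h_k \cdot \overline r'_k = (\lambda_k, \lambda_k)\, \overline r'_k$, with $(\lambda_k, \lambda_k) \ne 0$ supplying the required nonzero self-eigenvalue, while Lemma \ref{Poisson-action} provides the Poisson-automorphism property of the $T$-action. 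For the symmetric axiom, the Levendorskii-Soibelman interval condition is visibly symmetric in $i$ and $j$, so the reversed tower $P_k^{\mathrm{op}} := \mathbb{C}[\overline r'_N, \ldots, \overline r'_{N-k+1}]$ is again a chain of Poisson subalgebras, and the identical argument with $h_k^{\mathrm{op}} := -h_{\lambda_{N-k+1}}$ yields the reverse Poisson-Ore extensions. The main obstacle is bookkeeping rather than conceptual: I expect the most delicate step to be verifying the complete list of axioms of \cite{GY}, in particular local nilpotence of each $\delta_k$ (which should follow from the fact that $\delta_k$ strictly lowers degree in the $(\mathbb{Z}\Phi)^+$-grading) and rationality of the $T$-action (which is automatic since $T$ acts with eigenvectors spanning $P$).
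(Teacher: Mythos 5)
Your proposal follows essentially the same route as the paper, whose proof is itself only a sketch reducing everything to the iterated Poisson-Ore property via Proposition \ref{Pois-bra}, Theorem \ref{LS} and the paragraph of \cite{GY} after Definition 2.9; you simply fill in the routine details. One correction: your justification of local nilpotence of $\delta_k$ is backwards, since $\delta_k$ \emph{raises} the $(\mathbb Z\Phi)^+$-weight by $\lambda_k$ rather than lowering it. Nilpotence instead follows from the interval condition together with a weight count: $\delta_k(\overline r'_i)$ involves only $\overline r'_m$ with $i<m<k$, and if one picks a linear functional $\ell$ on $\mathbb Z\Phi$ with $\ell(\lambda_m)<0$ for $m<k$ and $\ell(\lambda_k)>0$ (e.g. the height of $(s_{\alpha_{i_1}}\cdots s_{\alpha_{i_{k-1}}})^{-1}(\cdot)$), then $\ell$ evaluated on the weight of $\delta_k^m(f)$ grows linearly in $m$ while every nonzero homogeneous element of $P_{k-1}$ has weight with $\ell\le 0$, forcing $\delta_k^m(f)=0$ for large $m$.
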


\begin{proof}
(Sketch.)  Most of the proof is routine.  The only nontrivial part is that $P$ is an iterated Poisson-Ore extension.  For this one uses the formula in Proposition \ref{Pois-bra}, the Levendorskii-Soibelman straightening law (Theorem \ref{LS}) and the paragraph in \cite{GY} after Definition 2.9.
\end{proof}

\begin{cor}
$P$ has a structure of a cluster algebra.
\end{cor}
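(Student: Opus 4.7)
The plan is to obtain the corollary as a direct application of the Goodearl--Yakimov theory of symmetric Poisson CGL extensions, using Proposition \ref{CGL} as the input. Specifically, Goodearl and Yakimov prove in \cite{GY} that if a commutative polynomial algebra carries the structure of a symmetric Poisson CGL extension, then it admits a canonical upper cluster algebra structure (and under mild further hypotheses, the cluster and upper cluster algebras coincide), with an explicit initial seed built out of a chosen presentation as an iterated Poisson--Ore extension together with the associated sequence of prime elements. Since the previous proposition equips $P$ with precisely such a structure via the torus $T$ whose Lie algebra is $\mathfrak t$, the main step is to verify that this theorem applies verbatim.

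Concretely, I would first recall the relevant Goodearl--Yakimov construction: from the chosen ordering $\overline r'_1,\ldots,\overline r'_N$ of the Poisson-Ore generators one extracts a sequence of homogeneous prime elements $y_1,\ldots,y_N$ (one for each ``level'' of the extension), which serve as the cluster variables in the initial seed; the exchange matrix is read off from the $T$-weights of these prime elements via the combinatorial recipe of \cite{GY}. I would then check the two standing hypotheses of that recipe in our setting: (i) the $T$-action provided by Lemma \ref{Poisson-action} is rational and has suitably generic weights so that the iterated Poisson-Ore presentation is $T$-equivariant level by level, and (ii) the symmetry condition, i.e.\ that the generators can be arranged in the reverse order to give another Poisson-Ore presentation, which was already part of Proposition \ref{CGL}. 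Both points follow from the formula for $\{\overline r'_i,\overline r'_j\}$ given in Proposition \ref{Pois-bra} together with the Levendorskii--Soibelman type control provided by Theorem \ref{LS} and Proposition \ref{ast}.

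With those hypotheses in place, the Goodearl--Yakimov theorem directly provides a cluster algebra structure on $P$ whose defining seed is $T$-equivariant and compatible with the Poisson bracket $\{\,,\,\}$. The main obstacle I anticipate is purely bookkeeping: extracting the explicit homogeneous prime elements $y_k$ at each step of the iterated Poisson--Ore presentation and matching the resulting exchange matrix to the combinatorial data of a reduced expression for $w_0$. This is essentially the same computation that Elek and Lu carry out in \cite{EL} for $\mathbb C[Bw_0B/B]$ in Bott--Samelson coordinates; in view of the Poisson isomorphism of Theorem \ref{main-conj} one could alternatively transport their cluster structure back to $P$, which gives a second, more concrete route to the same conclusion.
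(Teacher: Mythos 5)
Your proposal is correct and follows essentially the same route as the paper, which deduces the corollary directly from Proposition \ref{CGL} together with the main theorem of \cite{GY}; the additional detail you supply about initial seeds and the alternative transport via Theorem \ref{main-conj} is consistent but not needed for the paper's argument.
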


\begin{proof}
This follows from Propostion \ref{CGL} and the main theorem of \cite{GY}.
\end{proof}

Write $\kappa$ for the Killing form on $\mathfrak g$.  For $\lambda \in \Phi$, write $\mathfrak g_{\lambda}$ for the $\lambda$-root space of $\mathfrak g$.  Choose, for all $\lambda \in \Phi^+$, nonzero vectors $e_{\lambda} \in \mathfrak g_{\lambda}$ and $f_{\lambda} \in \mathfrak g_{-\lambda}$ such that $\kappa (e_{\lambda}, f_{\lambda}) = 1$.  Then we get an element
\begin{align}
R' := \frac 12 \sum \limits_{\lambda \in \Phi^+} e_{\lambda} \wedge f_{\lambda} \in \wedge^2 \mathfrak g.
\end{align}
$R'$ gives rise to a bivector $\pi_{st}$ on $G$ defined by
\begin{align}
\pi_{st} (g) := (L_g)_{\ast} (R') - (R_g)_{\ast} (R'),
\end{align}
for all $g \in G$, where $L_g$ (resp. $R_g$) stands for left (resp. right) multiplication by $g$.  It turns out that, c.f. \cite{BGY, EL, EvLu, EvLu2, LM, Mi, STS}, $\pi_{st}$ is a Poisson bivector on $G$.  $\pi_{st}$ is called the standard Poisson bivector on $G$.

Let $p: G \rightarrow G/B$ be the natural projection map.  It is known that, c.f. \cite{BGY, EL, EvLu, EvLu2, LM, Mi, STS}, there is a unique Poisson bivector on $G/B$, also denoted by $\pi_{st}$, making the map $p$ Poisson.  This Poisson bivector is called the standard Poisson bivector on $G/B$.

Example \ref{type-A2}, Proposition \ref{type-G2} and Proposition \ref{CGL} are consequences of the following

\begin{thm} \label{main-conj}
There exists a Poisson isomorphism between the open Bruhat cell $Bw_0B/B$ in the flag variety $G/B$ equipped with the standard Poisson structure and $(\text {Spec} P, \{~~,~~\})$.
\end{thm}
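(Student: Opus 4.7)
The plan is to factor the desired isomorphism through the opposite unipotent group $N^-$ of $G$, passing through two intermediate Poisson isomorphisms: $(\mathrm{Spec}\, P, \{~,~\}) \simeq (N^-, \pi^{st}_{N^-})$ and $(N^-, \pi^{st}_{N^-}) \simeq (Bw_0B/B, \pi_{st})$, where $\pi^{st}_{N^-}$ is the Poisson structure on $N^-$ induced by the classical $r$-matrix. The second isomorphism is classical: the open immersion $N^- \hookrightarrow G/B$, $n \mapsto nw_0B$, is Poisson and has image exactly $Bw_0B/B$ (see \cite{EvLu, LM}). So the real work is in constructing the first isomorphism.

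For that, I would first identify $C_q^+$ with $U^+$ as $\mathbb{C}(q)$-algebras. Both sides satisfy straightening relations of the same shape: Lemma \ref{com-rel} and Proposition \ref{ast} show that the Kashiwara operators $r'_i$ satisfy $r'_i r'_j - q^{(\lambda_i,\lambda_j)} r'_j r'_i$ equal to an $\mathcal{A}$-linear combination of PBW monomials in $r'_k$ for indices $k$ strictly between $i$ and $j$, precisely paralleling the Levendorskii--Soibelman law (Theorem \ref{LS}) satisfied by the $E_{\lambda_i}$. Using the Drinfeld--Killing pairing of Proposition \ref{Dri-Kil} to keep track of scaling factors---specifically, by rescaling $r'_{\lambda_k}$ by a factor involving $\langle F_{\lambda_k}, E_{\lambda_k}\rangle$ from Proposition \ref{Dri-Kil-val}(c)---one matches the structure constants and obtains a $\mathbb{C}(q)$-algebra isomorphism $\Phi_q : U^+ \xrightarrow{\sim} C_q^+$ sending the PBW basis of Theorem \ref{PBW-quan}(b) to the PBW basis of Proposition \ref{PBW-boson}. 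Restricting to integral forms (using Theorem \ref{LS-str} and Proposition \ref{PBW-boson-int}) and taking quasi-classical limits produces a $\mathbb{C}$-algebra isomorphism $\Phi : \mathbb{C}[N^-] \xrightarrow{\sim} P$, where we invoke the classical identification of the quasi-classical limit of $U^+_{\mathcal{A}}$ (equivalently, the semi-classical limit of Joseph's quantum coordinate ring $\mathbb{C}_q[N^-]$) with $\mathbb{C}[N^-]$.

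To see that $\Phi$ is Poisson, observe that both target Poisson brackets arise as Hayashi limits of the \emph{same} quantum commutation relations: the bracket on $P$ by construction, and the bracket on $\mathbb{C}[N^-]$ because $\pi^{st}_{N^-}$ is the semi-classical limit of the standard quantization of the coordinate ring of $N^-$. Uniqueness of the Hayashi bracket then forces $\Phi$ to respect the two Poisson structures. The main obstacle will be executing the scalar matching in the second paragraph cleanly enough to confirm that the two Hayashi limits produce identical Poisson brackets and not merely Poisson-isomorphic ones; this reduces to a careful bookkeeping of the factors $\langle F^{\vec d}, E^{\vec d}\rangle$ from Proposition \ref{Dri-Kil-val}(b)(c) and the pre-factors in Proposition \ref{Pois-bra}. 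The explicit checks in Example \ref{type-A2} and the remark following Proposition \ref{type-G2} confirm this matching in low rank. An alternative route, which avoids the quantum coordinate ring altogether, is to define $\Phi$ directly on generators using Elek--Lu's Bott--Samelson coordinates on $Bw_0B/B$ and verify the Poisson property by comparing their explicit bracket formulas with Proposition \ref{Pois-bra} term by term.
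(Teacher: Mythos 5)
The first half of your plan --- rescaling the Kashiwara operators by the Drinfeld--Killing pairing so that their Levendorskii--Soibelman structure constants match those of the $E_{\lambda_k}$, and then passing to the quasi-classical limit --- is essentially the paper's Proposition \ref{Poisson-isomorphism-for-main-theorem}: the relevant integral form is not $U^+_{\mathcal A}$ (whose $q=1$ limit is the noncommutative $\mathcal U(\mathfrak n)$) but the De Concini--Procesi algebra $A^+$ generated by $(q_{\alpha}-q_{\alpha}^{-1})E_{\alpha}$, and the map $\overline r'_{\lambda_k}\mapsto \overline{-(q_{\lambda_k}-q_{\lambda_k}^{-1})E_{\lambda_k}}$ is an (anti-)Poisson isomorphism onto $A^+_{cl}$. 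Up to that point your proposal is sound, modulo the sign coming from the two conventions $\frac 1{q-1}$ versus $\frac 1{1-q}$ in the Hayashi construction.

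The gap is in the second half. First, a concrete error: the map $N^-\to G/B$, $n\mapsto nw_0B$, is not an open immersion onto $Bw_0B/B$; since $\dot w_0^{-1}N^-\dot w_0=N^+\subseteq B$, one has $N^-\dot w_0B=\dot w_0B$, so the image is the single point $w_0B$. The open cell is parametrized by the \emph{positive} unipotent group via $N^+\to Bw_0B/B$, $u\mapsto u\dot w_0B$. Second, and more seriously, the quasi-classical limit of $A^+$ is naturally a Poisson subalgebra of $\mathbb C[G^{\ast}]$, the coordinate ring of the \emph{dual} Poisson--Lie group --- not of $\mathbb C[G]$ or $\mathbb C[G/B]$ --- so there is no ``same quantum commutation relations'' argument available: one must still prove that the dual-group Poisson structure transported to the cell agrees (up to sign) with $\pi_{st}$. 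This is exactly where the paper's real work lies: the exact sequence $1\to N^-\to G^{\ast}\to B\to 1$, the identification $\mathbb C[B]\simeq Z_0^0\otimes Z_0^-$ with the quotient Poisson structure, Lemma \ref{equivalence-of-Poisson-structures} comparing $\pi_{quot}$ with $\pi_{st}$ on $B$ via the Drinfeld double, and the Poisson map $(B,\pi_{st})\to(Bw_0B/B,\pi_{st})$, $b\mapsto b\dot w_0B$. ``Uniqueness of the Hayashi bracket'' only determines the bracket on the classical limit of a given quantum algebra; it cannot show that this bracket coincides with the geometrically defined $\pi_{st}$ on the cell. Your fallback of comparing with the Elek--Lu formulas term by term would in principle work but is only carried out in the paper in ranks $\le 2$ as a consistency check, not as a proof.
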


The proof of Theorem \ref{main-conj} is postponed to the end of this section.

In \cite{DP}, De Concini and Procesi have defined an $\mathcal A$ subalgebra $A^+$ of the quantized universal enveloping algebra $U$.  By definition, $A^+$ is the smallest $\mathcal A$ subalgebra of $U$ containing $(q_{\alpha} - q_{\alpha}^{-1}) E_{\alpha}$ which is stable under the action of the braid group.  Recall our notation in Theorem \ref{LS}.  Notice that, for $1 \le i \le j \le N$, we have
\begin{align} \label{formula-1}
& [(q_{\lambda_i} - q_{\lambda_i}^{-1}) E_{\lambda_i}, (q_{\lambda_j} - q_{\lambda_j}^{-1}) E_{\lambda_j}] \nonumber \\
= & (q^{(\lambda_i,\lambda_j)} - 1) ((q_{\lambda_j} - q_{\lambda_j}^{-1}) E_{\lambda_j}) ((q_{\lambda_i} - q_{\lambda_i}^{-1}) E_{\lambda_i}) \nonumber \\
+ & \sum c_{\vec d} \frac {(q_{\lambda_i} - q_{\lambda_i}^{-1}) (q_{\lambda_j} - q_{\lambda_j}^{-1})} {(q_{\lambda_1} - q_{\lambda_1}^{-1})^{d_1} \cdots (q_{\lambda_N} - q_{\lambda_N}^{-1})^{d_N}} ((q_{\lambda_N} - q_{\lambda_N}^{-1}) E_{\lambda_N})^{d_N} \cdots ((q_{\lambda_1} - q_{\lambda_1}^{-1}) E_{\lambda_1})^{d_1}.
\end{align}
By Proposition \ref{Dri-Kil-val} and Lemma \ref{com-rel}, we have
\begin{align} \label{formula-2}
[- r'_{\lambda_i}, - r'_{\lambda_j}] = & (q^{(\lambda_i,\lambda_j)} - 1) (- r'_{\lambda_j}) (- r'_{\lambda_i}) \nonumber \\
+ & \sum c_{\vec d} \frac {(q_{\lambda_i} - q_{\lambda_i}^{-1}) (q_{\lambda_j} - q_{\lambda_j}^{-1})} {(q_{\lambda_1} - q_{\lambda_1}^{-1})^{d_1} \cdots (q_{\lambda_N} - q_{\lambda_N}^{-1})^{d_N}} (- r'_{\lambda_N})^{d_N} \cdots (- r'_{\lambda_1})^{d_1}.
\end{align}

Write $A_{cl}^+$ for the $\mathbb C$-algebra $A^+/(1-q) A^+$.  The Hayashi construction equips $A_{cl}^+$ with a Poisson bracket.  We remark here that in \cite{DP}, when constructing the Poisson bracket for $\overline a$ and $\overline b$, where $a, b \in A$, De Concini and Procesi have used the formula
\begin{align}
\{\overline a, \overline b\} := (\frac 1 {q-1} [a,b])|_{q=1},
\end{align}
as opposed to our convention
\begin{align}
\{\overline a, \overline b\} := (\frac 1 {1-q} [a,b])|_{q=1}.
\end{align}
Define a $\mathbb C$-algebra map $P \rightarrow A_{cl}^+$ by sending $\overline r'_{\lambda_i}$ to $\overline {- (q_{\lambda_i} - q_{\lambda_i}^{-1}) E_{\lambda_i}}$, where $\overline {- (q_{\lambda_i} - q_{\lambda_i}^{-1}) E_{\lambda_i}}$ stands for the image of $- (q_{\lambda_i} - q_{\lambda_i}^{-1}) E_{\lambda_i}$ in $A_{cl}^+$.

\begin{prop} \label{Poisson-isomorphism-for-main-theorem}
The map $P \rightarrow A_{cl}^+$ is an anti-Poisson isomorphism of Poisson algebras.
\end{prop}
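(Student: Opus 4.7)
The plan is to establish, in sequence, that the map is a well-defined $\mathbb C$-algebra homomorphism, that it is an isomorphism, and that it intertwines the Poisson brackets up to a sign. Everything hinges on a structural parallel between formulas (\ref{formula-1}) and (\ref{formula-2}): writing $b_i := r'_{\lambda_i}$ and $a_i := -(q_{\lambda_i}-q_{\lambda_i}^{-1})E_{\lambda_i}$, one checks that
\[
[b_i,b_j] \;=\; (q^{(\lambda_i,\lambda_j)}-1)\, b_j b_i + \sum_{\vec d} \gamma_{\vec d}\, b_N^{d_N}\cdots b_1^{d_1},
\]
\[
[a_i,a_j] \;=\; (q^{(\lambda_i,\lambda_j)}-1)\, a_j a_i + \sum_{\vec d} \gamma_{\vec d}\, a_N^{d_N}\cdots a_1^{d_1},
\]
with \emph{identical} coefficients $\gamma_{\vec d} \in \mathcal A$ (the sign $(-1)^{|\vec d|}$ appearing in both after moving the minus signs inside).

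For well-definedness, I would use that $P$ is the polynomial algebra on $\overline{r'_{\lambda_1}},\ldots,\overline{r'_{\lambda_N}}$ (Proposition \ref{a-presentation}(b)), so the map is freely determined by its values on generators provided the target is commutative. Commutativity of $A_{cl}^+$ is then extracted from formula (\ref{formula-1}) alone: $(q^{(\lambda_i,\lambda_j)}-1)$ vanishes at $q=1$, and Theorem \ref{LS-str} combined with the fact that the rational factor $(q_{\lambda_i}-q_{\lambda_i}^{-1})(q_{\lambda_j}-q_{\lambda_j}^{-1})/\prod_k(q_{\lambda_k}-q_{\lambda_k}^{-1})^{d_k}$ has $(1-q)$-valuation $2-|\vec d|$ forces each correction coefficient to be divisible by $(1-q)$.

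For bijectivity, I would invoke De~Concini--Procesi's PBW theorem for $A^+$ from \cite{DP} (a free $\mathcal A$-basis of ordered monomials in the $(q_{\lambda_i}-q_{\lambda_i}^{-1})E_{\lambda_i}$) and reduce modulo $(1-q)$ to obtain an analogous $\mathbb C$-basis of $A_{cl}^+$; comparison with Proposition \ref{PBW-qc}(b) then identifies the images of the basis monomials of $P$ with this basis of $A_{cl}^+$ up only to the overall sign $(-1)^{d_1+\cdots+d_N}$. For the anti-Poisson property, recall that our Hayashi convention yields $\{\overline x,\overline y\}_P = \overline{(1-q)^{-1}[x,y]}$ whereas De~Concini--Procesi use $\{\overline x,\overline y\}_{DP} = \overline{(q-1)^{-1}[x,y]} = -\overline{(1-q)^{-1}[x,y]}$ (a sign mismatch flagged explicitly before the statement). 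The two displayed expansions above show that the map sends $(1-q)^{-1}[b_i,b_j]$ termwise to $(1-q)^{-1}[a_i,a_j]$, so the image of $\{\overline{b_i},\overline{b_j}\}_P$ equals $-\{\overline{a_i},\overline{a_j}\}_{DP}$. This is the anti-Poisson identity on generators, and it propagates to all of $P$ via the Leibniz rule together with the algebra-homomorphism property.

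The main obstacle is the sign bookkeeping: one must carefully verify that the minus sign built into $\overline{r'_{\lambda_i}}\mapsto -\overline{(q_{\lambda_i}-q_{\lambda_i}^{-1})E_{\lambda_i}}$ exactly absorbs the $(-1)^{|\vec d|}$ factors that appear after expanding $(-a_N)^{d_N}\cdots(-a_1)^{d_1}$ in (\ref{formula-1}) and $(-b_N)^{d_N}\cdots(-b_1)^{d_1}$ in (\ref{formula-2}), and that the residual sign flip is absorbed by the convention difference in the Hayashi construction. Once this is organized, the argument is essentially a bookkeeping exercise built on Proposition \ref{a-presentation}, Theorem \ref{LS-str}, Proposition \ref{PBW-qc}, and the PBW theorem for $A^+$.
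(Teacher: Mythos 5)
Your proposal is correct and follows essentially the same route as the paper: the algebra isomorphism is obtained by matching the PBW basis of $P$ (Proposition \ref{PBW-qc}) with De Concini--Procesi's PBW basis for $A^+$ reduced modulo $(1-q)$, and the anti-Poisson property comes from the termwise identification of formulas (\ref{formula-1}) and (\ref{formula-2}) together with the opposite sign conventions in the two Hayashi constructions. The paper's proof is just a terser citation of these same ingredients; your sign bookkeeping and the $(1-q)$-valuation count for commutativity of $A_{cl}^+$ check out.
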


\begin{proof}
That the map $P \rightarrow A_{cl}^+$ is an algebra isomorphism follows from our PBW theorem for $P$ (Proposition \ref{PBW-qc} or Proposition \ref{a-presentation}) and the PBW basis for $A_{cl}^+$ constructed in \cite{DP}.

That the map $P \rightarrow A_{cl}^+$ is anti-Poisson follows from formulas (\ref{formula-1}), (\ref{formula-2}) and the Hayashi construction.
\end{proof}

Let $B^- \subseteq G$ be the Borel subgroup of $G$ opposite to $B$ and define $H := B \cap B^-$.  Write $N^+$ (resp. $N^-$) for the unipotent radical of $B$ (resp. $B^-$).  Equip $G$ with the standard Poisson structure.  So the Poisson dual group $G^{\ast}$ of $G$ can be concretely realized as
\begin{align}
G^{\ast} = \{(t u_-, t^{-1} u): t \in H, u \in N^+, u_- \in N^-\}.
\end{align}
With this presentation of $G^{\ast}$, it is easy to see that $N^-$ can be identified with a subgroup of $G^{\ast}$ via the injective map $u_- \mapsto (u_-, e)$.  It is clear that $N^-$ is in fact a normal subgroup of $G^{\ast}$ and the quotient group can be identified with $B$.  So we get a short exact sequence of algebraic groups
\begin{align} \label{exact-sequence}
1 \rightarrow N^- \rightarrow G^{\ast} \rightarrow B \rightarrow 1,
\end{align}
where the map $G^{\ast} \rightarrow B$ is given by $(t u_-, t^{-1} u) \mapsto t^{-1} u$.

Recall that $N^-$ is an almost Poisson subgroup of $G^{\ast}$ in the sense of, for example, \cite{BGY}.  This in particular implies that $B \simeq G^{\ast}/N^-$ has a unique Poisson structure such that the natural quotient map $G^{\ast} \rightarrow G^{\ast}/N^- \simeq B$ is Poisson.  We write $\pi_{quot}$ for the unique Poisson structure and call it the quotient Poisson structure on $B$.  Recall also that $B$ is a Poisson subgroup of $G$.  So $B$ has a Poisson structure such that the natural inclusion map $B \hookrightarrow G$ is Poisson.  Let us call this Poisson structure on $B$ the standard Poisson structure, and write $\pi_{st}$ for it.  To compare the two Poisson structures on $B$, we have

\begin{lem} \label{equivalence-of-Poisson-structures}
The identity map of $B$ is an anti-Poisson isomorphism from $(B, \pi_{st})$ to $(B, \pi_{quot})$.
\end{lem}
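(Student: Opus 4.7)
The plan is to reduce the lemma to an infinitesimal comparison of Lie cobrackets on $\mathfrak{b}$. Since $B \hookrightarrow G$ is a morphism of Poisson Lie groups and $G^{\ast} \twoheadrightarrow B$ is Poisson by construction, both $\pi_{st}$ and $\pi_{quot}$ are multiplicative Poisson bivectors on $B$. In particular they both vanish at the identity $e \in B$, and each is determined by its linearization at $e$, which is a Lie cobracket $\delta : \mathfrak{b} \to \wedge^2 \mathfrak{b}$. Thus it suffices to show $\delta_{st} = -\delta_{quot}$ and then invoke the standard fact that on a connected Lie group, a multiplicative Poisson bivector is uniquely determined by its linearization at the identity.

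The cobracket $\delta_{st}$ is immediate: since $\mathfrak{b}$ is a Lie sub-bialgebra of $\mathfrak{g}$, $\delta_{st}$ is the restriction to $\mathfrak{b}$ of the cobracket $\delta_{\mathfrak{g}}(x) = [x \otimes 1 + 1 \otimes x,\, R']$ on $\mathfrak{g}$. For $\delta_{quot}$, the short exact sequence \eqref{exact-sequence} identifies $\mathfrak{b}$ with $\mathfrak{g}^{\ast}/\mathfrak{n}^-$ as Lie algebras; since $N^-$ is an almost Poisson subgroup of $G^{\ast}$, $\mathfrak{n}^-$ is a Lie coideal of $(\mathfrak{g}^{\ast}, \delta_{\mathfrak{g}^{\ast}})$, and $\delta_{quot}$ is obtained by pushing $\delta_{\mathfrak{g}^{\ast}}$ through the quotient. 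The cobracket $\delta_{\mathfrak{g}^{\ast}}$ is in turn dual to the bracket $[\cdot,\cdot]_{\mathfrak{g}}$ via the standard Manin-triple pairing on $\mathfrak{g} \oplus \mathfrak{g}$, whose two Lagrangian summands are the diagonal $\mathfrak{g}_{\Delta}$ and $\mathfrak{g}^{\ast}$ in the parameterization given in the excerpt.

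The key calculation is then to verify that, under the identification $(h + n_-, -h + n) \mapsto -h + n$ of $\mathfrak{g}^{\ast}/\mathfrak{n}^-$ with $\mathfrak{b}$, the induced cobracket $\delta_{quot}$ equals $-\delta_{st}$. The source of the minus sign is the antisymmetry of the Manin-triple pairing $\langle (x_1, x_2), (y_1, y_2) \rangle = \kappa(x_1, y_1) - \kappa(x_2, y_2)$ on $\mathfrak{g} \oplus \mathfrak{g}$, equivalently the $t \mapsto t^{-1}$ inversion in the second factor of the parameterization of $G^{\ast}$. Checking the sign flip on generators---for instance on $h \in \mathfrak{h}$ and on root vectors $e_{\lambda} \in \mathfrak{n}^+$---and then extending by Leibniz completes the argument. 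The main obstacle is the careful bookkeeping through the various identifications; conceptually, the result expresses the familiar principle that passing to a Poisson Lie subgroup of $G$ and passing to the dually corresponding Poisson quotient of $G^{\ast}$ produce opposite Poisson structures.
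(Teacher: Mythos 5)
Your argument is correct in outline but proceeds by a genuinely different route from the paper. You linearize: both $\pi_{st}$ and $\pi_{quot}$ are multiplicative bivectors on $B$ (the first because $B$ is a Poisson subgroup of $G$, the second because the quotient map $G^{\ast} \rightarrow G^{\ast}/N^- \simeq B$ is a group homomorphism along which the multiplicative bivector of $G^{\ast}$ pushes forward), both vanish at $e$, and a multiplicative bivector on a connected group is determined by its intrinsic derivative at $e$; so the lemma reduces to the identity $\delta_{quot} = -\delta_{st}$ of Lie cobrackets on $\mathfrak b$, which, since both sides are $1$-cocycles, need only be checked on generators. The paper instead argues globally: it shows that the second projection $\text{pr}_2: D = G \times G \rightarrow G$ pushes the double's bivector $\pi_D$ forward to $\pi_{st}$, observes that the composite $G^{\ast} \hookrightarrow D \xrightarrow{\text{pr}_2} G$ is anti-Poisson (anti-Poisson inclusion followed by a Poisson map) and coincides with the quotient map onto $B$, and reads off the sign with no infinitesimal analysis at all. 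The trade-off is that the paper's route imports the sign from the standard fact that $G^{\ast} \hookrightarrow D$ is anti-Poisson, whereas in your route the entire content of the lemma is concentrated in the generator computation of $\delta_{quot}$ versus $\delta_{st}$, which you describe but do not carry out; since the statement is purely a sign assertion, that computation (say on $h_i \in \mathfrak h$ and $e_{\lambda} \in \mathfrak n$, using the pairing $\langle (x_1,x_2),(y_1,y_2)\rangle = \kappa(x_1,y_1) - \kappa(x_2,y_2)$ and the identification $(h+n_-,-h+n) \mapsto -h+n$) should be written out to make the proof complete. Your approach does have the merit of making transparent \emph{why} the sign appears, namely the duality between passing to a Poisson subgroup of $G$ and passing to the corresponding coisotropic quotient of $G^{\ast}$.
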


\begin{proof}
Let $D := G \times G$ be the Drinfeld double of $G$.  So the Lie algebra of $D$ is the direct sum $\mathfrak d := \mathfrak g \oplus \mathfrak g$.  Recall the notations we used in the definition of the standard Poisson structure $\pi_{st}$ on $G$.  Choose a basis $h_1, \cdots, h_r$ for the Cartan subalgebra $\mathfrak h$ of $\mathfrak g$ such that
\begin{align}
2 \kappa (h_i, h_j) = \delta_{i,j},
\end{align}
for all $1 \le i,j \le r$, where $\delta_{i,j}$ is the Kronecker delta.

Notice that the Poisson dual group $G^{\ast}$ is a subgroup of $D$.  Its Lie algebra $\mathfrak g^{\ast}$ can thus be realized as the following Lie subalgebra of $\mathfrak d$:
\begin{align}
\mathfrak g^{\ast} = \{(h + n_-, - h + n): h \in \mathfrak h, n \in \mathfrak n, n_- \in \mathfrak n^-\}.
\end{align}
Write $\mathfrak g_{\Delta}$ for the diagonal $\{(x,x) \in \mathfrak d: x \in \mathfrak g\}$ of $\mathfrak d$.  Then we have a basis
\begin{align}
\{(e_{\lambda}, e_{\lambda}), (f_{\lambda}, f_{\lambda}), (h_i, h_i): \lambda \in \Phi^+, 1 \le i \le r\}
\end{align}
for $\mathfrak g_{\Delta}$ and a basis
\begin{align}
\{(f_{\lambda}, 0), (0, - e_{\lambda}), (h_i, - h_i): \lambda \in \Phi^+, 1 \le i \le r\}
\end{align}
for $\mathfrak g^{\ast}$.

Let
\begin{align}
R := \frac 12 \sum ((f_{\lambda}, 0) \wedge (e_{\lambda}, e_{\lambda}) + (0, - e_{\lambda}) \wedge (f_{\lambda}, f_{\lambda})) + \frac 12 \sum (h_i, - h_i) \wedge (h_i, h_i)
\end{align}
be an element in $\wedge^2 \mathfrak d$.  For $d \in D$, let $R_d: D \rightarrow D$ (resp. $L_d: D \rightarrow D$) be the map sending $d'$ to $d'd$ (resp. $dd'$).  It is well-known, c.f. \cite{EL, EvLu, EvLu2, LM, STS}, that the bivector
\begin{align}
\pi_D (d) := (R_d)_{\ast} R - (L_d)_{\ast} R, ~~ d \in D
\end{align}
is a Poisson bivector on $D$ and the inclusion $G^{\ast} \hookrightarrow D$ is anti-Poisson.

Let $\text{pr}_2: D \rightarrow G$ be the second projection map.  We prove that $(\text{pr}_2)_{\ast} \pi_D$ is a well-defined Poisson bivector.  For this we fix $g \in G$ and compute the value of $(\text{pr}_2)_{\ast} \pi_D$ at $g$.  Let $g' \in G$ and write $d$ for $(g',g)$.  Then we have
\begin{align}
(\text{pr}_2)_{\ast} (\pi_D (d)) = & (\text{pr}_2)_{\ast} (R_d)_{\ast} R - (\text{pr}_2)_{\ast} (L_d)_{\ast} R \nonumber \\
= & (R_g)_{\ast} (\text{pr}_2)_{\ast} R - (L_g)_{\ast} (\text{pr}_2)_{\ast} R,
\end{align}
since $\text{pr}_2 \circ R_d = R_g \circ \text{pr}_2$ and $\text{pr}_2 \circ L_d = L_g \circ \text{pr}_2$.  Here $R_g: G \rightarrow G$ (resp. $L_g: G \rightarrow G$) stands for the map sending $g'$ to $g'g$ (resp. $gg'$).  Recall that $R' = \frac 12 \sum e_{\lambda} \wedge f_{\lambda} \in \wedge^2 \mathfrak g$, so that $(\text{pr}_2)_{\ast} R = - R'$.  Hence we have
\begin{align}
(\text{pr}_2)_{\ast} (\pi_D (d)) = - (R_g)_{\ast} R' + (L_g)_{\ast} R'.
\end{align}
From this it is clear that $(\text{pr}_2)_{\ast} (\pi_D (d))$ is independent of the choice of $g'$, hence $(\text{pr}_2)_{\ast} \pi_D$ is well-defined.  Recall that the standard Poisson bivector $\pi_{st}$ on $G$ is given by
\begin{align}
\pi_{st} (g) = (L_g)_{\ast} R' - (R_g)_{\ast} R', ~~ g \in G.
\end{align}
So we conclude that $(\text{pr}_2)_{\ast} \pi_D = \pi_{st}$.  Therefore, $(\text{pr}_2)_{\ast} \pi_D$ is a well-defined Poisson bivector, and is equal to $\pi_{st}$.

Consider the composition
\begin{align}
G^{\ast} \hookrightarrow D \xrightarrow {\text{pr}_2} G.
\end{align}
It is anti-Poisson because it is the composition of an anti-Poisson map with a Poisson map.  For $(t u_-, t^{-1} u) \in G^{\ast}$, the composition sends $(t u_-, t^{-1} u) \in G^{\ast}$ to $t^{-1} u$.  Hence we see that the image of the composition is the Borel subgroup $B$ of $G$ and the composition, viewed as a map from $G^{\ast}$ to $B$, is nothing but the natural quotient map from $G^{\ast}$ to $B$ in our exact sequence (\ref{exact-sequence}).  Consequently, the composition $G^{\ast} \rightarrow B$ is Poisson for the quotient Poisson structure on $B$ and anti-Poisson for the standard Poisson structure on $B$.  Therefore, the quotient and standard Poisson structures on $B$ differ only by a minus sign.
\end{proof}

\begin{proof}[Proof of Theorem \ref{main-conj}]
For this proof we will use notations in \cite{DP} freely.

Recall the De Concini and Procesi have constructed in \cite{DP} a Poisson algebra $Z_0$ over $\mathbb C$ and proved that it is Poisson isomorphic to the coordinate ring $\mathbb C[G^{\ast}]$ of the Poisson dual group $G^{\ast}$.  They have also constructed a $\mathbb C$-vector space isomorphism $Z_0 \simeq Z_0^+ \otimes Z_0^0 \otimes Z_0^-$.  The subgroup $N^-$ of $G^{\ast}$ acts on $G^{\ast}$ by multiplication on the right.  This action induces an action of $N^-$ on $Z_0$ by algebra isomorphisms via the isomorphism $Z_0 \simeq \mathbb C[G^{\ast}]$.  Analyzing the construction of De Concini and Procesi, one can prove that the $N^-$-invariant part of $Z_0$ is $Z_0^0 \otimes Z_0^-$.  De Concini and Procesi have also shown that $Z_0^0 \otimes Z_0^-$ is a Poisson subalgebra of $Z_0$.  Hence, by the exact sequence (\ref{exact-sequence}), we have a Poisson isomorphism
\begin{align}
\mathbb C[B] \simeq \mathbb C[G^{\ast}]^{N^-} \simeq Z_0^0 \otimes Z_0^-,
\end{align}
where $B$ is equipped with the quotient Poisson structure $\pi_{quot}$.

Now let us use notations in De Concini and Procesi \cite{DP}.  In particular, $J$ stands for the reduced expression (\ref{red-exp}) for $w_0$.  Also, there is a bijection $\bar \cdot: \Pi \rightarrow \Pi$ defined by $\bar {\alpha} = - w_0 (\alpha)$.  Then
\begin{align}
w_0 = s_{\bar {\alpha}_{i_1}} \cdots s_{\bar {\alpha}_{i_N}}
\end{align}
is also a reduced expression for $w_0$.  This reduced expression will be denoted by $\bar J$.  Let the $x$, $y$ and $z$'s be defined as in \cite{DP}.  For $1 \le k \le N$, one verifies that
\begin{align}
x_k^J = - \overline {K_{\lambda_k}^{-1} (q_{\lambda_k} - q_{\lambda_k}^{-1}) E_{\lambda_k}},
\end{align}
where $\overline {K_{\lambda_k}^{-1} (q_{\lambda_k} - q_{\lambda_k}^{-1}) E_{\lambda_k}}$ stands for the image of $K_{\lambda_k}^{-1} (q_{\lambda_k} - q_{\lambda_k}^{-1}) E_{\lambda_k}$ after quotienting out by the ideal generated by $(1-q)$.  Comparing with Proposition \ref{Poisson-isomorphism-for-main-theorem}, we see that $\overline r'_k$ maps to $z_{\lambda_k} x_k^J$ under the anti-Poisson isomorphism in Proposition \ref{Poisson-isomorphism-for-main-theorem}, so that $P$ is anti-Poisson isomorphic to the Poisson subalgebra of $Z_0^0 \otimes Z_0^-$ generated by $z_{\lambda_k} x_k^J$ for all $1 \le k \le N$.  

Recall that $W = N_G(H)/H$, where $N_G(H)$ is the normalizer of $H$ in $G$.  Let $\dot{w_0}$ be a lift of $w_0 \in W = N_G(H)/H$ to $N_G(H) \subseteq G$.  It is not hard to show that the map $(B, \pi_{st}) \rightarrow (Bw_0B/B, \pi_{st})$ defined by $b \mapsto b \dot{w_0} B/B$ is Poisson.  Thus the composition $\text{Spec} Z_0^0 \otimes Z_0^- \rightarrow B \rightarrow Bw_0B/B$ is anti-Poisson, where $B$ ane $Bw_0B/B$ are equipped with the standard Poisson structures.  The isomorphism $\text{Spec} Z_0^0 \otimes Z_0^- \simeq \text{Spec} Z_0^0 \times \text{Spec} Z_0^-$ of varieties allows us to decompose a point $p \in \text{Spec} Z_0^0 \otimes Z_0^-$ as $p = (p_0, p_-)$, where $p_0 \in \text{Spec} Z_0^0$ and $p_- \in \text{Spec} Z_0^-$.  Analyzing the argument of De Concini and Procesi in \cite{DP}, we see that the composition above sends $p \in \text{Spec} Z_0^0 \otimes Z_0^-$ to $(Z(p_0) X(p_-) Z^{-1}(p_0)) w_0 B/B$, where the maps $X: \text{Spec} Z_0^- \rightarrow N$ and $Z: \text{Spec} Z_0^0 \rightarrow H$ are defined in \cite{DP}.

Again, for the meaning of all the undefined notations in this paragraph, the readers are referred to \cite{DP}.  Define $X^J_k := T_{w_0} (Y^{\bar J}_k)$ for $1 \le k \le N$.  Using facts in \cite{DP}, we have
\begin{align}
X^J_k = & T_{w_0} (Y^{\bar J}_k) \nonumber \\
= & T_{w_0} (\exp{(y_k^{\bar J} f_k^{\bar J})}) \nonumber \\
= & t_0 \exp {(T_{w_0}(y_k^{\bar J}) f_k^{\bar J})} t_0^{-1} \nonumber \\
= & t_0 \exp {(x_k^J f_k^{\bar J})} t_0^{-1} \nonumber \\
= & \exp{(-x_k^J e_k^J)},
\end{align}
where $\exp : \mathfrak g \rightarrow G$ is the exponential map.  Notice that $X^J = T_{w_0} (Y^{\bar J}) = T_{w_0} (Y^{\bar J}_N \cdots Y^{\bar J}_1) = T_{w_0} (Y^{\bar J}_N) \cdots T_{w_0} (Y^{\bar J}_1) = X^J_N \cdots X^J_1$.  Hence, for $p = (p_0,p_-) \in \text{Spec} Z_0^0 \otimes Z_0^-$, we have
\begin{align}
Z(p_0) X(p_-) Z^{-1}(p_0) = & (Z(p_0) X^J_N(p_-) Z^{-1}(p_0)) \cdots (Z(p_0) X^J_1(p_-)  Z^{-1}(p_0)) \nonumber \\
= & \exp{(- z_{\lambda_N}(p_0) x^J_N(p_-) e^J_N)} \cdots \exp{(- z_{\lambda_1}(p_0) x^J_1(p_-) e^J_1)}.
\end{align}

Recall that the map $N^+ \rightarrow Bw_0B/B$ sending $u$ to $u \dot{w_0} B/B$ is an isomorphism of varieties.  Recall also that, for $1 \le k \le N$, we have a coordinate function $a_k: N^+ \rightarrow \mathbb C$ on $N^+$ sending $\exp{(t_{\lambda_N} e^J_N)} \cdots \exp{(t_{\lambda_1} e^J_1)}$ in $N^+$ to $t_{\lambda_k}$.  Here, we have used the fact that the map $\mathbb A^N \rightarrow N^+$ sending $(t_{\lambda_1}, \cdots, t_{\lambda_N})$ to $\exp{(t_{\lambda_N} e^J_N)} \cdots \exp{(t_{\lambda_1} e^J_1)}$ is an isomorphism of varieties.  Our arguments above imply that, for all $1 \le k \le N$, under the composition
\begin{align}
\text{Spec} Z_0^0 \otimes Z_0^- \rightarrow (B, \pi_{st}) \rightarrow (Bw_0B/B, \pi_{st}),
\end{align}
the coordinate function $a_k$ pulls back to $- z_{\lambda_k} x^J_k$, which is the image of $- \overline r'_k$ under the anti-Poisson isomorphism in Proposition \ref{Poisson-isomorphism-for-main-theorem}.  Hence, the injection $\mathbb C[Bw_0B/B] \rightarrow Z_0^0 \otimes Z_0^-$ sends $\mathbb C[Bw_0B/B]$, where $Bw_0B/B$ is equipped with the standard Poisson structure, Poisson isomorphically to $P$.
\end{proof}

\begin{rems}
1. A consequence of Theorem \ref{main-conj} is that $C^+_{\mathcal A}$ is a quantization of the open Bruhat cell $Bw_0B/B$ in the flag variety $G/B$ equipped with the standard Poisson structure.  Hence, Lemma \ref{com-rel} can be viewed as a quantization of the explicit formulas for the standard Poisson bracket on the open Bruhat cell $Bw_0B/B$ in \cite{EL}.  This answers the question of Elek an Lu in \cite{EL} about how to quantize their formulas.

2. An interesting question is to compare $C^+_{\mathcal A}$ with the `preferred' quantization of the coordinate ring of $Bw_0B/B$ constructed by Mi in \cite{Mi}.  Our quantum algebra $C^+_{\mathcal A}$ has the advantage that it is very explicit, at least representation-theoretically, while Mi's approach is very general but abstract.  The hope is that this comparison will provide a concrete understanding of Mi's abstract approach towards quantizing Poisson CGL extensions.  Computation in type $G_2$ suggests that there is no obvious $\mathcal A$-algebra isomorphism between $C_{\mathcal A}^+$ and Mi's `preferred' quantization of the coordinate ring of $Bw_0B/B$ equipped with the standard Poisson structure.

3. Another interesting consequence of Theorem \ref{main-conj} is that the Poisson brackets constructed in Theorem \ref{comp-gen} are now compatible with the standard Poisson bracket on the open Bruhat cell.  It would be very interesting to interpret these Poisson brackets, or the vector fields $\overline F_{\alpha_i + \cdots + \alpha_j}$, for all $1 \le i \le j \le n$, in terms of the geometry of the flag variety and explain compactibility of $\pi$ with $[\overline F_{\alpha_i + \cdots + \alpha_j}, \pi]$ using this interpretation.  We point out that the vector fields $\overline F_{\alpha_i + \cdots + \alpha_j}$, for $1 \le i \le j \le n$, are in general not Hamiltonian.

4. Recall that $B$ is a Poisson subgroup of $G$ and the induced Poisson structure on $B$ is referred to as the standard one.  The action of $B$ on $G$ by left multiplication induces an action of $B$ on the open Brahut cell $Bw_0B/B$, making it a $B$-homogeneous Poisson space, where $B$ is equipped with the standard Poisson structure.  Hence, by Theorem \ref{main-conj}, $\text{Spec} P$ can then be identified with a $B$-orbit in the variety $\mathcal L$ of Lagrangian subalgebras of the Drinfeld double of the Lie bialgebra $\mathfrak b$ corresponding to the standard Poisson structure on $B$ (see \cite{EvLu, EvLu2}).  Note that if we equip $B$ with the zero Poisson bracket, and make $B$ act on $\mathfrak n^{\ast}$ by the coadjoint action, then each $B$-orbit in $\mathfrak n^{\ast}$ is a $B$-homogeneous Poisson space for the Kirillov-Kostant Poisson bracket on $\mathfrak n^{\ast}$.  Hence, each $B$-orbit in $\mathfrak n^{\ast}$ embeds into the variety $\mathcal L'$ of Lagrangian subalgebras of the Drinfeld double of the Lie bialgebra $\mathfrak b$ corresponding to the zero Poisson structure on $B$.  It is interesting to study what our compatibility result Theorem \ref{KK} tells us about the geometry of $\mathcal L$ and $\mathcal L'$.
\end{rems}

\begin{proof}[Proof of Theorem \ref{gen-rk}]
By Theorem \ref{main-conj}, $\text{gr}_{\mathfrak g}$ equals the maximum of the dimension of the symplectic leaves in $Bw_0B/B$ equipped with the standard Poisson structure.  By Example 4.9 of \cite{EvLu2}, the latter quantity equals the dimension of $\mathfrak h^{-w_0}$, where $\mathfrak h^{-w_0}$ stands for the vector subspace of $\mathfrak h$ fixed by the action of $-w_0$.  By Proposition 1.10 of \cite{Kos}, $\text{gr}_{\mathfrak g}'$ equals the same number.
\end{proof}

\begin{rem}
This argument is largely due to S. Evens.  The author would like to thank him for generously sharing this proof.
\end{rem}

\section{Appendix: Proof of Theorem \ref{LS-str}}

In this section we prove our stronger version of the Levendorskii-Soibelman straightening law (Theorem \ref{LS-str}).  We prove a version slightly different than Theorem \ref{LS-str}.  It is not hard to check that Theorem \ref{LS-str} follows from this slightly different version.  Also, we will only prove the statement in Theorem \ref{LS-str} involving the $F$'s.  The proof of the statement involving the $E$'s is similar.  Our proof is adapted from the proof of Theorem \ref{LS} in \cite{DP}.

For this section, when we write $F^{\vec d}$, for $\vec d \in (\mathbb Z_{\ge 0})^N$, we mean $F_1^{d_1} \cdots F_N^{d_N}$.  It is known that $\{F^{\vec d}: \vec d \in (\mathbb Z_{\ge 0})^N\}$ is also a $\mathbb C(q)$-basis for $U^-$, c.f. \cite{DP, Jan}.  The advantage of this basis is that we do not have to reverse the natural order of the root vectors $F_1, \cdots F_N$.  This makes the computation slightly easier.

We shall prove

\begin{thm} \label{LS-var}
For $1 \le i < j \le N$, we have
\begin{align}
F_{\lambda_i} F_{\lambda_j} - q^{(\lambda_i, \lambda_j)} F_{\lambda_j} F_{\lambda_i} = \sum_{\vec d \in (\mathbb Z_{\ge 0})^N} c_{\vec d} F^{\vec d},
\end{align}
where $c_{\vec d} \in \mathcal A$ and $c_{\vec d} = 0$ whenever $d_k \neq 0$ for some $k \in [1,i] \cup [j,N]$.  Moreover, if $c_{\vec d} \neq 0$, then it is divisible by $(1-q)^{(\sum \limits _{k=1}^N d_k) - 1}$ in $\mathcal A$.
\end{thm}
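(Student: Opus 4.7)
The plan is to adapt De Concini and Procesi's proof of Theorem \ref{LS} from \cite{DP} while carefully tracking the $(1-q)$-adic valuation of the coefficients $c_{\vec d}$ at each step. I would proceed by induction on the gap $j-i$. The base case $j=i+1$ is the $q$-commutation $F_{\lambda_i} F_{\lambda_{i+1}} = q^{(\lambda_i,\lambda_{i+1})} F_{\lambda_{i+1}} F_{\lambda_i}$ of consecutive PBW root vectors, for which the sum on the right is empty and the divisibility claim is vacuous. After applying the inverse braid operators $T_{\alpha_{i_1}}^{-1} \cdots T_{\alpha_{i_{i-1}}}^{-1}$ to cyclically shift the reduced expression (\ref{red-exp}), one may further assume $i=1$ and $\lambda_1 = \alpha_{i_1}$ is simple; this change of presentation preserves both the form of the straightening identity and the $(1-q)$-adic valuations of its coefficients.

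For the inductive step $j > i+1$, I would follow DP's decomposition strategy: express $F_{\lambda_j}$ in terms of smaller-indexed root vectors via the braid group, and rewrite $R_{i,j} := F_{\lambda_i} F_{\lambda_j} - q^{(\lambda_i,\lambda_j)} F_{\lambda_j} F_{\lambda_i}$ as a combination of relations $R_{i',j'}$ with strictly smaller gap. Each such decomposition passes through a quantum commutator $F_p F_q - q^{(\lambda_p,\lambda_q)} F_q F_p$ whose straightening (by the inductive hypothesis) yields monomials $F^{\vec d'}$ with coefficients divisible by $(1-q)^{|\vec d'|-1}$; combining these with an extra factor of the form $(q^a-1)$ arising from passing $F_{\lambda_i}$ through an intermediate root vector gives divisibility by $(1-q)^{|\vec d|-1}$ for the resulting monomial $F^{\vec d}$, where $|\vec d| = |\vec d'|+1$.

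The main technical obstacle is the combinatorial bookkeeping in the inductive step: when several contributions accumulate into the coefficient of a single $F^{\vec d}$, one must check that the critical $(q^a-1)$ factors do not cancel. A more conceptual route that sidesteps this bookkeeping is to pass to the De Concini-Procesi subalgebra $A^+ \subseteq U$, generated under the braid group action by the rescaled simple root vectors $\tilde E_\alpha := (q_\alpha - q_\alpha^{-1}) E_\alpha$. The ordered monomials in $\tilde E_{\lambda_k} := (q_{\lambda_k} - q_{\lambda_k}^{-1}) E_{\lambda_k}$ form an $\mathcal A$-basis for $A^+$, and the quasi-classical limit $A^+/(1-q) A^+$ is commutative (\cite{DP}). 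Hence $[\tilde E_{\lambda_i}, \tilde E_{\lambda_j}] \in (1-q) A^+$; writing this commutator as $(\tilde E_{\lambda_i} \tilde E_{\lambda_j} - q^{(\lambda_i,\lambda_j)} \tilde E_{\lambda_j} \tilde E_{\lambda_i}) + (q^{(\lambda_i,\lambda_j)}-1) \tilde E_{\lambda_j} \tilde E_{\lambda_i}$, expanding in the $\tilde E^{\vec d}$ basis, and using that each $(q_{\lambda_k} - q_{\lambda_k}^{-1})$ has $(1-q)$-adic valuation exactly $1$, one reads off that $c_{\vec d}$ is divisible by $(1-q)^{|\vec d|-1}$ directly.
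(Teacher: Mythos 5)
Your first route is indeed the strategy of the paper's own proof: induction on $j-i$, reduction to $i=1$, and a braid-group decomposition of $F_{\lambda_r}$ while tracking powers of $(1-q)$. But as written it is only an outline of that strategy; essentially all of the mathematical content lives in the ``combinatorial bookkeeping'' you defer. Concretely, the induction step splits into six cases according to the rank-two subsystem generated by $\alpha_{i_{r-1}}$ and $\alpha_{i_r}$ (type $A_1\times A_1$, $A_2$ or $B_2$, with $G_2$ disposed of separately since it is itself of rank two) and according to whether $s_{\alpha_{i_1}}\cdots s_{\alpha_{i_{r-2}}}s_{\alpha_{i_r}}$ is reduced. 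When it is not reduced one needs the exchange lemma to manufacture a usable reduced expression (and a separate argument when the exchanged letter is the first one), and in the $B_2$ case the product $F_rF_1$ decomposes into five separate blocks of terms, each of which must be checked to carry the right power of $(1-q)$. It is exactly here --- your own worry that ``the critical $(q^a-1)$ factors do not cancel'' --- that the argument could fail, and you offer no mechanism to rule that out. So the first route identifies the correct skeleton but supplies none of the verification that constitutes the proof.

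The proposed shortcut through $A^+$ does not repair this, because it is circular. The two inputs you take from \cite{DP} --- that the ordered monomials in the rescaled root vectors form an $\mathcal A$-basis of $A^+$, and that $A^+/(1-q)A^+$ is commutative --- are not logically independent of Theorem \ref{LS-var}: they are established precisely by straightening $\tilde E_{\lambda_i}\tilde E_{\lambda_j}$ and checking that the coefficient of $\tilde E^{\vec d}$, namely $(q_{\lambda_i}-q_{\lambda_i}^{-1})(q_{\lambda_j}-q_{\lambda_j}^{-1})\prod_k(q_{\lambda_k}-q_{\lambda_k}^{-1})^{-d_k}c_{\vec d}$, lies in $\mathcal A$ (for the spanning statement) and in $(1-q)\mathcal A$ (for commutativity of the quotient). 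That is exactly the divisibility of $c_{\vec d}$ by $(1-q)^{|\vec d|-1}$ that you are trying to prove. The paper's own logic runs in the opposite direction --- commutativity of the quasi-classical limit is deduced from Theorem \ref{LS-str} via Proposition \ref{ast} --- and the paper explicitly distrusts the integrality claims of \cite{DP} over $\mathbb C[q^{\pm 1}]$, which is the very reason the appendix reproves the straightening law over the localization $\mathcal A=\mathbb C[q^{\pm 1}]_{[3]!}$. Citing those results here would assume the conclusion.
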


\begin{proof}
We first note that when the root system $\Phi$ is of rank $2$, the statement can be verified by a straightforward computation.  In particular, we may assume that in the root system $\Phi$, there is no irreducible component of type $G_2$.

We prove Theorem \ref{LS-var} by induction on $j-i$.  Without loss of generality, we may assume that $i = 1$ and $j = r$ for some $1 < r \le N$.  The base case $r = 2$ can be verified by the same computation in a root system of rank $2$ as in the previous paragraph.  Assume that $r > 2$ and Theorem \ref{LS-var} has been verified for $j = 2, 3, \cdots, r-1$.  We have a number of cases.

\underline {Case 1.}  The roots $\alpha_{i_{r-1}}$ and $\alpha_{i_r}$ generate a root system of type $A_1 \times A_1$.

It follows from the assumption that
\begin{align}
\lambda_r = s_{\alpha_{i_1}} \cdots s_{\alpha_{i_{r-2}}} s_{\alpha_{i_{r-1}}} (\alpha_{i_r}) = s_{\alpha_{i_1}} \cdots s_{\alpha_{i_{r-2}}} (\alpha_{i_r})
\end{align}
and
\begin{align}
F_r = T_{\alpha_{i_1}} \cdots T_{\alpha_{i_{r-2}}} T_{\alpha_{i_{r-1}}} (F_{\alpha_{i_r}}) =  T_{\alpha_{i_1}} \cdots T_{\alpha_{i_{r-2}}} (F_{\alpha_{i_r}}).
\end{align}
Also, the assumption implies that
\begin{align}
s_{\alpha_{i_1}} \cdots s_{\alpha_{i_{r-2}}} s_{\alpha_{i_r}}
\end{align}
is a reduced expression.  So this reduced expression can be completed to a reduced expression for $w_0$.  Write $F'_1, \cdots, F'_N$ for the root vectors defined by this new reduced expression for $w_0$.  They correspond to positive roots $\lambda'_1, \cdots, \lambda'_N$.  Then our computation above tells us that $F_k = F'_k$ for $k = 1, \cdots, r-2$, and $F_r = F'_{r-1}$.  By induction hypothesis, we get
\begin{align} \label{aaaaa}
F_1 F_r - q^{(\lambda_1,\lambda_r)} F_r F_1 = F'_1 F'_{r-1} - q^{(\lambda'_1,\lambda'_{r-1})} F'_{r-1} F'_1 = \sum c_{\vec d} (F')^{\vec d},
\end{align}
where $c_{\vec d}$ has the properties stated in Theorem \ref{LS-var}.  Since $F_2 = F'_2, \cdots, F_{r-2} = F'_{r-2}$, $(F')^{\vec d} = F^{\vec d}$ for all $\vec d \in (\mathbb Z_{\ge 0})^N$ such that the coefficient $c_{\vec d}$ in formula (\ref{aaaaa}) is nonzero.  It follows that $F_1 F_r - q^{(\lambda_1,\lambda_r)} F_r F_1 = \sum c_{\vec d} F^{\vec d}$, where $c_{\vec d}$ has the properties claimed in Theorem \ref{LS-var}.

\underline {Case 2.}  The roots $\alpha_{i_{r-1}}$ and $\alpha_{i_r}$ generate a root system of type $A_2$ and $s_{\alpha_{i_1}} \cdots s_{\alpha_{i_{r-2}}} s_{\alpha_{i_r}}$ is a reduced expression.

Write $u$ for the element $s_{\alpha_{i_1}} \cdots s_{\alpha_{i_{r-2}}}$ in the Weyl group $W$.  By abuse of notation, when we write $u$, we also mean the reduced expression $s_{\alpha_{i_1}} \cdots s_{\alpha_{i_{r-2}}}$.  The assumptions of this case imply that
\begin{align}
u s_{\alpha_{i_r}}
\end{align}
is a reduced expression.  So it can be completed to a reduced expression for$w_0$.  Write $F'_1, \cdots, F'_N$ for the root vectors defined by this new reduced expression for $w_0$.  They correspond to positive roots $\lambda'_1, \cdots, \lambda'_N$.  It is easy to see that $F_k = F'_k$ for $k = 1, \cdots, r-2$ and $T_u (F_{\alpha_{i_r}}) = F'_{r-1}$.  By induction hypothesis, we have
\begin{align} \label{bbbbb}
F_1 (T_u (F_{\alpha_{i_r}})) - q^{(\alpha_{i_1}, u(\alpha_{i_r}))} (T_u (F_{\alpha_{i_r}})) F_1 = F'_1 F'_{r-1} - q^{(\alpha_{i_1}, u(\alpha_{i_r}))} F'_{r-1} F'_1 = \sum c_{\vec d} (F')^{\vec d},
\end{align}
where $c_{\vec d}$ has the properties claimed in Theorem \ref{LS-var}.  Note that $(F')^{\vec d} = F^{\vec d}$ for all $\vec d \in (\mathbb Z_{\ge 0})^N$ such that the coefficient $c_{\vec d}$ in formula (\ref{bbbbb}) is nonzero.  We have $F_1 (T_u (F_{\alpha_{i_r}})) - q^{(\alpha_{i_1}, u(\alpha_{i_r}))} (T_u (F_{\alpha_{i_r}})) F_1 = \sum c_{\vec d} F^{\vec d}$, where $c_{\vec d}$ has the properties claimed in Theorem \ref{LS-var}.  For notational simplicity we write $S'_{(1,r-1)}$ for $F_1 (T_u (F_{\alpha_{i_r}})) - q^{(\alpha_{i_1}, u(\alpha_{i_r}))} (T_u (F_{\alpha_{i_r}})) F_1$.  Similarly, our induction hypothesis also implies that
\begin{align}
F_1 F_{r-1} - q^{(\lambda_1,\lambda_{r-1})} F_{r-1} F_1 = \sum \tilde c_{\vec d} F^{\vec d}, 
\end{align}
where $\tilde c_{\vec d}$ has the properties claimed in Theorem \ref{LS-var}.  Again for notational simplicity, we write $S_{(1,r-1)}$ for $F_1 F_{r-1} - q^{(\lambda_1,\lambda_{r-1})} F_{r-1} F_1$.

Since
\begin{align}
F_r = & T_u T_{\alpha_{i_{r-1}}} (F_{\alpha_{i_r}}) \nonumber \\
= & T_u (F_{\alpha_{i_r}} F_{\alpha_{i_{r-1}}} - q F_{\alpha_{i_{r-1}}} F_{\alpha_{i_r}} \nonumber \\
= & T_u (F_{\alpha_{i_r}}) T_u (F_{\alpha_{i_{r-1}}}) - q T_u (F_{\alpha_{i_{r-1}}}) T_u (F_{\alpha_{i_r}}) \nonumber \\
= & T_u (F_{\alpha_{i_r}}) F_{r-1} - q F_{r-1} T_u (F_{\alpha_{i_r}}),
\end{align}
it follows that
\begin{align}
F_r F_1 = & T_u (F_{\alpha_{i_r}}) F_{r-1} F_1 - q F_{r-1} T_u (F_{\alpha_{i_r}}) F_1 \nonumber \\
= & T_u (F_{\alpha_{i_r}}) (q^{-(\lambda_1,\lambda_{r-1})} F_1 F_{r-1} - q^{-(\lambda_1,\lambda_{r-1})} S_{(1,r-1)}) \nonumber \\
& - q F_{r-1} (q^{-(\lambda_1,u(\alpha_{i_r}))} F_1 T_u (F_{\alpha_{i_r}}) - q^{-(\lambda_1,u(\alpha_{i_r}))} S'_{(1,r-1)}) \nonumber \\
= & q^{- (\lambda_1,\lambda_{r-1}) - (\lambda_1, u(\alpha_{i_r}))} (F_1 T_u (F_{\alpha_{i_r}}) - S'_{(1,r-1)}) F_{r-1} - q^{-(\lambda_1,\lambda_{r-1})} T_u (F_{\alpha_{i_r}}) S_{(1,r-1)} \nonumber \\
& - q^{1 - (\lambda_1, u(\alpha_{i_r})) - (\lambda_1, \lambda_{r-1})} (F_1 F_{r-1} - S_{(1,r-1)}) T_u (F_{\alpha_{i_r}}) + q^{1 - (\lambda_1, u(\alpha_{i_r}))} F_{r-1} S'_{(1,r-1)} \nonumber \\
= & q^{- (\lambda_1,\lambda_{r-1} + u(\alpha_{i_r}))} F_1 F_r - q^{- (\lambda_1,\lambda_{r-1} + u(\alpha_{i_r}))} (S'_{(1,r-1)} F_{r-1} - q^{1 + (\lambda_1,u(\alpha_{i_r}))} F_{r-1} S'_{(1,r-1)}) \nonumber \\
& + q^{1 - (\lambda_1,\lambda_{r-1} + u(\alpha_{i_r}))} (S_{(1,r-1)} T_u (F_{\alpha_{i_r}}) - q^{- 1 + (\lambda_1,u(\alpha_{i_r}))} T_u (F_{\alpha_{i_r}}) S_{(1,r-1)}).
\end{align}

We analyze $S'_{(1,r-1)} F_{r-1} - q^{1 + (\lambda_1,u(\alpha_{i_r}))} F_{r-1} S'_{(1,r-1)}$.  It follows from induction hypothesis and the definition of $S'_{(1,r-1)}$ that
\begin{align}
S'_{(1,r-1)} F_{r-1} - q^{1 + (\lambda_1 + \lambda_{r-1},u(\alpha_{i_r}))} F_{r-1} S'_{(1,r-1)} = \sum a_{\vec d} F^{\vec d},
\end{align}
where $a_{\vec d} \in \mathcal A$, $a_{\vec d} \neq 0$ only when $d_k = 0$ for all $k \le 2$ and $k \ge r-1$.  Moreover, if $a_{\vec d} \neq 0$, it is divisible by $(1-q)^{(\sum\limits_{k=1}^N d_k) - 1}$.  Hence, we have
\begin{align}
& S'_{(1,r-1)} F_{r-1} - q^{1 + (\lambda_1,u(\alpha_{i_r}))} F_{r-1} S'_{(1,r-1)} \nonumber \\
= & q^{- (\lambda_{r-1},u(\alpha_{i_r}))} (q^{(\lambda_{r-1},u(\alpha_{i_r}))} S'_{(1,r-1)} F_{r-1} - q^{1 + (\lambda_1 + \lambda_{r-1},u(\alpha_{i_r}))} F_{r-1} S'_{(1,r-1)}) \nonumber \\
= & q^{- (\lambda_{r-1},u(\alpha_{i_r}))} ((q^{(\lambda_{r-1},u(\alpha_{i_r}))} - 1) S'_{(1,r-1)} F_{r-1} + \sum a_{\vec d} F^{\vec d}).
\end{align}
Recall that each coefficient in $S'_{(1,r-1)}$ is divisible a power of $(1-q)$ as predicted by Theorem \ref{LS-var}.  Since $q^{(\lambda_{r-1},u(\alpha_{i_r}))} - 1$ is divisible by $(1-q)$, each coefficient in $S'_{(1,r-1)} F_{r-1}$ is also divisible by a power of $(1-q)$ as predicted by Theorem \ref{LS-var}.

The analysis of and conclusion about $S_{(1,r-1)} T_u (F_{\alpha_{i_r}}) - q^{- 1 + (\lambda_1,u(\alpha_{i_r}))} T_u (F_{\alpha_{i_r}}) S_{(1,r-1)}$ is exactly the same.  Note that
\begin{align}
(\lambda_1 + \lambda_{r-1}, u(\alpha_{i_r})) = & (\lambda_1 + u(\alpha_{i_{r-1}}), u(\alpha_{i_r})) \nonumber \\
= & (\lambda_1, u(\alpha_{i_r})) + (u(\alpha_{i_{r-1}}), u(\alpha_{i_r})) \nonumber \\
= & (\lambda_1, u(\alpha_{i_r})) + (\alpha_{i_{r-1}}, \alpha_{i_r}) \nonumber \\
= & (\lambda_1, u(\alpha_{i_r})) - 1.
\end{align}
So the summand $S_{(1,r-1)} T_u (F_{\alpha_{i_r}})$ will not appear in $S_{(1,r-1)} T_u (F_{\alpha_{i_r}}) - q^{- 1 + (\lambda_1,u(\alpha_{i_r}))} T_u (F_{\alpha_{i_r}}) S_{(1,r-1)}$.  In other words, $F'_{r-1}$ will not appear in $S_{(1,r-1)} T_u (F_{\alpha_{i_r}}) - q^{- 1 + (\lambda_1,u(\alpha_{i_r}))} T_u (F_{\alpha_{i_r}}) S_{(1,r-1)}$.

Notice that $(\lambda_1,\lambda_r) = (\lambda_1, u s_{\alpha_{i_{r-1}}} (\alpha_{i_r})) = (\lambda_1, \lambda_{r-1} + u (\alpha_{i_r}))$.  So, combining the formulas in the last three paragraphs, we are done in this case.

\underline {Case 3.}  The roots $\alpha_{i_{r-1}}$ and $\alpha_{i_r}$ generate a root system of type $A_2$ and $s_{\alpha_{i_1}} \cdots s_{\alpha_{i_{r-2}}} s_{\alpha_{i_r}}$ is not a reduced expression.

Let $u$ be the same as in Case 2.  The assumptions imply that there exists a reduced expression $v$ such that $v s_{\alpha_{i_r}}$ is a reduced expression for $u$.  By abuse of notation, we write $v$ also for the product of the simple reflections in the reduced expression $v$.  Observe that the length $l(v s_{\alpha_{i_{r-1}}})$ of $v s_{\alpha_{i_{r-1}}}$ is great than the length $l(v)$ of $v$.  In fact, if this is not the case, then
\begin{align}
l(u) + 2 = l(u s_{\alpha_{i_{r-1}}} s_{\alpha_{i_r}}) = l(v s_{\alpha_{i_r}} s_{\alpha_{i_{r-1}}} s_{\alpha_{i_r}}) = l(v s_{\alpha_{i_{r-1}}} s_{\alpha_{i_r}} s_{\alpha_{i_{r-1}}}) < l(v) + 2,
\end{align}
which contradicts the assumption that $s_{\alpha_{i_1}} \cdots s_{\alpha_{i_{r-2}}} s_{\alpha_{i_r}}$ is not a reduced expression.

By the exchange lemma (c.f. \cite[Section~1.7]{Hum}), there exists $k \in \{1, \cdots, r-2\}$ such that
\begin{align}
s_{\alpha_{i_1}} \cdots s_{\alpha_{i_{k-1}}} s_{\alpha_{i_{k+1}}} \cdots s_{\alpha_{i_{r-2}}}
\end{align}
is a reduced expression for $v$ and
\begin{align}
s_{\alpha_{i_1}} \cdots s_{\alpha_{i_{k-1}}} s_{\alpha_{i_{k+1}}} \cdots s_{\alpha_{i_{r-2}}} s_{\alpha_{i_r}}
\end{align}
is a reduced expression for $u$.  Consequently, we must have
\begin{align}
F_r = T_u T_{\alpha_{i_{r-1}}} (F_{\alpha_{i_r}}) = T_v T_{\alpha_{i_r}} T_{\alpha_{i_{r-1}}} (F_{\alpha_{i_r}}) = T_v (F_{\alpha_{i_{r-1}}}).
\end{align}
Since $l(v s_{\alpha_{i_{r-1}}}) > l(v)$,
\begin{align}
s_{\alpha_{i_1}} \cdots s_{\alpha_{i_{k-1}}} s_{\alpha_{i_{k+1}}} \cdots s_{\alpha_{i_{r-2}}} s_{\alpha_{i_{r-1}}}
\end{align}
is a reduced expression of length $r-2$.  Hence, if $k$ is not equal to $1$, we can compute $F_1 F_r - q^{(\lambda_1,\lambda_r)} F_r F_1$ using this reduced expression and the induction hypothesis.  The rest of the argument is the same as in Case 2.  If $k$ is equal to $1$, then we have $\alpha_{i_1} = \alpha_{i_r}$.  It follows that $v s_{\alpha_{i_r}} = s_{\alpha_{i_r}} v$, i.e. $s_{\alpha_{i_r}} = v s_{\alpha_{i_r}} v^{-1} = s_{v(\alpha_{i_r})}$.  So the roots $\alpha_{i_r}$ and $v(\alpha_{i_r})$ are proportional.  Since $s_{\alpha_{i_2}} \cdots s_{\alpha_{i_{r-2}}} s_{\alpha_{i_r}}$ is a reduced expression, $v(\alpha_{i_r})$ is a positive root.  Hence, $v(\alpha_{i_r}) = \alpha_{i_r}$.  As a consequence, we have
\begin{align}
F_1 = F_{\alpha_{i_1}} = F_{\alpha_{i_r}} = T_v (F_{\alpha_{i_r}}).
\end{align}
It follows that
\begin{align}
F_1 F_r = T_v (F_{\alpha_{i_r}}) T_v (F_{\alpha_{i_{r-1}}}) = - q^{-1} T_v T_{\alpha_{i_r}} (F_{\alpha_{i_{r-1}}}) + q^{-1} F_r F_1 = - q^{-1} F_{r-1} + q^{-1} F_r F_1.
\end{align}
Notice that $(\lambda_1,\lambda_r) = (v(\alpha_{i_r}), v(\alpha_{i_{r-1}})) = (\alpha_{i_r}, \alpha_{i_{r-1}}) = -1$.  So, we conclude that
\begin{align}
F_1 F_r - q^{(\lambda_1,\lambda_r)} F_r F_1 = - q^{-1} F_{r-1}.
\end{align}
We are done for this case.

\underline {Case 4.}  The roots $\alpha_{i_{r-1}}$ and $\alpha_{i_r}$ generate a root system of type $B_2$, $\alpha_{i_{r-1}}$ is longer and
\begin{align}
s_{\alpha_{i_1}} \cdots s_{\alpha_{i_{r-2}}} s_{\alpha_{i_r}}
\end{align}
is a reduced expression.

Again let $u$ be the same as before.  Since $F_r = T_u T_{\alpha_{i_{r-1}}} (F_{\alpha_{i_r}}) = T_u (F_{\alpha_{i_r}}) F_{r-1} - q^2 F_{r-1} T_u (F_{\alpha_{i_r}})$ and
\begin{align}
u s_{\alpha_{i_r}} = s_{\alpha_{i_1}} \cdots s_{\alpha_{i_{r-2}}} s_{\alpha_{i_r}}
\end{align}
is a reduced expression, exactly the same argument as in Case 2 works in this case.  We omit the details.

\underline {Case 5.}  The roots $\alpha_{i_{r-1}}$ and $\alpha_{i_r}$ generate a root system of type $B_2$, $\alpha_{i_r}$ is longer and
\begin{align}
s_{\alpha_{i_1}} \cdots s_{\alpha_{i_{r-2}}} s_{\alpha_{i_r}}
\end{align}
is a reduced expression.

Let $u$ be the same as before.  The assumptions imply that $u s_{\alpha_{i_r}} s_{\alpha_{i_{r-1}}}$ is a reduced expression.  So it can be completed to a reduced expression for $w_0$.  Write $F'_1, \cdots, F'_N$ for the root vectors defined by this new reduced expression for $w_0$.  They correspond to positive roots $\lambda'_1, \cdots, \lambda'_N$.  Write $S_{(1,r-1)}$ for $F_1 F_{r-1} - q^{(\lambda_1,\lambda_{r-1})} F_{r-1} F_1$.  By induction hypothesis, the coefficients in $S_{(1,r-1)}$ have the properties claimed in Theorem \ref{LS-var}.  Similarly, define $S'_{(1,r-1)} := F'_1 F'_{r-1} - q^{(\lambda'_1,\lambda'_{r-1})} F'_{r-1} F'_1$.  Coefficients in $S'_{(1,r-1)}$ have the properties claimed in Theorem \ref{LS-var}.  By a lengthy calculation, we have
\begin{align}
F_r F_1 = & [- q^{- 2 - (\lambda_1,\lambda_{r-1}) - (\lambda_1,\lambda'_r)} F_1 F'_r F_{r-1} + q^{- 2 - (\lambda_1,\lambda_{r-1}) - (\lambda_1,\lambda'_r)} F_1 F_{r-1} F'_r \nonumber \\
& + (q^{-2} - q^2) q^{- (\lambda_1, 2 \lambda_{r-1} + \lambda'_{r-1})} F_1 F_{r-1} F'_{r-1} F_{r-1} - (q^{-2} - q^2) q^{- (\lambda_1, 2 \lambda_{r-1} + \lambda'_{r-1})} F_1 F^2_{r-1} F'_{r-1}] \nonumber \\
& + [-q^{-2} F'_r S_{(1,r-1)} + q^{-2-(\lambda_1, \lambda'_r)} S_{(1,r-1)} F'_r] \nonumber \\
& + [-q^{-2-(\lambda_1, \lambda_{r-1})} S'_{(1,r-1)} F_{r-1} + q^{-2} F_{r-1} S'_{(1,r-1)}] \nonumber \\
& + (q^{-2} - q^2) [q^{-(\lambda_1, \lambda_{r-1})} F_{r-1} S'_{(1,r-1)} F_{r-1} - F^2_{r-1} S'_{(1,r-1)}] \nonumber \\
& + (q^{-2} - q^2) [q^{-(\lambda_1,\lambda_{r-1} + \lambda'_{r-1})} S_{(1,r-1)} F'_{r-1} F_{r-1} + F_{r-1} F'_{r-1} S_{(1,r-1)} \nonumber \\
& - q^{-(\lambda_1, \lambda_{r-1} + \lambda'_{r-1})} S_{(1,r-1)} F_{r-1} F'_{r-1} - q^{-(\lambda_1,\lambda'_{r-1})} F_{r-1} S_{(1,r-1)} F'_{r-1}].
\end{align}
Each of the five summands above (note that the first two lines is one single summand, the last two lines is also one signle summand) can be analyzed in the same way as in Case 2.  The details will take too much space to be written down.  It is omitted.

\underline {Case 6.}  The roots $\alpha_{i_{r-1}}$ and $\alpha_{i_r}$ generate a root system of type $B_2$ and $s_{\alpha_{i_1}} \cdots s_{\alpha_{i_{r-2}}} s_{\alpha_{i_r}}$ is not a reduced expression.

Let $u$ be the same as before.  This case has two subcases.  The first subcase is where $l(u s_{\alpha_{i_r}} s_{\alpha_{i_{r-1}}}) < l(u s_{\alpha_{i_r}})$.  The second subcase is where $l(u s_{\alpha_{i_r}} s_{\alpha_{i_{r-1}}}) > l(u s_{\alpha_{i_r}})$.  In both subcases, the analysis is a combination of the analysis in Case 2 and Case 3.  More specifically, non-reducedness of the expression
\begin{align}
s_{\alpha_{i_1}} \cdots s_{\alpha_{i_{r-2}}} s_{\alpha_{i_r}}
\end{align}
is dealt with in the same way as in Case 3; counting the power of $(1-q)$ in each coefficient, as always, is done in the same way as in Case 2.  The details are omitted.
\end{proof}





\noindent Li, Yu: Department of Mathematics, University of Chicago, 5734 S. University Ave., Chicago, IL 60637.  Email: liyu@math.uchicago.edu

\end{document}